\newcommand*{\medcap}{\mathbin{\scalebox{1.5}{\ensuremath{\cap}}}}%
\newtheorem{theorem}{Theorem}[section]
\newtheorem{lemma}[theorem]{Lemma}
\newtheorem{proposition}[theorem]{Proposition}
\newtheorem{hypotheses}[theorem]{Hypotheses}
\theoremstyle{definition}
\newtheorem{exmp}{Example}[section]
\newtheorem{defn}{Definition}[section]
\DeclareMathOperator{\Int}{Int}
\DeclareMathOperator{\MA}{MA}
\DeclareMathOperator{\dom}{dom}
\DeclareMathOperator{\cvx}{cvx}
\DeclareMathOperator{\Ric}{Ric}
\DeclareMathOperator{\Ent}{Ent}
\DeclareMathOperator{\AM}{AM}
\DeclareMathOperator{\lin}{\hspace{.2mm}lin}
\begin{document}

\title{Monge--Amp\`{e}re Iteration}
\date{}
\author{Ryan Hunter\thanks{Partially supported by the BSF grant 2012236 and the NSF grants DMS-1515703 and DMS-1440140.}}

\maketitle

\begin{abstract}
In a recent paper, Darvas--Rubinstein proved a convergence result for the K\"{a}hler--Ricci iteration, which is a sequence of recursively defined complex Monge--Amp\`{e}re equations.  We introduce the Monge--Amp\`{e}re iteration to be an analogous, but more  general, sequence of recursively defined real Monge--Amp\`{e}re second boundary value problems, and we establish sufficient conditions for its convergence.  We then determine two cases where these conditions are satisfied and provide geometric applications for both.  First, we give a new proof of Darvas and Rubinstein's theorem on the convergence of the Ricci iteration in the case of toric Kahler manifolds, while at the same time generalizing their theorem to general convex bodies.  Second, we introduce the affine iteration to be a sequence of prescribed affine normal problems and prove its convergence to an affine sphere, giving a new approach to an existence result due to Klartag.  
\tableofcontents{}
\end{abstract}

\section{Introduction}

Let $(M,g_0)$ be a Riemannian manifold.  A Ricci iteration, in the sense of Rubinstein \cite{Rubinstein}, is a sequence of Riemmannian metrics $\{g_i\}_{i\in\mathbb{N}}$ which satisfy
\[
\Ric (g_{i+1}) = g_i,
\]
where $\Ric(g)$ denotes the Ricci curvature of the metric $g$.  The Ricci iteration is a dynamical system on the space of metrics which breaks up the Einstein equation $\Ric(g)=g$ into a sequence of prescribed curvature problems.  If a Ricci iteration converges to an Einstein metric $g$, then the metrics $\{g_i\}_{i\in\mathbb{N}}$ are approximations of the Einstein metric $g$.  Thus, the Ricci iteration can be thought of as a tool for finding numerical approximations of Einstein metrics.

The Ricci iteration was introduced by Rubinstein \cite{Rubinstein2,Yanirthesis,Rubinstein} and has been since studied by a number of authors \cite{BBGEZ,Darvas,GKY,Keller,Pulemotov,YanirSS}. On K\"{a}hler manifolds the Ricci iteration is a sequence of complex Monge--Amp\`{e}re equations.  We introduce the following analogous sequence of real Monge--Amp\`{e}re equations.

A \textit{Monge--Amp\`{e}re iteration} is a sequence of convex functions $\{ \varphi_i\}_{i\in\mathbb{N}}$ which satisfy
\begin{equation}\label{introiteration}
\begin{cases}
    \dfrac{\det(\nabla^2 \varphi_{i+1})}{\lambda(A)} = \dfrac{h\circ \varphi_i}{\|h\circ \varphi_i\|_1} &\text{on }\mathbb{R}^n\\
    \nabla \varphi_{i+1}(\mathbb{R}^n) = A.
\end{cases}
\end{equation}
where $\lambda$ is Lebesgue measure, $\|\cdot\|_1$ is the $L^1$ norm on $\mathbb{R}^n$, $h$ is a function of one variable, and $A$ is a bounded, convex set.  Our goal is to provide sufficient conditions on $h$ and $A$ so a Monge--Amp\`{e}re iteration converges to a convex solution $\varphi$ to the \textit{Monge--Amp\`{e}re second boundary value problem} given by 
\begin{equation}\label{intropde}
\begin{cases}
    \dfrac{\det(\nabla^2 \varphi)}{\lambda(A)} = \dfrac{h\circ \varphi}{\|h\circ \varphi\|_1} &\text{on }\mathbb{R}^n\\
    \nabla \varphi(\mathbb{R}^n) = A.
\end{cases}
\end{equation}

We will go into details shortly, but let us mention at the outset three main difficulties as compared to the by now well studied complex Monge-Amp\`{e}re equations arising from the Kahler--Ricci iteration \cite{Darvas, Rubinstein}. First, we must deal with a second boundary value condition. Second, we allow rather general right hand sides. Third, we must deal with the noncompactness  of $\mathbb{R}^n$.  Existence and uniqueness of solutions to equation (\ref{intropde}) have been studied in the case $h(t)=e^{-t}$ by Berman and Berndtsson \cite{BB} and in the case $h(t)=t^{-(n+q)}$ for $q>1$ by Klartag \cite{Klartag}.  Thus, we will focus on these two functions for $h$.  Solutions to equation (\ref{introiteration}) are only unique up to an additive constant, so we must first normalize the solutions before we can obtain convergence. For this reason we define a \textit{normalized Monge--Amp\`{e}re iteration} to be a sequence of convex functions $\{\varphi_i\}_{i\in\mathbb{N}}$ solving
\begin{equation}\label{MAiteration}
\begin{cases}
    \dfrac{\det(\nabla^2 \varphi_{i+1})}{\lambda(A)} = \dfrac{h\circ \varphi_i}{\|h\circ \varphi_i\|_1} &\text{on }\mathbb{R}^n\\
    \nabla \varphi_{i+1}(\mathbb{R}^n) = A\\
    \int_A \varphi_{i+1}^* \,d\lambda = -\tau,
\end{cases}
\end{equation}
where $\varphi^*(y) = \sup\,\{\langle x,y\rangle - \varphi(x) \mid x\in\mathbb{R}^n\,\}$ is the Legendre transform of $\varphi$, and $\tau$ is a constant.    We likewise define the \textit{normalized Monge--Amp\`{e}re second boundary value problem} to be 
\begin{equation}\label{ourpde}
\begin{cases}
    \dfrac{\det(\nabla^2 \varphi)}{\lambda(A)} = \dfrac{h\circ \varphi}{\|h\circ \varphi\|_1} &\text{on }\mathbb{R}^n\\
    \nabla \varphi(\mathbb{R}^n) = A\\
    \int_A \varphi^* \,d\lambda = -\tau.
\end{cases}
\end{equation}

Our main theorem establishes the convergence of the normalized Monge--Amp\`{e}re iteration in the cases $h(t)=e^{-t}$ and $h(t)=t^{-(n+q)}$ for $q>1$.  In order to unify the proofs, we define hypotheses which are satisfied for both functions $h$ and prove that these hypotheses are sufficient for the convergence of the normalized iteration.  We also believe our methodical proof reveals some of the inherent structure of the problem.

We define the space of continuous functions with at most linear growth
\begin{equation}\label{classC}
\mathcal{C}_{\lin} = \big\{\,f: \mathbb{R}^n \to (\tau,\infty) \mid f\text{ continuous, and } f(x)\leq C \,(1+|x|)\text{ for some constant }C\,\big\},
\end{equation}
and the space of probability measures with finite first moments
\[
    \mathcal{P}_1 = \left\{\,\mu \in \mathcal{P} \mid \int_{\mathbb{R}^n} |x| \,d\mu < \infty\,\right\}.
\]
The natural pairing between $\mathcal{C}_{\lin}$ and $\mathcal{P}_1$ is
\[
\langle f,\mu\rangle = \int_{\mathbb{R}^n} f\,d\mu.
\]
We define the Monge--Amp\`{e}re measure of a convex function $\varphi$ on a Borel set $U$ by
\[
\MA(\varphi)(U) = \lambda( \partial \varphi(U)),
\]
where $\partial \varphi$ is the subgradient of $\varphi$.  The Monge--Amp\`{e}re measure is an extension of the measure $\det(\nabla^2 \varphi)\,\lambda$ to any non-smooth convex function.  Its definition is due to Alexandrov \cite{Alexandrov}, and a thorough treatment can be found in Rauch--Taylor \cite{RT}.

We define the functional $\mathcal{F}: \mathcal{C}_{\lin} \to \mathbb{R}$ by
\begin{equation}\label{Fdef}
    \mathcal{F}(f) = H^{-1}\big(\|H\circ f\|_1\big)\hspace{3mm}\text{where}\hspace{3mm}H(t) = \int_t^\infty h\,d\lambda,
\end{equation}
and a dual functional $\mathcal{G}:\mathcal{P}_1 \to \mathbb{R}$ by
\begin{equation}\label{Gdef}
\mathcal{G}(\mu) = \inf \,\big\{ \,g\big(\langle f,\mu\rangle,\,\mathcal{F}(f)\big) \mid f\in \mathcal{C}_{\lin}\, \big\},
\end{equation}
for a function $g$ of two variables.  When $h(t)=e^{-t}$ we choose $g(s,t)=s-t$, and when $h(t)=t^{-(n+q)}$ we choose $g(s,t)=s/t$ for $s>0$. In general, $g$ is chosen so that the following hypotheses are satisfied:

\begin{hypotheses}\label{hypB} \hspace{1cm}
\begin{enumerate}[label=(B\arabic*)]
\item $h$ is smooth, positive, decreasing, and there exist $p>0$ and $C>0$ such that $h(t) \leq C\,t^{-(n+p+1)}$ for $t\gg 1$\label{B1}.
\item If smooth solutions to equation (\ref{ourpde}) exist, then they are unique up to translations of $\mathbb{R}^n$\label{E}.
\item $g(s,t)$ is differentiable, decreasing in $t$, and satisfies $g(s,g(s,t))=t$\label{B2}.
\item If $\{\varphi_i\}_{i\in\mathbb{N}}$ is a sequence of smooth, strictly convex functions solving equation (\ref{MAiteration}), then 
\[
g\Big(\Big\langle \varphi_{i+1},\,\scaleobj{.9}{\dfrac{\MA(\varphi_{i+1})}{\lambda(A)}}\Big\rangle,\,\mathcal{G}\Big(\scaleobj{.9}{\dfrac{\MA(\varphi_{i+1})}{\lambda(A)}}\Big)\Big) \leq \mathcal{F}(\varphi_i) \text{ for all }i.
\]\label{B3}
\end{enumerate}
\end{hypotheses}

In Subsection \ref{hypexplanation} we explain the role of Hypotheses \ref{hypB} in the proof of convergence of the normalized Monge--Amp\`{e}re iteration.  Hypotheses \ref{B1} is a mild technical assumption, and \ref{E} is necessary for convergence. Hypotheses \ref{B2} and \ref{B3} imply
\[
\big\{\mathcal{F}(\varphi_i)\big\}_{i\in\mathbb{N}} \text{ and }\Big\{ g\Big(\Big\langle \varphi_{i+1},\,\scaleobj{.9}{\dfrac{\MA(\varphi_{i+1})}{\lambda(A)}}\Big\rangle,\,\mathcal{G}\Big(\scaleobj{.9}{\dfrac{\MA(\varphi_{i+1})}{\lambda(A)}}\Big)\Big)\Big\}_{i\in\mathbb{N}}
\]
are decreasing sequences along the Monge--Amp\`{e}re iteration. In Subsection \ref{hypexplanation} we explain the utility of these decreasing subsequences.

We will always assume
\begin{equation}\label{Aconditions}
A\subset \mathbb{R}^n \text{ is open, bounded, convex, and }\, \int_A y_i\,d\lambda=0\, \text{ for }i=1,\ldots,n,
\end{equation}
meaning the barycenter of $A$ lies at the origin.  Our main theorem is:

\begin{theorem}\label{introthm}
Assume $A$ satisfies (\ref{Aconditions}) and equation (\ref{MAiteration}) satisfies Hypotheses \ref{hypB}.  Let $\{\varphi_i\}_{i\in\mathbb{N}}$ be a normalized Monge--Amp\`{e}re iteration solving equation (\ref{MAiteration}), and let $\widetilde{\varphi}_i(x) = \varphi_i(x+a_i)$ for $a_i$ such that $\varphi_i(a_i)=\inf \varphi_i$.  Then, there exists a smooth, convex solution $\varphi$ to equation (\ref{ourpde}) such that $\widetilde{\varphi}_i$ converges to $\varphi$ on compact sets in every $C^{k,\alpha}$ norm.
\end{theorem}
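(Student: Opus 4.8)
The plan is to run the by-now standard strategy for such iterations: identify a quantity monotone along the iteration (here supplied by Hypotheses~\ref{hypB}), use a priori estimates for the second boundary value problem to extract a $C^0_{loc}$-convergent subsequence, show the gap in the monotonicity inequality tends to zero so that the limit is a ``fixed point'' of the iteration, i.e.\ a solution of (\ref{ourpde}), and finally invoke the uniqueness hypothesis \ref{E} together with classical Monge--Amp\`ere regularity to promote this to $C^{k,\alpha}_{loc}$ convergence of the entire sequence. For the \emph{compactness input}: since $\nabla\widetilde\varphi_i(\mathbb R^n)=A$ with $A$ bounded, each $\widetilde\varphi_i$ is convex and $L$-Lipschitz for $L=\sup_{y\in\overline A}|y|$, and $\inf\widetilde\varphi_i=\widetilde\varphi_i(0)$ by the choice of $a_i$. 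Because the barycenter of $A$ is the origin, the constraint $\int_A\varphi_i^*\,d\lambda=-\tau$ is preserved under the centering, so $\int_A\widetilde\varphi_i^*\,d\lambda=-\tau$; since $\widetilde\varphi_i^*$ attains its minimum $-\widetilde\varphi_i(0)$ at $y=0$, this traps $\widetilde\varphi_i(0)$ in a bounded interval independent of $i$, so $\{\widetilde\varphi_i\}$ is locally uniformly bounded and globally uniformly Lipschitz, and by Arzel\`a--Ascoli a subsequence converges in $C^0_{loc}$ to a convex $L$-Lipschitz $\varphi$. Hypothesis \ref{B1} — the decay $h(t)\le C\,t^{-(n+p+1)}$ — is exactly what makes $h\circ\widetilde\varphi_i$ uniformly integrable with $\|h\circ\widetilde\varphi_i\|_1$ bounded above and below, so the right-hand sides of (\ref{MAiteration}) are pinched between positive constants on each compact set; Caffarelli's interior estimates then give uniform $C^{2,\alpha}_{loc}$ bounds, and a Schauder bootstrap (using smoothness of $h$) gives uniform $C^{k,\alpha}_{loc}$ bounds.

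\emph{The monotone functional and the vanishing gap.} From the definition (\ref{Gdef}) of $\mathcal G$ (taking $f=\varphi_{i+1}$ as competitor) and the involution identity $g(s,g(s,t))=t$ of \ref{B2} (with $g$ decreasing in $t$), one obtains, writing $\mu_{i+1}=\MA(\varphi_{i+1})/\lambda(A)$,
\[
\mathcal F(\varphi_{i+1})\ \le\ g\big(\langle\varphi_{i+1},\mu_{i+1}\rangle,\ \mathcal G(\mu_{i+1})\big)\ \le\ \mathcal F(\varphi_i),
\]
the last step being \ref{B3}; all three quantities are translation invariant, so they are unchanged by the centering. Thus $\{\mathcal F(\varphi_i)\}$ is nonincreasing, and the estimates above make it bounded below, so it converges; the middle term, squeezed between $\mathcal F(\varphi_{i+1})$ and $\mathcal F(\varphi_i)$, converges to the same limit, so the gaps $\mathcal F(\varphi_i)-g(\langle\varphi_{i+1},\mu_{i+1}\rangle,\mathcal G(\mu_{i+1}))$ and $\mathcal F(\varphi_{i+1})-g(\langle\varphi_{i+1},\mu_{i+1}\rangle,\mathcal G(\mu_{i+1}))$ both tend to $0$.

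\emph{The limit solves (\ref{ourpde}); uniqueness and full convergence.} The vanishing of the second gap says that $\varphi_{i+1}$ is an asymptotically optimal competitor in the infimum (\ref{Gdef}) defining $\mathcal G(\mu_{i+1})$, while by (\ref{MAiteration}) the measure $\mu_{i+1}$ equals $(h\circ\varphi_i)/\|h\circ\varphi_i\|_1\,d\lambda$; the (stability of the) equality case of that variational problem then forces $\widetilde\varphi_{i+1}$ and a suitable translate of $\widetilde\varphi_i$ to have the same $C^0_{loc}$ limit $\varphi$ — here one checks, using the estimates above applied to both indices, that the centering shifts $a_{i+1}-a_i$ stay bounded. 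Passing to the limit in (\ref{MAiteration}) along the subsequence, weak continuity of the Monge--Amp\`ere operator under $C^0_{loc}$ convergence of convex functions (Alexandrov; Rauch--Taylor) gives $\MA(\widetilde\varphi_{i+1})\rightharpoonup\MA(\varphi)$, while $(h\circ\widetilde\varphi_i)/\|h\circ\widetilde\varphi_i\|_1\,d\lambda\rightharpoonup(h\circ\varphi)/\|h\circ\varphi\|_1\,d\lambda$ by the uniform integrability; hence $\MA(\varphi)/\lambda(A)=(h\circ\varphi)/\|h\circ\varphi\|_1\,d\lambda$. The gradient images converge to give $\nabla\varphi(\mathbb R^n)=A$ up to a null set, and $\int_A\widetilde\varphi_i^*\,d\lambda=-\tau$ passes to the limit using the uniform bounds on $\widetilde\varphi_i^*|_A$, so $\varphi$ solves (\ref{ourpde}). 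Its right-hand side is continuous and positive, so by Caffarelli's theory $\varphi$ is smooth and strictly convex; by Hypothesis \ref{E} the smooth solution of (\ref{ourpde}) is unique up to translation, and the normalizations ($\inf\varphi$ attained at $0$, $\int_A\varphi^*\,d\lambda=-\tau$, barycenter of $A$ at the origin) remove that freedom, so $\varphi$ is uniquely determined. Consequently every $C^0_{loc}$-subsequential limit of $\{\widetilde\varphi_i\}$ is this same $\varphi$, the full sequence converges in $C^0_{loc}$, and combined with the uniform $C^{k,\alpha}_{loc}$ bounds and interpolation, $\widetilde\varphi_i\to\varphi$ in every $C^{k,\alpha}_{loc}$ norm.

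\emph{Main obstacle.} The delicate step is converting ``the functional gap tends to $0$'' into ``consecutive iterates have a common limit.'' This requires a quantitative (stability) form of the equality case in the variational problem defining $\mathcal G$ — morally, that a near-optimal $f$ for $\mathcal G(\mu)$ must be close to the convex potential whose Monge--Amp\`ere measure is proportional to $\mu$ — together with careful bookkeeping of the two independent centerings $a_i$, $a_{i+1}$ for the pair of iterates coupled by (\ref{MAiteration}). Everything else, including that the monotone sequence is genuinely bounded (so ``gap $\to 0$'' is meaningful) and that $\|h\circ\varphi_i\|_1$ is controlled from both sides, hinges on the uniformity of the a priori estimates, which is precisely the role of Hypothesis \ref{B1}.
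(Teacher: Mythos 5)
Your compactness input is not justified as stated, and the claim it rests on is false. From $\int_A\widetilde\varphi_i^*\,d\lambda=-\tau$ and $\min_{\overline A}\widetilde\varphi_i^*=\widetilde\varphi_i^*(0)=-\widetilde\varphi_i(0)$ you only get the \emph{lower} bound $\widetilde\varphi_i(0)\ge\tau/\lambda(A)$ (the paper's Lemma \ref{tau}); the normalization alone does not trap $\widetilde\varphi_i(0)$ from above. For example, in dimension one with $A=(-1,1)$ the functions $f_M(x)=M+\max\{0,|x|-(2M-\tau)\}$ satisfy $\partial f_M(\mathbb{R})=\overline A$, $\inf f_M=f_M(0)=M$ and $\int_A f_M^*\,d\lambda=-\tau$ for every large $M$, so no uniform upper bound can come from the normalization and the location of the minimum. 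In the paper the upper bound $\widetilde\varphi_i(0)\le C$ is genuinely an iteration statement: Hypotheses \ref{B2} and \ref{B3} make $\mathcal F(\varphi_i)$ decreasing (Lemma \ref{functionallimit}), hence $\mathcal F(\widetilde\varphi_i)\le\mathcal F(\varphi_0)$, and this, together with the coercive lower bound $\widetilde\varphi_i(x)\ge\tau/\lambda(A)+r|x|$ of Lemma \ref{lowerbound} (which itself needs Klartag's Lemma \ref{Klartaglemma}, not just the minimum estimate), yields Lemma \ref{upperbound}. Since your monotonicity step only uses boundedness of $\mathcal F$ from below, the order of the argument can be repaired, but as written the local uniform boundedness, the two-sided control of $\|h\circ\varphi_i\|_1$, and hence the whole compactness package are unsupported. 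Likewise, "the gradient images converge to give $\nabla\varphi(\mathbb R^n)=A$ up to a null set" is not automatic: locally uniform limits of convex functions can have strictly smaller gradient image (the paper's example $f_i=\max\{f,R|x|-i\}$), and the paper needs the refined convex-analysis Lemma \ref{Klartagvar} together with the normalization to prove Lemma \ref{subgradientlimit}.

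The more serious gap is the step you yourself flag as the "main obstacle." Your identification of the limit equation passes to the limit in (\ref{MAiteration}) directly, and since that equation couples $\varphi_{i+1}$ with $\varphi_i$ through two independent centerings, it requires precisely the unproved assertion that $\widetilde\varphi_{i'}$ and $\widetilde\varphi_{i'+1}$ (after accounting for $a_{i'+1}-a_{i'}$) share the same limit; the mechanism you invoke for this, a quantitative stability of the equality case in the variational problem (\ref{Gdef}), is nowhere established, so the proof does not close. The paper avoids this comparison of consecutive iterates altogether: it proves $\MA(\widetilde\varphi_{i'})\to_1\MA(\varphi)$ in Wasserstein distance, where the bound $|a_{i+1}-a_i|\le C$ of Lemma \ref{aibound} is used only for the tightness half of Proposition \ref{sd}; then continuity of $\mathcal F$ (Lemma \ref{Fconverge}) and of $\langle\cdot,\MA(\cdot)\rangle$ (Lemma \ref{angle}), upper semicontinuity of $\mathcal G$ (Lemma \ref{Gusc}), and the common limit $\beta$ of the two monotone sequences show that the single limit function $\varphi$ attains the infimum defining $\mathcal G\big(\MA(\varphi)/\lambda(A)\big)$ (Proposition \ref{sd2}); finally the equation (\ref{ourpde}) emerges as the Euler--Lagrange equation of that minimization (Lemma \ref{limitsolution}), after which Caffarelli regularity, Hypothesis \ref{E}, and the normalization give uniqueness of subsequential limits and smooth convergence, much as you outline. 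So the decisive identification step in your proposal is missing, and the route the paper actually takes is a different, purely variational one that renders the stability statement unnecessary.
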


We take a moment here to contrast the proof of Theorem \ref{introthm} with the proof of the convergence of the K\"{a}hler-Ricci iteration in Darvas--Rubinstein \cite{Darvas}.  The proof of Theorem \ref{introthm} relies upon a classical variational argument concerning minimizers of a certain functional.  Darvas and Rubinstein's proof relies upon a more exotic notion of weak convergence in an infinite dimensional Finsler geometry introduced by Darvas \cite{Darvas2}.

We prove Hypotheses \ref{hypB} are satisfied when $h(t)=e^{-t}$ in Subsection \ref{iteration2}, and when $h(t)=t^{-(n+q)}$ for $q>1$ in Subsection \ref{iteration1}.  These are the two cases for which Hypothesis \ref{E} has been established in earlier work.

For each step of the Monge--Amp\`{e}re iteration, the function $\varphi_{i+1}$ arises as the solution to an optimal transport problem.  Specifically, if $\varphi_{i+1}$ solves equation (\ref{MAiteration}), then
\[
(\nabla \varphi_{i+1})_{\#} \bigg( \dfrac{h\circ \varphi_i}{\|h \circ \varphi_i\|_1} \,\lambda \bigg) = \dfrac{\lambda \big|_A}{\lambda(A)}, 
\]
and $\nabla \varphi_{i+1}$ minimizes the cost
\[
\int_{\mathbb{R}^n} |x-T(x)|^2 \,\dfrac{h\circ \varphi_i}{\|h \circ \varphi_i\|_1} \,d\lambda
\]
over all maps $T:\mathbb{R}^n \to A$ which also push forward the probability measure $\dfrac{h\circ \varphi_i}{\|h\circ \varphi_i\|_1}\,\lambda$ to the uniform measure on $A$.  Numerical approximations of $\varphi_{i+1}$ are facilitated by this optimal transportation interpretation, as is shown in the work of Lindsey and Rubinstein \cite{Lindsey} and references therein.  This demonstrates the potential of using a Monge--Amp\`{e}re iteration to numerically approximate solutions of equation (\ref{ourpde}).

{\bf Organization.} In Section \ref{GA} we outline two geometric applications of Theorem \ref{introthm} in toric K\"{a}hler geometry and affine differential geometry.  In Section \ref{thmpf} we  motivate Hypotheses \ref{hypB} and prove Theorem \ref{introthm}.  In Section \ref{RIsection} we show Monge--Amp\`{e}re iterations for $h(t)=e^{-t}$ correspond to Ricci iterations on toric K\"{a}hler manifolds, and we prove Theorem \ref{introthm} implies the convergence of the Ricci iteration to K\"{a}hler Einstien metrics when they exist. In Section \ref{AIsection} we introduce the affine iteration to be a sequence of prescribed affine normal problems in affine differential geometry which correspond to a Monge--Amp\`{e}re iteration with $h(t)=t^{-(n+2)}$. We prove Theorem \ref{introthm} implies the convergence of the affine iteration to affine spheres when they exist.

\section{Geometric applications}\label{GA}

The Monge--Amp\`{e}re iteration is inspired by the Ricci iteration.  Darvas and Rubinstein \cite{Darvas} proved a convergence result for the Ricci iteration on K\"{a}hler manifolds, and our first geometric application is to show Theorem \ref{introthm} recovers a special case of their result for toric K\"{a}hler manifolds.  Our second geometric application is to affine differential geometry, where we introduce the affine iteration and show Theorem \ref{introthm} proves the affine iteration converges to an affine sphere.

\subsection{K\"{a}hler--Ricci iteration}

In Section \ref{iteration2} we prove Hypotheses \ref{hypB} are satisfied for $h(t)=e^{-t}$. After showing the hypotheses are satisfied, we apply Theorem \ref{introthm} to prove the following:

\begin{theorem}\label{MAiteration2}
Assume $A\subset\mathbb{R}^n$ satisfies (\ref{Aconditions}), and fix $\tau\in\mathbb{R}$ and the function $h(t)=e^{-t}$.  If $\{\phi_i\}_{i\in\mathbb{R}}$ is a normalized Monge--Amp\`{e}re iteration, then there exist constants $a_i\in \mathbb{R}^n$ such that $\widetilde{\phi}_i(x) = \phi_i(x+a_i)$ converges to $\phi$, a smooth convex solution to equation (\ref{ourpde}), on compact sets in every $C^{k,\alpha}$ norm.
\end{theorem}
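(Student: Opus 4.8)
The plan is to verify that Hypotheses \ref{hypB} hold for $h(t) = e^{-t}$ with the choice $g(s,t) = s - t$ made in the introduction, and then to invoke Theorem \ref{introthm}. For this $h$ one has $H(t) = \int_t^\infty e^{-s}\,ds = e^{-t}$, so $H^{-1}(u) = -\log u$ and $\mathcal{F}(f) = -\log\int_{\mathbb{R}^n} e^{-f}\,d\lambda$. The first three hypotheses are then routine: \ref{B1} holds because $e^{-t}$ is smooth, positive and decreasing and $e^{-t} \le t^{-(n+2)}$ for $t \gg 1$ (so $p=1$ works); \ref{E}, uniqueness of smooth solutions of (\ref{ourpde}) up to translation, is the uniqueness theorem of Berman and Berndtsson \cite{BB}; and \ref{B2} is immediate since $g(s,t) = s-t$ is differentiable with $\partial_t g \equiv -1 < 0$ and $g(s,g(s,t)) = s-(s-t) = t$.

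The substance of the theorem is Hypothesis \ref{B3}. First I would identify $\mathcal{G}$: by definition
\[
\mathcal{G}(\mu) = \inf\Big\{\langle f,\mu\rangle + \log\int e^{-f}\,d\lambda \;\Big|\; f \in \mathcal{C}_{\lin}\Big\},
\]
and the Gibbs variational principle identifies this with $-\Ent(\mu)$ for $\mu = \rho\,\lambda \ll \lambda$, where $\Ent(\mu) = \int\rho\log\rho\,d\lambda$, the infimum being attained at $f = -\log\rho$. Now let $\{\varphi_i\}$ be smooth, strictly convex solutions of (\ref{MAiteration}) and set $\mu_{i+1} = \MA(\varphi_{i+1})/\lambda(A)$; by the iteration equation $\mu_{i+1}$ has density $\rho_{i+1} = e^{-\varphi_i}\big/\int e^{-\varphi_i}\,d\lambda$. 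Substituting $\log\rho_{i+1} = -\varphi_i - \log\int e^{-\varphi_i}\,d\lambda$ into the formula for the entropy gives $\mathcal{G}(\mu_{i+1}) = \langle\varphi_i,\mu_{i+1}\rangle - \mathcal{F}(\varphi_i)$, hence
\[
g\big(\langle\varphi_{i+1},\mu_{i+1}\rangle,\,\mathcal{G}(\mu_{i+1})\big) = \langle\varphi_{i+1}-\varphi_i,\,\mu_{i+1}\rangle + \mathcal{F}(\varphi_i),
\]
so \ref{B3} reduces to the single inequality $\langle\varphi_{i+1}-\varphi_i,\,\mu_{i+1}\rangle \le 0$.

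To establish this I would use the optimal transport structure recorded in the introduction: $\nabla\varphi_{i+1}$ pushes $\rho_{i+1}\,\lambda$ forward to $\lambda|_A/\lambda(A)$. From the equality $\varphi_{i+1}(x) = \langle x,\nabla\varphi_{i+1}(x)\rangle - \varphi_{i+1}^*(\nabla\varphi_{i+1}(x))$, integrating against $\rho_{i+1}\,\lambda$ and changing variables turns $\int\varphi_{i+1}^*(\nabla\varphi_{i+1})\,\rho_{i+1}\,d\lambda$ into $\frac{1}{\lambda(A)}\int_A\varphi_{i+1}^*\,d\lambda = -\tau/\lambda(A)$ by the normalization in (\ref{MAiteration}), so that $\langle\varphi_{i+1},\mu_{i+1}\rangle = \int\langle x,\nabla\varphi_{i+1}(x)\rangle\,\rho_{i+1}\,d\lambda + \tau/\lambda(A)$. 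On the other hand, Young's inequality $\varphi_i(x) \ge \langle x,y\rangle - \varphi_i^*(y)$ evaluated at $y = \nabla\varphi_{i+1}(x)$, combined with the same change of variables and the identical normalization $\int_A\varphi_i^*\,d\lambda = -\tau$, gives $\langle\varphi_i,\mu_{i+1}\rangle \ge \int\langle x,\nabla\varphi_{i+1}(x)\rangle\,\rho_{i+1}\,d\lambda + \tau/\lambda(A)$. Subtracting the two yields $\langle\varphi_{i+1}-\varphi_i,\mu_{i+1}\rangle \le 0$, which is \ref{B3}. (Equivalently, the Brenier potential $\varphi_{i+1}$ minimizes $\psi\mapsto\langle\psi,\mu_{i+1}\rangle + \frac1{\lambda(A)}\int_A\psi^*\,d\lambda$ over convex competitors, $\varphi_i$ is such a competitor, and the normalization makes the $\psi^*$-terms cancel.) With Hypotheses \ref{hypB} verified, Theorem \ref{introthm} gives the stated convergence of $\widetilde{\phi}_i$.

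The main difficulty I anticipate is analytic bookkeeping rather than algebra. One must justify the Gibbs identity over the restricted class $\mathcal{C}_{\lin}$ so that $\mathcal{G}(\mu_{i+1}) = -\Ent(\mu_{i+1})$ is an equality, and one must check that $\langle x,\nabla\varphi_{i+1}(x)\rangle$, $\varphi_i$ and $\varphi_i^*$ are integrable against the relevant measures. The latter follows from $\nabla\varphi_i(\mathbb{R}^n) = A$ being bounded — each $\varphi_i$ is then Lipschitz, and, $0$ being interior to $A$, grows at least linearly, which forces $e^{-\varphi_i}$ to have finite moments of every order — together with finiteness of $\int_A\varphi_i^*\,d\lambda = -\tau$. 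Finally, one uses that the iterates are genuinely smooth and strictly convex, which comes from the regularity theory for the second boundary value problem since $A$ is convex and the right-hand side $e^{-\varphi_i}\big/\|e^{-\varphi_i}\|_1$ is smooth and positive.
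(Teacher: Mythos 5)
Your proposal is correct and follows essentially the same route as the paper: verify \ref{B1}, \ref{E}, \ref{B2} directly, identify $\mathcal{G}$ with negative relative entropy via the Gibbs variational principle (the paper's Lemma \ref{RicG}), and establish $\langle \varphi_{i+1}-\varphi_i,\,\MA(\varphi_{i+1})/\lambda(A)\rangle \le 0$ by the Legendre-duality/Young's-inequality comparison with matching normalizations (the paper's Lemma \ref{compare}), then apply Theorem \ref{introthm}. The only cosmetic difference is that you fold the comparison lemma into the verification of \ref{B3} rather than isolating it as a separate statement.
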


In the rest of Section \ref{RIsection} we interpret Theorem \ref{MAiteration2} in terms of the convergence of the K\"{a}hler--Ricci iteration on toric K\"{a}hler manifolds which we outline here.

Let $X$ be a compact K\"{a}hler manifold which is Fano, meaning it has positive first Chern class.  A K\"{a}hler--Ricci iteration is a sequence of K\"{a}hler metrics $\{\omega_i\}_{i\in\mathbb{N}}$ in the first Chern class of $X$ solving the sequence of prescribed Ricci curvature problems
\begin{equation}\label{introRI}
\Ric(\omega_{i+1}) = \omega_i.
\end{equation}
The K\"{a}hler--Ricci iteration can be thought of as a discretization of the K\"{a}hler--Ricci flow, which is given by $\partial_t\,\omega=-\Ric(\omega)+\omega$ when $c_1(X)$ is positive.  Each step of the K\"{a}hler--Ricci iteration admits a unique, smooth solution by the Calabi-Yau Theorem \cite{Yau}.  Darvas--Rubinstein \cite[Theorem~1.2]{Darvas} prove that when $X$ admits a K\"{a}hler--Einstein metric, there exist automorphisms $g_i$ such that $g_i^*\,\omega_i$ converges smoothly to a K\"{a}hler--Einstein metric.  

A compact K\"{a}hler manifold $X$ of complex dimension $n$ is toric if it admits an effective holomorphic $\mathbb{C}^*{}^n$ action with an open, dense orbit $X_0$. They are characterized by certain compact, convex polytopes $P \subset \mathbb{R}^n$, and K\"{a}hler metrics $\omega$ which are invariant under the action of $(S^1)^n\subset\mathbb{C}^*{}^n$ can be described in terms of a potential in the open orbit $X_0\simeq \mathbb{C}^*{}^n$:
\begin{equation}\label{potential}
\omega \big|_{\mathbb{C}^*{}^n} = \sqrt{-1} \partial \overline{\partial}\,\phi.
\end{equation}
The $(S^1)^n$-invariance implies $\phi$ only depends on the real variables $x_i = \log(|z_i|^2)$, and $\phi:\mathbb{R}^n \to \mathbb{R}$ is a smooth, strictly convex function satisfying $\nabla \phi(\mathbb{R}^n) = \Int P$.  The K\"{a}hler--Ricci iteration can be written in terms of these potentials as the Monge--Amp\`{e}re iteration
\begin{equation}\label{first}
\begin{cases}
    \dfrac{\det(\nabla^2 \phi_{i+1})}{\lambda(P)} =\dfrac{e^{-\phi_i}}{\|e^{-\phi_i}\|_1}\\
    \nabla \phi_{i+1}(\mathbb{R}^n) = \Int P.
\end{cases}
\end{equation}

By work of Wang--Zhu \cite{WangZhu}, K\"{a}hler--Einstein metrics exist on toric Fano manifolds if and only if the barycenter of $P$ lies at the origin.  In Section \ref{proofofRI} we use this result along with Theorem \ref{MAiteration2} to provide an alternate proof of the following special case of Darvas--Rubinstein \cite[Theorem~1.2]{Darvas}:

\begin{theorem}\label{introthm3}
Let $X$ be a smooth, toric Fano manifold admitting a K\"{a}hler--Einstein metric.  If  $\{\omega_i\}_{i\in\mathbb{N}}$ is a K\"{a}hler--Ricci iteration on $X$, then there exists a sequence of automorphisms $\{g_i\}_{i\in\mathbb{N}}$ such that $g_i^*\omega_i$ converges smoothly to a K\"{a}hler--Einstein metric.
\end{theorem}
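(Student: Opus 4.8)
The plan is to reformulate the K\"{a}hler--Ricci iteration as a normalized Monge--Amp\`{e}re iteration \eqref{MAiteration}, apply Theorem~\ref{MAiteration2} to get convergence of the toric potentials on the open orbit, and then upgrade that to smooth convergence on all of $X$. First I restrict to torus-invariant iterations (the natural class on a toric manifold, and the one needed to recover the toric case of Darvas--Rubinstein): if $\omega_0$ is $(S^1)^n$-invariant, then by uniqueness of solutions to the prescribed Ricci equation $\Ric(\omega_{i+1})=\omega_i$ (the Calabi--Yau theorem), together with the torus-invariance of $\omega_i$, every $\omega_i$ is $(S^1)^n$-invariant. Let $P$ be the moment polytope of $X$; since $X$ carries a K\"{a}hler--Einstein metric, the Wang--Zhu criterion recalled above places the barycenter of $P$ at the origin, so $A:=\Int P$ satisfies \eqref{Aconditions}. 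Writing $\omega_i$ through its potential \eqref{potential} produces convex functions $\phi_i\colon\mathbb{R}^n\to\mathbb{R}$ with $\nabla\phi_i(\mathbb{R}^n)=\Int P$ solving \eqref{first}, i.e.\ a Monge--Amp\`{e}re iteration for $h(t)=e^{-t}$. Each $\phi_i$ is defined only up to an additive constant, but $e^{-\phi_i}/\|e^{-\phi_i}\|_1$ is invariant under $\phi_i\mapsto\phi_i+c$ whereas $\phi_i^*\mapsto\phi_i^*-c$, so one may choose the representatives with $\int_A\phi_i^*\,d\lambda=-\tau$; then $\{\phi_i\}$ solves \eqref{MAiteration}.

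By Theorem~\ref{MAiteration2} there exist $a_i\in\mathbb{R}^n$ such that $\widetilde{\phi}_i(x):=\phi_i(x+a_i)$ converges on compact subsets of $\mathbb{R}^n$, in every $C^{k,\alpha}$ norm, to a smooth convex $\phi$ solving \eqref{ourpde}. The translation $x\mapsto x+a_i$ on $X_0\cong(\mathbb{C}^*)^n$ is induced by the automorphism $g_i\in(\mathbb{C}^*)^n\subset\operatorname{Aut}(X)$ acting by $z_j\mapsto e^{a_i^{(j)}/2}z_j$, so $g_i^*\omega_i$ is the torus-invariant metric in $c_1(X)$ whose potential \eqref{potential} is $\widetilde{\phi}_i$, and $\nabla\widetilde{\phi}_i(\mathbb{R}^n)=\Int P$ for every $i$. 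For $h(t)=e^{-t}$, equation \eqref{ourpde} is the toric K\"{a}hler--Einstein equation $\det(\nabla^2\phi)=(\lambda(P)/\|e^{-\phi}\|_1)\,e^{-\phi}$ with $\nabla\phi(\mathbb{R}^n)=\Int P$; since $\phi$ is smooth, strictly convex, with gradient image $\Int P$, the metric $\sqrt{-1}\,\partial\overline\partial\phi$ on $X_0$ extends to a smooth K\"{a}hler--Einstein metric $\omega_{KE}$ on $X$, and $\widetilde{\phi}_i\to\phi$ in $C^{k,\alpha}_{\mathrm{loc}}(\mathbb{R}^n)$ yields $g_i^*\omega_i\to\omega_{KE}$ in $C^\infty_{\mathrm{loc}}(X_0)$.

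The hard part is upgrading this $C^\infty_{\mathrm{loc}}(X_0)$ convergence to $C^\infty(X)$ convergence: local convergence of the potentials on $\mathbb{R}^n$ carries no information about their behavior as $|x|\to\infty$, i.e.\ near the toric divisors. It is cleanest to pass to the symplectic potentials $v_i:=\widetilde{\phi}_i^*$, which all live on the \emph{fixed} polytope $P$: $g_i^*\omega_i$ corresponds to $v_i$, which differs from the canonical Guillemin potential $v_P$ of $P$ by a function smooth up to $\partial P$ (because $g_i^*\omega_i$ is smooth on $X$), and the interior convergence above is exactly $v_i\to v:=\phi^*$ in $C^\infty_{\mathrm{loc}}(\Int P)$. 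Since $\overline{P}$ is compact, it then suffices to establish $i$-independent $C^k(\overline{P})$ bounds on $v_i-v_P$: these force relative compactness of $\{v_i-v_P\}$ in each $C^k(\overline{P})$, any limit coincides with $v-v_P$ by the interior convergence, and hence $g_i^*\omega_i\to\omega_{KE}$ in $C^\infty(X)$; since every subsequence then sub-converges to $\omega_{KE}$, the whole sequence does. Obtaining these uniform bounds is the main obstacle. They are the up-to-the-boundary a priori estimates for the (degenerate) Monge--Amp\`{e}re problems that the recursion \eqref{first} imposes on $P$ with Guillemin boundary conditions; the key points to exploit are that the polytope $P$, and hence all the boundary data, are the same for every $i$, and that \eqref{first} keeps the right-hand sides uniformly controlled, so that classical toric Monge--Amp\`{e}re estimates (in the style of Guillemin, Donaldson, and Wang--Zhu) should apply. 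Making this uniform boundary analysis precise --- showing that $\{\widetilde{\phi}_i\}$ is relatively compact among toric potentials of smooth K\"{a}hler metrics on $X$, not merely in $C^\infty_{\mathrm{loc}}(\mathbb{R}^n)$ --- is the step that requires genuine work; the preceding reductions are routine given Theorem~\ref{MAiteration2} and the standard toric dictionary.
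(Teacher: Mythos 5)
Your reduction is exactly the paper's: restrict to torus-invariant iterations, invoke Wang--Zhu to place the barycenter of $P$ at the origin so that $A=\Int P$ satisfies (\ref{Aconditions}), normalize the additive constants by $\int_P\phi_i^*\,d\lambda=-\tau$ so that the open-orbit potentials solve (\ref{MAiteration}) with $h(t)=e^{-t}$, apply Theorem \ref{MAiteration2}, and realize the translations $x\mapsto x+a_i$ as the torus automorphisms $g_i\colon z_j\mapsto e^{a_i^{(j)}/2}z_j$, so that $g_i^*\omega_i$ has potential $\widetilde{\phi}_i$ and the limit potential $\phi$ solves (\ref{KE}) and gives a metric extending to a K\"{a}hler--Einstein metric on $X_P$. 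The only divergence is the step you single out as the ``hard part'': the paper's proof does not perform the upgrade from $C^\infty_{\mathrm{loc}}(X_0)$ convergence of the potentials to $C^\infty(X)$ convergence of the metrics at all --- it simply concludes that $g_i^*\omega_i=\sqrt{-1}\,\partial\overline{\partial}\,\widetilde{\phi}_i$ on the open orbit converges smoothly to $\omega$ ``by Theorem \ref{MAiteration2}'', i.e.\ the convergence asserted is the one carried by the open-orbit potentials. So measured against the paper's own argument your proposal is complete (indeed more scrupulous), and your sketched program --- passing to the symplectic potentials $v_i=\widetilde{\phi}_i^*$ on the fixed polytope $\overline{P}$ and seeking $i$-independent $C^k(\overline{P})$ bounds on $v_i-v_P$ in the style of Guillemin--Donaldson boundary estimates --- is a sensible route to the stronger global statement (the one Darvas--Rubinstein prove), but it is additional work that the paper neither carries out nor claims; you correctly leave it flagged rather than asserting it.
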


Conversely, the results of Darvas and Rubinstein can be used to prove the analytic Theorem \ref{MAiteration2} in the special case when $A=\Int\,P$ for a Delzant polytope $P$.  Thus, Theorem \ref{MAiteration2} generalizes the analytic results of Darvas--Rubinstein \cite{Darvas} to all bounded, convex sets.

\subsection{Affine iteration}

In Section \ref{iteration2} we prove Hypotheses \ref{hypB} are satisfied for $h(t)=t^{-(n+p+1)}$ when $p>0$. After showing the hypotheses are satisfied, we apply Theorem \ref{introthm} to prove the following:

\begin{theorem}\label{MAiteration1}
Assume $A\subset\mathbb{R}^n$ satisfies (\ref{Aconditions}), and fix $p>0$, $\tau>0$, and the function $h(t)=t^{-(n+p+1)}$.  If $\{\phi_i\}_{i\in\mathbb{N}}$ is a normalized Monge--Amp\`{e}re iteration , then there exist constants $a_i\in \mathbb{R}^n$ such that $\widetilde{\phi}_i(x) = \phi_i(x+a_i)$ converges to $\phi$, a smooth convex solution to equation (\ref{ourpde}), on compact sets in every $C^{k,\alpha}$ norm.
\end{theorem}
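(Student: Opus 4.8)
The plan is to verify that equation~(\ref{MAiteration}) with $h(t)=t^{-(n+p+1)}$ satisfies Hypotheses~\ref{hypB} for the choice $g(s,t)=s/t$, and then to quote Theorem~\ref{introthm}; the shifts $a_i$ in the statement are exactly the ones produced there. Hypotheses~\ref{B1},~\ref{B2},~\ref{E} are quick. For~\ref{B1} take $C=1$ and the same $p$: $h$ is smooth, positive, strictly decreasing on $(0,\infty)$, and pairing Fenchel--Young ($\varphi_i^*(y)\ge\langle x,y\rangle-\varphi_i(x)$) against the barycenter condition in~(\ref{Aconditions}) and the normalization $\int_A\varphi_i^*\,d\lambda=-\tau$ shows $\varphi_i\ge\tau/\lambda(A)>0$, so $h$ is only ever evaluated where it is smooth and positive. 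For~\ref{B2}, $H(t)=t^{-(n+p)}/(n+p)$ is a decreasing bijection of $(0,\infty)$, so $\mathcal{F}(f)=\bigl(\int_{\mathbb{R}^n}f^{-(n+p)}\,d\lambda\bigr)^{-1/(n+p)}$, which is positive and $1$-homogeneous on its domain, while $g(s,t)=s/t$ is differentiable, decreasing in $t$ for $s>0$, and satisfies $g(s,g(s,t))=s/(s/t)=t$. Hypothesis~\ref{E} is precisely the uniqueness up to translations of smooth solutions of~(\ref{ourpde}) in the case $h(t)=t^{-(n+q)}$, $q=p+1>1$, which is due to Klartag~\cite{Klartag}.

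The substance is Hypothesis~\ref{B3}. Given smooth strictly convex solutions $\{\varphi_i\}$ of~(\ref{MAiteration}), set $\mu_{i+1}:=\MA(\varphi_{i+1})/\lambda(A)$, which by the equation is the probability measure with density proportional to $\varphi_i^{-(n+p+1)}$; it lies in $\mathcal{P}_1$ since $p>0$ and $\varphi_i$ grows at least linearly. With $g(s,t)=s/t$, \ref{B3} asks that $\langle\varphi_{i+1},\mu_{i+1}\rangle/\mathcal{G}(\mu_{i+1})\le\mathcal{F}(\varphi_i)$, and I would obtain this from two inequalities. \emph{(a)} For every $f\in\mathcal{C}_{\lin}$, a reverse Hölder inequality with conjugate exponents $r=(n+p)/(n+p+1)\in(0,1)$ and $r'=-(n+p)$, applied to $\langle f,\mu_{i+1}\rangle=\int_{\mathbb{R}^n}f\,d\mu_{i+1}$, gives $\langle f,\mu_{i+1}\rangle\ge c_i\,\mathcal{F}(f)$; a direct computation (this is where the homogeneity of $h$ enters) identifies the sharp constant as $c_i=\langle\varphi_i,\mu_{i+1}\rangle/\mathcal{F}(\varphi_i)$, so the infimum defining $\mathcal{G}(\mu_{i+1})$ is in fact attained at $f=\varphi_i$ and $\mathcal{G}(\mu_{i+1})\ge\langle\varphi_i,\mu_{i+1}\rangle/\mathcal{F}(\varphi_i)$. \emph{(b)} Since $\nabla\varphi_{i+1}$ is the optimal transport map sending $\mu_{i+1}$ to $\lambda|_A/\lambda(A)$, evaluating Fenchel--Young at $y=\nabla\varphi_{i+1}(x)$, integrating $d\mu_{i+1}$, and pushing forward the $\psi^*$-term gives, for every convex $\psi$ with $\psi^*$ finite on $A$,
\[
\int_{\mathbb{R}^n}\psi\,d\mu_{i+1}+\frac{1}{\lambda(A)}\int_A\psi^*\,d\lambda\;\ge\;\int_{\mathbb{R}^n}\varphi_{i+1}\,d\mu_{i+1}+\frac{1}{\lambda(A)}\int_A\varphi_{i+1}^*\,d\lambda,
\]
with equality at $\psi=\varphi_{i+1}$; taking $\psi=\varphi_i$ and cancelling the two $\varphi^*$-integrals via the shared normalization $\int_A\varphi_i^*\,d\lambda=\int_A\varphi_{i+1}^*\,d\lambda=-\tau$ yields $\langle\varphi_{i+1},\mu_{i+1}\rangle\le\langle\varphi_i,\mu_{i+1}\rangle$. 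Combining \emph{(a)} and \emph{(b)}, $\langle\varphi_{i+1},\mu_{i+1}\rangle/\mathcal{G}(\mu_{i+1})\le\mathcal{F}(\varphi_i)\,\langle\varphi_{i+1},\mu_{i+1}\rangle/\langle\varphi_i,\mu_{i+1}\rangle\le\mathcal{F}(\varphi_i)$, which is~\ref{B3}.

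I expect the genuine work to be the integrability and domain bookkeeping underlying both parts: one must check that $\|\varphi_i^{-(n+p+1)}\|_1$ and $\|\varphi_i^{-(n+p)}\|_1$ are finite, that $\varphi_i,\varphi_{i+1},\varphi_i^*$ are integrable against the measures that appear, and that $\mathcal{F}(\varphi_i)$ and the pairings are finite, all of which rest on the linear growth of $\varphi_i$ (forced by $\nabla\varphi_i(\mathbb{R}^n)=A\ni 0$), the finiteness of $\varphi_i^*$ on $A$ (forced again by $\nabla\varphi_i(\mathbb{R}^n)=A$), the lower bound $\varphi_i\ge\tau/\lambda(A)>0$, and $p>0$ for the first-moment and $L^{p}$ bounds. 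With Hypotheses~\ref{B1}--\ref{B3} and~\ref{E} in place, Theorem~\ref{introthm} applies and produces a smooth convex solution $\phi$ of~(\ref{ourpde}) together with shifts $a_i$ such that $\widetilde\phi_i(x)=\phi_i(x+a_i)$ converges to $\phi$ on compact sets in every $C^{k,\alpha}$ norm, which is the assertion.
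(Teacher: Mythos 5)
Your verification of Hypotheses \ref{B1}, \ref{B2} and, most importantly, \ref{B3} is correct and is essentially the paper's own argument. Your step \emph{(a)} is the reverse H\"older inequality that underlies Proposition \ref{Gequality}: writing $\mu_{i+1}=\MA(\varphi_{i+1})/\lambda(A)$, you only need the one-sided bound $\mathcal{G}(\mu_{i+1})\geq \big\|\det(\nabla^2\varphi_{i+1})/\lambda(A)\big\|_{\frac{n+p}{n+p+1}}$ together with the observation that equality holds at $f\propto\varphi_i$ (the paper proves the full equality, using the truncations $f_k=\min\{h^{-1/(s+1)},k(1+|x|)\}$, because it reuses it later for affine surface area, but for \ref{B3} the inequality suffices, exactly as you say). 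Your step \emph{(b)} is precisely Lemma \ref{compare}, proved the same way: Fenchel--Young, the equality case along $y=\nabla\varphi_{i+1}(x)$, the pushforward $(\nabla\varphi_{i+1})_{\#}\,\mu_{i+1}=\lambda|_A/\lambda(A)$, and cancellation of the common normalization $\int_A\varphi_i^*\,d\lambda=\int_A\varphi_{i+1}^*\,d\lambda=-\tau$. Combining \emph{(a)} and \emph{(b)} with the homogeneity of $t^{-(n+p+1)}$ reproduces Lemma \ref{rev4}, and the reduction to Theorem \ref{introthm} is the intended one.

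The one place where you are too quick is Hypothesis \ref{E}. Klartag's theorem gives uniqueness of smooth solutions of the unnormalized problem (\ref{AHE}) up to translations \emph{and dilations} $\varphi\mapsto a\,\varphi(\cdot/a)$, not up to translations alone, and the dilations genuinely act on the first two lines of (\ref{ourpde}) because the right-hand side is normalized by $\|h\circ\varphi\|_1$, which makes the PDE scale invariant. To obtain Hypothesis \ref{E} you must eliminate the dilation using the third line: since $\varphi_a^*=a\,\varphi^*$ and the barycenter of $A$ is the origin (so translations do not change $\int_A\varphi^*\,d\lambda$), two normalized solutions related by a translation and a dilation satisfy $a\int_A\varphi^*\,d\lambda=\int_A\varphi^*\,d\lambda=-\tau$, and $\tau>0$ forces $a=1$. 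This is exactly the role of $\tau>0$ in the statement, and it is where the paper's Lemma \ref{negativeintegral} (showing $\int_A\varphi^*\,d\lambda<0$ for every solution, so that prescribing $-\tau<0$ is the consistent normalization and pins a unique dilation) enters. The fix is only a few lines, but as written your appeal to \cite{Klartag} does not by itself yield uniqueness up to translations for (\ref{ourpde}).
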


Theorem \ref{MAiteration1} recovers the existence of smooth solutions to equation (\ref{intropde}) with $h(t)=t^{-(n+p+1)}$ for $p>0$ which was first proven by Klartag \cite{Klartag}.  The proof of Theorem \ref{MAiteration1} relies upon the uniqueness of smooth solutions to equation (\ref{intropde}) with $h(t)=t^{-(n+p+1)}$ for $p>0$.  We cite Klartag's proof of the uniqueness of weak solutions, but uniqueness of smooth solutions could be proven more easily.

In the rest of Section \ref{AIsection} we consider the case $p=1$ and interpret Theorem \ref{MAiteration1} in terms of affine differential geometry.

Affine differential geometry is concerned with immersions $f:M^n \hookrightarrow \mathbb{R}^{n+1}$ and their properties which are \textit{equiaffine}, meaning they are invariant under volume preserving affine transformations of the form $x\mapsto A\,x + v$ for $A\in Sl_{n+1}\mathbb{R}$ and $v\in\mathbb{R}^{n+1}$.  In Subsection \ref{affineimmersions} we define the affine normal $\xi:M \to T\,\mathbb{R}^{n+1} \big|_{f(M)}$ to be a unique equiaffine transversal vector field.  

An important object of study in affine geometry are \textit{affine spheres} which are immersions $f$ satisfying
\[
f(x) + c\, \xi = x_0,
\]
for a constant $c\in\mathbb{R}$ and $x_0 \in \mathbb{R}^n$. On an affine sphere, the affine normals, once scaled by a constant, all meet at a point $x_0$ called the \textit{center} of the affine sphere.  When $c>0$ they are called \textit{elliptic affine spheres}.  When the manifold $M$ is compact, the only affine spheres are ellipsoids as proven by Blaschke \cite{Blaschke} for $n=2$ and Deicke \cite{Deicke} in higher dimensions.  By further work of Calabi \cite{Calabi} and Cheng--Yau \cite{CY}, completeness of an elliptic affine sphere implies compactness, and so the only complete examples of elliptic affine spheres are ellipsoids.  

To allow for a larger class of elliptic affine spheres we consider incomplete immersions.  Specifically we study immersions which are the graph of the Legendre transform of convex functions.  In Subsection \ref{leggraphs} we introduce the \textit{Legendre graph immersion over $A$} to be the immersion $f_\phi: \mathbb{R}^n \hookrightarrow \mathbb{R}^{n+1}$ given by
\[
f_\phi(x) = \big(\nabla \phi(x), \langle x,\nabla \phi(x) \rangle  -\phi(x) \big) = \big(\nabla \phi(x) , \,\phi^*(\nabla \phi(x))\big),
\]
for $\phi:\mathbb{R}^n \to \mathbb{R}$, a smooth, strictly convex function such that  $\nabla \phi(\mathbb{R}^n)=A$, a bounded convex set.  We introduce the following sequence of prescribed affine normal problems for Legendre graph immersions over $A$.

\begin{defn}
An \textit{affine iteration over A} is a sequence $\big\{f_i = \big(\nabla \phi_i(x),\,\phi_i^*(\nabla \phi_i(x))\big)\big\}_{i\in\mathbb{N}}$ of Legendre graph immersions over $A$ satisfying 
\[
\begin{cases}
\xi_{i+1}(x) = -c_{i+1}\,f_{i}(x)\\
\int_A \phi_i^*\,d\lambda=-\tau
\end{cases}
\]
for positive real numbers $\tau$ and $\{c_i\}_{i\in\mathbb{N}}$, where $\xi_{i+1}$ is the affine normal of $f_{i+1}$.
\end{defn}

In Proposition \ref{prescribednormal} we show results of McCann \cite{McCann} and Brenier \cite{Brenier2} on optimal transport imply there is a unique affine iteration over $A$ for every $f_0$ which is an initial Legendre graph immersion over $A$.

In Section \ref{affineiterationsection} we show a normalized Monge--Amp\`{e}re iteration $\{\phi_i\}_{i\in\mathbb{N}}$ with $h(t) = t^{-(n+2)}$ corresponds to an affine iteration $f_i$ over $A$. We use Theorem \ref{MAiteration1} to prove the following theorem on the convergence of the affine iteration.

\begin{theorem}\label{affineiterationconverges}
Assume $A\subset \mathbb{R}^n$ satisfies (\ref{Aconditions}).  If $\{f_i\}_{i\in\mathbb{N}}$ is an affine iteration over $A$, then there exist matrices $M_i \in Sl_{n+1}\mathbb{R}$ such that $M_i\cdot f_i(\mathbb{R}^n)$ converge smoothly to an elliptic affine sphere with center at the origin.
\end{theorem}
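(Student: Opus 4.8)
The plan is to reduce the statement to Theorem \ref{MAiteration1} with $p = 1$, so that $h(t) = t^{-(n+2)}$, by translating the geometric normalization condition on the affine iteration into the analytic normalization appearing in equation (\ref{MAiteration}). First I would recall from Section \ref{affineiterationsection} the precise dictionary between a Legendre graph immersion $f_\phi$ over $A$ and its convex potential $\phi$: the affine normal $\xi$ of $f_\phi$ is computed in terms of $\nabla^2\phi$ and $\phi$ (up to equiaffine factors), and the prescribed affine normal equation $\xi_{i+1}(x) = -c_{i+1}\,f_i(x)$ unwinds—after identifying $f_i(x) = (\nabla\phi_i(x),\phi_i^*(\nabla\phi_i(x)))$—to a Monge--Amp\`ere equation of the form $\det(\nabla^2\phi_{i+1}) = \text{const}\cdot(\phi_i)^{-(n+2)}\circ(\text{appropriate argument})$, together with the second boundary condition $\nabla\phi_{i+1}(\mathbb{R}^n) = A$. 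The constant $c_{i+1}$ is exactly what one divides out to pass from equation (\ref{introiteration}) to the normalized form, i.e.\ it is absorbed into $\|h\circ\phi_i\|_1$. The auxiliary condition $\int_A\phi_i^*\,d\lambda = -\tau$ in the definition of the affine iteration is \emph{literally} the third line of equation (\ref{MAiteration}). Hence a given affine iteration over $A$ is, after this change of variables, a normalized Monge--Amp\`ere iteration in the sense of (\ref{MAiteration}) with $h(t) = t^{-(n+2)}$ and $\tau > 0$.

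Next I would invoke Theorem \ref{MAiteration1}: since $A$ satisfies (\ref{Aconditions}), there exist translations $a_i \in \mathbb{R}^n$ such that $\widetilde\phi_i(x) = \phi_i(x + a_i)$ converges on compact sets in every $C^{k,\alpha}$ norm to a smooth strictly convex solution $\phi$ of equation (\ref{ourpde}). I then need to turn these translations of the base $\mathbb{R}^n$, together with the additive normalization built into $\phi^*$, into \emph{equiaffine} maps $M_i \in Sl_{n+1}\mathbb{R}$ acting on the images $f_i(\mathbb{R}^n) \subset \mathbb{R}^{n+1}$. The key point is that the operations that change $\phi_i$ without changing the underlying geometry of $f_{\phi_i}$ form a group of equiaffine transformations of $\mathbb{R}^{n+1}$: a translation $x\mapsto x + a$ of the domain corresponds to composing $f_\phi$ with the linear shear $(\,y,\,s\,)\mapsto (\,y,\,s + \langle a, y\rangle\,)$ (plus a translation coming from the constant term), which lies in $Sl_{n+1}\mathbb{R}$; adding a constant to $\phi$ corresponds to a vertical translation of the graph. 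Composing these, I obtain $M_i \in Sl_{n+1}\mathbb{R}$ so that $M_i\cdot f_i(\mathbb{R}^n) = f_{\widetilde\phi_i}(\mathbb{R}^n)$ (up to a harmless translation, which I can also package into an affine—though not linear—equiaffine map, or absorb by recentering $A$). Since $\widetilde\phi_i \to \phi$ in $C^{k,\alpha}_{\mathrm{loc}}$ and $\phi$ is smooth and strictly convex, the immersions $f_{\widetilde\phi_i}$ converge smoothly on compact sets to $f_\phi$, and because affine normals depend continuously (indeed smoothly) on the second jet of the potential, the limit $f_\phi$ satisfies $\xi = -c\,f_\phi$ for some $c > 0$ obtained as the limit of the $c_i$ (positivity of $c$ following from $\tau > 0$ and the structure of equation (\ref{ourpde}) with $h(t) = t^{-(n+2)}$, which forces $\phi > \tau \geq 0$ where relevant). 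That is precisely the statement that $f_\phi(\mathbb{R}^n)$ is an elliptic affine sphere; the barycenter condition in (\ref{Aconditions}) together with the translation normalization places its center at the origin.

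The main obstacle I anticipate is bookkeeping the normalizations correctly: one must verify that the constant $c_{i+1}$ produced by the affine normal equation is \emph{exactly} the $L^1$-normalizing factor $\|h\circ\phi_i\|_1$ (up to a fixed factor depending only on $n$, $\lambda(A)$, and $\tau$), so that an affine iteration as defined is genuinely a \emph{normalized} Monge--Amp\`ere iteration and not merely an unnormalized one—otherwise Theorem \ref{MAiteration1} does not apply directly. A second, more technical point is ensuring the convergence of $c_i \to c$ and that the limiting relation $\xi = -c\,f_\phi$ really does hold \emph{pointwise everywhere} on $\mathbb{R}^n$, which requires that the local $C^{k,\alpha}$ convergence is uniform enough on every compact set—this is fine because affine spheres are determined by a local differential equation and $C^2_{\mathrm{loc}}$ convergence of potentials suffices. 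Finally, one should check that the assembled $M_i$ indeed lie in $Sl_{n+1}\mathbb{R}$ and not just in the affine group; the shear and vertical-translation generators are unipotent, hence of determinant one, so this is automatic once the decomposition of the normalization group is made explicit.
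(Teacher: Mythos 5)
Your overall strategy is the paper's: identify the affine iteration with the normalized Monge--Amp\`ere iteration for $h(t)=t^{-(n+2)}$ (this is exactly Proposition \ref{prescribednormal} together with Lemma \ref{Legaffinenormal}), invoke Theorem \ref{MAiteration1} with $p=1$, and realize the recentering $\widetilde\phi_i(x)=\phi_i(x+a_i)$ on the level of images as the unipotent shear $M_i=\begin{pmatrix} I & 0\\ a_i^T & 1\end{pmatrix}\in Sl_{n+1}\mathbb{R}$, using $\widetilde\phi_i^*(y)=\phi_i^*(y)-\langle a_i,y\rangle$. Incidentally, no extra vertical translation is needed: by Lemma \ref{tildeprops} the normalization $\int_A\widetilde\phi_i^*\,d\lambda=-\tau$ is preserved under recentering because the barycenter of $A$ is at the origin, so the shear alone suffices and the determinant-one issue you raise is automatic.

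The one step where your argument as written has a gap is the identification of the limit as an elliptic affine sphere. You propose to get $\xi=-c\,f_\phi$ by passing to the limit in the iteration relation $\xi_{i+1}=-c_{i+1}f_i$ with $c=\lim c_i$. But consecutive immersions are normalized by different matrices: applying $M_{i+1}$ to that relation produces $-c_{i+1}\,f_{\phi_i(\cdot+a_{i+1})}$ on the right-hand side, and since only $|a_{i+1}-a_i|\le C$ is known (Lemma \ref{aibound}), not $a_{i+1}-a_i\to 0$, this need not converge to $-c\,f_\phi$; moreover the convergence of $c_{i+1}=\lambda(A)^{-1/(n+2)}\,\|\phi_i\|_{-(n+2)}^{-1}$ is asserted rather than proved. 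The paper sidesteps all of this: since the limit $\phi$ solves (\ref{ourpde}) with $h(t)=t^{-(n+2)}$, one has $\psi=\det(\nabla^2\phi)^{-1/(n+2)}=\lambda(A)^{-1/(n+2)}\,\|\phi\|_{-(n+2)}^{-1}\,\phi$, so Lemma \ref{Legaffinenormal} (equivalently the shape-operator computation in Proposition \ref{affinesphereequation}) gives $\xi_\phi=-c\,f_\phi$ with $c>0$ directly, and the smooth convergence $M_i\,f_i(\mathbb{R}^n)\to f_\phi(\mathbb{R}^n)$ finishes the proof; the centering at the origin comes from this identity, not from the barycenter condition per se. Replacing your limiting argument by this direct verification on the limit potential closes the gap; the rest of your proposal matches the paper.
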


\section{Proof of Theorem \ref{introthm}}\label{thmpf}

This section is organized as follows.  In Subsection \ref{hypexplanation} we motivate Hypotheses \ref{hypB} and briefly explain their main role in the proof.  In Subsection \ref{outline} we outline the proof of Theorem \ref{introthm} and break it down into five steps.  In Sections 3.3\,--\,3.7 we prove each step of the outline.

\subsection{Explanation of Hypotheses \ref{hypB}}\label{hypexplanation}
\textit{Hypothesis \ref{B1}} :

If $\varphi$ solves equation (\ref{ourpde}), then $h\circ \varphi$ is in $L^1(\mathbb{R}^n)$.  Thus, it is natural to stipulate a decay condition on $h$ to guarantee $\|h\circ \varphi\|_1 < \infty$.  Assume $h$ is a positive, decreasing function such that
\[
h(t) \leq C\,t^{-(n+p+1)} \text{, for }p>1 \text{ when }t\gg1.
\]
If $\varphi$ is convex and $\nabla \varphi(\mathbb{R}^n)$ contains the origin in its interior, then $\varphi(x) \geq r\,|x|$ for some $r$ as $|x| \to \infty$. Since $h$ is decreasing, $h(\varphi(x)) \leq h(r\,|x|)$ for large $|x|$, and the asymptotic bound implies $\|h\circ \varphi\|_1 < \infty$. The bounds on $h$ also imply the bounds
\[
0<H(t) \leq c\,t^{-(n+p)} \text{ when }t\gg1,
\]
for $H(t) = \int_t^\infty h\,d\lambda$.  This implies $\mathcal{F}(f) = H^{-1}\big(\|H\circ f\|_1\big) < \infty$ for $f\in\mathcal{C}_{\lin}$.  The smoothness and positivity of $h$ are necessary to prove regularity of solutions to equation (\ref{MAiteration}).

\noindent \textit{Hypothesis \ref{E}} :

If the solutions to equation (\ref{ourpde}) were not unique up to translations, then a translated normalized Monge--Amp\`{e}re iteration $\widetilde{\varphi}_i(x) = \varphi_i(x+a_i)$, for $a_i$ such that $\varphi_i(a_i)=\inf \varphi_i$, could have two subsequences converging to different solutions of equation (\ref{ourpde}). 

\noindent \textit{Hypothesis \ref{B2} and \ref{B3}} :

The definition of $\mathcal{G}$ implies 
\[
\mathcal{G}(\mu) \leq g\big(\langle f,\mu\rangle ,\,\mathcal{F}(f)\big)
\]
for all $f$ in $\mathcal{C}_{\lin}$ and $\mu$ in $\mathcal{P}_1$.  When $g$ satisfies Hypothesis \ref{B2}, we can apply $g(\langle f,\mu\rangle, \cdot)$ to both sides of the equation to see
\begin{equation}\label{dualityhalf}
g\big(\langle f,\mu \rangle,\,\mathcal{G}(\mu)\big) \geq \mathcal{F}(f).
\end{equation}
If $\{\varphi_i\}$ is a sequence of convex functions solving the Monge--Amp\`{e}re iteration, then Hypothesis \ref{B3} and equation (\ref{dualityhalf}) imply
\begin{equation}\label{introdecreasingsequence}
\mathcal{F}(\varphi_i) \geq g\Big(\Big\langle \varphi_{i+1},\,\scaleobj{.9}{\dfrac{\MA(\varphi_{i+1})}{\lambda(A)}}\Big\rangle,\,\mathcal{G}\Big(\scaleobj{.9}{\dfrac{\MA(\varphi_{i+1})}{\lambda(A)}}\Big)\Big) \geq \mathcal{F}(\varphi_{i+1}).
\end{equation}
Thus $\big\{\mathcal{F}(\varphi_i)\big\}_{i\in\mathbb{N}}$ and $\Big\{ g\Big(\,\Big\langle \varphi_{i+1},\,\scaleobj{.9}{\dfrac{\MA(\varphi_{i+1})}{\lambda(A)}}\Big\rangle,\,\mathcal{G}\Big(\scaleobj{.9}{\dfrac{\MA(\varphi_{i+1})}{\lambda(A)}}\Big)\Big)\Big\}_{i\in\mathbb{N}}$ are decreasing sequences.  The idea of the proof of Theorem \ref{introthm} is to show continuity for these two decreasing functionals, so any limit $\varphi_i \to \varphi$ will achieve equality between them, and then we show that equality is only achieved for solutions of equation (\ref{ourpde}).

\noindent \textit{Conditions (\ref{Aconditions}) on $A$} :

If $\varphi$ is a solution of equation (\ref{ourpde}), then for $i=1,\ldots,n$
\[
\int_A y_i\,d\lambda = \int_{\mathbb{R}^n} \dfrac{\partial \varphi}{\partial x_i} \,\det(\nabla^2 \varphi) \,d\lambda = \lambda(A) \int_{\mathbb{R}^n} \dfrac{\partial \varphi}{\partial x_i} \, \dfrac{h \circ \varphi}{\|h\circ \varphi\|_1}\,d\lambda = \lambda(A)\,\|h\circ \varphi\|_1^{-1} \int_{\mathbb{R}^n} \dfrac{\partial}{\partial x_i} (H\circ \varphi)\,d\lambda,
\]
where $H$ is an antiderivative of $h$.  The decay condition on $h$ implies, after an integration by parts, that the last integral is $0$, so the barycenter of $A$ must lie at the origin.   This shows the necessity of 
\[
\int_A y_i\,d\lambda=0\, \text{ for }i=1,\ldots,n.
\]
This condition is used in the proof of Lemma \ref{tau} which gives a lower bound for a convex function $f$ in terms of $\lambda(A)$ and $\tau$ when $f$ satisfies $\int_A f^*\,d\lambda = -\tau$.

\subsection{Proof outline}\label{outline}

First, we'll fix notation for the proof.  Assume $A \subset \mathbb{R}^n$ satisfies (\ref{Aconditions}).

\begin{center}
\begin{tabular}{c|c}
$\{\varphi_i\}$ & smooth, convex solutions to the normalized Monge--Amp\`{e}re iteration (\ref{MAiteration})\\[.5ex]
$\{a_i\}$ & points such that $\varphi_i(a_i) = \inf \varphi_i$\\[.5ex]
$\{\widetilde{\varphi}_i\}$ & the translated sequence $\widetilde{\varphi}_i(x) = \varphi_i(x+a_i)$ \\[.5ex]
\end{tabular}
\end{center}

\noindent \textit{Step 1: Uniform growth estimate}

We prove the translated sequence satisfies 
\[
\dfrac{\tau}{\lambda(A)} + r\,|x| \leq \widetilde{\varphi}_i(x) \leq C+R\,|x|,
\]
where $C$, $r$, and $R$ depend on $A$, $\tau$, $h$, and the initial function $\varphi_0$ starting the iteration. The lower bound only depends on $\int_A \varphi_i^*\,d\lambda = -\tau$.   The constant $C$ in the upper bound depends on $\{\varphi_i\}$ solving the Monge--Amp\`{e}re iteration, but the $R\,|x|$ term only comes from $\nabla \varphi_i(\mathbb{R}^n)=A \subset B_R$ for some constant $R$ depending only on $A$. 

\noindent \textit{Step 2: Subsequence convergence and subgradient limits}

The pointwise boundedness from step 1 implies any subsequence $\{\widetilde{\varphi}_{i'}\}$ will have a further subsequence 
\[
\widetilde{\varphi}_{i''} \to \varphi
\]
converging uniformly on compact subsets of $\mathbb{R}^n$ to a convex function $\varphi$ by compactness properties of convex functions.  We use $\int_A \widetilde{\varphi}_i^*\,d\lambda = -\tau$ to show $\varphi$ also has subgradient image $\partial \varphi(\mathbb{R}^n)=A$ up to a set of measure 0.

\noindent \textit{Step 3: Convergence of the Monge--Amp\`{e}re measures}

We prove that if a subsequence $\varphi_{i'}$ converges to $\varphi$ uniformly on compact sets, then their Monge--Amp\`{e}re measures converge in Wasserstein distance,
\[
 \MA(\widetilde{\varphi}_{i'}) \rightarrow_1 \MA(\varphi),
\]
which means $\MA(\widetilde{\varphi}_{i'})$ converges to $\MA(\varphi)$ weakly, and $\{\MA(\widetilde{\varphi}_i)\}_{i\in\mathbb{N}}$ satisfy the tightness condition
\[
\lim_{R\to\infty} \limsup_{i'\to\infty} \,\bigg\{\int_{|x|\geq R} |x|\,\MA(\varphi_{i'})\bigg\}=0.
\]
The weak convergence is a standard consequence of the uniform convergence on compact sets. The proof of the tightness condition relies on a uniform bound of the form $| a_{i+1}-a_i|\leq C$, which is essentially a bound on the rate the sequence $\{\varphi_i\}$ can drift horizontally.

\noindent \textit{Step 4: $\varphi$ minimizes $g\Big(\Big\langle\, \cdot\,, \,\scaleobj{.9}{\dfrac{\MA(\varphi)}{\lambda(A)}} \Big\rangle, \,\mathcal{F}(\cdot)\Big)$.}

Recall the definition of $\mathcal{G}$:
\[
\mathcal{G}(\mu) = \inf \big\{ \,g\big(\langle f,\mu\rangle,\,\mathcal{F}(f)\big) \mid f\in \mathcal{C}_{\lin}\, \big\}.
\]
We prove that $\varphi$ is the minimizer in the definition of $\mathcal{G}\Big(\scaleobj{0.9}{\dfrac{\MA(\varphi)}{\lambda(A)}}\Big)$. The proof relies upon the convergence $\MA(\widetilde{\varphi}_{i'}) \rightarrow_1 \MA(\varphi)$ and the upper semicontinuity of $\mathcal{G}$ with respect to this convergence.

\noindent \textit{Step 5: $\{\widetilde{\varphi}_i\}$ converges smoothly to $\varphi$, which solves the Monge--Amp\`{e}re equation (\ref{ourpde})}.

First we show $\varphi$, the uniform limit of a convergent subsequence $\{\widetilde{\varphi}_{i'}\}$, is a solution to equation (\ref{ourpde}). We take variations $\varphi_{\epsilon}$ and differentiate $g\Big(\Big\langle \varphi_\epsilon, \scaleobj{.9}{\dfrac{\MA(\varphi)}{\lambda(A)}}\Big\rangle,\,\mathcal{F}(\varphi_\epsilon)\Big)$ to show that equation (\ref{ourpde}) is the Euler-Lagrange equation for this functional, and $\varphi$ is a weak solution.  The smoothness of $\varphi$ results from the $C^{2,\alpha}$ estimates of Cafarelli and elliptic regularity.  The subgradient image $\partial \varphi(\mathbb{R}^n)=A$ up to a set of measure $0$ is then upgraded to $\nabla \varphi(\mathbb{R}^n)=A$.  We also prove $\int_A \varphi^*\,d\lambda=-\tau$.

To prove convergence of the whole sequence $\{\widetilde{\varphi}_i\}$, we show the limit $\varphi$ of every convergent subsequence $\widetilde{\varphi}_{i'}$ is unique.  $\varphi$ is a smooth solutions to equation (\ref{ourpde}), so it is unique up to translations by assumption.  The condition $\widetilde{\varphi}_i(0) = \inf \widetilde{\varphi}_i$ implies $\varphi(0) = \inf \varphi$, so it follows that $\varphi$ is unique.  Since every subsequence has a further subsequence which converges and the limits are the same, it follows that $\widetilde{\varphi}_i$ converges to $\varphi$. The smooth convergence is a consequence of Caffarelli's $C^{2,\alpha}$ estimates and elliptic regularity.

\subsection{Uniform growth estimate}

In this section we will prove step 1 of Subsection \ref{outline}:

\begin{proposition}\label{uniformbounds}
Assume $A$ satisfies (\ref{Aconditions}) and the Monge--Amp\`{e}re iteration (\ref{MAiteration}) satisfies Hypotheses \ref{hypB}.  Then 
\begin{equation}\label{propvarphibounds}
\dfrac{\tau}{\lambda(A)} + r\,|x| \leq \widetilde{\varphi}_i(x) \leq C + R\,|x|.
\end{equation}
\end{proposition}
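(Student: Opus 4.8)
The plan is to prove the two bounds separately, using quite different mechanisms for each. For the \emph{lower bound}, the key observation is that it depends only on the normalization $\int_A \widetilde\varphi_i^*\,d\lambda = -\tau$ together with $\partial\widetilde\varphi_i(\mathbb{R}^n) = A$ and the barycenter condition on $A$; it has nothing to do with the iteration. I would invoke (or prove) the auxiliary lemma mentioned in the text — Lemma \ref{tau} — which gives a lower bound $f(x)\geq \tau/\lambda(A) + r|x|$ for any convex $f$ with $\partial f(\mathbb{R}^n)=A$ and $\int_A f^*\,d\lambda=-\tau$. The mechanism behind that lemma: by convex duality, $f(x) = \sup_{y\in A}(\langle x,y\rangle - f^*(y))$, and averaging over $A$ gives $f(x)\geq \frac{1}{\lambda(A)}\int_A(\langle x,y\rangle - f^*(y))\,d\lambda(y) = \langle x, \text{bar}(A)\rangle + \tau/\lambda(A) = \tau/\lambda(A)$ since the barycenter is the origin. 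This gives the constant part $\tau/\lambda(A)$; the linear growth term $r|x|$ comes from the fact that $A$ is open, so it contains a ball $B_{r}(0)$ for some $r>0$, hence $f(x)\geq \sup_{y\in B_r}(\langle x,y\rangle - f^*(y)) \geq r|x| - \sup_{B_r} f^*$, and one then combines the two estimates (e.g.\ on the region where $|x|$ is large the linear term dominates, while the normalization controls the additive constant; a clean way is to take a convex combination of the averaging inequality over $A$ and the one over a small ball).

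For the \emph{upper bound} $\widetilde\varphi_i(x)\leq C + R|x|$, the $R|x|$ part is immediate: since $\nabla\widetilde\varphi_i(\mathbb{R}^n)=A$ and $A\subset B_R$ for $R$ depending only on $A$, the function $\widetilde\varphi_i$ is $R$-Lipschitz, so $\widetilde\varphi_i(x)\leq \widetilde\varphi_i(0) + R|x| = \inf\widetilde\varphi_i + R|x|$. Thus the whole problem reduces to bounding $\inf\varphi_i = \inf\widetilde\varphi_i$ uniformly in $i$ from above. Here is where the structure of the iteration enters. From the already-established (Hypotheses \ref{hypB}) fact that $\{\mathcal{F}(\varphi_i)\}$ is a decreasing sequence, we have $\mathcal{F}(\varphi_i)\leq \mathcal{F}(\varphi_0)$ for all $i$. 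Recall $\mathcal{F}(\varphi_i) = H^{-1}(\|H\circ\varphi_i\|_1)$ where $H(t)=\int_t^\infty h$, and $H$ is positive and strictly decreasing (by \ref{B1}), so $H^{-1}$ is decreasing; hence $\mathcal{F}(\varphi_i)\leq \mathcal{F}(\varphi_0)$ is equivalent to a \emph{lower} bound $\|H\circ\varphi_i\|_1 \geq H(\mathcal{F}(\varphi_0)) =: \delta > 0$. Now if $\inf\varphi_i$ were very large, then $\varphi_i(x)\geq \inf\varphi_i$ everywhere combined with the Lipschitz bound $\varphi_i(x)\leq \inf\varphi_i + R|x|$ would force $H\circ\varphi_i$ to be small on all of $\mathbb{R}^n$: indeed $H(\varphi_i(x)) \leq H(\inf\varphi_i)$, and more quantitatively, integrating the decay estimate on $H$ over $\mathbb{R}^n$ against the lower bound $\varphi_i(x)\geq \inf\varphi_i + r|x|$ (the lower bound from the first part!) shows $\|H\circ\varphi_i\|_1 \leq \int_{\mathbb{R}^n} H(\tau/\lambda(A) + r|x|)\,d\lambda(x)$, which is a fixed finite number independent of $i$, but does \emph{not} by itself go to zero. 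To get the needed contradiction I would instead use both sides: $\|H\circ\varphi_i\|_1 \leq \int_{\mathbb{R}^n} H(\max(\inf\varphi_i, \tau/\lambda(A)) + r|x| )\,d\lambda(x)$ does tend to $0$ as $\inf\varphi_i\to\infty$ by dominated convergence (using the integrable majorant from \ref{B1}), contradicting $\|H\circ\varphi_i\|_1\geq\delta$. Hence $\inf\varphi_i\leq C$ for a constant $C$ depending on $A,\tau,h$, and $\mathcal{F}(\varphi_0)$, and the upper bound follows.

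The main obstacle I anticipate is making the last argument fully rigorous — specifically, verifying that $\varphi_i$ really does satisfy the \emph{lower} bound $\varphi_i(x)\geq \inf\varphi_i + r|x|$ uniformly (this requires that the ball $B_r(0)\subset A$ is used at the point $a_i$ where the minimum is attained, i.e.\ that $0\in\partial\varphi_i(a_i)$ so the Legendre/Lipschitz-from-below estimate is centered correctly), and that the convergence $\|H\circ\varphi_i\|_1\to 0$ as $\inf\varphi_i\to\infty$ is genuinely uniform in the shape of $\varphi_i$ and not just pointwise — for this the uniform $R$-Lipschitz bound and the decay hypothesis \ref{B1} on $h$ (hence on $H$) are exactly what is needed to produce an $i$-independent integrable dominating function. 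A secondary technical point is tracking precisely which constants ($r$, $R$, $C$) depend on which data, since the statement of the proposition is careful about this; I would organize the write-up so that the lower-bound constants $r$ and the $\tau/\lambda(A)$ term come purely from $A$ and $\tau$, the $R$ in the upper bound comes purely from $A$, and only the additive $C$ is allowed to depend on $\varphi_0$ through $\mathcal{F}(\varphi_0)$.
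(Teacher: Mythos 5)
Your overall architecture coincides with the paper's (lower bound from the normalization and the barycenter condition; the slope $R$ from $\nabla\widetilde{\varphi}_i(\mathbb{R}^n)=A\subset B_R$; reduction to a uniform bound on $\widetilde{\varphi}_i(0)$; monotonicity of $\mathcal{F}$ from Hypotheses \ref{B2}--\ref{B3} giving $\|H\circ\widetilde{\varphi}_i\|_1\geq H(\mathcal{F}(\varphi_0))>0$; contradiction via the decay of $H$). But the lower bound, as you sketch it, has a genuine gap. Lemma \ref{tau} gives only the constant bound $\widetilde{\varphi}_i\geq\tau/\lambda(A)$, not the linear term you attribute to it, and your devices for producing $r|x|$ do not close: averaging $\widetilde{\varphi}_i(x)\geq\langle x,y\rangle-\widetilde{\varphi}_i^*(y)$ over a ball $B_r\subset A$ centered at the origin yields no linear term (the barycenter of $B_r$ is $0$), while the bound $\widetilde{\varphi}_i(x)\geq r|x|-\sup_{B_r}\widetilde{\varphi}_i^*$ requires a uniform upper bound on $\sup_{B_r}\widetilde{\varphi}_i^*$, which the normalization $\int_A\widetilde{\varphi}_i^*\,d\lambda=-\tau$ alone cannot supply (a steep affine $\widetilde{\varphi}_i^*$ has the right average and arbitrarily large supremum on $B_r$), and which you cannot trade for $\inf\widetilde{\varphi}_i$ without circularity, since bounding $\inf\widetilde{\varphi}_i$ is exactly the purpose of the step that consumes this lower bound. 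The missing ingredient is the convex-analysis lemma of Klartag (Lemma \ref{Klartaglemma}), applied to $\widetilde{\varphi}_i^*+\tau/\lambda(A)$ and using that $\widetilde{\varphi}_i^*$ attains its minimum over $A$ at the origin (Lemma \ref{tildeprops}, i.e.\ the translation by $a_i$): it gives $\widetilde{\varphi}_i^*\leq-\tfrac12\big(\widetilde{\varphi}_i(0)+\tau/\lambda(A)\big)$ on a ball $B_r$ with $r=r(A)$, hence by Legendre duality $\widetilde{\varphi}_i(x)\geq\tfrac12\big(\widetilde{\varphi}_i(0)+\tau/\lambda(A)\big)+r|x|$ (Lemma \ref{lowerbound}), which is the bound the rest of the proof runs on.

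There is a second, smaller problem in your upper-bound step: the inequality you integrate, $\widetilde{\varphi}_i(x)\geq\max\big(\inf\widetilde{\varphi}_i,\tau/\lambda(A)\big)+r|x|=\inf\widetilde{\varphi}_i+r|x|$, is false — a smooth convex function grows only quadratically near its minimum, so no cone with apex value $\inf\widetilde{\varphi}_i$ and fixed slope $r>0$ can lie below $\widetilde{\varphi}_i$, and re-centering at $a_i$ (your proposed fix) does not change this. The repair is easy and keeps your strategy intact: either use the two valid bounds separately, i.e.\ $H(\widetilde{\varphi}_i(x))\leq\min\big\{H(\widetilde{\varphi}_i(0)),\,H(\tau/\lambda(A)+r|x|)\big\}$, and apply dominated convergence as $\widetilde{\varphi}_i(0)\to\infty$ with the integrable majorant $H(\tau/\lambda(A)+r|x|)$ furnished by \ref{B1}; or use the half-minimum bound from Lemma \ref{lowerbound}, which is exactly how the paper's Lemma \ref{upperbound} derives the contradiction with $\|H\circ\widetilde{\varphi}_i\|_1\geq H(\mathcal{F}(\varphi_0))$. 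With those two ingredients supplied, your proof becomes the paper's proof.
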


Before proving Proposition \ref{uniformbounds} we'll prove a lemma which shows the bounds (\ref{propvarphibounds}) imply bounds on $\|h \circ \widetilde{\varphi}_i\|_1$ and $\|H\circ \widetilde{\varphi}_i\|_1$ which will be important for subsequent steps of the proof of Theorem \ref{introthm}.

\begin{lemma}\label{Hfbound}
Assume $h$ satisfy Hypothesis \ref{B1}, and let $H$ be defined by equation (\ref{Fdef}).  Let $f:\mathbb{R}^n \to (a,\infty)$ be a convex function.  If $f(x) \geq a + r\,|x|$ for $r>0$, then there exists a constant $c$ depending on $a$, $h$, and $r$ such that
\begin{equation}\label{hfupper}
\|h\circ f\|_1 \leq c \hspace{5mm}\text{and}\hspace{5mm} \|H \circ f\|_1 \leq c.
\end{equation}
If $f(x) \leq C + R\,|x|$ for $R>0$, then there exists a constant $c$ depending on $C$, $h$, and $R$ such that
\begin{equation}\label{hflower}
\|h\circ f\|_1 \geq c>0 \hspace{5mm}\text{and}\hspace{5mm} \|H \circ f\|_1 \geq c>0.
\end{equation}

\begin{proof}
By Hypothesis \ref{B1}, $h$ is positive and decreasing, and there exists $\rho >0$ such that $h(t) \leq C\,t^{-(n+p+1)}$ when $t\geq \rho$.  This implies $H(t) = \int_t^\infty h\,d\lambda$ is positive and decreasing, and $H(t) \leq C'\,t^{-(n+p)}$ when $t\geq \rho$. 

Firstly, assume $f(x) \geq a + r\,|x|$.  Since $h$ and $H$ are decreasing, it follows that
\[
h\circ f(x) \leq h\big(a + r\,|x|\big) \hspace{5mm}\text{and}\hspace{5mm} H\circ f(x) \leq H\big(a + r\,|x|\big).
\]
Thus we can estimate
\begin{align*}
\|h\circ f\|_1 &\leq  n\,\omega_n \int_0^\infty h(a + r\,s)\,s^{n-1}\,ds \\
&\leq \omega_n \,\Big(\dfrac{\rho - a}{r}\Big)^n\,h(\tau) + n\,\omega_n \int_{(\rho-a)/r}^\infty C\,(a + r\,s)^{-(n+p+1)}\,s^{n-1}\,ds.
\end{align*}
The last integral converges because the integrand is less than $C'\,s^{-(2+p)}$ for large $s$ and $p>0$.  This provides the upper bound for $\|h\circ f\|_1$ depending only on $h$, $a$, and $r$.  The bound for $\|H\circ f\|_1$ is identical, except the last integrand will be less than $C' \,s^{(n+p)}$ for large $s$.  Thus we can see that $p>0$ is optimal for the convergence of $\|H \circ f\|_1$.  

Secondly, assume $f(x) \leq C + R\,|x|$.  Since $h$ and $H$ are decreasing, it follows that
\[
h\circ f(x) \geq h\big(C + R\,|x|\big) \hspace{3mm}\text{and}\hspace{3mm} H\circ f(x) \geq H(C+R\,|x|).
\]
Thus we can estimate
\begin{align*}
    \|h\circ f\|_1 &\geq n\,\omega_n \int_0^\infty h(C+R\,s)\,s^{n+1}\,ds\\
    &\geq  \omega_n \,\Big(\dfrac{\rho - C}{R}\Big)^n\,h\Big(\dfrac{\rho-C}{R}\Big) >0.
\end{align*}
The lower bound for $\|H\circ f\|_1$ is completely analagous.
\end{proof}

\end{lemma}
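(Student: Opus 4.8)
The plan is to exploit the monotonicity of $h$, and of its tail integral $H$, to reduce both estimates to elementary one-dimensional radial integrals, where the power-law decay from Hypothesis \ref{B1} does all the work; notice that the convexity of $f$ is in fact never used, only the two pointwise sandwich bounds on $f$ (convexity is what produces such bounds in the application, Proposition \ref{uniformbounds}). First I would record the consequences of \ref{B1}: since $h$ is positive, decreasing, and satisfies $h(t) \le C\,t^{-(n+p+1)}$ for $t \ge \rho$, the tail $H(t) = \int_t^\infty h\,d\lambda$ is positive, decreasing, and integrating the bound gives $H(t) \le \tfrac{C}{n+p}\,t^{-(n+p)}$ for $t \ge \rho$. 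Also $h(a) < \infty$ since $h$ is continuous near $a$, and $H(a) < \infty$ by the decay bound.

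For the upper bounds, assume $f(x) \ge a + r|x|$. Monotonicity gives $h(f(x)) \le h(a + r|x|)$ and $H(f(x)) \le H(a + r|x|)$, so in polar coordinates
\[
\|h\circ f\|_1 \le n\omega_n \int_0^\infty h(a + rs)\,s^{n-1}\,ds,
\]
and likewise with $H$ replacing $h$. I would split the integral at $s_0 = \max\{0,(\rho - a)/r\}$. On $[0,s_0]$ the integrand is at most $h(a)\,s^{n-1}$, contributing at most $\omega_n\,h(a)\,s_0^{\,n}$. On $[s_0,\infty)$ one has $a + rs \ge \rho$, so $h(a+rs) \le C\,r^{-(n+p+1)}\,s^{-(n+p+1)}$, whence the integrand is bounded by a constant times $s^{-(p+2)}$, which is integrable since $p > 0$. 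The $H$-integral is handled identically, except the tail integrand is bounded by a constant times $s^{-(p+1)}$, integrable precisely because $p > 0$ — this is exactly the content of the remark that $p > 0$ is optimal. Adding the two pieces produces a finite constant $c$ depending only on $a$, $r$, and $h$.

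For the lower bounds, assume $f(x) \le C + R|x|$. Then $h(f(x)) \ge h(C + R|x|) > 0$ and $H(f(x)) \ge H(C + R|x|) > 0$, and discarding the integral outside a fixed ball $B_M$ (any $M > 0$ works) while using that $h$ is decreasing on $[0,M]$,
\[
\|h\circ f\|_1 \ge \int_{B_M} h(C + R|x|)\,d\lambda \ge \lambda(B_M)\,h(C + RM) = \omega_n M^n\,h(C+RM) > 0,
\]
a positive constant depending only on $C$, $R$, and $h$; the same computation with $H$ in place of $h$ yields the lower bound for $\|H\circ f\|_1$.

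I expect no real obstacle: the whole argument is a routine radial comparison. The only points requiring care are the exponent bookkeeping — checking that the critical integrability is $s^{-(p+1)} \in L^1([1,\infty))$, i.e. exactly $p > 0$, which is also the source of the sharpness claim — and the harmless case distinction between $a \ge \rho$, where no splitting is needed, and $a < \rho$.
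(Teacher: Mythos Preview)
Your proposal is correct and follows essentially the same approach as the paper: monotonicity of $h$ and $H$ reduces both bounds to radial integrals, which are split at the threshold where the power-law decay kicks in, with the tail exponents $s^{-(p+2)}$ and $s^{-(p+1)}$ giving integrability exactly when $p>0$. Your version is in fact slightly tidier --- you handle the case $a\ge\rho$ explicitly via $s_0=\max\{0,(\rho-a)/r\}$ and correctly use $h(a)$ on the inner piece, whereas the paper writes $h(\tau)$ (a typo) --- and your observation that convexity of $f$ is never used is accurate.
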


Now we return to the proof of Proposition \ref{uniformbounds}. We begin with two basic lemmas about the Legendre transform and translated sequences.

\begin{lemma}\label{tildeprops}
Assume $A$ satisfies (\ref{Aconditions}).  Let $f$ be a convex function such that $\int_A \varphi^*\,d\lambda = -\tau$. Assume $f(a_i) = \inf_{x\in\mathbb{R}^n} \{f(x)\}$, and let $\widetilde{f}(x) = f(x+a_i)$ be a translation of $f$.  Then
\[
\inf_{x\in\mathbb{R}^n} \big\{\widetilde{f}(x)\big\} = \widetilde{f}(0)\,,\hspace{5mm} \inf_{y\in A} \big\{\widetilde{f}^*(y)\big\}  = \widetilde{f}^*(0) = -\widetilde{f}(0)\,,\,\text{and}\hspace{5mm} \int_A \widetilde{f}^*\,d\lambda = -\tau.
\]
\end{lemma}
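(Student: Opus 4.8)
The plan is to reduce all three assertions to a single transformation rule for the Legendre transform under translation of the argument, namely
\[
\widetilde{f}^*(y) = f^*(y) - \langle a_i, y\rangle ,
\]
which is obtained from $\widetilde{f}^*(y) = \sup_x\{\langle x,y\rangle - f(x+a_i)\}$ by the substitution $u = x + a_i$. The only other ingredients are the elementary ``Fenchel'' observation that choosing $x=0$ in the supremum defining a Legendre transform gives $g^*(y) \geq -g(0)$ for every $y$, and the barycenter hypothesis in (\ref{Aconditions}).

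First I would dispose of the statement about the primal infimum: since $\widetilde{f}(x) = f(x+a_i)$, translation invariance of the infimum gives $\inf_x \widetilde{f}(x) = \inf_x f(x) = f(a_i) = \widetilde{f}(0)$. (This also records that $\widetilde{f}$ is bounded below, which matters for the next step: if $\inf f = -\infty$ then $f^* \equiv +\infty$, contradicting that $\int_A f^*\,d\lambda = -\tau$ is finite, so in fact $\inf f$ is finite.)

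Next, for the two identities involving $\widetilde{f}^*$ at the origin: evaluating $\widetilde{f}^*$ at $y=0$ gives $\widetilde{f}^*(0) = \sup_x\{-\widetilde{f}(x)\} = -\inf_x\widetilde{f}(x) = -\widetilde{f}(0)$ by the previous paragraph, and this is finite. On the other hand, taking $x=0$ in $\widetilde{f}^*(y) = \sup_x\{\langle x,y\rangle - \widetilde{f}(x)\}$ yields $\widetilde{f}^*(y) \geq -\widetilde{f}(0) = \widetilde{f}^*(0)$ for every $y\in\mathbb{R}^n$, so $0$ is a global minimizer of $\widetilde{f}^*$. Since (\ref{Aconditions}) forces the barycenter, hence the origin, to lie in the open convex set $A$, we conclude $\inf_{y\in A}\widetilde{f}^*(y) = \widetilde{f}^*(0) = -\widetilde{f}(0)$.

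Finally, for the normalization of the dual integral, I would integrate the transformation rule above over $A$ and use $\int_A y\,d\lambda = 0$:
\[
\int_A \widetilde{f}^*\,d\lambda = \int_A f^*\,d\lambda - \Big\langle a_i, \int_A y\,d\lambda\Big\rangle = \int_A f^*\,d\lambda = -\tau .
\]
There is no genuine obstacle in this lemma; it is entirely formal. The only points deserving a sentence of care are the finiteness of $\widetilde{f}^*(0)$ (equivalently, boundedness below of $\widetilde f$, which follows from finiteness of $\int_A f^*\,d\lambda$) and the fact that the global minimum of $\widetilde{f}^*$ is actually attained inside $A$ (which follows because the barycenter condition places the origin in $A$).
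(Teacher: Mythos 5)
Your proposal is correct and follows essentially the same route as the paper: the translation rule $\widetilde{f}^*(y)=f^*(y)-\langle a_i,y\rangle$ combined with the barycenter condition for the integral identity, and the choice $x=0$ in the supremum to show $\widetilde{f}^*(y)\geq -\widetilde{f}(0)=\widetilde{f}^*(0)$. The extra remarks you add (finiteness of $\inf f$ and the origin lying in $A$) are harmless refinements the paper leaves implicit.
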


\begin{proof}
By definition of $\widetilde{f}$,
\[
\widetilde{f}(0) = f(a_i) = \inf_{x\in\mathbb{R}^n} \{f(x)\} = \inf_{x\in\mathbb{R}^n}\big\{ \widetilde{f}(x)\big\}.
\]
By the definition of the Legendre transform,
\[  
\widetilde{f}^*(0) = \sup_{x\in\mathbb{R}^n} \big\{-\widetilde{f}(x)\big\} =- \widetilde{f}(0).
\]
In order to show $\widetilde{f}^*(0) = \inf \big\{\widetilde{f}^*\big\}$ we note
\[
\widetilde{f}^*(y) = \sup_{x\in\mathbb{R}^n} \big\{ \langle x,y\rangle - \widetilde{f}(x) \big\} \geq -\widetilde{f}(0)=\widetilde{f}^*(0).
\]

The Legendre transforms of $\widetilde{f}$  and $f$ are related by
\[
\widetilde{f}^*(y) = \sup_{x\in\mathbb{R}^n} \big\{\langle x,y\rangle - f(x+a_i) \big\}= \sup_{x\in\mathbb{R}^n} \big\{ \langle x-a_i,y\rangle - f(x)\big\} = f^*(y)-\langle a_i,y\rangle,
\]
which implies
\[
\int_A \widetilde{f}^*\,d\lambda = \int_A \big(f^*(y) -\langle a_i,y\rangle\big)\,d\lambda = -\tau,
\]
because the barycenter of $A$ is at the origin. 
\end{proof}

\begin{lemma}\label{tau}
Assume $A$ satisfies (\ref{Aconditions}), and let $f$ be a convex function on $\mathbb{R}^n$. If $\int_A f^*\,d\lambda=-\tau<\infty$, then $f(x) \geq \tau/\lambda(A)$ for all $x$.
\end{lemma}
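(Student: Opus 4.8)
The statement follows immediately from the definition of the Legendre transform together with the barycenter hypothesis on $A$, so the proof is short and there is no serious obstacle; the only thing to be careful about is that the relevant integrals make sense.

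The plan is as follows. Fix an arbitrary point $x_0 \in \mathbb{R}^n$. By the definition of the Legendre transform, for every $y \in \mathbb{R}^n$ we have the pointwise affine lower bound
\[
f^*(y) = \sup_{x\in\mathbb{R}^n} \big\{\langle x,y\rangle - f(x)\big\} \geq \langle x_0,y\rangle - f(x_0).
\]
Integrating this inequality over $A$ against Lebesgue measure, and using that $f^*$ is integrable on $A$ by hypothesis (so that $\int_A f^* \,d\lambda = -\tau$ is a genuine finite quantity), we obtain
\[
-\tau = \int_A f^*\,d\lambda \;\geq\; \int_A \big(\langle x_0,y\rangle - f(x_0)\big)\,d\lambda(y) \;=\; \Big\langle x_0, \int_A y\,d\lambda(y)\Big\rangle - f(x_0)\,\lambda(A).
\]
By the condition (\ref{Aconditions}) the barycenter of $A$ lies at the origin, i.e. $\int_A y_i\,d\lambda = 0$ for each $i$, so the first term on the right vanishes and the inequality becomes $-\tau \geq -f(x_0)\,\lambda(A)$. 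Since $\lambda(A) > 0$ ($A$ is open and nonempty), dividing gives $f(x_0) \geq \tau/\lambda(A)$. As $x_0$ was arbitrary, this proves the lemma.

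The main (and only) idea is thus the Fenchel--Young-type inequality $f^*(y) \geq \langle x_0,y\rangle - f(x_0)$ combined with the vanishing of $\int_A y\,d\lambda$; everything else is bookkeeping. I expect no difficulty here, though one could remark that the argument shows more precisely that $\inf f = \tau/\lambda(A)$ exactly when $f^*$ equals its infimum $-\inf f$ on a set of full measure in $A$, which connects with Lemma \ref{tildeprops}.
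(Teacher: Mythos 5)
Your proof is correct and is essentially the paper's argument: both rest on the pointwise bound $f^*(y)\geq \langle x_0,y\rangle - f(x_0)$ integrated over $A$ with the barycenter condition killing the linear term; the paper merely phrases it as a contradiction starting from $f(x_0)<\tau/\lambda(A)$, while you argue directly. No gap.
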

\begin{proof}
If there were a point $x_0$ in $\mathbb{R}^n$ where $f(x_0) <\tau/\lambda(A)$, then $f^*(y) = \sup \big\{\,\langle x,y \rangle -f(x) \mid x \in \mathbb{R}^n\, \big\} > \langle x_0,y\rangle -\tau/\lambda(A)$.  Since the barycenter of $A$ lies at the origin, 
\begin{equation}\label{tauequation}
\int_A f^*(y)\,d\lambda > \int_A \bigg(\langle x_0,y\rangle -\dfrac{\tau}{\lambda(A)} \bigg)\,d\lambda = -\tau,
\end{equation}
which contradicts $\int_A f^*\,d\lambda=-\tau$.
\end{proof}

\noindent \textit{Substep 1:} $\dfrac{\tau}{\lambda(A)} + r\,|x| \leq \widetilde{\varphi}_i(x)$

We recall the convex analysis Lemma 2.6 from Klartag \cite{Klartag}.

\begin{lemma}\label{Klartaglemma}
Let $A\subset \mathbb{R}^n$ be convex with the origin in its interior.  There exists $r>0$, depending on $A$, with the following property: Let $\psi: A \to \mathbb{R}$ be convex and integrable. Assume $\psi(0)=\inf \psi$, and $\int_A \psi \,d\lambda \leq 0$.  Then for $y \in A$, 
\[
\psi(y) \leq \psi(0)/2 \hspace{5mm}\text{when }|y|\leq r.
\]
\end{lemma}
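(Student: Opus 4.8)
Proof proposal (by contradiction; I use only that $0\in\operatorname{Int}A$ and $\lambda(A)<\infty$, not the barycenter condition).

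Write $N:=\psi(0)=\inf_A\psi$. Since $\psi\ge N$ on $A$ and $\int_A\psi\le 0$, we get $N\lambda(A)\le 0$, hence $N\le 0$. Suppose the conclusion fails: there is $y_0$ with $0<|y_0|=:\rho_0\le r$ and $\psi(y_0)>N/2$ (the alternative $y_0=0$ would force $N>N/2$, impossible); write $y_0=\rho_0\theta_0$ with $\theta_0\in S^{n-1}$. Fix $r_0>0$ with $\overline{B_{2r_0}(0)}\subseteq A$, which is possible since $0$ is interior to $A$; the radius $r$ will be pinned down at the end and will automatically satisfy $r<r_0$. The strategy: convexity forces $\psi$ to be large at a point $p$ on the ray through $y_0$ well inside $A$, and a spherical cap whose barycenter lies beyond $p$ on that ray then forces $\int_A\psi$ to be strictly positive.

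First, restrict $\psi$ to the segment $\{t\theta_0:0\le t\le 2r_0\}\subseteq A$, obtaining a convex function $g(t)=\psi(t\theta_0)$ with $g(0)=N=\min g$. Since a convex function lies above any of its chords extended beyond an endpoint, the chord through $(0,N)$ and $(\rho_0,g(\rho_0))$ gives, at $p:=r_0\theta_0$ (note $\rho_0\le r<r_0$ and $\psi(y_0)-N\ge 0$),
\[
\psi(p)=g(r_0)\ \ge\ N+\frac{r_0}{\rho_0}\bigl(\psi(y_0)-N\bigr)\ \ge\ N+\frac{r_0}{r}\bigl(\psi(y_0)-N\bigr).
\]

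Next let $\Omega:=\{x\in\overline{B_{2r_0}(0)}:\langle x,\theta_0\rangle\ge \tfrac32 r_0\}\subseteq A$, the spherical cap of height $r_0/2$. Its volume $\lambda(\Omega)$ is a positive constant depending only on $n$ and $r_0$, hence only on $A$; and by rotational symmetry about the axis $\mathbb{R}\theta_0$ its barycenter is $\bar x_\Omega=\bar t\,\theta_0$ with $\bar t\ge\tfrac32 r_0>r_0=|p|$. Thus $p=\tfrac{r_0}{\bar t}\,\bar x_\Omega+(1-\tfrac{r_0}{\bar t})\cdot 0$ lies on the segment $[0,\bar x_\Omega]$, so convexity of $\psi$ gives $\psi(p)\le \tfrac{r_0}{\bar t}\,\psi(\bar x_\Omega)+(1-\tfrac{r_0}{\bar t})\,\psi(0)$; since both $\psi(\bar x_\Omega)\le \tfrac{1}{\lambda(\Omega)}\int_\Omega\psi$ (Jensen) and $\psi(0)=N\le \tfrac{1}{\lambda(\Omega)}\int_\Omega\psi$, we conclude $\int_\Omega\psi\ge \psi(p)\,\lambda(\Omega)$. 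Combining this with $\psi\ge N$ on $A\setminus\Omega$, $N\le 0$, and $\lambda(A)<\infty$,
\[
0\ \ge\ \int_A\psi\ =\ \int_\Omega\psi+\int_{A\setminus\Omega}\psi\ \ge\ \psi(p)\,\lambda(\Omega)+N\lambda(A),
\]
so $\psi(p)\le -N\lambda(A)/\lambda(\Omega)$. Plugging this into the displayed estimate for $\psi(p)$ and rearranging (using $-N=|N|\ge 0$) yields
\[
\psi(y_0)-N\ \le\ \frac{r}{r_0}\,|N|\Bigl(\tfrac{\lambda(A)}{\lambda(\Omega)}+1\Bigr).
\]
Choosing $r:=r_0\big/\bigl(2(\lambda(A)/\lambda(\Omega)+1)\bigr)$ — a constant depending only on $A$, which is automatically $<r_0$ — forces $\psi(y_0)-N\le |N|/2=-N/2$, i.e. $\psi(y_0)\le N/2$, contradicting the choice of $y_0$. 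Hence the lemma holds with this $r$.

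The only nontrivial ingredient is the thickening in the third paragraph: upgrading the one-dimensional bound "$\psi$ is large at $p$ along the ray through $y_0$" into a lower bound for $\int_\Omega\psi$ over a set of volume bounded below in terms of $A$ alone. The point is to pick a full-dimensional region whose barycenter sits past $p$ on that ray, which is exactly the configuration in which the chord inequality and Jensen combine to give $\int_\Omega\psi\ge \psi(p)\lambda(\Omega)$; a spherical cap of fixed relative height does the job. The remaining points are bookkeeping, but one should check the two containments used ($\overline{B_{2r_0}(0)}\subseteq A$, so $\Omega\subseteq A$, and $\rho_0<r_0$, so $p$ genuinely lies beyond $y_0$), both guaranteed by the choices of $r_0$ and $r$.
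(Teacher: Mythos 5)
Your proof is correct. Note that the paper itself gives no proof of this statement---it is recalled from Klartag \cite{Klartag} (his Lemma 2.6)---so your argument supplies a self-contained proof; the closest argument actually in the paper is the proof of the generalization, Lemma \ref{Klartagvar}, which proceeds differently: there one assumes the sublevel set $\{\psi \leq -C\inf\psi\}$ misses a point of $A_\epsilon$, takes a supporting halfspace at a boundary point of that sublevel set, and produces a half-ball of definite volume on which $\psi \geq -C\inf\psi$, forcing $\int_A \psi > 0$. Your route manufactures the large-value region instead by a one-dimensional chord estimate (failure of the bound at $y_0$ with $|y_0|\leq r$ forces $\psi(p)-\psi(0) \geq \tfrac{r_0}{r}\,(\psi(y_0)-\psi(0))$ at $p=r_0\theta_0$) and then upgrades this to the lower bound $\int_\Omega \psi \geq \psi(p)\,\lambda(\Omega)$ over a fixed spherical cap whose barycenter lies beyond $p$, via convexity along $[0,\bar{x}_\Omega]$ together with Jensen; the endgame ($\int_A\psi>0$, contradiction) is the same, and you additionally extract an explicit admissible radius $r = r_0\big/\bigl(2(\lambda(A)/\lambda(\Omega)+1)\bigr)$. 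The details check out: $\lambda(\Omega)$ is rotation-invariant, so $r$ depends only on $A$ and not on $\theta_0$ or $\psi$; the chord inequality needs $\rho_0 \leq r < r_0$, which your choice of $r$ guarantees; and $y_0=0$ is correctly excluded since $\psi(0)\leq 0$. Your explicit use of $\lambda(A)<\infty$ is harmless: in the paper the lemma is only applied to $A$ satisfying (\ref{Aconditions}), hence bounded, which is also the setting of Klartag's original statement.
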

\noindent We illustrate this lemma below for $n=1$.

\begin{center}
\includegraphics{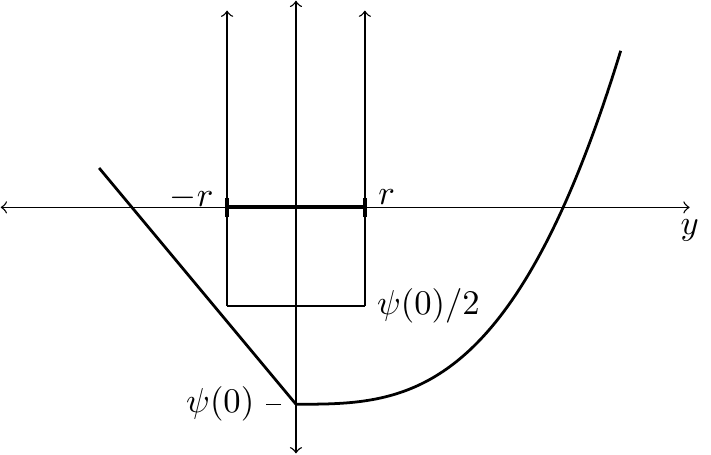}
\end{center}

The $r$ in Lemma \ref{Klartaglemma} will become the $r$ in $\dfrac{\tau}{\lambda(A)} + r\,|x| \leq \widetilde{\varphi}_i(x)$.

\begin{lemma}\label{lowerbound}
Assume $A$ satisfies (\ref{Aconditions}).  Let $f$ be a convex function such that $\inf \{f \}= f(0)$ and $\int_A f^*\,d\lambda = -\tau$.  Then there exists an $r$ depending on $A$ such that
\begin{equation}\label{lowerboundeq}
f(x) \geq \dfrac{1}{2}\Big(f(0)+\scaleobj{.9}{\dfrac{\tau}{\lambda(A)}}\Big) +r\,|x| \geq \tau + r\,|x|.
\end{equation}
\end{lemma}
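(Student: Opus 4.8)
The plan is to bootstrap from the convex-analysis lemma of Klartag (Lemma \ref{Klartaglemma}) to a genuine linear lower bound on $f$ over $\mathbb{R}^n$, and then combine it with the pointwise bound $f \geq \tau/\lambda(A)$ from Lemma \ref{tau}. The key observation is that the hypotheses of Lemma \ref{lowerbound} are exactly tailored so that the restriction $\psi := f^*|_A$ satisfies the hypotheses of Lemma \ref{Klartaglemma}: $\psi$ is convex and integrable on $A$; by Lemma \ref{tildeprops} (applied with the translation already performed, so $a_i = 0$) we have $\psi(0) = f^*(0) = \inf_A f^* = -f(0)$; and $\int_A \psi\,d\lambda = \int_A f^*\,d\lambda = -\tau$, which we must know is $\leq 0$. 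Establishing $-\tau \leq 0$, i.e. $\tau \geq 0$, is immediate from Lemma \ref{tau}: it gives $f(x) \geq \tau/\lambda(A)$, and since $f$ is real-valued this already forces no sign condition — but actually we get $\tau \geq 0$ for free because otherwise the conclusion $f \geq \tau/\lambda(A)$ with negative right side is vacuous; more carefully, $\int_A f^* \, d\lambda = -\tau$ together with $f^*(y) \geq -f(0)$ and $\inf f^* = f^*(0)$ forces $-\tau \geq \lambda(A)\cdot(-f(0))$... but cleanest is: Lemma \ref{tau} already yields $f(x)\ge \tau/\lambda(A)$, so we only need to improve the $r|x|$ term and the constant.

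First I would invoke Lemma \ref{Klartaglemma} with $\psi = f^*|_A$, obtaining an $r>0$ depending only on $A$ such that $f^*(y) \leq f^*(0)/2 = -f(0)/2$ whenever $|y| \leq r$. Second, for any fixed $x \in \mathbb{R}^n$, I would choose $y$ of length $r$ pointing in the direction of $x$ (i.e. $y = r\,x/|x|$, or $y=0$ if $x=0$), so that $\langle x, y\rangle = r|x|$. By definition of the Legendre transform, $f(x) \geq \langle x,y\rangle - f^*(y) \geq r|x| - f^*(y) \geq r|x| + f(0)/2$. This handles the case where $|x| \geq$ some threshold, but in fact it works for all $x$ since $y$ of length exactly $r$ lies in $A$ (shrink $r$ if necessary so $B_r \subset A$). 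Third, to incorporate the $\tau/\lambda(A)$ lower bound symmetrically, I note $f(0) \geq \tau/\lambda(A)$ by Lemma \ref{tau}, but I actually want the sharper averaged bound $f(0) + \tau/\lambda(A)$ appearing in \eqref{lowerboundeq}: since $f^*(0) = -f(0)$ and $f^*$ attains its infimum at $0$, we have $-\tau = \int_A f^*\,d\lambda \geq \lambda(A)\,f^*(0) = -\lambda(A)\,f(0)$, hence $f(0) \geq \tau/\lambda(A)$, giving $\tfrac12 f(0) + \tfrac12 \tau/\lambda(A) \leq \tfrac12 f(0) + \tfrac12 f(0) = f(0) \leq f(x) - r|x| + \tfrac12 f(0)$... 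I would instead just write $f(x) \geq r|x| + \tfrac12 f(0) \geq r|x| + \tfrac12\big(\tfrac12 f(0) + \tfrac12\tfrac{\tau}{\lambda(A)}\big) + \ldots$ — the honest route is: from $f(x) \geq r|x| + f(0)/2$ and $f(0) \geq \tau/\lambda(A)$ we get $f(x) \geq r|x| + \tfrac12\big(f(0) + \tfrac{\tau}{\lambda(A)}\big) - \tfrac{\tau}{2\lambda(A)} + \tfrac{f(0)}{2} - \tfrac{f(0)}{2}$; cleaner is to show directly $\tfrac12(f(0)+\tau/\lambda(A)) \leq f(0)$, which is just $\tau/\lambda(A) \leq f(0)$, already known.

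Finally, for the second inequality $\tfrac12\big(f(0) + \tau/\lambda(A)\big) + r|x| \geq \tau + r|x|$, I would need $\tfrac12\big(f(0) + \tau/\lambda(A)\big) \geq \tau$, i.e. $f(0) \geq 2\tau - \tau/\lambda(A) = \tau(2 - 1/\lambda(A))$; this will follow from Lemma \ref{tau} provided $\lambda(A)$ and $\tau$ are in the right regime, or more likely the displayed inequality \eqref{lowerboundeq} is using a normalization (perhaps $\lambda(A)$ normalized, or $\tau$ small) that makes $2 - 1/\lambda(A) \leq 1$; I would check the precise constant against the conventions in the paper and adjust — this is the one spot requiring care rather than the main mathematical content. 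The main obstacle, such as it is, is purely bookkeeping: correctly matching $\psi = f^*|_A$ to the hypotheses of Klartag's lemma (convexity, integrability, the normalization $\psi(0) = \inf\psi$, and $\int_A\psi\,d\lambda \leq 0$) and then tracking the constants through the two inequalities; the core idea — use the Legendre duality $f(x) \geq \langle x,y\rangle - f^*(y)$ with $y$ chosen of fixed length $r$ in the direction of $x$, where $r$ comes from Klartag — is short and robust.
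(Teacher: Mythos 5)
Your core duality step (invoke Klartag's Lemma \ref{Klartaglemma} on $A$, then bound $f(x)\geq \langle x,y\rangle - f^*(y)$ with $y$ of length $r$ in the direction of $x$) is exactly the paper's mechanism, but the way you feed Klartag's lemma has a genuine gap. You apply it to $\psi=f^*|_A$, which requires $\int_A \psi\,d\lambda\leq 0$, i.e.\ $\tau\geq 0$; this is \emph{not} among the hypotheses, and your attempts to get it "for free" are circular. Lemma \ref{tau} gives $f(0)\geq \tau/\lambda(A)$, equivalently $\tau\leq \lambda(A)f(0)$, which says nothing about the sign of $\tau$: for instance with $n=1$, $A=(-1,1)$, $f(x)=|x|-10$ one has $\inf f=f(0)$ and $\int_A f^*\,d\lambda=20$, so $\tau=-20<0$, and in the application to Theorem \ref{MAiteration2} ($h(t)=e^{-t}$) the constant $\tau$ is indeed an arbitrary real number. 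The paper's proof avoids this entirely by applying Lemma \ref{Klartaglemma} not to $f^*$ but to the shifted function $f^*+\tau/\lambda(A)$, whose integral over $A$ is exactly $0$, so the hypothesis $\int_A\psi\,d\lambda\leq 0$ holds with no sign condition on $\tau$.

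The shift also repairs a second defect of your route: even when $\tau\geq 0$, applying Klartag's lemma to $f^*$ only yields $f^*(y)\leq -f(0)/2$ on $B_r$, hence $f(x)\geq \tfrac12 f(0)+r|x|$, which is \emph{weaker} than the asserted constant $\tfrac12\big(f(0)+\tau/\lambda(A)\big)$ whenever $\tau>0$. Your proposed patch, observing $\tfrac12\big(f(0)+\tau/\lambda(A)\big)\leq f(0)$, compares the target to $f(0)$ rather than to $\tfrac12 f(0)$, so it does not bridge the gap. With the shifted function the constant comes out automatically: Klartag's lemma gives $f^*(y)+\tau/\lambda(A)\leq\tfrac12\big(f^*(0)+\tau/\lambda(A)\big)$ for $|y|\leq r$, i.e.\ $f^*(y)\leq -\tfrac12\big(f(0)+\tau/\lambda(A)\big)$ on $B_r$, and then your own Legendre-duality step yields precisely $f(x)\geq \tfrac12\big(f(0)+\tau/\lambda(A)\big)+r|x|$. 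Your worry about the final inequality is reasonable but secondary: the paper deduces it from $f(0)\geq\tau/\lambda(A)$ (Lemma \ref{tau}), which gives a lower bound $\tau/\lambda(A)+r|x|$, the form actually used in Proposition \ref{uniformbounds}.
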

\begin{proof}
Adding $\tau/\lambda(A)$ to $f^*$ implies
\[
\int_A \bigg(f^* + \dfrac{\tau}{\lambda(A)}\bigg)\,d\lambda = 0.
\]
Thus $f^*+ \tau/\lambda(A)$ satisfies all the hypotheses of Lemma \ref{Klartaglemma}, so there exists an $r$ depending only on $A$ such that $f^*(y) + \tau/\lambda(A) \leq \dfrac{1}{2}\big(f^*(0) + \tau/\lambda(A)\big)$ when $|y|\leq r$. Equivalently,
\begin{equation}\label{aa}
f^*(y) \leq -\dfrac{1}{2}\Big(f(0) + \scaleobj{.9}{\dfrac{\tau}{\lambda(A)}}\Big) + \mathbb{1}_{B_r},
\end{equation}
where
\[
\mathbb{1}_D(y) = \begin{cases} 0 &y\in D\\ \infty & y \notin D \end{cases}
\]
is the convex indicator function of a convex set $D$.
The order reversing property of the Legendre transform implies 
\begin{align*}
f(x) &= \sup_{y \in A}\, \big\{\langle x,y\rangle - f^*(y) \big\} \geq \sup_{y\in\mathbb{R}^n} \Big\{ \langle x,y\rangle - \Big(-\dfrac{1}{2}\Big(f(0) + \scaleobj{.9}{\dfrac{\tau}{\lambda(A)}}\Big) + \mathbb{1}_{B_r}\Big) \Big\} \\ 
&=\dfrac{1}{2}\Big(f(0)+\scaleobj{.9}{\dfrac{\tau}{\lambda(A)}}\Big)+ \sup_{|y|\leq r} \big\{\langle x,y\rangle \big\}=  \dfrac{1}{2}\Big(f(0)+\scaleobj{.9}{\dfrac{\tau}{\lambda(A)}}\Big) + r\,|x|,
\end{align*}
which is the first inequality in equation (\ref{lowerboundeq}). Lemma \ref{tau} implies $f(0)\geq \tau/\lambda(A)$, which yields the second inequality in equation (\ref{lowerboundeq}).
\end{proof}

Lemma \ref{tildeprops} implies $\inf \{\widetilde{\varphi}_i\} = \widetilde{\varphi}_i(0)$ and $\int_A \widetilde{\varphi}_i^*\,d\lambda=-\tau$, so we can apply Lemma \ref{lowerbound} to $\widetilde{\varphi}_i$ to prove substep 1.

\noindent \textit{Substep 2:} $C_1 \leq \mathcal{F}(\varphi_i) = \mathcal{F}(\widetilde{\varphi}_i) \leq C_2$ 

Before proving the upper bound in equation (\ref{propvarphibounds}), we must prove $\mathcal{F}(\varphi_i)$ is bounded.  $\mathcal{F}$ is translation invariant, so $\mathcal{F}(\widetilde{\varphi}_i) = \mathcal{F}(\varphi_i)$, and it is sufficient to show the boundedness of either.  First we show $\mathcal{F}$ is bounded below.

\begin{lemma}\label{Fboundedbelow}
Assume $A$ satisfies (\ref{Aconditions}) and $h$ satisfies Hypothesis \ref{B1}.  Let $\mathcal{F}$ be the functional defined in (\ref{Fdef}). There exists a constant $C$ depending on $A$, $\tau$, $p$, and $h$ such that if $f$ is convex and $\int_A f^*\,d\lambda=-\tau$, then $\mathcal{F}(f) \geq C$.
\end{lemma}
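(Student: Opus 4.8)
The plan is to reduce, using the translation invariance of $\mathcal{F}$, to the case of a convex function attaining its infimum at the origin, and then to combine the linear lower bound of Lemma~\ref{lowerbound} with the integrability estimate of Lemma~\ref{Hfbound}, finishing by inverting $H$. Concretely, since Lebesgue measure is translation invariant, $\|H\circ f\|_1$ — and hence $\mathcal{F}(f)=H^{-1}(\|H\circ f\|_1)$ — is unchanged when $f$ is replaced by $f(\,\cdot+a)$ for any $a\in\mathbb{R}^n$. Picking $a$ with $f(a)=\inf f$ and setting $\widetilde f(x)=f(x+a)$, Lemma~\ref{tildeprops} gives $\inf\widetilde f=\widetilde f(0)$ and $\int_A\widetilde f^*\,d\lambda=-\tau$ (this is where the barycenter condition in~(\ref{Aconditions}) enters), while $\mathcal{F}(\widetilde f)=\mathcal{F}(f)$; so it suffices to bound $\mathcal{F}(\widetilde f)$ from below.

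Next I would invoke Lemma~\ref{lowerbound} for $\widetilde f$ to obtain $r=r(A)>0$ with $\widetilde f(x)\ge\tfrac12\big(\widetilde f(0)+\tau/\lambda(A)\big)+r|x|$, and since $\widetilde f(0)\ge\tau/\lambda(A)$ by Lemma~\ref{tau}, this improves to $\widetilde f(x)\ge\tau/\lambda(A)+r|x|$. Feeding this into the upper estimate~(\ref{hfupper}) of Lemma~\ref{Hfbound} with $a=\tau/\lambda(A)$ produces a constant $c$, depending only on $\tau$, $\lambda(A)$, $h$ (hence on $p$) and $r$ (hence on $A$), with $\|H\circ\widetilde f\|_1\le c$; in particular $H\circ\widetilde f\in L^1(\mathbb{R}^n)$, so $\mathcal{F}(\widetilde f)$ is well defined.

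Finally, by Hypothesis~\ref{B1} the function $h$ is positive and decreasing, so $H(t)=\int_t^\infty h\,d\lambda$ is continuous and strictly decreasing with $H(t)\to0$ as $t\to\infty$; thus $H^{-1}$ is continuous and strictly decreasing on its range, and therefore $\mathcal{F}(\widetilde f)=H^{-1}(\|H\circ\widetilde f\|_1)\ge H^{-1}(c)=:C$, a constant depending only on $A$, $\tau$, $p$, and $h$. Since $\mathcal{F}(f)=\mathcal{F}(\widetilde f)$, this proves $\mathcal{F}(f)\ge C$. The proof is mostly bookkeeping once Lemmas~\ref{lowerbound} and~\ref{Hfbound} are in hand; the one point meriting care is the translation reduction — that translating $f$ leaves both $\mathcal{F}$ and the normalization $\int_A f^*\,d\lambda=-\tau$ intact — which rests on the barycenter hypothesis, and I do not expect a genuine obstacle beyond this.
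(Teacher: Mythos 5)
Your proof is correct and follows essentially the same route as the paper: translate so the infimum is at the origin (using Lemma \ref{tildeprops} and translation invariance of $\|H\circ f\|_1$), apply Lemma \ref{lowerbound} to get the linear lower bound $\tau/\lambda(A)+r|x|$, feed that into Lemma \ref{Hfbound} to bound $\|H\circ f\|_1$, and conclude via the monotonicity of $H^{-1}$. No gaps.
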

\begin{proof}
Recall the definition (\ref{Fdef}) of $\mathcal{F}$:
\[
\mathcal{F}(f) = H^{-1}\big(\|H\circ f\|_1\big)\hspace{3mm}\text{where}\hspace{3mm}H(t) = \int_t^\infty h\,d\lambda.
\]
By Hypothesis \ref{B1}, $h$ is positive, so $H$ is a strictly decreasing function.  Thus $H^{-1}$ is strictly decreasing as well, so if we show $\| H \circ f\|_1 \leq C$, then the lower bound for $\mathcal{F}(f)$ will follow. 

$\|H\circ f\|_1$ is invariant under translations of $f$ so we can assume without loss of generality that $\inf \{ f \} = f(0)$.  Lemma \ref{tildeprops} implies $\int_A f^*\,d\lambda = -\tau$ is unchanged by translations.  Lemma \ref{lowerbound} implies the lower bound
\[
f(x) \geq \scaleobj{.9}{\dfrac{\tau}{\lambda(A)}} + r\,|x|.
\]
Then, Lemma \ref{Hfbound} implies $\|H \circ f\|_1 \leq C$ as desired.  
\end{proof}

Te hypotheses of Lemma \ref{Fboundedbelow} are satisfied by $\varphi_i$, so $\mathcal{F}(\widetilde{\varphi}_i) = \mathcal{F}(\varphi_i) \geq C$.  Now we prove the upper bound for $\mathcal{F}$ along the Monge--Amp\`{e}re iteration.

\begin{lemma}\label{functionallimit}
Assume the Monge--Amp\`{e}re iteration satisfies Hypotheses \ref{hypB}.  Then
\begin{equation}\label{functionalinequality}
\mathcal{F}(\varphi_{i-1}) \geq g\Big(\Big\langle \varphi_i,\,\scaleobj{.9}{\dfrac{\MA(\varphi_i)}{\lambda(A)}}\Big\rangle,\,\mathcal{G}\Big(\scaleobj{.9}{\dfrac{\MA(\varphi_i)}{\lambda(A)}}\Big)\Big) \geq \mathcal{F}(\varphi_i),
\end{equation}
and there exists $\beta$ finite such that 
\begin{equation}\label{functionallimiteq}
\beta = \lim_{i\to\infty} \mathcal{F}(\varphi_i) = \lim_{i\to\infty} g\Big(\Big\langle \varphi_i,\,\scaleobj{.9}{\dfrac{\MA(\varphi_i)}{\lambda(A)}}\Big\rangle,\,\mathcal{G}\Big(\scaleobj{.9}{\dfrac{\MA(\varphi_i)}{\lambda(A)}}\Big)\Big)
\end{equation}
\end{lemma}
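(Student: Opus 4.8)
The plan is to read off the inequality chain (\ref{functionalinequality}) directly from Hypotheses \ref{B2} and \ref{B3}, and then to extract (\ref{functionallimiteq}) by combining monotonicity with the lower bound already established in Lemma \ref{Fboundedbelow}. For the left-hand inequality $\mathcal{F}(\varphi_{i-1}) \geq g\big(\langle \varphi_i, \MA(\varphi_i)/\lambda(A)\rangle,\, \mathcal{G}(\MA(\varphi_i)/\lambda(A))\big)$ there is nothing to do: it is precisely Hypothesis \ref{B3} with the index shifted by one, applicable because $\{\varphi_i\}$ consists of smooth, strictly convex solutions of (\ref{MAiteration}).

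For the right-hand inequality I would first record the abstract duality statement (\ref{dualityhalf}). By the definition (\ref{Gdef}) of $\mathcal{G}$ as an infimum, $\mathcal{G}(\mu) \leq g(\langle f,\mu\rangle, \mathcal{F}(f))$ for every $f \in \mathcal{C}_{\lin}$ and $\mu \in \mathcal{P}_1$; applying the map $t \mapsto g(\langle f,\mu\rangle, t)$, which by Hypothesis \ref{B2} is decreasing, reverses the inequality, and the involutivity relation $g(s, g(s,t)) = t$ of Hypothesis \ref{B2} rewrites the right-hand side as $\mathcal{F}(f)$, so that $g(\langle f,\mu\rangle, \mathcal{G}(\mu)) \geq \mathcal{F}(f)$. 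I then specialize to $f = \varphi_i$ and $\mu = \MA(\varphi_i)/\lambda(A)$, noting that $\varphi_i \in \mathcal{C}_{\lin}$ by the growth estimate of Proposition \ref{uniformbounds} and that $\MA(\varphi_i)/\lambda(A) = \big((h\circ\varphi_{i-1})/\|h\circ\varphi_{i-1}\|_1\big)\,\lambda$ is a probability measure with finite first moment, since Hypothesis \ref{B1} together with the lower bound $\varphi_{i-1}(x) \geq \tau/\lambda(A) + r|x|$ forces its density to decay faster than $|x|^{-(n+1)}$; hence the pairing $\langle \varphi_i, \MA(\varphi_i)/\lambda(A)\rangle$ is finite and the specialization is legitimate.

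It remains to prove (\ref{functionallimiteq}). The chain (\ref{functionalinequality}) gives $\mathcal{F}(\varphi_{i-1}) \geq \mathcal{F}(\varphi_i)$ for all $i$, so $\{\mathcal{F}(\varphi_i)\}_{i\in\mathbb{N}}$ is nonincreasing; since each $\varphi_i$ is convex with $\int_A \varphi_i^*\,d\lambda = -\tau$, Lemma \ref{Fboundedbelow} yields $\mathcal{F}(\varphi_i) \geq C$ for all $i$, so the sequence is bounded below and converges to a finite limit $\beta$. The middle term of (\ref{functionalinequality}) is then squeezed between $\mathcal{F}(\varphi_{i-1})$ and $\mathcal{F}(\varphi_i)$, both of which tend to $\beta$, hence it also tends to $\beta$.

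No step here is a genuine obstacle. The only points that require a little care — and they rely on the earlier parts of the argument rather than on the present lemma — are the finiteness of the pairing $\langle \varphi_i, \MA(\varphi_i)/\lambda(A)\rangle$ and the well-definedness and monotonicity of the map $t \mapsto g(\langle \varphi_i,\,\cdot\,\rangle, t)$, both of which follow from Hypotheses \ref{B1} and \ref{B2} and the uniform growth estimate of Step 1.
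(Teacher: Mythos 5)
Your proposal is correct and follows essentially the same route as the paper: the left inequality is Hypothesis \ref{B3} verbatim, the right inequality comes from the infimum definition of $\mathcal{G}$ together with the decreasing involution property of Hypothesis \ref{B2}, and the limit statement follows from monotonicity plus the lower bound of Lemma \ref{Fboundedbelow}. One caution: you justify $\varphi_i\in\mathcal{C}_{\lin}$ by citing the growth estimate of Proposition \ref{uniformbounds}, but in the paper's logical order this lemma is Substep 2 of the proof of that proposition and its upper bound (Substep 3) depends on the present lemma, so that citation is circular as written; the fix is immediate, since $\nabla\varphi_i(\mathbb{R}^n)=A$ bounded already gives $\varphi_i(x)\leq \varphi_i(0)+R\,|x|$ (with a constant allowed to depend on $i$), which together with the lower bound of Lemma \ref{lowerbound} (Substep 1, available at this stage) is all that is needed for membership in $\mathcal{C}_{\lin}$ and for $\MA(\varphi_i)/\lambda(A)\in\mathcal{P}_1$.
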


\begin{proof}
The first inequality in equation (\ref{functionalinequality}) is exactly hypothesis \ref{B3}.  Since $\varphi_i$ is a convex function with $\nabla \varphi_i(\mathbb{R}^n)=A$ it follows that $\varphi_i \in \mathcal{C}_{\lin}$, defined in (\ref{classC}).  Thus by the definition of $\mathcal{G}$,
\[
\mathcal{G}\Big(\scaleobj{.9}{\dfrac{\MA(\varphi_i)}{\lambda(A)}}\Big) \leq g\Big(\Big\langle \varphi_i,\,\scaleobj{.9}{\dfrac{\MA(\varphi_i)}{\lambda(A)}} \Big\rangle,\,\mathcal{F}(\varphi_i)\Big).
\]
After applying $g\Big(\Big\langle \varphi_i,\scaleobj{.9}{\dfrac{\MA(\varphi_i)}{\lambda(A)}} \Big\rangle,\cdot\,\Big)$ to both sides of the equation, Hypothesis \ref{B2}, that $g(s,t)$ is a decreasing involution, implies
\[
g\Big(\Big\langle \varphi_i,\scaleobj{.9}{\dfrac{\MA(\varphi_i)}{\lambda(A)}} \Big\rangle,\,\mathcal{G}\Big(\scaleobj{.9}{\dfrac{\MA(\varphi_i)}{\lambda(A)}}\Big)\Big) \geq \mathcal{F}(\varphi_i),
\]
proving the second inequality in equation (\ref{functionalinequality}).  The two inequalities show that 
\[
\big\{\mathcal{F}(\varphi_i)\big\} \hspace{3mm}\text{and}\hspace{3mm} \Big\{g\Big(\Big\langle \varphi_i,\scaleobj{.9}{\dfrac{\MA(\varphi_i)}{\lambda(A)}}\Big\rangle, \,\mathcal{G}\Big(\scaleobj{.9}{\dfrac{\MA(\varphi_i)}{\lambda(A)}}\Big)\Big)\Big\}
\]
are decreasing sequences.  Lemma \ref{Fboundedbelow} implies $\mathcal{F}(\varphi_i)$ is bounded below, so both decreasing sequences converge to a common finite value $\beta$ as in equation (\ref{functionallimiteq}).
\end{proof}

The two previous lemmas imply $C_1 \leq \mathcal{F}(\varphi_i) = \mathcal{F}(\widetilde{\varphi}_i) \leq C_2$.

\noindent \textit{Substep 3:} $\widetilde{\varphi}_i(x) \leq C+R\,|x|$

Now we use the boundedness of $\mathcal{F}(\widetilde{\varphi}_i)$ to prove the upper bound for $\widetilde{\varphi}_i$.

\begin{lemma}\label{upperbound}
Assume $A$ satisfies conditions (\ref{Aconditions}) and the Monge--Amp\`{e}re iteration satisfies Hypotheses \ref{hypB}.  Then, 
\[
\widetilde{\varphi}_i(x) \leq C+R\,|x|.
\]
for constants $C$ and $R$ depending on $\tau$, $A$, and $h$.
\end{lemma}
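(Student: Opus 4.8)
The plan is to separate the estimate into its two pieces. The linear term $R\,|x|$ comes entirely from the gradient constraint: since $\nabla\widetilde{\varphi}_i(\mathbb{R}^n)=A$ and $A$ is bounded, I can choose $R$ depending only on $A$ with $A\subset B_R$, so each $\widetilde{\varphi}_i$ is $R$-Lipschitz; as $\widetilde{\varphi}_i(0)=\inf\widetilde{\varphi}_i$ by Lemma \ref{tildeprops}, this already gives $\widetilde{\varphi}_i(x)\le\widetilde{\varphi}_i(0)+R\,|x|$. The whole problem thus reduces to producing an $i$-independent upper bound $C$ for the minimum values $\widetilde{\varphi}_i(0)=\inf\widetilde{\varphi}_i$.

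To control $\widetilde{\varphi}_i(0)$ from above I would play the lower estimate of Substep 1 against the upper bound on $\mathcal{F}$ from Substep 2. Since $\widetilde{\varphi}_i$ has its minimum at the origin and $\int_A\widetilde{\varphi}_i^*\,d\lambda=-\tau$ (Lemma \ref{tildeprops}), Lemma \ref{lowerbound} gives
\[
\widetilde{\varphi}_i(x) \ge \tfrac12\big(\widetilde{\varphi}_i(0)+\tau/\lambda(A)\big)+r\,|x|
\]
with $r$ depending only on $A$, so that the affine lower bound on $\widetilde{\varphi}_i$ has intercept $a_i:=\tfrac12(\widetilde{\varphi}_i(0)+\tau/\lambda(A))$ that grows linearly in the very quantity $\widetilde{\varphi}_i(0)$ we wish to bound. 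Feeding this into Lemma \ref{Hfbound} — more precisely, into the integral estimate inside its proof, where the decay $H(t)\le C'\,t^{-(n+p)}$ implied by Hypothesis \ref{B1} produces a bound of the form $\|H\circ\widetilde{\varphi}_i\|_1\le C''\,a_i^{-p}$ once $a_i$ is large — shows that $\|H\circ\widetilde{\varphi}_i\|_1\to 0$ as $\widetilde{\varphi}_i(0)\to\infty$, uniformly in $i$.

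On the other side, Substep 2 (via the monotonicity established in Lemma \ref{functionallimit}) gives $\mathcal{F}(\widetilde{\varphi}_i)=\mathcal{F}(\varphi_i)\le\mathcal{F}(\varphi_0)=:C_2<\infty$. Since $h>0$, $H$ is strictly decreasing with $H(t)\to 0$, hence so is $H^{-1}$, and $\mathcal{F}(\widetilde{\varphi}_i)=H^{-1}\big(\|H\circ\widetilde{\varphi}_i\|_1\big)\le C_2$ is equivalent to $\|H\circ\widetilde{\varphi}_i\|_1\ge H(C_2)>0$. Comparing the two estimates: if $\widetilde{\varphi}_i(0)$ were unbounded along a subsequence, then $\|H\circ\widetilde{\varphi}_i\|_1$ would be forced below the positive number $H(C_2)$ along that subsequence, a contradiction. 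Hence $\widetilde{\varphi}_i(0)\le C$ for a constant $C$ depending only on $A$, $\tau$, $h$, and $C_2=\mathcal{F}(\varphi_0)$, and combining with the Lipschitz bound of the first paragraph finishes the proof.

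The only delicate point I anticipate is the bookkeeping in the second paragraph: one must extract from the proof of Lemma \ref{Hfbound} not merely that $\|H\circ\widetilde{\varphi}_i\|_1$ is finite but that the constant decays like $a_i^{-p}$ in the intercept, which is exactly where the positivity of $p$ in Hypothesis \ref{B1} enters (the same place where the proof of Lemma \ref{Hfbound} remarks $p>0$ is optimal). Everything else — the Lipschitz estimate, the translation invariance and monotonicity of $\mathcal{F}$, and the algebra with $H^{-1}$ — is routine.
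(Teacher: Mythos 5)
Your proposal is correct and follows essentially the same route as the paper: bound $\widetilde{\varphi}_i(0)$ by playing the lower bound of Lemma \ref{lowerbound} (whose intercept grows with $\widetilde{\varphi}_i(0)$) against the positive lower bound $\|H\circ\widetilde{\varphi}_i\|_1 \geq H(C_2)>0$ coming from the monotonicity of $\mathcal{F}$, then add the $R|x|$ term from the gradient constraint $A\subset B_R$. The only cosmetic difference is that you make the decay quantitative ($\|H\circ\widetilde{\varphi}_i\|_1\lesssim a_i^{-p}$) where the paper invokes dominated convergence, and your observation that $C$ depends on $\mathcal{F}(\varphi_0)$ matches the dependence acknowledged in the paper's proof outline.
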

\begin{proof}
 $\mathcal{F}(\widetilde{\varphi}_i) = H^{-1}\big(\|H\circ \widetilde{\varphi}_i\|_1\big) \leq c$, and $H$ is positive and decreasing, so
\[
0<H(c)\leq\|H\circ \widetilde{\varphi}_i\|_1.
\]
We will first show that there exists $C$, independent of $i$, such that $\widetilde{\varphi}_i(0)\leq C$.  Lemma \ref{lowerbound} implies
\[
\widetilde{\varphi}_i(x) \geq \dfrac{1}{2}\Big(\widetilde{\varphi}_i(0)+\dfrac{\tau}{\lambda(A)}\Big) + r\,|x|.
\]
As in the proof of Lemma \ref{Hfbound}, the bound for $h$ from hypothesis \ref{B1} shows there exists $C$, $R$, and $p>0$ such that
\[
\|H\circ \widetilde{\varphi}_i\|_1 \leq \int_{B_R} H\bigg(\dfrac{1}{2}\Big(\widetilde{\varphi}_i(0)+\scaleobj{.9}{\dfrac{\tau}{\lambda(A)}}\Big)\bigg) \,d\lambda + C\int_{\mathbb{R}^n\setminus B_R} \bigg(\dfrac{1}{2}\Big(\widetilde{\varphi}_i(0)+\scaleobj{.9}{\dfrac{\tau}{\lambda(A)}}\Big) + r\,|x|\bigg)^{-(n+p)} \,d\lambda.
\]
Since $H(t) = \int_t^\infty h\,d\lambda$, $\lim_{t\to\infty}H(t) = 0$ so
\[
\lim_{\widetilde{\varphi}_i(0)\to \infty } \int_{B_R} H\bigg(\dfrac{1}{2}\Big(\widetilde{\varphi}_i(0)+\scaleobj{.9}{\dfrac{\tau}{\lambda(A)}}\Big)\bigg) \,d\lambda = 0.
\]
Likewise, $\Big(\dfrac{1}{2}\Big(\widetilde{\varphi}_i(0)+\scaleobj{.9}{\dfrac{\tau}{\lambda(A)}}\Big) + r\,|x|\Big)^{-(n+p)}$ converges to the zero function pointwise as $\widetilde{\varphi}_i(0)\to\infty$, and it is uniformly bounded above by $\Big(\scaleobj{.9}{\dfrac{\tau}{\lambda(A)}} + r\,|x|\Big)^{-(n+p)}$, so
\[
\lim_{\widetilde{\varphi}_i(0)\to \infty} C\int_{\mathbb{R}^n\setminus B_R} \bigg(\dfrac{1}{2}\Big(\widetilde{\varphi}_i(0)+\scaleobj{.9}{\dfrac{\tau}{\lambda(A)}}\Big) + r\,|x|\bigg)^{-(n+p)} \,d\lambda = 0
\]
by the dominated convergence theorem.  But we have the positive lower bound
\[
0<H(c) \leq \int_{B_R} H\bigg(\dfrac{1}{2}\Big(\widetilde{\varphi}_i(0)+\scaleobj{.9}{\dfrac{\tau}{\lambda(A)}}\Big)\bigg) \,d\lambda + C\int_{\mathbb{R}^n\setminus B_R} \bigg(\dfrac{1}{2}\Big(\widetilde{\varphi}_i(0)+\scaleobj{.9}{\dfrac{\tau}{\lambda(A)}}\Big) + r\,|x|\bigg)^{-(n+p)} \,d\lambda,
\]
so the integrals cannot go to $0$, and there must be some upper bound $C\geq\widetilde{\varphi}_i(0)$. And if $R>0$ is a constant satisfying $A\subset B_R$, then $| \nabla \widetilde{\varphi}_i(x)|\leq R$, and by integrating along lines from the origin it follows that
\[
\widetilde{\varphi}_i(x) \leq C+R\,|x|.
\]
\end{proof}

The three previous substeps complete the proof of Proposition \ref{uniformbounds}.

\subsection{Subsequence convergence and subgradient limits}

In this section we will prove Step 2 of Subsection \ref{outline}:

\begin{proposition}\label{subsequence}
Assume $A$ satisfies (\ref{Aconditions}) and the Monge--Amp\`{e}re iteration (\ref{MAiteration}) satisfies Hypotheses \ref{hypB}. Then every subsequence $\{\widetilde{\varphi}_{i'}\}$ of the translated sequence $\{\widetilde{\varphi}_i\}$ has a further subsequence $\{\widetilde{\varphi}_{i''}\}$ which converges uniformly on compact sets to some convex function $\varphi$ for which $\Int \partial \varphi(\mathbb{R}^n)=A$.
\end{proposition}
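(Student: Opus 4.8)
The plan is to combine the uniform growth estimate of Proposition~\ref{uniformbounds} with standard compactness properties of convex functions to extract the subsequential limit $\varphi$, and then to pin down $\Int \partial \varphi(\mathbb{R}^n)$ by arguing on two sides: the ``primal'' side shows every subgradient of $\varphi$ lies in $\overline{A}$, and the ``dual'' side, which uses the normalization $\int_A \widetilde{\varphi}_i^*\,d\lambda = -\tau$, shows the Legendre transforms $\widetilde{\varphi}_i^*$ stay locally bounded on $A$.

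First I would extract the limit. By Proposition~\ref{uniformbounds} the $\widetilde{\varphi}_i$ satisfy $\tau/\lambda(A) \le \widetilde{\varphi}_i(x) \le C + R|x|$, so on every ball they have uniformly bounded oscillation; since a convex function with oscillation at most $\omega$ on $B(y_0,2\rho)$ is $(\omega/\rho)$-Lipschitz on $B(y_0,\rho)$, the family $\{\widetilde{\varphi}_i\}$ is locally equi-Lipschitz. Arzel\`a--Ascoli, together with a diagonal argument over an exhaustion of $\mathbb{R}^n$ by balls, then produces from any subsequence $\{\widetilde{\varphi}_{i'}\}$ a further subsequence $\{\widetilde{\varphi}_{i''}\}$ converging uniformly on compact sets to a finite convex function $\varphi$, which inherits the bounds of Proposition~\ref{uniformbounds}.

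Next I would show $\partial\varphi(\mathbb{R}^n) \subseteq \overline{A}$, hence $\Int\partial\varphi(\mathbb{R}^n) \subseteq \Int\overline{A} = A$ (the last equality because $A$ is open and convex). For each $i$ we have $\nabla\widetilde{\varphi}_i(\mathbb{R}^n) = A$, so integrating along the segment from $x_0$ to $x$ gives $\widetilde{\varphi}_i(x) - \widetilde{\varphi}_i(x_0) = \int_0^1 \langle \nabla\widetilde{\varphi}_i(x_0 + t(x-x_0)),\, x-x_0\rangle\,dt \le h_A(x-x_0)$, where $h_A$ is the support function of $A$. Passing to the limit, $\varphi(x) - \varphi(x_0) \le h_A(x-x_0)$ for all $x,x_0$, which forces every $v \in \partial\varphi(x_0)$ to satisfy $\langle v,w\rangle \le h_A(w)$ for all $w$, i.e. $v \in \overline{A}$. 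The same growth bound shows $\varphi^*(y) = \infty$ for $y \notin \overline{A}$ by a separating-hyperplane argument, so also $\dom\varphi^* \subseteq \overline{A}$.

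The main work, and the step I expect to be the real obstacle, is the reverse inclusion $A \subseteq \Int\partial\varphi(\mathbb{R}^n)$, obtained via the normalization from Lemma~\ref{tildeprops}. Since $\nabla\widetilde{\varphi}_i(\mathbb{R}^n)=A$, each $\widetilde{\varphi}_i^*$ is finite and convex on $A$, and by Lemma~\ref{tildeprops} and Proposition~\ref{uniformbounds} it satisfies $\widetilde{\varphi}_i^* \ge \widetilde{\varphi}_i^*(0) = -\widetilde{\varphi}_i(0) \ge -C$; thus $\widetilde{\varphi}_i^* + C$ is a nonnegative convex function on $A$ with integral $C\lambda(A)-\tau$ independent of $i$. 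A nonnegative convex function on $B(y_0,2\rho) \subset A$ with integral bounded by $M'$ is bounded above on the smaller concentric ball $B(y_0,\rho/2)$ by a constant depending only on $M'$, $\rho$, and $n$: reflecting through $y_0$, for at least half the points $y$ of $B(y_0,\rho/2)$ one has $\psi(y) \ge \psi(y_0)$, so a large value at $y_0$ forces a proportionally large integral. Hence $\{\widetilde{\varphi}_i^*\}$ is uniformly bounded above on compact subsets of $A$. For $y \in A$, lower semicontinuity of the Legendre transform under local uniform convergence --- namely $\varphi^*(y) \le \liminf_i \widetilde{\varphi}_{i''}^*(y)$, since $\langle x,y\rangle - \varphi(x) = \lim_i(\langle x,y\rangle - \widetilde{\varphi}_{i''}(x))$ for every $x$, whence the sup over $x$ --- then gives $\varphi^*(y) < \infty$, so $A \subseteq \dom\varphi^*$. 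As $A$ is open this yields $A \subseteq \Int\dom\varphi^*$, and combined with $\Int\dom\varphi^* \subseteq \Int\overline{A} = A$ we get $\Int\dom\varphi^* = A$. Finally, since $\Int\dom\varphi^* \subseteq \partial\varphi(\mathbb{R}^n) \subseteq \dom\varphi^*$ by standard convex analysis, taking interiors gives $\Int\partial\varphi(\mathbb{R}^n) = A$, which completes the proof.
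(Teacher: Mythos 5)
Your proof is correct, and its core step goes by a genuinely different route than the paper's. The compactness extraction and the inclusion $\Int\partial\varphi(\mathbb{R}^n)\subseteq A$ via the support-function bound are essentially the same as in the paper (which simply cites Rockafellar's Theorem 10.9 and the bound $\widetilde{\varphi}_i \leq \widetilde{\varphi}_i(0)+\mathbb{1}_A^*$). For the reverse inclusion, however, the paper proves a separate convex-analysis lemma (Lemma \ref{Klartagvar}, a variant of Klartag's Lemma \ref{Klartaglemma}: a convex $\psi$ on $A$ with $\int_A\psi\,d\lambda\leq 0$ satisfies $\psi\leq -C\inf_A\psi$ on $A_\epsilon$), applies it to $\widetilde{\varphi}_i^*$, and then transfers the resulting dual bounds back to the primal side to sandwich $\varphi$ between the cones $\mathbb{1}_{A_\epsilon}^*-C_\epsilon$ and $C+\mathbb{1}_A^*$, letting $\epsilon\to 0$. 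You instead bound $\widetilde{\varphi}_i^*$ on compact subsets of $A$ directly by an averaging/reflection argument for nonnegative convex functions with bounded integral (using the same two inputs the paper uses: $\inf\widetilde{\varphi}_i^*=-\widetilde{\varphi}_i(0)\geq -C$ and $\int_A\widetilde{\varphi}_i^*\,d\lambda=-\tau$), then stay on the dual side: lower semicontinuity of the Legendre transform under local uniform convergence gives $A\subseteq\dom\varphi^*$, and the standard chain $\Int\dom\varphi^*\subseteq\partial\varphi(\mathbb{R}^n)\subseteq\dom\varphi^*$ (valid since $\varphi$ is finite, hence closed, so the subgradient inversion rule applies) yields $\Int\partial\varphi(\mathbb{R}^n)=A$. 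Your local bound is more elementary than the paper's halfspace/extreme-point argument, at the cost of invoking the Rockafellar relation between $\dom\varphi^*$ and the subgradient image; both are legitimate, and neither generalizes the statement beyond the other. One small imprecision to fix: reflecting through $y_0$ only bounds $\psi(y_0)$; to bound $\psi$ on all of $B(y_0,\rho/2)$ you should run the same reflection argument centered at each point $y\in B(y_0,\rho/2)$ (using $B(y,\rho)\subset B(y_0,2\rho)\subset A$), which gives the uniform constant you assert.
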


The subsequence convergence is a simple corollary of the compactness properties for locally, uniformly bounded convex functions.

\begin{lemma}
Assume $A$ satisfies (\ref{Aconditions}) and the Monge--Amp\`{e}re iteration satisfies Hypotheses \ref{hypB}.  Then every subsequence $\{\widetilde{\varphi}_{i'}\}$ has a further subsequence, uniformly convergent on compact sets.
\end{lemma}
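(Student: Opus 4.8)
The plan is to invoke the Arzel\`a--Ascoli theorem, for which I need two ingredients on the family $\{\widetilde{\varphi}_i\}$: local equiboundedness and equicontinuity. The first is immediate from Proposition \ref{uniformbounds}: the estimate $\tau/\lambda(A) + r\,|x| \leq \widetilde{\varphi}_i(x) \leq C + R\,|x|$ shows that on any closed ball $\overline{B_k}$ the values $\widetilde{\varphi}_i(x)$ lie in a fixed bounded interval depending only on $k$, $\tau$, $A$, and $h$, not on $i$.

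For equicontinuity I would use that each $\varphi_i$ (hence each $\widetilde{\varphi}_i$, a translate) is smooth and convex with $\nabla \widetilde{\varphi}_i(\mathbb{R}^n) = A$. Since $A$ is bounded, fix $R>0$ with $A \subset B_R$; then $|\nabla \widetilde{\varphi}_i(x)| \leq R$ for every $x$, so each $\widetilde{\varphi}_i$ is $R$-Lipschitz on all of $\mathbb{R}^n$, with the same constant for every $i$. Thus $\{\widetilde{\varphi}_i\}$ is uniformly Lipschitz, in particular equicontinuous on every compact set.

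Given a subsequence $\{\widetilde{\varphi}_{i'}\}$, I would then apply Arzel\`a--Ascoli on the exhausting sequence of compact sets $\overline{B_1} \subset \overline{B_2} \subset \cdots$: extract a subsequence converging uniformly on $\overline{B_1}$, a further subsequence converging uniformly on $\overline{B_2}$, and so on, and take the diagonal subsequence $\{\widetilde{\varphi}_{i''}\}$, which converges uniformly on each $\overline{B_k}$ and hence uniformly on every compact subset of $\mathbb{R}^n$. Finally, a uniform (indeed pointwise) limit of convex functions is convex, so the limit $\varphi$ is convex, which is all that is claimed here; the identification $\Int \partial \varphi(\mathbb{R}^n) = A$ is deferred to the remaining part of Proposition \ref{subsequence}. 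There is essentially no obstacle in this step — the only mild care needed is the diagonalization to pass from compact balls to all of $\mathbb{R}^n$, reflecting the noncompactness issue flagged in the introduction, but it is entirely routine.
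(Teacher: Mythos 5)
Your proof is correct, but it takes a slightly different route from the paper's. The paper simply observes that Proposition \ref{uniformbounds} gives pointwise boundedness of $\{\widetilde{\varphi}_{i'}(x_0)\}$ at every fixed $x_0$ and then cites Rockafellar's compactness theorem for convex functions (Theorem 10.9 of \cite{Rockafellar}), which already packages the fact that a pointwise bounded sequence of convex functions is locally equi-Lipschitz and hence has a locally uniformly convergent subsequence; no gradient information is needed. You instead use the extra structure of the iteration, namely $\nabla \widetilde{\varphi}_i(\mathbb{R}^n)=A\subset B_R$, to get a single global Lipschitz constant $R$ for the whole family, and then run Arzel\`a--Ascoli plus a diagonal extraction over an exhaustion by closed balls by hand, finishing with the observation that a pointwise limit of convex functions is convex. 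Your argument is more self-contained (it avoids the convex-analysis compactness theorem and makes the equicontinuity completely explicit), while the paper's is shorter and more robust in that it would apply to any pointwise bounded sequence of convex functions, even without the second boundary condition; in this setting both are equally valid, and your uniform Lipschitz bound is in any case available (even without smoothness, since subgradients lie in $\overline{A}$).
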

\begin{proof}
By Proposition \ref{uniformbounds},
\[
\scaleobj{.9}{\dfrac{\tau}{\lambda(A)}} \leq \widetilde{\varphi}_{i'}(x) \leq C+R\,|x|.
\]
Thus, for every fixed $x_0\in\mathbb{R}^n$, the set $\big\{\widetilde{\varphi}_{i'}(x_0)\big\} \subset \big[\tau/\lambda(A),\,C+R\,|x_0|\,\big]$ is bounded.  By Rockafellar \cite[Theorem~10.9]{Rockafellar} there exists a further subsequence $\{\widetilde{\varphi}_{i''}\}$ which converges to a convex function $\varphi$ uniformly on compact subsets of $\mathbb{R}^n$.
\end{proof}

The convergence of the subgradients relies upon the fact that $\int_A \widetilde{\varphi}_i^*\,d\lambda=-\tau$ for each $i$.  

In general, if we only know that convex functions $f_i$ converge to $f$ uniformly on compact sets, we may have $\partial f(\mathbb{R}^n) \subsetneq \partial f_i (\mathbb{R}^n)$.  To see this, let $f$ be any convex function such that $\partial f(\mathbb{R}^n) = A$ where $A \subset B_R$.  Define
\[
f_i(x) = \max\{\, f(x), \,R\,|x| -i\, \}.
\]
Then $f_i$ converges to $f$ uniformly on compact subsets, but $\partial f_i(\mathbb{R}^n) = B_R \supsetneq A$ for all $i$.  

The proof of the subgradient limit relies upon a convex analysis lemma which is a generalization of Lemma \ref{Klartaglemma}.  Klartag proved that if $\psi$ is a convex function such that $\int_A \psi \leq 0$, then there is an upper bound for $\psi$, depending on $A$ and its minimum value, in a small ball around its minimum.  This Lemma extends the upper bound to any open set away from the boundary of $A$.

\begin{lemma}\label{Klartagvar}
Let $A\subset \mathbb{R}^n$ be convex.  For each $\epsilon>0$ define $A_\epsilon = \{\,y\mid B_\epsilon(y)\subset A\,\}$.  There exists $C>0$ depending on $\epsilon$ and $\lambda(A)$ with the following property: Let $\psi: A \to \mathbb{R}$ be convex, and assume $\int_A \psi \,d\lambda \leq 0 $.  Then
\[
\psi(y) \leq -C\,\inf_A \{\psi\} \hspace{5mm}\text{when }y\in A_\epsilon.
\]
\end{lemma}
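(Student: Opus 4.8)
The plan is to compare $\psi$ pointwise with two explicit minorants and then integrate against the hypothesis $\int_A \psi\,d\lambda \le 0$. Write $m = \inf_A \psi$. First note $m$ is finite: $\psi$ is real-valued and convex on a convex set with nonempty interior, so it has a subgradient at any interior point, hence an affine minorant valid on all of $A$, which is bounded below on the bounded set $A$. Also $m \le 0$, since $\psi \ge m$ on $A$ gives $m\,\lambda(A) \le \int_A \psi\,d\lambda \le 0$. Fix $y_0 \in A_\epsilon$, so $B_\epsilon(y_0)\subset A$ and in particular $y_0$ is an interior point, and set $M = \psi(y_0)$; the goal is $M \le -C m$ with $C$ depending only on $\epsilon$, $n$, and $\lambda(A)$.

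Choose $q \in \partial\psi(y_0)$ and let $\ell(y) = M + \langle q, y-y_0\rangle$, so that $\psi \ge \ell$ on all of $A$. Together with $\psi \ge m$ this gives $\psi \ge \max(\ell, m) = m + (\ell-m)_+$ on $A$. The key observation — the one that lets the argument run with no control on $|q|$ — is that on the half-ball $B^+ := B_\epsilon(y_0)\cap\{\,y : \langle q, y-y_0\rangle \ge 0\,\}\subset A$ one has $\ell - m \ge M - m \ge 0$, while $\lambda(B^+) = \tfrac12\omega_n\epsilon^n$. Hence
\[
0 \;\ge\; \int_A \psi\,d\lambda \;\ge\; m\,\lambda(A) + \int_A (\ell-m)_+\,d\lambda \;\ge\; m\,\lambda(A) + \tfrac12\,\omega_n\,\epsilon^n\,(M-m).
\]
Rearranging, $M - m \le -\,2\,m\,\lambda(A)\big/\big(\omega_n\epsilon^n\big)$, so $M \le -m\big(\tfrac{2\lambda(A)}{\omega_n\epsilon^n}-1\big)$, which is the assertion with $C = \tfrac{2\lambda(A)}{\omega_n\epsilon^n}-1$; this is positive because $A_\epsilon\ne\emptyset$ forces $B_\epsilon(y_0)\subset A$, hence $\lambda(A)\ge\omega_n\epsilon^n$. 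If $q=0$, replace $B^+$ by any half of $B_\epsilon(y_0)$ (or the whole ball), with no change to the conclusion.

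I do not anticipate a serious obstacle. The one pitfall is the temptation to bound the subgradient norm $|q|$, which is not possible from the given data and is unnecessary once the integral is restricted to the half-ball $B^+$. The remaining points are bookkeeping: finiteness of $m$ (handled by the affine-minorant remark above) and the genuinely degenerate cases ($A$ not full-dimensional, or $A_\epsilon$ empty), in which the statement is vacuous. One could alternatively phrase the argument through the barycenter of $A$ via Jensen's inequality, but the supporting-hyperplane-plus-half-ball version above seems the most direct and gives an explicit $C$.
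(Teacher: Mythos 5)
Your argument is correct, and it reaches the same quantitative heart as the paper's proof — a half-ball of radius $\epsilon$ inside $A$, of volume $\tfrac12\omega_n\epsilon^n$, on which $\psi$ is at least as large as its value at the point of interest, combined with the trivial bound $\psi\geq\inf_A\psi$ on the rest of $A$ — but it gets there by a different mechanism. The paper fixes $C$ in advance (with $C+1>1/V$, $V=\omega_n\epsilon^n/2$ after normalizing $\lambda(A)=1$), argues by contradiction, and uses the convexity of the sublevel set $\{\psi\leq -C\inf\psi\}$ together with a supporting halfspace at a boundary point of that set lying in $A_\epsilon$; you instead work directly at the given point $y_0\in A_\epsilon$, take a subgradient $q\in\partial\psi(y_0)$ (which exists since $y_0$ is interior), and integrate the pointwise minorant $\psi\geq m+(\ell-m)_+$ over $A$, restricting the second term to the half-ball $B_\epsilon(y_0)\cap\{\langle q,y-y_0\rangle\geq 0\}$. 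Your direct route buys two things: it sidesteps the slightly delicate selection of a boundary point of the sublevel set inside $A_\epsilon$ that the paper's contradiction argument relies on, and it produces an explicit constant $C=\tfrac{2\lambda(A)}{\omega_n\epsilon^n}-1$, which after the paper's normalization $\lambda(A)=1$ is exactly the threshold value the paper's choice of $C$ must exceed; the observation that no bound on $|q|$ is needed once you restrict to the half-space where $\langle q,y-y_0\rangle\geq 0$ is precisely the right one. The only caveats are the implicit standing assumptions (also implicit in the paper): $A$ should have finite positive measure and nonempty interior for the statement and the constant to make sense, and integrability of $\psi$ is presupposed by the hypothesis $\int_A\psi\,d\lambda\leq 0$; you address these adequately.
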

\begin{proof}
We can make the simplifying assumption that $\lambda(A)=1$.  Define $C$ to be any constant such that $C+1 > 1/V$ where $V=\omega_n \epsilon^n/2$ is the volume of a half-ball of radius $\epsilon$. Assume by contradiction to the conclusion of the lemma
\[
\big\{\,y\in A \mid \psi(y)\leq -C\,\inf \{\psi\} \,\big\} \text{ does not contain }A_\epsilon.
\]
Since $\{\,y\mid \psi(y)\leq -C\,\inf\psi\,\}$ is the sublevel set of a convex function, it is convex.  Let $y_0$ be any point in the portion of the boundary of $\{\,y\mid \psi(y)\leq -C\,\inf\psi\,\}$ which intersects $A_\epsilon$.  Then the supporting halfspace at $y_0$ lies outside $\{\,y\mid \psi(y)\leq -C\,\inf\psi\,\}$, meaning there exists an outward normal $v$ such that 
\[
\Big(A\cap \big\{\,y \mid \langle y-y_0,v\rangle \geq 0\,\big\}\Big) \subset \{\,y \in A \mid \psi(y)\geq -C\,\inf\psi\,\}.
\]
Since $y_0\in A_\epsilon$ we can intersect the first set with $B_\epsilon(y_0)$ to get
\[
B:=\Big(B_\epsilon(y_0)\cap\big\{\,y \mid \langle y-y_0,v\rangle \geq 0\,\big\} \Big) \subset \{\,y \in A \mid \psi(y)\geq -C\,\inf\psi\,\}.
\]

\begin{center}
\includegraphics{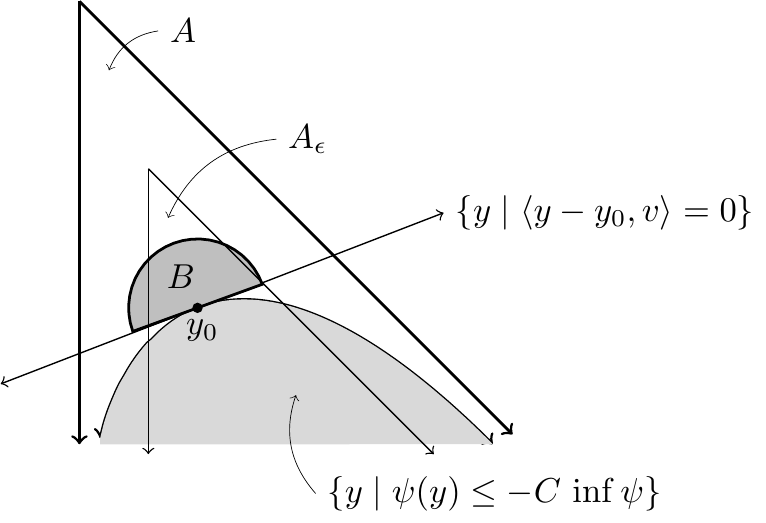}
\end{center}

Then, since $\lambda(A)=1$ 
\[
\int_A \psi(y)\,d\lambda \geq (1-V)(\inf \psi)+V(-C\,\inf\psi) = (-\inf\psi)\,\big(-1+(C+1)\,V\big).
\]
$\int_A \psi\,d\lambda\leq 0$ so $(-\inf\psi)\geq 0$.  We chose $C$ so that $C+1 > 1/V$, and it follows that $\int_A \psi\,d\lambda >0$, which is a contradiction since $\int_A \psi\,d\lambda\leq0$.
\end{proof}

\begin{lemma}\label{subgradientlimit}
Assume $A$ satisfies (\ref{Aconditions}) and the Monge--Amp\`{e}re iteration satisfies Hypotheses \ref{hypB}.  Let $\{\widetilde{\varphi}_{i'}\}$ be any subsequence which converges to $\varphi$ uniformly on compact subsets of $\mathbb{R}^n$.  Then $\Int\,\partial \varphi(\mathbb{R}^n)=A$.
\end{lemma}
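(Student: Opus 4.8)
The plan is to establish the two inclusions $\Int\partial\varphi(\mathbb{R}^n)\subseteq A$ and $A\subseteq\Int\partial\varphi(\mathbb{R}^n)$ separately; the second is where the normalization $\int_A\widetilde{\varphi}_i^*\,d\lambda=-\tau$ and Lemma \ref{Klartagvar} do the real work. For the inclusion $\Int\partial\varphi(\mathbb{R}^n)\subseteq A$, I would use that each $\widetilde{\varphi}_{i'}$ has gradient image $A\subseteq B_R$, so integrating $\nabla\widetilde{\varphi}_{i'}$ along a ray gives $\widetilde{\varphi}_{i'}(x+tv)\leq\widetilde{\varphi}_{i'}(x)+t\,\sup_{y\in A}\langle y,v\rangle$ for every unit vector $v$ and every $t>0$. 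Passing to the uniform-on-compacts limit yields the same inequality for $\varphi$, which forces any $p\in\partial\varphi(x)$ to satisfy $\langle p,v\rangle\leq\sup_{y\in A}\langle y,v\rangle$ for all $v$, i.e.\ $p\in\overline{A}$. Hence $\partial\varphi(\mathbb{R}^n)\subseteq\overline{A}$, and since $A$ is open and convex, $\Int\partial\varphi(\mathbb{R}^n)\subseteq\Int\overline{A}=A$.

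For the reverse inclusion the crux is a uniform (in $i'$) upper bound for the Legendre transforms $\widetilde{\varphi}_{i'}^*$ on compact subsets of $A$. By Lemma \ref{tildeprops} each $\widetilde{\varphi}_{i'}^*$ is convex on $A$ with $\int_A\widetilde{\varphi}_{i'}^*\,d\lambda=-\tau$ and $\inf_A\widetilde{\varphi}_{i'}^*=\widetilde{\varphi}_{i'}^*(0)=-\widetilde{\varphi}_{i'}(0)$, while Proposition \ref{uniformbounds} gives $\tau/\lambda(A)\leq\widetilde{\varphi}_{i'}(0)\leq C$; thus $-C\leq\inf_A\widetilde{\varphi}_{i'}^*\leq-\tau/\lambda(A)$. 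Applying Lemma \ref{Klartagvar} to $\psi_{i'}:=\widetilde{\varphi}_{i'}^*+\tau/\lambda(A)$, which satisfies $\int_A\psi_{i'}\,d\lambda=0$, produces for each $\epsilon>0$ a constant $M_\epsilon$ depending only on $\epsilon$, $\lambda(A)$, $\tau$, and $C$ such that $\widetilde{\varphi}_{i'}^*(y)\leq M_\epsilon$ for all $y\in A_\epsilon$ and all $i'$.

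Finally I would pass to the limit. For fixed $y$ and arbitrary $x$, $\langle x,y\rangle-\varphi(x)=\lim_{i'}\big(\langle x,y\rangle-\widetilde{\varphi}_{i'}(x)\big)\leq\liminf_{i'}\widetilde{\varphi}_{i'}^*(y)$, and taking the supremum over $x$ gives $\varphi^*(y)\leq\liminf_{i'}\widetilde{\varphi}_{i'}^*(y)$; combined with the previous step, $\varphi^*(y)\leq M_\epsilon<\infty$ for every $y\in A_\epsilon$. Since $A$ is open, $A=\bigcup_{\epsilon>0}A_\epsilon$, so $\varphi^*$ is finite throughout $A$; together with $\dom\varphi^*\subseteq\overline{A}$ from the first inclusion this forces $\Int(\dom\varphi^*)=A$. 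A finite convex function on an open set is subdifferentiable there, so $\partial\varphi^*(y)\neq\emptyset$ for every $y\in A$, and the conjugate-subgradient relation $y\in\partial\varphi(x)\iff x\in\partial\varphi^*(y)$ yields $A\subseteq\partial\varphi(\mathbb{R}^n)$, hence (as $A$ is open) $A\subseteq\Int\partial\varphi(\mathbb{R}^n)$. Combining the two inclusions completes the proof. The main obstacle is precisely the uniform bound of the second paragraph: this is the step that rules out the degeneration in the example preceding the lemma, where the steep parts of the $\widetilde{\varphi}_{i'}$ escape to infinity and $\partial\varphi(\mathbb{R}^n)$ shrinks below $A$. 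The remaining steps are soft limiting arguments and standard convex analysis.
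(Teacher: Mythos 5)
Your proof is correct and follows essentially the same route as the paper: both arguments rest on the upper bound coming from $\nabla\widetilde{\varphi}_{i'}(\mathbb{R}^n)=A\subset B_R$ together with the boundedness of $\widetilde{\varphi}_{i'}(0)$ from Proposition \ref{uniformbounds}, and on Lemma \ref{Klartagvar} applied to the normalized Legendre transforms to get the uniform bound $\widetilde{\varphi}_{i'}^*\leq M_\epsilon$ on $A_\epsilon$. The only difference is cosmetic: you pass to the limit on the dual side ($\varphi^*\leq\liminf\widetilde{\varphi}_{i'}^*$, hence $\Int\dom\varphi^*=A$) and invoke subgradient duality, whereas the paper conjugates the bound back to obtain the cone sandwich $\mathbb{1}_{A_\epsilon}^*-C_\epsilon\leq\varphi\leq C+\mathbb{1}_A^*$ and reads off $A_\epsilon\subset\Int\partial\varphi(\mathbb{R}^n)\subset A$ before letting $\epsilon\to 0$.
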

\begin{proof}
First we find a refined upper bound for $\widetilde{\varphi}_i$.  Let
\[
\mathbb{1}_D = \begin{cases} 0 &y\in D\\ \infty & y \notin D \end{cases}
\]
be the convex indicator function of a convex set $D$.  Then $\mathbb{1}_D^*(x) = \sup \{\,\langle x,y\rangle\mid y\in D\,\}$ is the cone emanating from the origin with subgradient image equal to  $\overline{D}$.

By Lemma \ref{tildeprops} $\inf \widetilde{\varphi}_i^* = \widetilde{\varphi}_i^*(0) = -\widetilde{\varphi}_i(0)$, so
\[
\widetilde{\varphi}_i^* \geq \mathbb{1}_{A} +\widetilde{\varphi}_i^*(0) = \mathbb{1}_{A} -\widetilde{\varphi}_i(0).
\]
The order reversing property of the Legendre transform implies
\[
\widetilde{\varphi}_i(x) \leq \widetilde{\varphi}_i(0) + \mathbb{1}_A^*(x).
\]
Proposition \ref{uniformbounds} implies $\big\{\widetilde{\varphi}_i(0)\big\}$ is bounded, so there is a constant $C$ such that 
\begin{equation}\label{a}
\widetilde{\varphi}_i(x) \leq C + \mathbb{1}_A^*(x) \hspace{5mm}\text{for all }i.
\end{equation}

Next we find a refined lower bound for $\widetilde{\varphi}_i$.  The addition of a constant to every $\widetilde{\varphi}_i$ does not affect the convergence or the subgradients, so we can assume without loss of generality that $\int_A \widetilde{\varphi}_{i'}^* \,d\lambda = -\tau \leq 0$.  Thus we can apply Lemma \ref{Klartagvar} to show that for every $\epsilon>0$ there exists a constant $C_{\epsilon}$ such that
\[
\widetilde{\varphi}_i^*(y) \leq \mathbb{1}_{A_\epsilon}(y) - C_{\epsilon}\,\big(\inf \widetilde{\varphi}_i^*\big) = \mathbb{1}_{A_\epsilon}(y) +C_{\epsilon}\,\widetilde{\varphi}_i(0).
\]
Since $\big\{\widetilde{\varphi}_i(0)\big\}$ is bounded,
\[
\widetilde{\varphi}_i^*(y) \leq \mathbb{1}_{A_\epsilon}(y) +C_{\epsilon}.
\]
By the order reversing properties of the Legendre transform
\begin{equation}\label{b}
\widetilde{\varphi}_i(x) \geq \mathbb{1}_{A_\epsilon}^*(x) -C_{\epsilon}.
\end{equation}
Since the bounds (\ref{a}) and (\ref{b}) are independent of $i$, it follows that
\[
\mathbb{1}_{A_\epsilon}^*(x) -C_{\epsilon} \leq \varphi(x) \leq C + \mathbb{1}_A^*(x) \hspace{5mm}\text{for all }i.
\]
Thus for every $\epsilon>0$, $A_\epsilon \subset \Int\,\partial \varphi(\mathbb{R}^n) \subset A$.  Letting $\epsilon \to 0$ finishes the proof.
\end{proof}

\subsection{Convergence of the Monge--Amp\`{e}re measures}

In this section we will prove step 3 of Subsection \ref{outline}.  

Let $\{\mu_i\}$ be measures in $\mathcal{P}_1$.  Recall, $\mu_i$ converges to $\mu$ \textit{weakly}, denoted $\mu_i \Rightarrow \mu$, if
\[
\int_{\mathbb{R}^n} f\,d\mu_i \to \int_{\mathbb{R}^n} f\,d\mu \text{ for all }f\in C_b,
\]
the space of continuous, bounded functions. We will take the following proposition \cite[pg.\,96]{Villani1} as equivalent definitions of convergence in Wasserstein distance, which we denote $\mu_i \to_1 \mu$. 

\begin{proposition}\label{Wassequivalent}
Let $\{\mu_i\}_{i=1}^\infty$ and $\mu$ be probability measures in $\mathcal{P}_1$. The following are equivalent:
\begin{enumerate}[label = (\roman*)]
\item $\mu_i \to_1 \mu$.
\item $\mu_i \Rightarrow \mu$ and 
\[
\lim_{R\to\infty}\limsup_{i\to\infty} \,\bigg\{\int_{\big\{|x|\geq R\big\}} |x| \,d\mu_i\,\bigg\} =0.
\]
\item For all continuous functions $f$ with $|f(x)| \leq C\,\big(1+|x|\big)$,
\[
\int_{\mathbb{R}^n} f\,d\mu_i \to \int_{\mathbb{R}^n} f\,d\mu.
\]
\end{enumerate}
\end{proposition}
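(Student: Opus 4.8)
The plan is to cite this as a standard fact, but since the excerpt frames it as a proposition to be proved, I would organize the argument around the chain of implications $(i)\Rightarrow(iii)\Rightarrow(ii)\Rightarrow(i)$, which is the most economical cycle. Throughout, write $W_1(\mu,\nu)$ for the Wasserstein-1 distance and recall the Kantorovich--Rubinstein duality $W_1(\mu,\nu)=\sup\{\int f\,d\mu-\int f\,d\nu : \operatorname{Lip}(f)\le 1\}$, which will be the workhorse for the first two implications.

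\emph{$(i)\Rightarrow(iii)$.} Suppose $W_1(\mu_i,\mu)\to 0$. Given continuous $f$ with $|f(x)|\le C(1+|x|)$, I would first reduce to $f$ Lipschitz by a truncation/mollification argument: approximate $f$ uniformly on balls by Lipschitz functions $f_R$ agreeing with $f$ on $B_R$ and growing at most linearly, then control the tail $\int_{|x|\ge R}|f-f_R|\,d\mu_i$ using the uniform integrability of $|x|$ that $W_1$-convergence forces (indeed $\int|x|\,d\mu_i\to\int|x|\,d\mu$ since $x\mapsto|x|$ has linear growth, once that case is handled, and more: $W_1$-convergence implies uniform integrability of the identity map). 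For Lipschitz $g$, duality gives $|\int g\,d\mu_i-\int g\,d\mu|\le \operatorname{Lip}(g)\,W_1(\mu_i,\mu)\to 0$. Assembling the pieces yields $(iii)$.

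\emph{$(iii)\Rightarrow(ii)$.} Weak convergence $\mu_i\Rightarrow\mu$ is immediate since $C_b$ functions satisfy the growth bound in $(iii)$. For the tightness-type condition, note that $(iii)$ applied to $f(x)=|x|$ gives $\int|x|\,d\mu_i\to\int|x|\,d\mu<\infty$; combined with $\mu_i\Rightarrow\mu$ this gives uniform integrability of $|x|$ against $\{\mu_i\}$ (a measure $\mu_i$ with $\int|x|\,d\mu_i$ converging and $|x|\wedge R$ having a continuous bounded-growth integrand lets one pass to the limit on the truncation and conclude $\sup_i\int_{|x|\ge R}|x|\,d\mu_i\to 0$ as $R\to\infty$). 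That is exactly the stated limit.

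\emph{$(ii)\Rightarrow(i)$.} This is the main obstacle, as it requires upgrading weak convergence plus tail control to $W_1$-convergence. I would argue as follows: fix $\varepsilon>0$ and, using the tightness condition, choose $R$ so that $\limsup_i\int_{|x|\ge R}|x|\,d\mu_i<\varepsilon$ and also $\int_{|x|\ge R}|x|\,d\mu<\varepsilon$. Using duality, estimate $W_1(\mu_i,\mu)$ by splitting any $1$-Lipschitz test function $f$ (normalized so $f(0)=0$, hence $|f(x)|\le|x|$) into its restriction to $B_R$, where $\mu_i\Rightarrow\mu$ and $f$ is bounded continuous so the integrals converge, and its contribution on $\{|x|\ge R\}$, bounded by $\int_{|x|\ge R}|x|\,(d\mu_i+d\mu)<2\varepsilon$ for $i$ large. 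Taking the sup over $f$ and then $\limsup_i$ gives $\limsup_i W_1(\mu_i,\mu)\le C\varepsilon$ for every $\varepsilon$, hence $(i)$. The delicate point is handling the supremum over the infinite-dimensional family of Lipschitz functions uniformly in the splitting; one clean way is to instead invoke that on the compact ball $\overline{B_R}$ weak convergence metrizes as $W_1$ on that ball (bounded Lipschitz metric), so the near-$B_R$ part is genuinely $o(1)$. Since the cited reference \cite{Villani1} records this equivalence, I would in the actual paper simply state the proposition and refer there; the above is the route one takes if a self-contained proof is wanted.
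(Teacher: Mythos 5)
The paper in fact gives no proof of this proposition at all: it is taken directly from Villani \cite[pg.~96]{Villani1} as the working definition of convergence in Wasserstein distance, so your closing suggestion to state the result and refer to that source is exactly what the paper does. Your sketched cycle $(i)\Rightarrow(iii)\Rightarrow(ii)\Rightarrow(i)$ is the standard Kantorovich--Rubinstein duality argument and is sound in outline, including your own correct flag that the $(ii)\Rightarrow(i)$ step cannot be done by a naive term-by-term split over all $1$-Lipschitz test functions and instead needs the bounded-Lipschitz (or restricted-$W_1$) metrization of weak convergence on compact balls.
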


The goal of this section is to prove the following proposition:

\begin{proposition}\label{sd}
Assume $A$ satisfies (\ref{Aconditions}) and the Monge--Amp\`{e}re iteration (\ref{MAiteration}) satisfies Hypotheses \ref{hypB}.  Assume a subsequence $\{\widetilde{\varphi}_{i'}\}$ converges to $\varphi$ uniformly on compact sets. Then 
\begin{equation}\label{Wasslimit}
\scaleobj{.9}{\dfrac{\MA(\widetilde{\varphi}_{i'})}{\lambda(A)}} \rightarrow_1 \scaleobj{.9}{\dfrac{\MA(\varphi)}{\lambda(A)}}.
\end{equation}
\end{proposition}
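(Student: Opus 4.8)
The plan is to verify the two conditions in part (ii) of Proposition \ref{Wassequivalent}, i.e. weak convergence together with the uniform-integrability (tightness of first moments) condition. Throughout I write $\mu_{i'} = \MA(\widetilde{\varphi}_{i'})/\lambda(A)$ and $\mu = \MA(\varphi)/\lambda(A)$; note each $\mu_{i'}$ is a probability measure supported (up to measure zero) in the fixed bounded set $A \subset B_R$ by the second line of equation (\ref{MAiteration}) and by Proposition \ref{subsequence}. Wait — this is not quite right: the measure in (\ref{Wasslimit}) is the push-forward of Lebesgue onto $A$ only under $\nabla \varphi_{i'}$, whereas $\MA(\widetilde{\varphi}_{i'})$ is a measure on the \emph{domain} $\mathbb{R}^n$, not on $A$. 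So the support is genuinely unbounded and the tightness statement has real content. The correct normalization is that $\MA(\widetilde{\varphi}_{i'})/\lambda(A) = (h\circ\widetilde{\varphi}_{i'-1})/\|h\circ\widetilde{\varphi}_{i'-1}\|_1 \cdot \lambda$ (using equation (\ref{MAiteration})), which is why the density is controlled by the growth bounds of Proposition \ref{uniformbounds}.

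First I would establish weak convergence $\mu_{i'} \Rightarrow \mu$. This is the standard fact that if convex functions $\widetilde{\varphi}_{i'} \to \varphi$ uniformly on compact sets, then their Monge--Amp\`ere measures converge weakly; see Rauch--Taylor \cite{RT} or the treatment in \cite{Rockafellar}. The essential input is that uniform convergence of convex functions on compact sets implies convergence of subgradients in an appropriate Kuratowski sense, and $\MA$ is weakly continuous under this. Since $\widetilde\varphi_{i'}$ and $\varphi$ all satisfy the same uniform bounds from Proposition \ref{uniformbounds}, no boundary issues arise on any fixed compact set.

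Second, and this is the main obstacle, I would prove the first-moment tightness condition
\[
\lim_{R\to\infty}\limsup_{i'\to\infty}\int_{\{|x|\ge R\}} |x|\, d\mu_{i'} = 0.
\]
Using the identification $\mu_{i'} = (h\circ\widetilde\varphi_{i'-1})/\|h\circ\widetilde\varphi_{i'-1}\|_1\,\lambda$ from equation (\ref{MAiteration}), this becomes an estimate on $\int_{\{|x|\ge R\}} |x|\, h(\widetilde\varphi_{i'-1}(x))\,d\lambda$ divided by $\|h\circ\widetilde\varphi_{i'-1}\|_1$. The denominator is bounded below uniformly in $i'$: by Proposition \ref{uniformbounds}, $\widetilde\varphi_{i'-1}(x)\le C + R|x|$, so Lemma \ref{Hfbound} gives $\|h\circ\widetilde\varphi_{i'-1}\|_1 \ge c > 0$. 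For the numerator, the lower bound $\widetilde\varphi_{i'-1}(x) \ge \tau/\lambda(A) + r|x|$ from Proposition \ref{uniformbounds} together with monotonicity of $h$ gives $h(\widetilde\varphi_{i'-1}(x)) \le h(\tau/\lambda(A) + r|x|)$, and the decay hypothesis \ref{B1}, $h(t)\le C t^{-(n+p+1)}$ for $t\gg 1$ with $p>0$, makes $\int_{\{|x|\ge R\}} |x|\, h(\tau/\lambda(A) + r|x|)\,d\lambda$ a convergent tail integral (the integrand decays like $|x|\cdot|x|^{-(n+p+1)} = |x|^{-(n+p)}$, integrable over $\mathbb{R}^n$), hence it tends to $0$ as $R\to\infty$ uniformly in $i'$. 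I should be slightly careful here: the proof outline in Step 3 mentions a drift bound $|a_{i+1}-a_i|\le C$ as the crucial ingredient, which suggests the author relates $\widetilde\varphi_{i'}$ to $\widetilde\varphi_{i'-1}$ via the translations $a_{i'}$; the point is that the density of $\MA(\widetilde\varphi_{i'})$ involves $h\circ\varphi_{i'-1}$ composed with a shift by $a_{i'-1}-a_{i'}$, and the growth bounds of Proposition \ref{uniformbounds} must be transported across that bounded shift. Granting the uniform bounds survive the shift (which they do, up to adjusting constants, precisely because the shift is bounded), the tail estimate above goes through and both conditions of Proposition \ref{Wassequivalent}(ii) hold, giving (\ref{Wasslimit}).
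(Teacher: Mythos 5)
Your overall strategy coincides with the paper's: verify Proposition \ref{Wassequivalent}(ii), getting weak convergence from local uniform convergence of convex functions (the paper cites Trudinger--Wang for this) and getting the first-moment tightness from the pointwise bound on the density $h\circ\varphi_{i'-1}/\|h\circ\varphi_{i'-1}\|_1$ supplied by Proposition \ref{uniformbounds}, Lemma \ref{Hfbound}, and the decay in Hypothesis \ref{B1}. You also correctly diagnose the one subtlety: the density of $\MA(\widetilde{\varphi}_{i'})$ is $h\big(\varphi_{i'-1}(x+a_{i'})\big)/\|h\circ\varphi_{i'-1}\|_1$, whose lower bound from Proposition \ref{uniformbounds} is centered at $a_{i'-1}$, so the tail estimate only becomes uniform in $i'$ after the shift $a_{i'}-a_{i'-1}$ is controlled.

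The gap is that you never prove the drift bound $|a_{i+1}-a_i|\leq C$; you ``grant'' it on the grounds that the proof outline mentions it, and you justify transporting the bounds across the shift ``precisely because the shift is bounded''---which presupposes exactly what must be shown. This bound is not an input available before Proposition \ref{sd}: it is Lemma \ref{aibound} of the paper, proved as part of this very proposition, and it is the genuinely new ingredient here (the rest of the tightness argument is routine once it is in hand). Without it the tail estimate fails: if $|a_{i'}-a_{i'-1}|\to\infty$, the density of $\MA(\widetilde{\varphi}_{i'})$ concentrates near $x=a_{i'-1}-a_{i'}$, and $\int_{\{|x|\geq R\}}|x|\,d\mu_{i'}$ stays of order $|a_{i'-1}-a_{i'}|$ for each fixed $R$, so the $\limsup$ does not vanish. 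The paper's proof of the drift bound is not formal bookkeeping: it uses Lemma \ref{subgradientinclusion} to show that the subgradient image of $\varphi_{i+1}$ over a ball of bounded radius around its minimum point $a_{i+1}$ contains $B_r$, forcing a definite amount of Monge--Amp\`ere mass there, while the iteration equation plus the growth bound centered at $a_i$ and Hypothesis \ref{B1} make $\det(\nabla^2\varphi_{i+1})$ uniformly small far from $a_i$; these two facts are incompatible if $|a_{i+1}-a_i|$ is large. Supplying an argument of this kind is what your proposal is missing.
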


For this step, we will simplify notation by assuming $\lambda(A)=1$.  The same proofs hold in general by replacing $\MA(\varphi)$ by $\scaleobj{.9}{\dfrac{\MA(\varphi)}{\lambda(A)}}$.  By Proposition \ref{Wassequivalent}, equation (\ref{Wasslimit}) is equivalent to
\[
\MA(\widetilde{\varphi}_{i'}) \Rightarrow \mu \hspace{3mm}\text{and}\hspace{3mm} \lim_{R\to\infty} \limsup_{i'\to\infty} \,\bigg\{\int_{\{|x|\geq R\}}|x|\,\MA(\widetilde{\varphi}_{i'}) \bigg\}=0.
\]
The weak convergence $\MA(\widetilde{\varphi}_{i'}) \Rightarrow \MA(\varphi)$ is a consequence of $\varphi_{i'}$ converging to $\varphi$ uniformly on compact sets by Trudinger--Wang \cite[Lemma 2.2]{TW}. 

The second condition, which is a tightness condition on the measures, relies upon a uniform bound $|a_{i+1}-a_i|\leq C$.  When $h(t)=e^{-t}$ this bound is the analogue of Darvas--Rubinstein \cite[Theorem~5.1]{Darvas} which establishes the bound on the distance between consecutive automorphisms from the sequence $\{g_i\}_{i\in\mathbb{N}}$ which make the K\"{a}hler--Ricci iteration converge.  First we need a convex analysis lemma.

\begin{lemma}\label{subgradientinclusion}
Let $f$ be a convex function on $B_r$, and let $a<b$ be constants such that $f(0)=a$ and $f(x)\geq b$ for $|x|=r$.  Then
\[
B_{(b-a)/r} \subset \partial f(B_r) .
\]
\end{lemma}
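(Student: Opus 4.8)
The plan is to show that every vector $v$ with $|v| < (b-a)/r$ is the subgradient at some point of $B_r$, by using $v$ to tilt $f$ and then locating an interior minimum. Concretely, fix such a $v$ and consider the convex function $g(x) = f(x) - \langle v, x\rangle$ on $\overline{B_r}$. The key point is that $g$ attains its minimum over the closed ball $\overline{B_r}$ at an \emph{interior} point: at the origin we have $g(0) = f(0) = a$, while on the boundary sphere $|x| = r$ we estimate $g(x) = f(x) - \langle v,x\rangle \geq b - |v|\,r > b - (b-a) = a = g(0)$. Hence $\inf_{\overline{B_r}} g$ is not achieved on $\partial B_r$, so it is achieved at some $x_0 \in B_r$ with $|x_0| < r$.

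At such an interior minimizer $x_0$ of the convex function $g$ we have $0 \in \partial g(x_0)$, and since $g$ differs from $f$ by the linear function $\langle v, \cdot\rangle$, the subgradient shifts accordingly: $\partial g(x_0) = \partial f(x_0) - v$. Therefore $v \in \partial f(x_0) \subset \partial f(B_r)$. Since $v$ was an arbitrary vector with $|v| < (b-a)/r$, this shows the open ball of radius $(b-a)/r$ is contained in $\partial f(B_r)$; taking closures (or simply noting the statement as written allows the boundary sphere, which follows by the same tilting argument with non-strict inequalities when $|v| = (b-a)/r$ and using that $g$ still attains its min on $\overline{B_r}$, possibly on the boundary, but then by convexity also at a point where $0 \in \partial f - v$ can be arranged — more cleanly, one observes $\overline{B_{(b-a)/r}} = \overline{\bigcup_{\rho < r}}$ of such open balls) gives $B_{(b-a)/r} \subset \partial f(B_r)$ as stated.

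I do not anticipate a serious obstacle here; the argument is a standard "tilt and minimize" trick from convex analysis. The only mild subtlety is the boundary case $|v| = (b-a)/r$ versus the strict inequality, and whether the paper intends $B_{(b-a)/r}$ to be open or closed — since earlier in the excerpt $B_R$-type notation is used for both closed balls (e.g., $A \subset B_R$) and the context here just needs the inclusion to feed into a tightness estimate, it suffices to prove the open-ball inclusion and note the closed version follows by continuity, so I would present the clean strict-inequality version and remark that the boundary is handled by a limiting argument.
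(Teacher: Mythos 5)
Your tilt-and-minimize argument is essentially the paper's own proof in different clothing: the paper sets $c=\sup_{x\in B_r}\{a+\langle y,x\rangle-f(x)\}$ and shows the contact point of the shifted affine function lies in the interior, which is exactly your interior minimizer of $g(x)=f(x)-\langle v,x\rangle$, so the core of the proposal is correct and matches the paper. The one blemish is your treatment of the borderline case $|v|=(b-a)/r$: the ``taking closures'' remark proves nothing, since $\partial f(B_r)$ need not be a closed set, and the parenthetical patch is not an argument. But the fix is one line, and it is how the paper's version covers the sphere of radius $(b-a)/r$ directly: for $|v|=(b-a)/r$ one still has $g\geq b-|v|\,r=a=g(0)$ on $\partial B_r$, so the minimum of $g$ over $\overline{B_r}$ is at most $g(0)=a$; if it is $<a$ the minimizer is forced into the interior, and if it equals $a$ then $x_0=0$ is already an interior minimizer, so in either case $v\in\partial f(x_0)$ with $|x_0|<r$. (In the application, Lemma \ref{aibound}, only $\lambda\big(\partial f(B_r)\big)$ is used, so the open-ball inclusion you prove cleanly would in any case suffice.)
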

\begin{proof}
It is enough to show that if $|y|=(b-a)/r$, then $y\in \partial f(B_r)$.  $|y|=(b-a)/r$ implies
\[
a+\langle y,x\rangle \leq b\leq f(x) \hspace{5mm}\text{for }|x|=r.
\]
Let
\[
c=\sup_{x\in B_r} \big\{a + \langle y,x\rangle -f(x)\big\}.
\]
Considering $0$ in the supremum shows $c\geq 0$.  We claim
\begin{equation}\label{hyperplane}
a+\langle y,x\rangle -c \leq f(x)
\end{equation}
is a supporting hyperplane for $f$ for some $x_1$ such that $|x_1|<r$.  If $c=0$, then $x_1=0$ gives equality in (\ref{hyperplane}), proving the lemma.  If $c>0$ then let $x_1$ be any point in $B_r$ attaining the supremum defining $c$.  It remains to show $|x_1|<r$.  If $|x_1|=r$ then
\[
0 \geq a+ \langle y,x_1\rangle - f(x_1) = c>0,
\]
so $|x_1|$ must be less than $r$, and $y \in \partial f(x_1)$.
\end{proof}

\begin{lemma}\label{aibound}
Assume $A$ satisfies (\ref{Aconditions}) and the Monge--Amp\`{e}re iteration (\ref{MAiteration}) satisfies Hypotheses \ref{hypB}.  Then there exists  $C>0$ such that $|a_{i+1}-a_i|\leq C$ for all $i$.  
\end{lemma}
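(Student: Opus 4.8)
The plan is to bound $|a_{i+1}-a_i|$ by controlling how far the subgradient images of consecutive translated iterates can overlap, using the optimal transport structure of the iteration together with the growth estimates from Proposition \ref{uniformbounds}. Recall that $\nabla\varphi_{i+1}$ pushes the probability measure $\frac{h\circ\varphi_i}{\|h\circ\varphi_i\|_1}\lambda$ forward to the uniform measure on $A$, and $a_{i+1}$ is the (unique) point with $\varphi_{i+1}(a_{i+1})=\inf\varphi_{i+1}$, equivalently $0\in\partial\varphi_{i+1}(a_{i+1})$. The key observation is that $\nabla\varphi_{i+1}(a_{i+1})$ is the barycenter-type point in $A$ associated to where the mass of $\frac{h\circ\varphi_i}{\|h\circ\varphi_i\|_1}\lambda$ is concentrated, and since $h$ is decreasing, this mass is concentrated near $a_i$, the minimum of $\varphi_i$. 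So the heart of the argument is: $0\in\partial\varphi_{i+1}(a_{i+1})$ forces $a_{i+1}$ to lie near the region where $h\circ\varphi_i$ is large, which by the lower bound $\widetilde\varphi_i(x)\geq \tau/\lambda(A)+r|x|$ (Proposition \ref{uniformbounds}) and the upper bound $\widetilde\varphi_i(0)\leq C$ is a bounded neighborhood of $a_i$.

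More concretely, I would first translate both $\varphi_i$ and $\varphi_{i+1}$ so that $a_i=0$; then $\widetilde\varphi_i$ satisfies $\tau/\lambda(A)+r|x|\leq\widetilde\varphi_i(x)\leq C+R|x|$. The upshot is that $\frac{h\circ\widetilde\varphi_i}{\|h\circ\widetilde\varphi_i\|_1}$ is a probability density that is bounded below on any fixed ball $B_\rho$ (using $\widetilde\varphi_i\leq C+R\rho$ there and $h$ decreasing) and has a uniformly integrable tail (using $\widetilde\varphi_i\geq \tau/\lambda(A)+r|x|$ and the decay from Hypothesis \ref{B1}, exactly as in Lemma \ref{Hfbound}). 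Consequently there is a fixed compact set $K$ and a fixed $\delta>0$, independent of $i$, with $\int_K \frac{h\circ\widetilde\varphi_i}{\|h\circ\widetilde\varphi_i\|_1}\,d\lambda\geq 1-\delta$ for $\delta$ as small as we like by enlarging $K$. Now apply Lemma \ref{subgradientinclusion} to $\widetilde\varphi_{i+1}$ translated so its minimum is at $b:=a_{i+1}-a_i$: on a ball $B_r(b)$ where $\widetilde\varphi_{i+1}$ increases by a definite amount (which it does, by the uniform lower bound on $\widetilde\varphi_{i+1}$ combined with the uniform upper bound on its minimum value), the subgradient image contains a fixed ball, hence $\partial\widetilde\varphi_{i+1}(B_r(b))$ has Lebesgue measure at least some fixed $\eta>0$ relative to $\lambda(A)$; but this Lebesgue measure equals the $\frac{h\circ\widetilde\varphi_i}{\|h\circ\widetilde\varphi_i\|_1}\lambda$-mass of $B_r(b)$ by the transport relation. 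If $|b|$ were too large, $B_r(b)$ would be disjoint from the compact set $K$ carrying almost all the mass, giving mass $<\delta<\eta$, a contradiction. This pins $|b|=|a_{i+1}-a_i|$ below a constant depending only on $A$, $\tau$, $h$, and the $C,R,r$ of Proposition \ref{uniformbounds}.

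The main obstacle I anticipate is making the two-sided estimate on $\widetilde\varphi_{i+1}$ genuinely uniform in $i$: Proposition \ref{uniformbounds} gives $\widetilde\varphi_{i+1}(x)\geq \tau/\lambda(A)+r|x|$ and $\widetilde\varphi_{i+1}(0)\leq C$ with constants independent of $i$, so this is in hand, but one must be careful that the "definite increase" of $\widetilde\varphi_{i+1}$ over a fixed-size ball used in Lemma \ref{subgradientinclusion} — i.e.\ the gap $b-a$ in that lemma's notation — is bounded below independently of $i$, which follows since the lower bound grows like $r|x|$ while the value at the minimum is bounded above by $C$. A secondary point requiring care is the direction of the mass-concentration argument: one needs that $h\circ\widetilde\varphi_i$ concentrates near the origin, which uses that $h$ is \emph{decreasing} and that $\widetilde\varphi_i$ attains its minimum at $0$, so $h\circ\widetilde\varphi_i$ attains its maximum at $0$; combined with the superlinear-in-$|x|$ lower bound on $\widetilde\varphi_i$ this yields the uniform tail bound. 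Once these uniformities are checked, the contradiction argument via Lemma \ref{subgradientinclusion} and the transport identity $\MA(\widetilde\varphi_{i+1})=\|h\circ\widetilde\varphi_i\|_1^{-1}(h\circ\widetilde\varphi_i)\,\lambda$ (pushed forward) closes the estimate cleanly.
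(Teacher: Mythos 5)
Your proposal is correct and follows essentially the same route as the paper: translate so that $a_i=0$, use Lemma \ref{subgradientinclusion} together with the uniform growth bounds of Proposition \ref{uniformbounds} to force a definite amount of Monge--Amp\`{e}re mass into a ball around $a_{i+1}$ of controlled size, and contradict this with the decay of $h\circ\varphi_i$ away from the origin (via Hypothesis \ref{B1} and the lower bound on $\|h\circ\varphi_i\|_1$) when $|a_{i+1}-a_i|$ is large. The only cosmetic difference is that you compare the integrated mass of a fixed-radius ball with the uniform tail of the normalized density, while the paper compares a lower and an upper bound for the supremum of the density over a ball of radius $\alpha_{i+1}/r$; the ingredients are identical.
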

\begin{proof}
By translating both $\varphi_i$ and $\varphi_{i+1}$ we can assume $a_i=0$, so we need to prove a uniform upper bound for $|a_{i+1}|$.  In order to simplify notation, define
\[
\alpha_{i+1} := \varphi_{i+1}(a_{i+1})-\tau.
\]
By Lemma \ref{tau} $\alpha_{i+1} \geq 0$.  Moreover, equation (\ref{tauequation}) implies that if $\alpha_{i+1}=0$, then $\varphi_{i+1} = \mathbb{1}_A^*$ which would contradict the smoothness of $\varphi_{i+1}$, so $\alpha_{i+1}$ is positive.

Lemma \ref{lowerbound} implies $\varphi_{i+1}(x) \geq r\,|x-a_{i+1}| + \tau +\alpha_{i+1}/2$, and in particular
\[
\varphi_{i+1}(x)\geq \tau + (3/2)\,\alpha_{i+1} \hspace{5mm}\text{for }x\text{ such that }|x-a_{i+1}|=\alpha_{i+1}/r.
\]

\begin{center}
\includegraphics{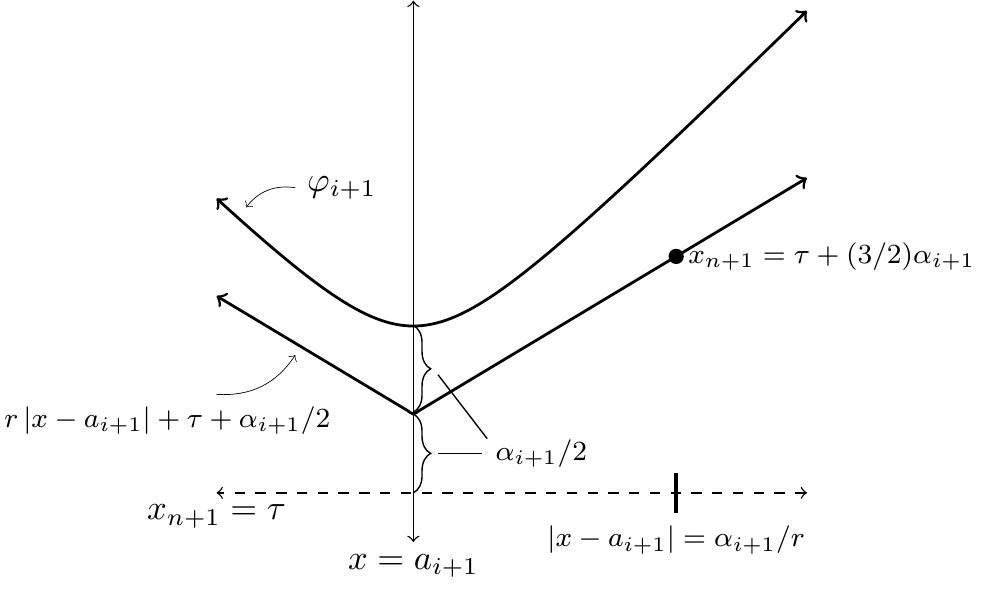}
\end{center}

Lemma \ref{subgradientinclusion} implies
\[
B_r \subset \nabla \varphi_{i+1}\big(B_{\alpha_{i+1}/r}(a_{i+1})\big),
\]
and in particular
\[
\omega_n r^n \leq \lambda \big( \nabla \varphi_{i+1}\big(B_{\alpha_{i+1}/r}(a_{i+1})\big)\big) = \int_{B_{\alpha_{i+1}/r}(a_{i+1})} \det(\nabla^2 \varphi_{i+1})\,d\lambda.
\]
Let
\[
\epsilon = \sup \,\big\{ \,\det\big(\nabla^2 \varphi_{i+1}(x)\big) \mid x \in B_{\alpha_{i+1}/r}(a_{i+1})\,\big\}.
\]
By Proposition \ref{uniformbounds} there exists a constant $C>0$ such that $\alpha_{i+1} \leq C$, so
\begin{equation}\label{epsilonbound}
\omega_n r^n \leq \epsilon\, \omega_n \Big(\scaleobj{0.9}{\dfrac{C}{r}}\Big)^n,
\end{equation}
and thus $(r^2/2C)^n \leq \epsilon$ is a uniform lower bound for $\epsilon$.  It remains to prove an upper bound for $\epsilon$ which goes to $0$ as $|a_{i+1}|\to\infty$, independently of $i$.  Since we assumed $\varphi_i(0)=\inf \varphi_i$, Proposition \ref{uniformbounds} implies
\[
\tau + r\,|x| \leq \varphi_i(x) \leq C+R\,|x|.
\]
By these bounds and hypothesis (B1) on the iteration, there is some constant $C'$ such that
\[
\det\big(\nabla^2 \varphi_{i+1}(x)\big) = \dfrac{h\circ \varphi_i}{\|h\circ \varphi_i\|_1} \leq C'\,\big(\tau+r\,|x|\big)^{-(n+p+1)}
\]
as $|x| \to \infty$.  Thus
\[
\epsilon \leq \sup \,\big\{\,C'\,\big(\tau+r\,|x|\big)^{-(n+p+1)} \mid x \in B_{C/r}(a_{i+1})\,\big\}
\]
for $|a_{i+1}|$ large enough.  Thus, $\epsilon \to 0$ as $|a_{i+1}|\to\infty$, and there is some constant $C$ such that if $|a_{i+1}| \geq C$, then $\epsilon<(r^2/2C)^n$, contradicting equation (\ref{epsilonbound}).  Thus $|a_{i+1}|$ has a uniform upper bound.
\end{proof}

\begin{lemma}\label{tight}
Assume $A$ satisfies (\ref{Aconditions}) and the Monge--Amp\`{e}re iteration (\ref{MAiteration}) satisfies Hypotheses \ref{hypB}.  Then $\{\MA(\widetilde{\varphi}_i)\}$ satisfies the tightness condition
\[
\lim_{R\to\infty} \limsup_{i\to\infty}\,\bigg\{ \int_{|x|\geq R} |x| \,\MA(\widetilde{\varphi}_i)\bigg\} = 0.
\]
\end{lemma}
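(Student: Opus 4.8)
The plan is to express each $\MA(\widetilde{\varphi}_{i+1})$ through the iteration equation~(\ref{MAiteration}) in terms of $\widetilde{\varphi}_i$, and then control its tail using the uniform growth bounds of Proposition~\ref{uniformbounds}, the uniform lower bound on $\|h\circ\widetilde{\varphi}_i\|_1$ coming from Lemma~\ref{Hfbound}, and the decay of $h$ from Hypothesis~\ref{B1}. Since the $\limsup$ is taken over $i\to\infty$, only indices $i\ge 1$ matter, and for those $\MA(\varphi_i)$ is given by the iteration formula; the single measure $\MA(\widetilde{\varphi}_0)$ is a finite measure and is irrelevant to the $\limsup$.

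First I would record how the Monge--Amp\`ere measure transforms under the translation $\widetilde{\varphi}_{i+1}(x)=\varphi_{i+1}(x+a_{i+1})$. Since $\partial\widetilde{\varphi}_{i+1}(U)=\partial\varphi_{i+1}(U+a_{i+1})$, one has $\MA(\widetilde{\varphi}_{i+1})(U)=\MA(\varphi_{i+1})(U+a_{i+1})$, so for any Borel function $\psi\ge 0$,
\[
\int \psi(x)\,d\MA(\widetilde{\varphi}_{i+1})(x) = \int \psi(x-a_{i+1})\,d\MA(\varphi_{i+1})(x) = \lambda(A)\int_{\mathbb{R}^n} \psi(x-a_{i+1})\,\dfrac{h\circ\varphi_i(x)}{\|h\circ\varphi_i\|_1}\,dx,
\]
the last equality being equation~(\ref{MAiteration}). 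Substituting $x=y+a_i$, using $\varphi_i(y+a_i)=\widetilde{\varphi}_i(y)$ and $\|h\circ\varphi_i\|_1=\|h\circ\widetilde{\varphi}_i\|_1$, and taking $\psi$ equal to $|x|$ on $\{|x|\ge R\}$ and $0$ elsewhere, I obtain with $v_i:=a_i-a_{i+1}$ that
\[
\int_{|x|\ge R} |x|\,\dfrac{\MA(\widetilde{\varphi}_{i+1})}{\lambda(A)} = \int_{|y+v_i|\ge R} |y+v_i|\,\dfrac{h\circ\widetilde{\varphi}_i(y)}{\|h\circ\widetilde{\varphi}_i\|_1}\,dy.
\]
By Lemma~\ref{aibound} there is a constant $C$, independent of $i$, with $|v_i|\le C$; since $|y+v_i|\le|y|+C$ and $\{|y+v_i|\ge R\}\subset\{|y|\ge R-C\}$, the right side is at most $\int_{|y|\ge R-C}(|y|+C)\,\frac{h\circ\widetilde{\varphi}_i(y)}{\|h\circ\widetilde{\varphi}_i\|_1}\,dy$.

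Next I would make this bound uniform in $i$. Proposition~\ref{uniformbounds} gives $\widetilde{\varphi}_i(y)\ge \tau/\lambda(A)+r|y|$ and $\widetilde{\varphi}_i(y)\le C+R|y|$ with all constants independent of $i$; monotonicity of $h$ turns the first bound into $h\circ\widetilde{\varphi}_i(y)\le h(\tau/\lambda(A)+r|y|)$, and Lemma~\ref{Hfbound} turns the second into a uniform lower bound $\|h\circ\widetilde{\varphi}_i\|_1\ge c_0>0$. Hence
\[
\limsup_{i\to\infty}\int_{|x|\ge R} |x|\,\dfrac{\MA(\widetilde{\varphi}_{i})}{\lambda(A)} \le \dfrac{1}{c_0}\int_{|y|\ge R-C}(|y|+C)\,h\!\Big(\dfrac{\tau}{\lambda(A)}+r|y|\Big)\,dy,
\]
which no longer depends on $i$. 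By Hypothesis~\ref{B1}, $h(t)\le C' t^{-(n+p+1)}$ for $t\gg1$, so once $R$ is large the integrand on $\{|y|\ge R-C\}$ is $\le C''|y|^{-(n+p)}$; passing to polar coordinates bounds the integral by a constant times $(R-C)^{-p}$, which tends to $0$ as $R\to\infty$. This yields the desired tightness.

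I do not anticipate a genuine obstacle here: the one point requiring care is that every constant entering the estimate---the drift bound $C$ of Lemma~\ref{aibound}, the lower bound $c_0$ for $\|h\circ\widetilde{\varphi}_i\|_1$, and the growth constants $r,R$ from Proposition~\ref{uniformbounds}---is uniform in $i$, which is exactly what those statements supply. The only bookkeeping subtleties are the index shift (the tail of $\MA(\widetilde{\varphi}_{i+1})$ is governed by $\widetilde{\varphi}_i$) and the bounded horizontal translation $v_i$, and by Lemma~\ref{aibound} the latter costs only the harmless replacement of $R$ by $R-C$ in the final tail integral.
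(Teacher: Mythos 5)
Your proposal is correct and follows essentially the same route as the paper's proof: both rewrite $\MA(\widetilde{\varphi}_{i+1})$ via the iteration equation in terms of $h\circ\widetilde{\varphi}_i$, use the uniform lower growth bound from Proposition \ref{uniformbounds} together with the positive lower bound on $\|h\circ\widetilde{\varphi}_i\|_1$ from Lemma \ref{Hfbound}, absorb the translation mismatch through the drift bound $|a_{i+1}-a_i|\le C$ of Lemma \ref{aibound}, and conclude with the decay of $h$ from Hypothesis \ref{B1} giving an $i$-independent tail of order $R^{-p}$. The only cosmetic difference is that you phrase the translation as a change of variables in the integral, while the paper bounds the translated density directly; the estimates are identical.
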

\begin{proof}
By Proposition \ref{uniformbounds},
\[
\tau + r\,|x-a_i|\leq \varphi_i(x)\leq C+R\,|x-a_i|.
\]
Thus since $h$ is decreasing and $\|h\circ \varphi_i\|$ has a positive lower bound by Proposition \ref{Hfbound}, there is some constant $C'$ such that
\begin{align*}
    \det(\nabla^2 \widetilde{\varphi}_{i+1})(x) &= \det(\nabla^2 \varphi_{i+1})(x+a_{i+1})\\
    &= \dfrac{h\big(\varphi_{i}(x+a_{i+1})\big)}{\|h\circ \varphi_i\|_1} \\
    &\leq C' \,h\big(\tau + r|x-a_i+a_{i+1}|\big).
\end{align*}
And Hypothesis \ref{B1} implies
\[
h\big(\tau + r|x-a_i+a_{i+1}|\big) \leq C' \,\big(\tau + r|x-a_i+a_{i+1}|\big)^{-(n+p+1)}
\]
for $|x|\gg1$. By Lemma \ref{aibound}, $|a_{i+1}-a_i|\leq c$ for a constant $c$ independent of $i$, so $|x-(a_i-a_{i+1})|\geq |x|-|a_i-a_{i+1}|\geq |x|-c$.  Thus,
\[
 \det(\nabla^2 \widetilde{\varphi}_{i+1})(x) \leq C'\,\big(\tau-r\,c + r\,|x|\big)^{-(n+p+1)}
\]
for $|x|\gg 1$.  The tightness condition follows from
\begin{align*}
\lim_{R\to\infty}\limsup_{i\to\infty}\,\bigg\{ \int_{|x|\geq R} |x|\,\det(\nabla^2 \widetilde{\varphi}_i)\,d\lambda)\bigg\} &\leq \lim_{R\to\infty} \int_{|x|\geq R}C'\,|x|\,\big(\tau-r\,c+r\,|x|\big)^{-(n+p+1)}\,d\lambda =0\\
& \leq \lim_{R\to\infty} \int_{|x|\geq R} C''\,|x|^{-(n+p)}\,d\lambda.
\end{align*}
Since $|x|^{-(n+p)}$ is integrable for $p>0$, the limit is $0$.
\end{proof}

This concludes the proof of Proposition \ref{sd}.

\subsection{\texorpdfstring{$\varphi$}{varphi} minimizes \texorpdfstring{$g\big(\big\langle \cdot,\frac{\MA(\varphi)}{\lambda(A)} \big\rangle, \,\mathcal{F}(\cdot)\,\big)$}{g}}

In this section we will prove step 4 of Subsection \ref{outline}.

By the definition of $\mathcal{G}$,
\[
\mathcal{G}(\scaleobj{.9}{\dfrac{\MA(\varphi)}{\lambda(A)}})= \inf \{ \,  g(\langle f, \scaleobj{.9}{\dfrac{\MA(\varphi)}{\lambda(A)}}\rangle, \,\mathcal{F}(f)) \mid f\in\mathcal{C}_{\lin}\,\} \leq g(\langle f,\scaleobj{.9}{\dfrac{\MA(\varphi)}{\lambda(A)}})\rangle,\,\mathcal{F}(f))
\]
for every $f\in\mathcal{C}_{\lin}$.  If we can show that $\mathcal{G}(\scaleobj{.9}{\dfrac{\MA(\varphi)}{\lambda(A)}})= g(\langle \varphi,\scaleobj{.9}{\dfrac{\MA(\varphi)}{\lambda(A)}}\rangle,\,\mathcal{F}(\varphi))$, then step 4 will be proven.

\begin{proposition}\label{sd2}
Assume $A$ satisfies (\ref{Aconditions}) and the Monge--Amp\`{e}re iteration (\ref{MAiteration}) satisfies Hypotheses \ref{hypB}.  Assume a subsequence $\{\widetilde{\varphi}_{i'}\}$ converges to $\varphi$ uniformly on compact sets. Then 
\begin{equation}\label{equality}
\mathcal{G}\Big(\scaleobj{.9}{\dfrac{\MA(\varphi)}{\lambda(A)}}\Big) = g\Big(\Big\langle \varphi, \scaleobj{.9}{\dfrac{\MA(\varphi)}{\lambda(A)}}\Big\rangle, \,\mathcal{F}(\varphi)\Big).
\end{equation}
\end{proposition}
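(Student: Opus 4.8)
The plan is to prove the two inequalities $\mathcal{G}(\MA(\varphi)/\lambda(A)) \geq g(\langle \varphi, \MA(\varphi)/\lambda(A)\rangle, \mathcal{F}(\varphi))$ and the reverse; the reverse inequality is immediate from the definition of $\mathcal{G}$ as an infimum, since $\varphi \in \mathcal{C}_{\lin}$ (it is convex with subgradient image $A$ bounded, hence has linear growth, and by Lemma \ref{tau} it is bounded below by $\tau/\lambda(A) > \tau$, so it lands in the target interval $(\tau,\infty)$). So the real content is showing $\mathcal{G}(\MA(\varphi)/\lambda(A)) \geq g(\langle \varphi, \MA(\varphi)/\lambda(A)\rangle, \mathcal{F}(\varphi))$, i.e. that $\varphi$ achieves the infimum defining $\mathcal{G}$.

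The approach is to pass to the limit along the subsequence $\{\widetilde{\varphi}_{i'}\}$ using the convergence results already established. By Lemma \ref{functionallimit}, both $\mathcal{F}(\varphi_i)$ and $g(\langle \varphi_i, \MA(\varphi_i)/\lambda(A)\rangle, \mathcal{G}(\MA(\varphi_i)/\lambda(A)))$ converge to the same finite limit $\beta$. First I would show $\mathcal{F}(\widetilde{\varphi}_{i'}) \to \mathcal{F}(\varphi)$: since $\mathcal{F}(f) = H^{-1}(\|H\circ f\|_1)$, this follows from $\|H\circ \widetilde{\varphi}_{i'}\|_1 \to \|H\circ\varphi\|_1$, which in turn follows from the uniform convergence on compact sets together with the uniform upper and lower bounds of Proposition \ref{uniformbounds} (which give, via Lemma \ref{Hfbound}, a uniform integrable majorant for $H\circ\widetilde{\varphi}_{i'}$, so dominated convergence applies), plus continuity of $H^{-1}$. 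Hence $\mathcal{F}(\varphi) = \beta$. Next, I would show $\langle \widetilde{\varphi}_{i'}, \MA(\widetilde{\varphi}_{i'})/\lambda(A)\rangle \to \langle \varphi, \MA(\varphi)/\lambda(A)\rangle$: by Proposition \ref{sd} the measures converge in Wasserstein distance, which by Proposition \ref{Wassequivalent}(iii) means $\int f \, d(\MA(\widetilde{\varphi}_{i'})/\lambda(A)) \to \int f\, d(\MA(\varphi)/\lambda(A))$ for continuous $f$ with linear growth; a small argument is needed because the integrand $\widetilde{\varphi}_{i'}$ also varies, but the uniform linear bounds of Proposition \ref{uniformbounds} and uniform convergence on compacts let one split into a compact piece (handled by uniform convergence plus weak convergence) and a tail piece (handled by the tightness in Proposition \ref{sd}). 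Now apply $g$, which is continuous by Hypothesis \ref{B2}: the quantities $g(\langle \widetilde{\varphi}_{i'}, \MA(\widetilde{\varphi}_{i'})/\lambda(A)\rangle, \mathcal{G}(\MA(\widetilde{\varphi}_{i'})/\lambda(A)))$ converge to $\beta = \mathcal{F}(\varphi)$, so if $\mathcal{G}(\MA(\widetilde{\varphi}_{i'})/\lambda(A)) \to \mathcal{G}(\MA(\varphi)/\lambda(A))$ we could invert $g$ and conclude. But $\mathcal{G}$ need not be continuous — only upper semicontinuous — so instead I would use upper semicontinuity: $\limsup_{i'} \mathcal{G}(\MA(\widetilde{\varphi}_{i'})/\lambda(A)) \leq \mathcal{G}(\MA(\varphi)/\lambda(A))$. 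Since $g(s,\cdot)$ is decreasing (Hypothesis \ref{B2}), applying $g(\langle \varphi, \MA(\varphi)/\lambda(A)\rangle, \cdot)$ reverses this, and combined with the convergence of the arguments one gets $\beta = \mathcal{F}(\varphi) \geq g(\langle \varphi, \MA(\varphi)/\lambda(A)\rangle, \mathcal{G}(\MA(\varphi)/\lambda(A)))$. Then applying the involution $g(\langle \varphi,\MA(\varphi)/\lambda(A)\rangle,\cdot)$ once more (using $g(s,g(s,t))=t$) flips this to $\mathcal{G}(\MA(\varphi)/\lambda(A)) \geq g(\langle \varphi, \MA(\varphi)/\lambda(A)\rangle, \mathcal{F}(\varphi))$, which is the desired inequality.

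The main obstacle I anticipate is establishing the upper semicontinuity of $\mathcal{G}$ with respect to Wasserstein convergence — that is, the statement $\limsup_{k} \mathcal{G}(\mu_k) \leq \mathcal{G}(\mu)$ whenever $\mu_k \to_1 \mu$. This is what lets the argument close, and it is nontrivial because $\mathcal{G}$ is an infimum over the noncompact family $\mathcal{C}_{\lin}$: one must show that for any near-optimal competitor $f$ for $\mu$, the value $g(\langle f, \mu_k\rangle, \mathcal{F}(f))$ is eventually close to $g(\langle f, \mu\rangle, \mathcal{F}(f))$, which is exactly where $f$ having at most linear growth and $\mu_k \to_1 \mu$ (controlling first moments, via Proposition \ref{Wassequivalent}) come together. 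The secondary technical point is the joint limit $\langle \widetilde{\varphi}_{i'}, \MA(\widetilde{\varphi}_{i'})/\lambda(A)\rangle \to \langle \varphi, \MA(\varphi)/\lambda(A)\rangle$, where both the integrand and the measure move; this is routine given Propositions \ref{uniformbounds} and \ref{sd} but requires the compact/tail splitting carefully. Everything else is bookkeeping with the hypotheses on $g$ and the already-proven limit $\beta$ from Lemma \ref{functionallimit}.
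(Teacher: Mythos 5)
Your proposal is correct and follows essentially the same route as the paper: the easy inequality from $\varphi\in\mathcal{C}_{\lin}$, then $\mathcal{F}(\widetilde{\varphi}_{i'})\to\mathcal{F}(\varphi)$ by dominated convergence, convergence of the pairing $\langle \widetilde{\varphi}_{i'},\MA(\widetilde{\varphi}_{i'})/\lambda(A)\rangle$, upper semicontinuity of $\mathcal{G}$ as an infimum of Wasserstein-continuous functionals, and passage to the limit in the decreasing second argument of $g$ followed by the involution $g(s,g(s,t))=t$, all anchored on the common limit $\beta$ from Lemma \ref{functionallimit}. The only cosmetic difference is that you handle the joint pairing limit by a compact/tail splitting while the paper phrases it as a double-sequence (iterated limit with uniform inner convergence) argument; these are equivalent and both rest on Proposition \ref{uniformbounds} and the tightness in Proposition \ref{sd}.
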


To prove Proposition \ref{sd2}, we first note that Proposition \ref{subsequence} implies $\Int\,\partial\varphi(\mathbb{R}^n)=A$, so $\varphi(x) \leq C\,\big(1+|x|\big)$.  Thus $\varphi \in \mathcal{C}_{\lin}$, and 
\[
\mathcal{G}\Big(\scaleobj{.9}{\dfrac{\MA(\varphi)}{\lambda(A)}}\Big) \leq g\Big(\Big\langle \varphi, \scaleobj{.9}{\dfrac{\MA(\varphi)}{\lambda(A)}}\Big\rangle, \,\mathcal{F}(\varphi)\Big).
\]

The reverse inequality relies upon the continuity of the pertinent functionals along the Monge--Amp\`{e}re iteration, which we prove in the next three lemmas.

\begin{lemma}\label{Fconverge}
Assume the Monge--Amp\`{e}re iteration (\ref{MAiteration}) satisfies Hypotheses \ref{hypB}. Assume a subsequence $\{\widetilde{\varphi}_{i'}\}$ converges to $\varphi$ uniformly on compact sets.  Then $\mathcal{F}(\widetilde{\varphi}_{i'}) \to \mathcal{F}(\varphi)$.  
\end{lemma}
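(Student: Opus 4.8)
The plan is to unwind the definition $\mathcal{F}(f) = H^{-1}\big(\|H\circ f\|_1\big)$ and establish convergence of the two ingredients in turn: first that $\|H\circ\widetilde{\varphi}_{i'}\|_1 \to \|H\circ\varphi\|_1$, and then that continuity of $H^{-1}$ at the limiting value closes the argument.

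For the $L^1$ convergence of $H\circ\widetilde{\varphi}_{i'}$, I would first note that uniform convergence on compact sets yields pointwise convergence $\widetilde{\varphi}_{i'}(x)\to\varphi(x)$ for every $x\in\mathbb{R}^n$, and since $h$ is continuous by Hypothesis \ref{B1}, so is $H(t)=\int_t^\infty h\,d\lambda$; hence $H\big(\widetilde{\varphi}_{i'}(x)\big)\to H\big(\varphi(x)\big)$ pointwise. To promote this to convergence of integrals I would exhibit an integrable majorant: by Proposition \ref{uniformbounds} each $\widetilde{\varphi}_{i'}$ satisfies $\widetilde{\varphi}_{i'}(x)\geq \tau/\lambda(A)+r\,|x|$ with $r$ independent of $i'$, so, $H$ being decreasing, $0\leq H\big(\widetilde{\varphi}_{i'}(x)\big)\leq H\big(\tau/\lambda(A)+r\,|x|\big)$. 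The decay bound $H(t)\leq C\,t^{-(n+p)}$ for $t\gg1$ coming from Hypothesis \ref{B1} (this is precisely the estimate behind Lemma \ref{Hfbound}) shows $x\mapsto H\big(\tau/\lambda(A)+r\,|x|\big)$ lies in $L^1(\mathbb{R}^n)$. The dominated convergence theorem then gives $\|H\circ\widetilde{\varphi}_{i'}\|_1\to\|H\circ\varphi\|_1$; note the limit $\varphi$ inherits the same lower bound $\varphi(x)\geq\tau/\lambda(A)+r\,|x|$ by passing to the limit, so $\|H\circ\varphi\|_1$ is finite, and it is positive since $h>0$ forces $H>0$.

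Finally, since $h$ is positive, $H$ is a strictly decreasing continuous bijection onto its range, which in the cases at hand is all of $(0,\infty)$; hence $H^{-1}$ is continuous there, and because $\|H\circ\varphi\|_1\in(0,\infty)$ lies in that range we conclude $\mathcal{F}(\widetilde{\varphi}_{i'})=H^{-1}\big(\|H\circ\widetilde{\varphi}_{i'}\|_1\big)\to H^{-1}\big(\|H\circ\varphi\|_1\big)=\mathcal{F}(\varphi)$. The only genuinely delicate point is the dominated-convergence step: it relies on the lower bound $\widetilde{\varphi}_{i'}(x)\geq\tau/\lambda(A)+r\,|x|$ holding with $r$ uniform in $i'$, which is exactly what the normalization $\int_A\widetilde{\varphi}_{i'}^*\,d\lambda=-\tau$ provides through Proposition \ref{uniformbounds}; without it the iteration could spread out and no single integrable majorant would exist. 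Everything else is routine continuity bookkeeping.
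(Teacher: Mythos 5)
Your proof is correct and follows essentially the same route as the paper: pointwise convergence of $H\circ\widetilde{\varphi}_{i'}$ from uniform convergence on compact sets plus continuity of $H$, dominated convergence using the uniform lower bound $\widetilde{\varphi}_{i'}(x)\geq \tau/\lambda(A)+r\,|x|$ from Proposition \ref{uniformbounds} together with the decay $H(t)\leq C\,t^{-(n+p)}$, and then continuity of $H^{-1}$ coming from $H$ being positive and strictly decreasing. Your closing remark correctly identifies the dominated-convergence majorant as the only delicate point, which is exactly where the paper's argument also puts its weight.
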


\begin{proof}
By Hypothesis (B1), $h(t)$ is smooth and positive, so $H(t) = \int_t^\infty h\,d\lambda$ is smooth, positive, and strictly decreasing.  In particular, $H$ has a continuous inverse, $H^{-1}$.  Since $\mathcal{F}(f) = H^{-1}\big(\| H\circ f\|_1\big)$, the lemma will follow from showing
\[
\|H\circ \widetilde{\varphi}_{i'}\|_1 \to \|H\circ \varphi\|_1.  
\]

$\widetilde{\varphi}_{i'}$ converges to $\varphi$ uniformly on compact sets, and since $H$ is continuous, it follows that $H\circ \widetilde{\varphi}_{i'}$ converges to $H\circ \varphi$ pointwise.  By Proposition \ref{uniformbounds} $\tau/\lambda(A) + r\,|x| \leq \widetilde{\varphi}_{i'}(x) \leq C+R\,|x|$, and Hypothesis (B1) also implies $H(t) \leq C \,t^{-(n+p)}$ as $t\to \infty$, so $H \circ \widetilde{\varphi}_{i'}$ are uniformly bounded by an $L^1$ function.  Thus $\|H\circ \widetilde{\varphi}_{i'}\|_1 \to \|H\circ \varphi\|_1$ by the dominated convergence theorem.
\end{proof}

Before the next lemma, we need to say a few words about double sequences $s_{i,j}$ indexed by $i,j\in\mathbb{N}$.  We say that a double sequence $s_{i,j}$ converges to $a$ if for every $\epsilon>0$ there exists $N\in\mathbb{N}$ such that $|s_{i,j}-a|\leq \epsilon$ when $i,j\geq N$.  In particular, this implies that the diagonal sequence $s_{i,i}$ converges to $a$.  

We can also consider the iterated limits $\lim_{i\to\infty} ( \,\lim_{j\to\infty}s_{i,j})$ and $\lim_{j\to\infty}(\,\lim_{i\to\infty}s_{i,j})$.  But even if both of these limits exist, and equal the same value, the whole double sequence may not converge.  For example, consider the double sequence $s_{i,j} = \dfrac{i\,j}{i^2 + j^2}$.  Both of the iterated limits exist and equal $0$, but $s_{i,j}$ does not converge as a double sequence, which is clear from $\lim_{i\to\infty}s_{i,i}=1/2$ and $\lim_{i\to\infty}s_{i,2i}=2/5$.  

In order to deduce the convergence of the double sequence from the iterated limits, we need convergence of one iterated limit $\lim_{i\to\infty} (\,\lim_{j\to\infty} s_{i,j})=a$ and uniform convergence of the inside limit $\lim_{j\to\infty} s_{i,j}$.  We say $s_{i,j} \xrightarrow[j\to\infty]{} a_i$ uniformly if for every $\epsilon>0$ there exists $N\in \mathbb{N}$, independent of $i$, such that $j\geq N$ implies $|s_{i,j}-a_i|<\epsilon$ for all $i$.

In the next lemma we assume $\lambda(A)=1$ for notational convenience.  The result in general follows by scaling each equation by $\lambda(A)^{-1}$.

\begin{lemma}\label{angle}
Assume the Monge--Amp\`{e}re iteration (\ref{MAiteration}) satisfies Hypotheses \ref{hypB}. Assume a subsequence $\{\widetilde{\varphi}_{i'}\}$ converges to $\varphi$ uniformly on compact sets.  Then $\big\langle \widetilde{\varphi}_{i'} ,\MA(\widetilde{\varphi}_{i'}) \big\rangle \to \big\langle \varphi, \MA(\varphi) \big\rangle$.
\end{lemma}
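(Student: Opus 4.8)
The plan is to view $\langle \widetilde{\varphi}_{i'},\MA(\widetilde{\varphi}_{i'})\rangle$ as the diagonal of the double sequence $s_{i,j}=\langle \widetilde{\varphi}_i,\MA(\widetilde{\varphi}_j)\rangle$ (keeping the normalization $\lambda(A)=1$ in force in this subsection) and to apply the double-sequence criterion recalled just above: it suffices to show that the inner limit $\lim_{j'\to\infty}s_{i',j'}$ exists and converges \emph{uniformly in $i'$}, and that the resulting iterated limit equals $\langle \varphi,\MA(\varphi)\rangle$. Two structural inputs drive this. First, by Proposition \ref{uniformbounds} the translated iterates sit in a fixed linear band, $|\widetilde{\varphi}_i(x)|\le c_0+R\,|x|$ with $c_0,R$ independent of $i$; moreover $\nabla\widetilde{\varphi}_i(\mathbb{R}^n)=A\subset B_R$, so each $\widetilde{\varphi}_i$ is $R$-Lipschitz, with the same bounds passing to the limit $\varphi$. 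Second, by Proposition \ref{sd} the Monge--Amp\`ere measures converge in Wasserstein distance, $\MA(\widetilde{\varphi}_{j'})\to_1\MA(\varphi)$, so in particular $W_1\big(\MA(\widetilde{\varphi}_{j'}),\MA(\varphi)\big)\to 0$ and $\MA(\varphi)\in\mathcal{P}_1$.

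For the uniform inner limit I would use the Kantorovich--Rubinstein formula for $W_1$ (see \cite{Villani1}): for any $L$-Lipschitz $f$ and any $\mu,\nu\in\mathcal{P}_1$,
\[
\big|\langle f,\mu\rangle-\langle f,\nu\rangle\big|\;\le\;L\,W_1(\mu,\nu).
\]
Applying this with $f=\widetilde{\varphi}_{i'}$ (Lipschitz constant $\le R$, uniformly in $i'$) and $\mu=\MA(\widetilde{\varphi}_{j'})$, $\nu=\MA(\varphi)$ gives
\[
\big|s_{i',j'}-\langle \widetilde{\varphi}_{i'},\MA(\varphi)\rangle\big|\;\le\;R\,W_1\big(\MA(\widetilde{\varphi}_{j'}),\MA(\varphi)\big),
\]
so $s_{i',j'}\to a_{i'}:=\langle \widetilde{\varphi}_{i'},\MA(\varphi)\rangle$ as $j'\to\infty$ at a rate that does not depend on $i'$. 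For the iterated limit, $\widetilde{\varphi}_{i'}\to\varphi$ uniformly on compact sets, hence pointwise, while $|\widetilde{\varphi}_{i'}|\le c_0+R\,|x|$, which is $\MA(\varphi)$-integrable because $\MA(\varphi)\in\mathcal{P}_1$; the dominated convergence theorem yields $a_{i'}\to\langle \varphi,\MA(\varphi)\rangle$. The double-sequence criterion then forces $s_{i,j}$ to converge as a double sequence, and in particular the diagonal $s_{i',i'}=\langle \widetilde{\varphi}_{i'},\MA(\widetilde{\varphi}_{i'})\rangle\to\langle \varphi,\MA(\varphi)\rangle$, which is the claim.

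The only real obstacle is that the integrand and the integrating measure both vary along the sequence, so one cannot pass to the limit in one factor at a time; the resolution is precisely that the uniform Lipschitz bound on $\{\widetilde{\varphi}_i\}$ (coming from the boundedness of $A$) converts the Wasserstein convergence of the measures into an $i'$-uniform estimate, which is exactly what the double-sequence criterion consumes. One can equivalently skip the double-sequence bookkeeping and bound the diagonal directly, writing $\big|\langle \widetilde{\varphi}_{i'},\MA(\widetilde{\varphi}_{i'})\rangle-\langle \varphi,\MA(\varphi)\rangle\big|\le\big|\langle \widetilde{\varphi}_{i'},\MA(\widetilde{\varphi}_{i'})\rangle-\langle \widetilde{\varphi}_{i'},\MA(\varphi)\rangle\big|+\big|\langle \widetilde{\varphi}_{i'},\MA(\varphi)\rangle-\langle \varphi,\MA(\varphi)\rangle\big|$, estimating the first term by $R\,W_1\big(\MA(\widetilde{\varphi}_{i'}),\MA(\varphi)\big)$ via Kantorovich--Rubinstein and the second by dominated convergence as above.
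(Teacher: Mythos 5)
Your proof is correct, and it follows the same overall skeleton as the paper's: treat $\langle\widetilde{\varphi}_{i'},\MA(\widetilde{\varphi}_{j'})\rangle$ as a double sequence, show the inner limit in $j'$ is uniform in $i'$, get the iterated limit by dominated convergence using the uniform linear bounds from Proposition \ref{uniformbounds}, and conclude for the diagonal. The difference lies in how uniformity of the inner limit is obtained. The paper argues it from the growth bound $\widetilde{\varphi}_{i'}(x)\le C+R\,|x|$, comparing the discrepancy to $R\,\big|\langle\,|x|,\MA(\widetilde{\varphi}_{j'})\rangle-\langle\,|x|,\MA(\varphi)\rangle\big|$, which is a somewhat loose estimate; you instead observe that $\nabla\widetilde{\varphi}_{i'}(\mathbb{R}^n)=A\subset B_R$ makes every $\widetilde{\varphi}_{i'}$ uniformly $R$-Lipschitz and invoke Kantorovich--Rubinstein duality to get the clean quantitative bound $\big|\langle\widetilde{\varphi}_{i'},\MA(\widetilde{\varphi}_{j'})\rangle-\langle\widetilde{\varphi}_{i'},\MA(\varphi)\rangle\big|\le R\,W_1\big(\MA(\widetilde{\varphi}_{j'}),\MA(\varphi)\big)$, manifestly independent of $i'$. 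This buys a tighter and more transparent justification of exactly the step the paper states most informally, and your closing remark is right that the same estimate applied directly on the diagonal, together with dominated convergence for $\langle\widetilde{\varphi}_{i'},\MA(\varphi)\rangle\to\langle\varphi,\MA(\varphi)\rangle$, removes the double-sequence bookkeeping altogether. The only point to make explicit is that the paper's notion $\mu_i\to_1\mu$ is introduced through the equivalent characterizations of Proposition \ref{Wassequivalent} (weak convergence plus the first-moment tightness, or convergence against linear-growth test functions), so to use $W_1\big(\MA(\widetilde{\varphi}_{j'}),\MA(\varphi)\big)\to 0$ you should cite that these are equivalent to metric convergence in $W_1$ (as in the reference to Villani), which also gives $\MA(\varphi)\in\mathcal{P}_1$ as you use in the dominated-convergence step.
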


\begin{proof}
We'll prove, more generally, that the double sequence $\big\langle \widetilde{\varphi}_{i'},\MA(\widetilde{\varphi}_{j'})\big\rangle \to \big\langle \varphi, \MA(\varphi)\big\rangle$.

Proposition \ref{sd} implies $\MA(\widetilde{\varphi}_{i'}) \to_1 \MA(\varphi)$, and by Lemma \ref{Wassequivalent} this is equivalent to
\[
\big\langle f, \MA(\widetilde{\varphi}_{i'})\big\rangle  \to \big\langle f, \MA(\varphi)\big\rangle,
\]
for all continuous $f$ such that $f(x) \leq C\,\big(1+|x|\big)$ for some constant $C$.  In particular, for every fixed $i'$
\[
\lim_{j'\to\infty}\big\langle \widetilde{\varphi}_{i'}, \MA(\widetilde{\varphi}_{j'})\big\rangle  = \big\langle  \widetilde{\varphi}_{i'}, \MA(\varphi)\big\rangle .
\]
Moreover, this converge is uniform because the uniform estimate $\tau + r\,|x| \leq \widetilde{\varphi}_{i'}(x) \leq C+R\,|x|$ implies
\[
\big|\,\big\langle \widetilde{\varphi}_{i'} ,\MA(\widetilde{\varphi}_{j'}) \big\rangle - \big\langle \widetilde{\varphi}_{i'}, \MA(\varphi) \big\rangle\, \big| \leq R\,\big|\,\big\langle\, |x| ,\MA(\widetilde{\varphi}_{j'}) \big\rangle - \big\langle\,|x|, \MA(\varphi) \big\rangle\, \big|<\epsilon
\]
for $j'\gg1$, independent of $i'$.  

Since $\widetilde{\varphi}_{i'}$ converges to $\varphi$ pointwise and $\widetilde{\varphi}_{i'}$ are uniformly bounded, 
\[
\lim_{i'\to\infty} \big\langle \widetilde{\varphi}_{i'},\MA(\varphi) \big\rangle= \big\langle \varphi, \MA(\varphi)\big\rangle
\]
by the dominated convergence theorem.  Thus the iterated sequence converges

\[
\lim_{i'\to\infty} \Big( \,\lim_{j'\to\infty} \big\langle \widetilde{\varphi}_{i'},\,\MA(\widetilde{\varphi}_{j'}) \big\rangle\,\Big) = \big\langle \varphi, \MA(\varphi)\big\rangle,
\]
and the inside limit converges uniformly.  It follows that the double sequence converges. 
\end{proof}

\begin{lemma}\label{Gusc}
Assume the Monge--Amp\`{e}re iteration (\ref{MAiteration}) satisfies Hypotheses \ref{hypB}.  Assume a subsequence $\{\widetilde{\varphi}_{i'}\}$ converges to $\varphi$ uniformly on compact sets. Then $\limsup_{i' \to \infty} \mathcal{G}\Big(\scaleobj{.9}{\dfrac{\MA(\widetilde{\varphi}_{i'})}{\lambda(A)}}\Big) \leq \mathcal{G}\Big(\scaleobj{.9}{\dfrac{\MA(\varphi)}{\lambda(A)}}\Big)$.
\end{lemma}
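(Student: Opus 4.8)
The plan is to use the standard principle that an infimum of a fixed family of continuous functionals is upper semicontinuous. By definition $\mathcal{G}(\mu) = \inf\{\, g(\langle f,\mu\rangle,\mathcal{F}(f)) \mid f\in\mathcal{C}_{\lin}\,\}$, and for each \emph{fixed} $f\in\mathcal{C}_{\lin}$ the map $\mu\mapsto g(\langle f,\mu\rangle,\mathcal{F}(f))$ is continuous with respect to convergence in Wasserstein distance. The lemma will then follow by applying this observation to the convergence $\MA(\widetilde{\varphi}_{i'})/\lambda(A)\to_1 \MA(\varphi)/\lambda(A)$ supplied by Proposition \ref{sd}.

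In detail, first I would fix $\epsilon>0$ and choose $f\in\mathcal{C}_{\lin}$ with $g(\langle f,\MA(\varphi)/\lambda(A)\rangle,\mathcal{F}(f)) < \mathcal{G}(\MA(\varphi)/\lambda(A)) + \epsilon$, noting that $\mathcal{F}(f)$ and $\langle f,\mu\rangle$ are finite for $f\in\mathcal{C}_{\lin}$ and $\mu\in\mathcal{P}_1$, as observed in the discussion of Hypothesis \ref{B1}. Since $f$ is continuous with at most linear growth, equivalence (iii) of Proposition \ref{Wassequivalent} applied to $\MA(\widetilde{\varphi}_{i'})/\lambda(A)\to_1 \MA(\varphi)/\lambda(A)$ gives $\langle f,\MA(\widetilde{\varphi}_{i'})/\lambda(A)\rangle \to \langle f,\MA(\varphi)/\lambda(A)\rangle$. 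Because $g$ is differentiable, hence continuous, in its first slot (Hypothesis \ref{B2}) and its second slot is held fixed at the constant $\mathcal{F}(f)$, it follows that $g(\langle f,\MA(\widetilde{\varphi}_{i'})/\lambda(A)\rangle,\mathcal{F}(f)) \to g(\langle f,\MA(\varphi)/\lambda(A)\rangle,\mathcal{F}(f))$. Finally, for every $i'$ the definition of $\mathcal{G}$ gives $\mathcal{G}(\MA(\widetilde{\varphi}_{i'})/\lambda(A)) \le g(\langle f,\MA(\widetilde{\varphi}_{i'})/\lambda(A)\rangle,\mathcal{F}(f))$, so passing to the $\limsup$ in $i'$ yields $\limsup_{i'}\mathcal{G}(\MA(\widetilde{\varphi}_{i'})/\lambda(A)) \le g(\langle f,\MA(\varphi)/\lambda(A)\rangle,\mathcal{F}(f)) < \mathcal{G}(\MA(\varphi)/\lambda(A)) + \epsilon$. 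Letting $\epsilon\to 0$ completes the proof.

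I do not expect a serious obstacle here: the argument is essentially formal once Proposition \ref{sd} is available. The two points needing care are that the test functions in $\mathcal{C}_{\lin}$ carry precisely the linear-growth hypothesis required to invoke Proposition \ref{Wassequivalent}(iii) — which holds by the very definition of $\mathcal{C}_{\lin}$ — and that $g$ is continuous in its first variable, which Hypothesis \ref{B2} supplies. (If one worried that the infimum defining $\mathcal{G}$ could fail to be real-valued, the same computation applies verbatim with "$\mathcal{G}(\mu)+\epsilon$" replaced by an arbitrary real bound $M$, forcing $\limsup_{i'}\mathcal{G}(\MA(\widetilde{\varphi}_{i'})/\lambda(A)) < M$ for every $M$.)
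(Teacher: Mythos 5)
Your proposal is correct and follows essentially the same route as the paper: both prove upper semicontinuity of $\mathcal{G}$ as an infimum of functionals that are continuous along Wasserstein convergence, using Proposition \ref{sd} together with Proposition \ref{Wassequivalent} and the continuity of $g$ from Hypothesis \ref{B2}. Your version merely makes the standard $\epsilon$-argument for upper semicontinuity of an infimum explicit, which the paper states as a general fact.
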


\begin{proof}
Recall
\[
\mathcal{G}(\mu) = \inf \big\{ \,g(\langle f,\mu \rangle,\,\mathcal{F}(f)) \mid f\in \mathcal{C}_{\lin}\, \big\}
\]
where $\mathcal{C}_{\lin} = \{\,f: \mathbb{R}^n \to (\tau,\infty) \mid f\text{ continuous, and }f(x)/(1+|x|)\text{ bounded}\,\}$.

Let $\mu_i$ and $\mu$ be any measures in $\mathcal{P}_1$ such that $\mu_i \rightarrow_1 \mu$.  Theorem \ref{Wassequivalent} implies 
\[
\lim_{i\to\infty} \langle f,\mu_i\rangle = \langle f,\mu\rangle
\]
for every fixed $f\in\mathcal{C}_{\lin}$.  Since $g$ is continuous, it follows that 
\[
\lim_{i \to \infty} g\big(\langle f,\mu_i \rangle ,\, \mathcal{F}(f)\big) = g\big(\langle f,\mu \rangle ,\, \mathcal{F}(f)\big)
\]
for every fixed $f\in \mathcal{C}_{\lin}$. Thus $\mathcal{G}$ is the infimum of continuous functionals, so $\mathcal{G}$ is upper semicontinuous, meaning
\[
\limsup_{i\to\infty}\, \mathcal{G}(\mu_i) \leq \mathcal{G}(\mu)
\]
whenever $\mu_i \rightarrow_1 \mu$.  The lemma then follows from Proposition \ref{sd} which says $\scaleobj{.9}{\dfrac{\MA(\widetilde{\varphi}_{i'})}{\lambda(A)}} \rightarrow_1 \scaleobj{.9}{\dfrac{\MA(\varphi)}{\lambda(A)}}$.
\end{proof}

\begin{lemma}\label{basiclimit}
Let $\{x_i\}$ and $\{y_i\}$ be sequences of real numbers with $\lim_{i\to\infty} x_i = x$ and $\limsup_{i\to\infty} y_i \leq y$ for $x$ and $y$ finite.  Let $g(s,t)$ be a continuous function such that $g(s,\cdot)$ is decreasing for all fixed $s$.  Then 
\[
\liminf_{i\to\infty} g(x_i,y_i) \geq g(x,y).
\]
\end{lemma}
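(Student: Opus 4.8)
The plan is to use the monotonicity of $g$ in its second argument to replace the awkward sequence $\{y_i\}$ by a constant upper bound, thereby reducing the statement to the continuity of $g$ together with the hypothesis $x_i\to x$.

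First I would fix $\epsilon>0$. Since $\limsup_{i\to\infty}y_i\leq y$, there is an index $N$ (depending on $\epsilon$) such that $y_i\leq y+\epsilon$ for all $i\geq N$. Because $g(s,\cdot)$ is decreasing for each fixed $s$, this gives $g(x_i,y_i)\geq g(x_i,y+\epsilon)$ for all $i\geq N$. Taking $\liminf$ over $i$ and discarding the finitely many terms with $i<N$, we obtain $\liminf_{i\to\infty}g(x_i,y_i)\geq \liminf_{i\to\infty}g(x_i,y+\epsilon)$. Now $x_i\to x$ and $g$ is continuous, so $g(x_i,y+\epsilon)\to g(x,y+\epsilon)$, and hence $\liminf_{i\to\infty}g(x_i,y_i)\geq g(x,y+\epsilon)$. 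Finally I would let $\epsilon\to 0^{+}$: continuity of $g$ (in particular of $g(x,\cdot)$) gives $g(x,y+\epsilon)\to g(x,y)$, which yields the claimed inequality.

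There is essentially no serious obstacle here; the argument is a routine three-$\epsilon$ manipulation. The only point of mild care is that the pointwise inequality $g(x_i,y_i)\geq g(x_i,y+\epsilon)$ holds only eventually in $i$, but since a $\liminf$ is unaffected by finitely many terms this causes no trouble. (One could alternatively pass to a subsequence along which $g(x_i,y_i)$ realizes its $\liminf$ and argue there, but the direct version above is cleaner.)

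In the application the lemma will be combined with Lemmas \ref{Fconverge}, \ref{angle}, and \ref{Gusc}: taking $x_i$ to be the pairing $\big\langle \widetilde{\varphi}_{i'},\MA(\widetilde{\varphi}_{i'})/\lambda(A)\big\rangle$ (which converges by Lemma \ref{angle}) and $y_i=\mathcal{G}\big(\MA(\widetilde{\varphi}_{i'})/\lambda(A)\big)$ (whose $\limsup$ is bounded by $\mathcal{G}(\MA(\varphi)/\lambda(A))$ via Lemma \ref{Gusc}), the lemma converts the upper semicontinuity of $\mathcal{G}$ into a lower bound for $g$ at the limit. Together with Lemma \ref{Fconverge} and Hypothesis \ref{B3}, this forces equality in (\ref{equality}).
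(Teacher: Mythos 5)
Your proof is correct and follows essentially the same route as the paper's: both use the monotonicity of $g(s,\cdot)$ to replace $y_i$ by a slightly enlarged constant $y+\epsilon$ (the paper uses a $\delta$ chosen from continuity), then pass to the limit in the first slot using $x_i\to x$ and continuity of $g$. The only cosmetic difference is that you take the $\liminf$ first and send $\epsilon\to 0^+$ at the end, whereas the paper fixes the tolerance via continuity at the outset; the substance is identical.
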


\begin{proof}
We need to show that $\liminf_{i\to\infty} g(x_i,y_i) \geq g(x,y)-\epsilon$ for all $\epsilon>0$.  For any $\epsilon>0$ there exists a $\delta>0$ such that $g(x,y+ \delta) \geq g(x,y)-\epsilon/2$ because $g$ is continuous at $(x,y)$.  

Because $\limsup y_i \leq y$, it follows that for $i$ large enough, $y_i \leq y+\delta$. Since $g(s,\cdot)$ is decreasing for every fixed $s$, it follows that
\[
g(x_i,y_i) \geq g(x_i, y+\delta) \text{ for }i\text{ large enough.}
\]
Now using that $x_i \to x$ and the continuity of $g$ we have that
\[
g(x_i,y+\delta) \geq g(x,y+\delta) - \epsilon/2 \geq g(x,y)-\epsilon \text{ for }i\text{ large enough.}
\]
Putting the previous two equations together, and taking the $\liminf$ of both sides implies
\[
\liminf g(x_i,y_i) \geq g(x,y)-\epsilon.
\]
\end{proof}

\begin{lemma}\label{varphiequality}
Assume the Monge--Amp\`{e}re iteration (\ref{MAiteration}) satisfies Hypotheses \ref{hypB}. Assume a subsequence $\{\widetilde{\varphi}_{i'}\}$ converges to $\varphi$ uniformly on compact sets.  Then 
\[
\mathcal{G}\Big(\scaleobj{.9}{\dfrac{\MA(\varphi)}{\lambda(A)}}\Big) \geq g\Big(\Big\langle \varphi,\scaleobj{.9}{\dfrac{\MA(\varphi)}{\lambda(A)}}\Big\rangle,\mathcal{F}(\varphi)\Big).
\]
\end{lemma}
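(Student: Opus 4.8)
The plan is to extract the desired inequality from the decreasing-sequence structure established in Lemma \ref{functionallimit} by passing to the limit along the convergent subsequence $\{\widetilde\varphi_{i'}\}$, using all three continuity lemmas just proven (Lemmas \ref{Fconverge}, \ref{angle}, \ref{Gusc}) together with the elementary semicontinuity fact in Lemma \ref{basiclimit}. Since $\mathcal{F}$, $\mathcal{G}$, and the Monge--Amp\`ere measure are translation invariant, I may replace $\varphi_{i'}$ by $\widetilde\varphi_{i'}$ throughout; I will also keep the normalization $\lambda(A)=1$ as in the surrounding subsection.

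First I would recall from Lemma \ref{functionallimit} that
\[
g\Big(\Big\langle \widetilde\varphi_{i'},\,\scaleobj{.9}{\dfrac{\MA(\widetilde\varphi_{i'})}{\lambda(A)}}\Big\rangle,\,\mathcal{G}\Big(\scaleobj{.9}{\dfrac{\MA(\widetilde\varphi_{i'})}{\lambda(A)}}\Big)\Big)\;\geq\;\mathcal{F}(\widetilde\varphi_{i'})
\]
for every $i'$, and that both sides converge to the common finite limit $\beta$ as $i'\to\infty$. By Lemma \ref{Fconverge}, $\mathcal{F}(\widetilde\varphi_{i'})\to\mathcal{F}(\varphi)$, so $\beta=\mathcal{F}(\varphi)$; in particular the left-hand side also converges to $\mathcal{F}(\varphi)$. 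Next I apply the three continuity inputs: Lemma \ref{angle} gives $\langle\widetilde\varphi_{i'},\MA(\widetilde\varphi_{i'})\rangle\to\langle\varphi,\MA(\varphi)\rangle$, and Lemma \ref{Gusc} gives $\limsup_{i'\to\infty}\mathcal{G}\big(\MA(\widetilde\varphi_{i'})/\lambda(A)\big)\le\mathcal{G}\big(\MA(\varphi)/\lambda(A)\big)$. Now set $x_{i'}=\langle\widetilde\varphi_{i'},\MA(\widetilde\varphi_{i'})/\lambda(A)\rangle$ and $y_{i'}=\mathcal{G}\big(\MA(\widetilde\varphi_{i'})/\lambda(A)\big)$, with $x=\langle\varphi,\MA(\varphi)/\lambda(A)\rangle$ and $y=\mathcal{G}\big(\MA(\varphi)/\lambda(A)\big)$. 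Since $g(s,\cdot)$ is decreasing and continuous (Hypothesis \ref{B2}), Lemma \ref{basiclimit} yields
\[
\mathcal{F}(\varphi)\;=\;\lim_{i'\to\infty} g(x_{i'},y_{i'})\;=\;\liminf_{i'\to\infty} g(x_{i'},y_{i'})\;\geq\;g(x,y)\;=\;g\Big(\Big\langle \varphi,\scaleobj{.9}{\dfrac{\MA(\varphi)}{\lambda(A)}}\Big\rangle,\,\mathcal{G}\Big(\scaleobj{.9}{\dfrac{\MA(\varphi)}{\lambda(A)}}\Big)\Big).
\]

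Finally I apply $g\big(\langle\varphi,\MA(\varphi)/\lambda(A)\rangle,\,\cdot\,\big)$ to both sides of this inequality. Because $g(s,\cdot)$ is a decreasing involution (Hypothesis \ref{B2}, $g(s,g(s,t))=t$), applying it reverses the inequality and removes the outer $g$, giving exactly
\[
\mathcal{G}\Big(\scaleobj{.9}{\dfrac{\MA(\varphi)}{\lambda(A)}}\Big)\;\geq\;g\Big(\Big\langle \varphi,\scaleobj{.9}{\dfrac{\MA(\varphi)}{\lambda(A)}}\Big\rangle,\,\mathcal{F}(\varphi)\Big),
\]
which is the claim. The step I expect to require the most care is justifying the identification $\beta=\mathcal{F}(\varphi)$ and that the $g$-sequence converges rather than merely has $\mathcal{F}(\varphi)$ as a subsequential value — but this is exactly what Lemma \ref{functionallimit} provides, since the full sequences $\{\mathcal{F}(\varphi_i)\}$ and the $g$-sequence are monotone with a common limit, so every subsequence inherits that limit. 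The only genuinely delicate point is that Lemma \ref{functionallimit}'s monotonicity is along the original index $i$, so I must first observe that convergence of the monotone sequences along $i$ forces convergence along the subsequence $i'$ to the same value $\beta$; combined with Lemma \ref{Fconverge} this pins down $\beta=\mathcal{F}(\varphi)$.
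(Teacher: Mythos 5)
Your proposal is correct and follows essentially the same route as the paper: both pass to the limit in the two-sided monotone structure of Lemma \ref{functionallimit}, identify the common limit with $\mathcal{F}(\varphi)$ via Lemma \ref{Fconverge}, apply Lemma \ref{basiclimit} together with Lemmas \ref{angle} and \ref{Gusc}, and finish by applying the decreasing involution $g\big(\langle \varphi,\MA(\varphi)/\lambda(A)\rangle,\cdot\big)$. Your added remarks on translation invariance and on the subsequence inheriting the monotone limit $\beta$ are exactly the (implicit) justifications the paper relies on when it restates equation (\ref{functionallimiteq}) for $\widetilde{\varphi}_{i'}$.
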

\begin{proof}
Recall equation (\ref{functionallimiteq}) which we restate for the subsequence $\widetilde{\varphi}_{i'}$
\[
\lim_{i\to\infty} \mathcal{F}(\widetilde{\varphi}_{i'}) = \lim_{i\to\infty} g\Big(\Big\langle \widetilde{\varphi}_{i'},\scaleobj{.9}{\dfrac{\MA(\widetilde{\varphi}_{i'})}{\lambda(A)}}\Big\rangle,\,\mathcal{G}\Big(\scaleobj{.9}{\dfrac{\MA(\widetilde{\varphi}_{i'})}{\lambda(A)}}\Big)\Big).
\]
By Lemma \ref{Fconverge}, $\lim_{i' \to \infty} \mathcal{F}(\widetilde{\varphi}_{i'}) = \mathcal{F}(\varphi)$, so
\[
\mathcal{F}(\varphi) = \lim_{i\to\infty} g\Big(\Big\langle \widetilde{\varphi}_{i'},\scaleobj{.9}{\dfrac{\MA(\widetilde{\varphi}_{i'})}{\lambda(A)}}\Big\rangle,\,\mathcal{G}\Big(\scaleobj{.9}{\dfrac{\MA(\widetilde{\varphi}_{i'})}{\lambda(A)}}\Big)\Big).
\]

Lemma \ref{basiclimit} applies to the limit in the above equation because $\Big\langle \widetilde{\varphi}_{i'},\,\scaleobj{.9}{\dfrac{\MA(\widetilde{\varphi}_{i'})}{\lambda(A)}} \Big\rangle \to \Big\langle \varphi,\,\scaleobj{.9}{\dfrac{\MA(\varphi)}{\lambda(A)}}\Big\rangle$ by Lemma \ref{angle} and $\limsup \mathcal{G}\Big(\scaleobj{.9}{\dfrac{\MA(\widetilde{\varphi}_{i'})}{\lambda(A)}}\Big) \leq \mathcal{G}\Big(\scaleobj{.9}{\dfrac{\MA(\varphi)}{\lambda(A)}}\Big)$ by Lemma \ref{Gusc}.  Lemma \ref{basiclimit} implies
\[
\mathcal{F}(\varphi) \geq g\Big(\Big\langle \varphi, \scaleobj{.9}{\dfrac{\MA(\varphi)}{\lambda(A)}}\Big\rangle, \mathcal{G}\Big(\scaleobj{.9}{\dfrac{\MA(\varphi)}{\lambda(A)}}\Big)\Big).
\]

Finally, by applying the decreasing function $g\Big(\Big\langle \varphi,\scaleobj{.9}{\dfrac{\MA(\varphi)}{\lambda(A)}}\Big\rangle ,\cdot\Big)$ to both sides of the previous equation, and using the fact that $g(s,g(s,t))=t$, we get
\[
g\Big(\Big\langle \varphi,\scaleobj{.9}{\dfrac{\MA(\varphi)}{\lambda(A)}}\Big\rangle,\mathcal{F}(\varphi)\Big) \leq \mathcal{G}\Big(\scaleobj{.9}{\dfrac{\MA(\varphi)}{\lambda(A)}}\Big).
\]
\end{proof}

This concludes the proof of Proposition \ref{sd2}.

\subsection{Convergence of the iteration}

In this section we will prove step 5 of Subsection \ref{outline}, thus completing the proof of Theorem \ref{introthm}.

\begin{proposition}\label{final}
Assume $A$ satisfies (\ref{Aconditions}) and the Monge--Amp\`{e}re iteration satisfies Hypotheses \ref{hypB}.  Then, $\widetilde{\varphi}_i$ converges to $\varphi$, which is a smooth solution to equation (\ref{ourpde}), in every $C^{k,\alpha}$ norm on compact sets. 
\end{proposition}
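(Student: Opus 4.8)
The plan is to carry out Step 5 of the outline. Fix a subsequence $\widetilde{\varphi}_{i'}\to\varphi$ converging uniformly on compact sets, as furnished by Proposition \ref{subsequence}; by that proposition $\Int\partial\varphi(\mathbb{R}^n)=A$, so $\varphi$ has at most linear growth and lies in $\mathcal{C}_{\lin}$, and Proposition \ref{sd2} says $\varphi$ attains the infimum defining $\mathcal{G}(\MA(\varphi)/\lambda(A))$. Writing $\mu:=\MA(\varphi)/\lambda(A)$, this means $\varphi$ minimizes $f\mapsto g(\langle f,\mu\rangle,\mathcal{F}(f))$ over all $f\in\mathcal{C}_{\lin}$ with $\mu$ held fixed. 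First I would compute the Euler--Lagrange equation. For $\psi\in C_c^\infty(\mathbb{R}^n)$ and $|\epsilon|$ small, $\varphi+\epsilon\psi\in\mathcal{C}_{\lin}$, and differentiating $\mathcal{F}(f)=H^{-1}(\|H\circ f\|_1)$ — using $H'=-h$, the linear growth bounds of Step 1, and Hypothesis \ref{B1} to justify differentiation under the integral sign — gives $\tfrac{d}{d\epsilon}\big|_{\epsilon=0}\mathcal{F}(\varphi+\epsilon\psi)=h(\mathcal{F}(\varphi))^{-1}\int_{\mathbb{R}^n}h(\varphi)\,\psi\,d\lambda$. Since $g$ is differentiable with $\partial_t g<0$ (Hypothesis \ref{B2}), setting $\tfrac{d}{d\epsilon}\big|_{0}\,g(\langle\varphi+\epsilon\psi,\mu\rangle,\mathcal{F}(\varphi+\epsilon\psi))=0$ for all $\psi$ forces $\mu=c\,(h\circ\varphi)\,\lambda$ for a positive constant $c$ (positivity of $c$ being forced by $\MA(\varphi)\ge 0$ and $\mu\ne 0$). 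Integrating over $\mathbb{R}^n$ and using $\mu(\mathbb{R}^n)=\lambda(\partial\varphi(\mathbb{R}^n))/\lambda(A)=1$ identifies $c=\|h\circ\varphi\|_1^{-1}$, so $\MA(\varphi)=\lambda(A)\,(h\circ\varphi)/\|h\circ\varphi\|_1\,\lambda$; this is the first line of (\ref{ourpde}) in the Alexandrov sense.

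Next comes regularity and the remaining boundary data. The density $(h\circ\varphi)/\|h\circ\varphi\|_1$ is positive, bounded (as $\varphi\ge\tau/\lambda(A)$ and $h$ is decreasing), and continuous, and the target $A$ is convex, so Caffarelli's regularity theory for the second boundary value problem gives $\varphi\in C^{1,\alpha}_{\mathrm{loc}}$, hence strict convexity and $\varphi\in C^{2,\alpha}_{\mathrm{loc}}$, and a Schauder bootstrap yields $\varphi\in C^\infty$. Strict convexity and smoothness of $\varphi$ promote $\Int\partial\varphi(\mathbb{R}^n)=A$ to $\nabla\varphi(\mathbb{R}^n)=A$. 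To recover $\int_A\varphi^*\,d\lambda=-\tau$ I would pass the identity $\int_A\widetilde{\varphi}_{i'}^*\,d\lambda=-\tau$ (Lemma \ref{tildeprops}) to the limit: uniform-on-compacts convergence $\widetilde{\varphi}_{i'}\to\varphi$ gives $\widetilde{\varphi}_{i'}^*\to\varphi^*$ pointwise on $A=\Int\dom\varphi^*$, and the $i$-independent two-sided bounds $\mathbb{1}_{A_\epsilon}^*-C_\epsilon\le\widetilde{\varphi}_i\le C+\mathbb{1}_A^*$ from Lemma \ref{subgradientlimit} translate, under the order-reversing Legendre transform, into uniform control of $\widetilde{\varphi}_{i'}^*$ on $A_\epsilon$ together with a uniform lower bound on all of $A$, which suffices for a dominated/Fatou argument on $A_\epsilon$ followed by letting $\epsilon\to 0$. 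Thus $\varphi$ solves (\ref{ourpde}).

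For the full sequence: every subsequence of $\{\widetilde{\varphi}_i\}$ has, by Proposition \ref{subsequence}, a further subsequence converging uniformly on compacts to some limit, which by the above is a smooth solution of (\ref{ourpde}); passing $\widetilde{\varphi}_i(0)=\inf\widetilde{\varphi}_i$ to the limit shows this limit attains its infimum at $0$. By Hypothesis \ref{E} solutions of (\ref{ourpde}) are unique up to translation, and the normalization at $0$ then determines the limit uniquely, so all subsequential limits coincide and $\widetilde{\varphi}_i\to\varphi$ uniformly on compacts. To upgrade to $C^{k,\alpha}_{\mathrm{loc}}$ convergence I would note $\det(\nabla^2\widetilde{\varphi}_{i+1})(x)=\lambda(A)\,h(\widetilde{\varphi}_i(x+a_{i+1}-a_i))/\|h\circ\widetilde{\varphi}_i\|_1$; by Lemma \ref{aibound} the shifts $a_{i+1}-a_i$ are bounded, and in fact $a_{i+1}-a_i\to 0$, since any subsequential limit of the shifts would be a period of the strictly convex $\varphi$. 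Hence the right-hand sides converge uniformly on compacts, and — using the growth bounds of Step 1 to control the eccentricity of sections, the bounds of Lemma \ref{Hfbound} to control $\|h\circ\widetilde{\varphi}_i\|_1$ from above and below, and local Lipschitz bounds from convexity — Caffarelli's interior $C^{2,\alpha}$ estimates and a Schauder bootstrap give $i$-independent $C^{k,\alpha}_{\mathrm{loc}}$ bounds on $\widetilde{\varphi}_i$ for every $k$. Arzel\`a--Ascoli together with the already-established uniqueness of the limit then upgrades the convergence to $C^{k,\alpha}_{\mathrm{loc}}$.

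I expect the main obstacle to be the first-variation step: one must check that additive perturbations stay in $\mathcal{C}_{\lin}$, justify the differentiations under the integral sign using only linear-growth control, and — most delicately — be careful that the variational characterization from Proposition \ref{sd2} genuinely allows one to vary $\varphi$ while holding the measure $\mu=\MA(\varphi)/\lambda(A)$ in the functional fixed. The dominated-convergence argument for $\int_A\varphi^*\,d\lambda=-\tau$ near $\partial A$ is a secondary technical point; the regularity and uniqueness steps are otherwise standard once the weak equation is established.
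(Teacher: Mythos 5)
Your overall strategy is the paper's: extract the Euler--Lagrange equation from the minimization property of Proposition \ref{sd2} (your normalization of total mass replaces the paper's substitution $b=1$, which is an equivalent bookkeeping), obtain the equation in the Alexandrov sense, upgrade to smoothness, identify all subsequential limits via Hypothesis \ref{E} and the normalization at the origin, and get $C^{k,\alpha}_{\mathrm{loc}}$ convergence from uniform Caffarelli estimates plus uniqueness of the limit (your extra claim $a_{i+1}-a_i\to 0$ is true but unnecessary; the bound $|a_{i+1}-a_i|\le C$ of Lemma \ref{aibound} already keeps the right-hand sides uniformly $C^{0,1}$ on compacts, which is all the paper uses). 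Two points need repair. The minor one: ``$C^{1,\alpha}_{\mathrm{loc}}$, hence strict convexity'' is backwards --- $C^{1,\alpha}$ does not imply strict convexity; strict convexity must come first, from Caffarelli's localization of the contact set of a supporting hyperplane (this is exactly the paper's Lemma \ref{regularitylemma}, which uses that the density is locally pinched between positive constants and that $\partial\varphi(\mathbb{R}^n)$ has nonempty interior), and only then do the interior $C^{2,\alpha}$ estimates and the Schauder bootstrap apply.

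The genuine gap is the normalization $\int_A\varphi^*\,d\lambda=-\tau$. Your argument (pointwise convergence $\widetilde{\varphi}_{i'}^*\to\varphi^*$ on $A$, the uniform lower bound $\widetilde{\varphi}_{i'}^*\ge -C$ on $A$, the $\epsilon$-dependent upper bound of Lemma \ref{Klartagvar} on $A_\epsilon$, then Fatou/dominated convergence on $A_\epsilon$ and $\epsilon\to 0$) delivers only the inequality $\int_A\varphi^*\,d\lambda\le-\tau$. The reverse inequality requires $\sup_{i}\int_{A\setminus A_\epsilon}(\widetilde{\varphi}_{i'}^*)^+\,d\lambda\to 0$ as $\epsilon\to 0$, i.e.\ that no fixed amount of positive mass of the conjugates concentrates in a shrinking boundary layer of $A$; the bounds you invoke do not exclude a spike of the form $M_i\,\mathrm{dist}(\cdot,\partial A)$-type with $M_i\to\infty$ carrying integral bounded below, which is compatible with a uniform lower bound on $A$ and uniform upper bounds on each fixed $A_\epsilon$. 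Under $y=\nabla\widetilde{\varphi}_{i'}(x)$ this boundary concentration is precisely the escape of $|x|$-mass of $\MA(\widetilde{\varphi}_{i'})$ to infinity, which is what the tightness in Proposition \ref{sd} (resting on Lemma \ref{aibound}) rules out; the paper therefore proves the normalization on the $x$-side, writing $\int_A\widetilde{\varphi}_{i'}^*\,d\lambda=\int_{\mathbb{R}^n}\big(\langle x,\nabla\widetilde{\varphi}_{i'}\rangle-\widetilde{\varphi}_{i'}\big)\,\MA(\widetilde{\varphi}_{i'})$ (Lemma \ref{legendreintregrallemma}) and passing to the limit with the Wasserstein convergence via Lemma \ref{angle} and a double-sequence argument (Lemma \ref{limitintegral}). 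To close your version you must import that tightness into the boundary-layer estimate (e.g.\ bound $\int_{A\setminus A_\epsilon}\widetilde{\varphi}_{i'}^*$ by a bounded contribution of size $O(\lambda(A\setminus A_\epsilon))$ plus $\int_{|x|\ge R}C(1+|x|)\,\MA(\widetilde{\varphi}_{i'})$) or simply follow the paper's route; without one of these, equality is not established.
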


Firstly, we will prove that the limit $\varphi$ of any convergent subsequence $\widetilde{\varphi}_{i'}$ is a smooth solution to equation (\ref{ourpde}).  In Lemma \ref{limitsolution} we show  $\varphi$ satisfies the differential equation and the second boundary condition, and in Lemma \ref{limitintegral} we show that $\varphi$ satisfies the normalization $\int_A \varphi^*\,d\lambda=-\tau$.  Before proving Lemma \ref{limitsolution} we must discuss the regularity results of Caffarelli.

Consider the Monge--Amp\`{e}re equation
\begin{equation}\label{appendixMA}
\det(\nabla^2 f(x)) = g(x)
\end{equation}
for $g$ a positive function.  We say that a convex function $f$ is an \textit{Alexandrov solution} to equation (\ref{appendixMA}) if
\[
\MA(f) = g\,\lambda
\]
as Borel measures. Caffarelli \cite{Caff2} proved if $f$ is a strictly convex Alexandrov solution of equation (\ref{appendixMA}) and $g\in C^{0,\alpha}$, then $f\in C^{2,\alpha}$.

Caffarelli also proved a means of showing that Alexandrov solutions to certain Monge--Amp\`{e}re equations are strictly convex.  Let $\Omega$ be an open, convex, bounded set, and let $f:\Omega \to \mathbb{R}$ be a convex function. If $f$ is not strictly convex, then there is some supporting hyperplane $z_{n+1} = f(x) + \langle z-x, y\rangle$ which intersects the graph of $f$ at more than one point.  Denote this set by
\[
S_{y} = \{\,z\in \Omega \mid f(z) = f(x) + \langle z-x, y\rangle\,\}.
\]
$S_y$ is a convex set because it is the sublevel set of the convex function, so we can consider its \textit{extreme points} which are the points in the boundary of $S_y$ that are not convex combinations of other points in $\overline{S_y}$.  Caffarelli \cite{Caff3} proved that if $f$ is an Alexandrov solution to $\det(\nabla ^2 f) =g$ where $c^{-1} \leq g \leq c$ for some $c>0$, then the extreme points of $S_y$ must lie in the boundary of $\Omega$.  Caffarelli's result has a nice corollary when the domain  of $f$ is all of $\mathbb{R}^n$.

\begin{lemma}\label{regularitylemma}
Let $f:\mathbb{R}^n \to \mathbb{R}$ be a convex function. If $\partial f(\mathbb{R}^n)$ has nonempty interior and $\MA(f) = g$ for $g$ a positive, continuous function, then $f$ is stricly convex.
\end{lemma}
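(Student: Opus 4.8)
The plan is to argue by contradiction, reducing the problem on $\mathbb{R}^n$ to a bounded ball on which Caffarelli's extreme-point result can be applied. Suppose $f$ is not strictly convex. Then some supporting hyperplane $\ell(z) = f(x_0) + \langle z - x_0, y_0\rangle$ meets the graph of $f$ at more than one point, so its contact set $S = \{\, z\in\mathbb{R}^n \mid f(z) = \ell(z)\,\}$ is a closed convex set containing a nondegenerate segment $[a,b]$. On $S$ the function $f$ coincides with the affine function $\ell$, and since $\ell \le f$ everywhere with equality on $S$, we get $y_0 \in \partial f(z)$ for every $z \in S$.

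The crucial preliminary observation is that $S$ cannot contain a full line. If $S$ contained a line $\{\, a + tv \mid t\in\mathbb{R}\,\}$, then for every $x$ and every $y \in \partial f(x)$ the subgradient inequality $f(a+tv) \ge f(x) + \langle a+tv-x, y\rangle$ would hold for all $t\in\mathbb{R}$; but the left-hand side is affine in $t$ with slope $\langle v, y_0\rangle$, so letting $t\to\pm\infty$ forces $\langle v, y\rangle = \langle v, y_0\rangle$. Then $\partial f(\mathbb{R}^n)$ would lie in an affine hyperplane, contradicting the hypothesis that $\partial f(\mathbb{R}^n)$ has nonempty interior. Thus $S$ is a nonempty closed convex set containing no line, and hence has at least one extreme point $p$ (a standard fact in convex analysis).

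Now I would localize. Choose $R$ so large that both $p$ and the segment $[a,b]$ lie in the open ball $B_R$. Since $g$ is positive and continuous, it is bounded above and below by positive constants on the compact set $\overline{B_R}$. The restriction $f|_{B_R}$ is an Alexandrov solution of $\det(\nabla^2 f) = g$ on $B_R$, because at an interior point a local supporting hyperplane of the globally convex function $f$ is automatically global, so the subdifferential of $f|_{B_R}$ agrees with that of $f$ there and the two Monge--Amp\`{e}re measures agree on $B_R$. Moreover $\ell$ still supports $f|_{B_R}$, with contact set $S\cap B_R \supseteq [a,b]$, so $f|_{B_R}$ is not strictly convex. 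Caffarelli's theorem then forces every extreme point of $S\cap B_R$ to lie on $\partial B_R$. But $p$ is an extreme point of $S$ lying in the open ball, hence an extreme point of $S\cap B_R$ not on $\partial B_R$ — a contradiction. Therefore $f$ is strictly convex.

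The main obstacle, and the reason this is not a one-line citation of Caffarelli, is the noncompactness of $\mathbb{R}^n$: Caffarelli's result is stated on bounded convex domains, and cutting $\mathbb{R}^n$ down to $B_R$ creates spurious extreme points of $S\cap B_R$ on the sphere $\partial B_R$, which are exactly the ones the theorem permits. Extracting a contradiction therefore hinges on producing a genuine extreme point of $S$ sitting strictly inside some ball, and that is precisely what the ``no line'' argument — which is where the nonempty-interior hypothesis on $\partial f(\mathbb{R}^n)$ enters — provides.
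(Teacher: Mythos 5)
Your proof is correct and takes essentially the same approach as the paper: both arguments rest on Caffarelli's localization result that extreme points of the contact set must lie on the boundary of the (bounded) domain, the standard fact that a closed, line-free convex set has an extreme point, and the nonempty-interior hypothesis on $\partial f(\mathbb{R}^n)$ to exclude lines in the contact set. The only difference is the ordering and how the hypothesis is invoked: you rule out lines first (showing the subgradient image would lie in a hyperplane) and then contradict Caffarelli at an interior extreme point, whereas the paper first uses Caffarelli to show the contact set has no extreme points, concludes it must contain a line, and then contradicts the linear growth $f(x)\geq -C+r\,|x|$ coming from the same hypothesis.
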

\begin{proof}
Assume $f$ is not strictly convex.  Then, for some point point $x\in\mathbb{R}^n$ and some point $y\in\partial f(x)$ the set
\[
S_{y} = \{\,z\in \mathbb{R}^n \mid f(z) = f(x) + \langle z-x, y\rangle\,\}
\]
contains more than one point.  Since the graph of $f$ lies above the supporting hyperplane $z_{n+1} = f(x) + \langle z-x, y\rangle$, it follows that $S_y$ is the sublevel set of a convex function, so it is convex.  Since $\dom(f) = \mathbb{R}^n$, it follows that $f$ is continuous, so $S_y$ is closed.  

We claim that the set $S_y$ cannot contain any extreme points.  Let $p$ be any point in the boundary of $S_y$.  Since $g$ is positive and continuous, $0<c<g(x) \leq C$ on $\overline{B_R(p)}$.  Thus $f$ is an Alexandrov solution to $0<c\leq\det(\nabla^2 f) \leq C$, so Caffarelli's theorem implies the extreme points of $S_y\cap \overline{B_R(p)}$ occur on the boundary.  Thus $p$ is not an extreme point of $S_y$, so $S_y$ has no extreme points.

Theorem 18.5.3 of Rockafellar \cite{Rockafellar} says any nonempty, closed convex set which contains no lines must contain at least one extreme point, so $S_y$ must contain a line $z+t\,u$ for $t\in \mathbb{R}$.  This implies $f(z+t\,u) = f(z) +t\,\langle u,y\rangle$, and we claim this contradicts $\partial f(\mathbb{R}^n)$ having nonempty interior.  By adding a linear function to $f$, we can assume without loss of generality that $B_r \subset \partial f(\mathbb{R}^n)$ for some small $r>0$.  This implies $f(x) \geq -C + r\,|x|$ for some constant $C$.  This lower bound contradicts $f(z+t\,u) = f(z) + t\,\langle u,y\rangle$ for either $t\to\infty$ or $t\to-\infty$, thus $f$ is strictly convex.
\end{proof}

\begin{lemma}\label{limitsolution}
Assume $A$ satisfies (\ref{Aconditions}) and the Monge--Amp\`{e}re iteration (\ref{MAiteration}) satisfies Hypotheses \ref{hypB}.    Let $\{\widetilde{\varphi}_{i'}\}$ be a subsequence which converges to $\varphi$ uniformly on compact sets.  Then $\varphi$ is a smooth solution to 
\[
    \begin{cases}
        \dfrac{\det(\nabla^2 \varphi)}{\lambda(A)} = \dfrac{h\circ \varphi}{\| h\circ \varphi \|_1} \\
        \nabla \varphi ( \mathbb{R}^n) = A.
    \end{cases}
\]
\end{lemma}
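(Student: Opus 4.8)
The plan is to exploit that, by Proposition~\ref{sd2}, the limit $\varphi$ lies in $\mathcal{C}_{\lin}$ and realizes the infimum defining $\mathcal{G}\big(\MA(\varphi)/\lambda(A)\big)$; equivalently, $\varphi$ minimizes
\[
\Phi(f)\;=\;g\Big(\big\langle f,\, \tfrac{\MA(\varphi)}{\lambda(A)}\big\rangle,\; \mathcal{F}(f)\Big)
\]
over $f\in\mathcal{C}_{\lin}$. I would compute the first variation of $\Phi$ at $\varphi$, read off its Euler--Lagrange equation, and then bootstrap regularity. Set $\mu=\MA(\varphi)/\lambda(A)$, a probability measure by Proposition~\ref{sd}, and note that $\|h\circ\varphi\|_1<\infty$ by Lemma~\ref{Hfbound} applied to the lower bound $\varphi(x)\ge \tau/\lambda(A)+r|x|$ inherited from Proposition~\ref{uniformbounds}.

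First I would fix $\psi\in C_c^\infty(\mathbb{R}^n)$; since $\varphi>\tau$ everywhere and $\varphi$ is continuous, $\min_{\mathrm{supp}\,\psi}\varphi>\tau$, so $\varphi+\epsilon\psi\in\mathcal{C}_{\lin}$ for $|\epsilon|$ small. Using $H'=-h$ and the chain rule for $H^{-1}$, and differentiating under the integral sign (immediate here, as $\psi$ has compact support), one obtains
\[
\frac{d}{d\epsilon}\Big|_{\epsilon=0}\mathcal{F}(\varphi+\epsilon\psi)\;=\;\frac{1}{h(\mathcal{F}(\varphi))}\int_{\mathbb{R}^n}(h\circ\varphi)\,\psi\,d\lambda,\qquad \frac{d}{d\epsilon}\Big|_{\epsilon=0}\big\langle \varphi+\epsilon\psi,\mu\big\rangle\;=\;\int_{\mathbb{R}^n}\psi\,d\mu.
\]
Since $\varphi$ is a minimizer, the $\epsilon$-derivative of $\Phi(\varphi+\epsilon\psi)$ vanishes at $0$; writing $g_1,g_2$ for the partials of $g$ at $(\langle\varphi,\mu\rangle,\mathcal{F}(\varphi))$ (a point in the domain of $g$, with $g_1\neq 0$ for both admissible $g$), this reads
\[
g_1\int_{\mathbb{R}^n}\psi\,d\mu\;+\;\frac{g_2}{h(\mathcal{F}(\varphi))}\int_{\mathbb{R}^n}(h\circ\varphi)\,\psi\,d\lambda\;=\;0\qquad\text{for all }\psi\in C_c^\infty(\mathbb{R}^n),
\]
so that $\mu=-\dfrac{g_2}{g_1\,h(\mathcal{F}(\varphi))}\,(h\circ\varphi)\,\lambda$ as Borel measures. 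The constant here is necessarily positive, and since $\mu$ is a probability measure it must equal $\|h\circ\varphi\|_1^{-1}$; hence $\MA(\varphi)/\lambda(A)=(h\circ\varphi)/\|h\circ\varphi\|_1$ in the Alexandrov sense. (Alternatively the constant is fixed by integrating and using $\MA(\varphi)(\mathbb{R}^n)=\lambda(\partial\varphi(\mathbb{R}^n))=\lambda(A)$, which holds since $\Int\partial\varphi(\mathbb{R}^n)=A$ by Proposition~\ref{subsequence} and $\partial A$ is Lebesgue--null.)

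It then remains to upgrade regularity. The density $(h\circ\varphi)/\|h\circ\varphi\|_1$ is positive and continuous---indeed locally Lipschitz, as $h$ is smooth and the convex function $\varphi$ is locally Lipschitz---and $\Int\partial\varphi(\mathbb{R}^n)=A$ is nonempty, so Lemma~\ref{regularitylemma} shows $\varphi$ is strictly convex; Caffarelli's $C^{2,\alpha}$ estimate then gives $\varphi\in C^{2,\alpha}_{\mathrm{loc}}$. A Schauder bootstrap (the right-hand side gains derivatives along with $\varphi$, since $h\in C^\infty$) yields $\varphi\in C^\infty$, with $\det\nabla^2\varphi>0$ because the right-hand side is positive. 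Finally, $\varphi\in C^1$ forces $\partial\varphi$ to be single-valued, so $\partial\varphi(\mathbb{R}^n)=\nabla\varphi(\mathbb{R}^n)$; since $\nabla^2\varphi>0$ the map $\nabla\varphi$ is a local diffeomorphism, hence open, so $\nabla\varphi(\mathbb{R}^n)$ is an open set equal to $\Int\partial\varphi(\mathbb{R}^n)=A$. This gives the second boundary condition and completes the proof.

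I expect the first-variation step to be the main obstacle: one must check that the perturbed functions remain admissible over the noncompact domain, justify the differentiation under the integral sign, and---the genuinely structural point---verify that the Euler--Lagrange relation coming out of the abstract hypotheses on $g$ is precisely equation~(\ref{ourpde}) and not merely a proportionality. Given Lemma~\ref{regularitylemma} and Caffarelli's regularity theory, the remaining regularity and boundary-condition steps are routine.
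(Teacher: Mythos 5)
Your proposal is correct and follows essentially the same route as the paper: take the first variation of the minimization property furnished by Proposition~\ref{sd2}, read off the Euler--Lagrange identity to get $\MA(\varphi)/\lambda(A)=(h\circ\varphi)\lambda/\|h\circ\varphi\|_1$ in the Alexandrov sense, then apply Lemma~\ref{regularitylemma}, Caffarelli's $C^{2,\alpha}$ estimate, and elliptic bootstrapping, and finally upgrade $\partial\varphi(\mathbb{R}^n)$ to $\nabla\varphi(\mathbb{R}^n)=A$. The only real deviation is that you test with compactly supported $\psi$ and fix the constant by total mass, which forces you to invoke $\partial g/\partial s\neq 0$ (true for both admissible choices of $g$, as you note, though not a formal consequence of Hypotheses~\ref{hypB}), whereas the paper additionally tests with $b\equiv 1$ to eliminate $\partial g/\partial s$ and only needs $\partial g/\partial t\neq 0$, which does follow from the involution property in Hypothesis~\ref{B2}.
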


\begin{proof}
By Proposition \ref{sd} and the definition of $\mathcal{G}$
\begin{equation}\label{minimizer}
g\Big(\Big\langle \varphi,\scaleobj{.9}{\dfrac{\MA(\varphi)}{\lambda(A)}}\Big\rangle,\mathcal{F}(\varphi)\Big) = \mathcal{G}\Big(\scaleobj{.9}{\dfrac{\MA(\varphi)}{\lambda(A)}}\Big) \leq g\Big(\Big\langle f,\scaleobj{.9}{\dfrac{\MA(\varphi)}{\lambda(A)}}\Big\rangle,\mathcal{F}(f)\Big)\hspace{5mm}\text{for all }f\in\mathcal{C}_{\lin},
\end{equation}
where we recall
\[
\mathcal{C}_{\lin} = \{\,f: \mathbb{R}^n \to (\tau,\infty) \mid f\text{ continuous, and } f(x)/(1+|x|)\text{ bounded}\,\}.
\]

Consider a variation $\varphi_\epsilon = \varphi + \epsilon\,b$ for $b$ a continuous, bounded function on $\mathbb{R}^n$. For $\epsilon$ small enough, $\varphi_\epsilon\in \mathcal{C}_{\lin}$.  Hypothesis \ref{B1} says $h$ is positive and decreasing so $H(t)=\int_t^\infty h\,d\lambda$ is positive, strictly decreasing, and convex.  Thus
\[
-(t-s)\,h(s)\leq H(t)-H(s) \leq -(t-s)\,h(t),
\]
for any values $s,\,t$.  Letting $s=\varphi(x)$ and $t=\varphi_\epsilon(x)$ implies
\begin{multline*}
-\int_{\mathbb{R}^n}b(x)\,h\big(\varphi(x)\big)\,d\lambda \leq \epsilon^{-1} \left( \int_{\mathbb{R}^n} H\big(\varphi_\epsilon(x)\big)\,d\lambda - \int_{\mathbb{R}^n} H\big(\varphi(x)\big)\,d\lambda \right) \\ \leq -\int_{\mathbb{R}^n} b(x)\,h\big(\varphi_\epsilon(x)\big)\,d\lambda,
\end{multline*}
for $\epsilon>0$ with the reverse inequalities when $\epsilon<0$. The dominated convergence theorem implies
\[
\int_{\mathbb{R}^n} b(x)\,h\big(\varphi_{\epsilon}(x)\big)\,d\lambda \to \int_{\mathbb{R}^n} b(x)\,h\big( \varphi(x)\big)\,d\lambda,
\]
so
\begin{multline*}
\dfrac{d}{d\epsilon} \bigg|_{\epsilon=0} \|H\circ \varphi_{\epsilon}\|_1 = \lim_{\epsilon \to 0} \epsilon^{-1} \left( \int_{\mathbb{R}^n} H\big(\varphi_\epsilon(x)\big)\,d\lambda - \int_{\mathbb{R}^n} H\big(\varphi(x)\big)\,d\lambda \right) \\ = -\int_{\mathbb{R}^n} b(x)\,h\big(\varphi(x)\big)\,d\lambda.
\end{multline*}
Since $H$ is strictly decreasing and differentiable, it follows that $H^{-1}$ is differentiable.  Using the formula for the derivative of the inverse and the definition of $\mathcal{F}$, it follows that
\begin{multline*}
\dfrac{d}{d\epsilon}\bigg|_{\epsilon=0} \mathcal{F}(\varphi_\epsilon) = \big(H^{-1}\big)'\big(\|H\circ \varphi\|_1\big)\cdot\left(-\int_{\mathbb{R}^n} h\big(\varphi(x)\big)\,b(x)\,d\lambda\right) \\ = \dfrac{1}{h\big(\mathcal{F}(\varphi)\big)}\,\left(\int_{\mathbb{R}^n} h\big(\varphi(x)\big)\,b(x)\,d\lambda\right).
\end{multline*}
The function $\epsilon \to \Big\langle \varphi_{\epsilon}, \,\scaleobj{.9}{\dfrac{\MA(\varphi)}{\lambda(A)}}\Big\rangle$ is linear in $\epsilon$ and it has derivative $\Big\langle b, \,\scaleobj{.9}{\dfrac{\MA(\varphi)}{\lambda(A)}}\Big\rangle$.  The definition of $\mathcal{G}$ in Hypothesis \ref{B2} assumes $g(s,t)$ is differentiable, so $g\Big(\Big\langle \varphi_{\epsilon},\scaleobj{.9}{\dfrac{\MA(\varphi)}{\lambda(A)}}\Big\rangle,\mathcal{F}(\varphi_\epsilon)\Big)$ is differentiable at $\epsilon=0$.  Differentiating the right hand side of equation (\ref{minimizer}) for $f=\varphi_\epsilon$, and evaluating at its minimimum when $\epsilon=0$ implies
\begin{multline}\label{minimum}
0=\dfrac{d}{d\epsilon}\bigg|_{\epsilon=0} g\Big(\Big\langle \varphi_{\epsilon},\scaleobj{.9}{\dfrac{\MA(\varphi)}{\lambda(A)}} \Big\rangle,\mathcal{F}(\varphi_\epsilon)\Big) = \frac{\partial g}{\partial s}\Big(\Big\langle \varphi,\scaleobj{.9}{\dfrac{\MA(\varphi)}{\lambda(A)}} \Big\rangle, \mathcal{F}(\varphi)\Big)\,\Big\langle b,\scaleobj{.9}{\dfrac{\MA(\varphi)}{\lambda(A)}}\Big\rangle\\ + \frac{\partial g}{\partial t}\Big(\Big\langle \varphi,\scaleobj{.9}{\dfrac{\MA(\varphi)}{\lambda(A)}}\Big\rangle, \mathcal{F}(\varphi)\Big)\,\dfrac{1}{h\big(\mathcal{F}(\varphi)\big)}\,\left(\int_{\mathbb{R}^n} h\big(\varphi(x)\big)\,b(x)\,d\lambda\right).
\end{multline}

When $b=1$ is the constant function, then using $\partial\varphi(\mathbb{R}^n)=A$ from Proposition \ref{uniformbounds} we find
\[
\Big\langle b,\scaleobj{.9}{\dfrac{\MA(\varphi)}{\lambda(A)}}\Big\rangle = \lambda(A)^{-1}\int_{\mathbb{R}^n} \MA(\varphi) = \lambda(A)^{-1}\,\lambda \big(\partial\varphi(\mathbb{R}^n)\big)=1.
\]
So putting $b=1$ into equation (\ref{minimum}) shows
\[
\frac{\partial g}{\partial s}\Big(\Big\langle \varphi,\scaleobj{.9}{\dfrac{\MA(\varphi)}{\lambda(A)}}\Big\rangle, \mathcal{F}(\varphi)\Big) = -\frac{\partial g}{\partial t}\Big(\Big\langle \varphi,\scaleobj{.9}{\dfrac{\MA(\varphi)}{\lambda(A)}}\Big\rangle, \mathcal{F}(\varphi)\Big)\,\dfrac{1}{h\big(\mathcal{F}(\varphi)\big)}\,\left(\int_{\mathbb{R}^n} h\big(\varphi(x)\big)\,d\lambda\right).
\]
Plugging the previous equation back into equation (\ref{minimum}) implies
\[
\int_{\mathbb{R}^n} b(x)\, \scaleobj{.9}{\dfrac{\MA(\varphi)}{\lambda(A)}}  = \dfrac{\int_{\mathbb{R}^n} h\big(\varphi(x)\big)\,b(x)\,d\lambda}{\int_{\mathbb{R}^n} h\big(\varphi(x)\big)\,d\lambda},
\]
for any continuous, bounded $b$. It follows that
\begin{equation}\label{alexandrov}
\dfrac{\MA(\varphi)}{\lambda(A)} = \dfrac{h\circ \varphi}{\|h\circ\varphi\|_1}\,\lambda.
\end{equation}
so $\varphi$ solves (\ref{ourpde}) in the Alexandrov sense.  Since $h$ is positive, Lemma \ref{regularitylemma} implies $\varphi$ is strictly convex.  Since $\partial\varphi(\mathbb{R}^n)$ is bounded, it follows that $h\circ\varphi \in C^{0,1}$.  Since $\varphi$ is strictly convex and  $h\circ\varphi \in C^{0,1}$, Caffarelli \cite[Theorem~2]{Caff2} implies $\varphi\in C^{2,\alpha}$.  Then $\varphi$ is a classical solution of
\[
\dfrac{\det(\nabla^2 \varphi)}{\lambda(A)} = \dfrac{h\circ \varphi}{\|h\circ\varphi\|_1}.
\]
Since $h$ is smooth, elliptic regularity shows that $\varphi$ is in fact smooth.  Thus $\varphi$ is strictly convex, and $\partial \varphi(\mathbb{R}^n)=A$ is upgraded to $\nabla \varphi(\mathbb{R}^n) =A$.
\end{proof}

In Lemma \ref{limitintegral} we show $\varphi$ satisfies the normalization in equation (\ref{ourpde}). First we need a convex analysis lemma.

\begin{lemma}\label{legendreintregrallemma}
Let $\Omega$ be an open convex set, and let  $f:\Omega \to \mathbb{R}$ be convex and $C^1$.  Then, 
\[
\int_{\nabla f(\Omega)} f^*\,d\lambda = \int_{\Omega} \big(\langle x,\,\nabla f(x)\rangle - f(x)\big)\,\MA(f),
\]
where $\MA(f)$ is the Monge--Amp\`{e}re measure of $f$.
\end{lemma}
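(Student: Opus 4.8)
\textit{Proof proposal.} The plan is to recognize the right-hand side as the integral of $f^{*}$ against the pushforward of $\MA(f)$ under the gradient map, and to observe that this pushforward is precisely Lebesgue measure restricted to $\nabla f(\Omega)$.

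First I would record the consequence of $f$ being $C^{1}$: the subgradient is single-valued on all of $\Omega$, so $\partial f(x)=\{\nabla f(x)\}$ and hence $\partial f(S)=\nabla f(S)$ for every set $S\subseteq\Omega$. In particular $\MA(f)(S)=\lambda(\nabla f(S))$. Since $\nabla f$ is continuous it is Borel measurable, so $(\nabla f)^{-1}(E)$ is Borel for each Borel $E\subseteq\mathbb{R}^{n}$; combining this with the elementary set identity $\nabla f\big((\nabla f)^{-1}(E)\big)=E\cap\nabla f(\Omega)$ gives
\[
(\nabla f)_{\#}\MA(f)\,(E)=\MA(f)\big((\nabla f)^{-1}(E)\big)=\lambda\big(\nabla f((\nabla f)^{-1}(E))\big)=\lambda\big(E\cap\nabla f(\Omega)\big),
\]
that is, $(\nabla f)_{\#}\MA(f)=\lambda\big|_{\nabla f(\Omega)}$.

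Next I would invoke the Fenchel equality: since $f$ is convex and $C^{1}$ we have $\nabla f(x)\in\partial f(x)$ for every $x$, which is equivalent to $f^{*}(\nabla f(x))=\langle x,\nabla f(x)\rangle-f(x)$. Thus the integrand on the right-hand side is exactly $f^{*}\circ\nabla f$, and the standard change-of-variables formula for pushforward measures yields
\[
\int_{\Omega}\big(\langle x,\nabla f(x)\rangle-f(x)\big)\,\MA(f)=\int_{\Omega}\big(f^{*}\circ\nabla f\big)\,d\,\MA(f)=\int_{\mathbb{R}^{n}}f^{*}\,d\big((\nabla f)_{\#}\MA(f)\big)=\int_{\nabla f(\Omega)}f^{*}\,d\lambda,
\]
which is the claimed identity.

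The only genuinely delicate points, and the ones I would spell out carefully, are measure-theoretic: that $\nabla f(S)$ is Lebesgue measurable so that $\MA(f)$ is a bona fide Borel measure (classical for convex functions, and here immediate since $\nabla f(S)$ is a continuous image of a Borel set); that $f^{*}$ is Borel measurable on $\mathbb{R}^{n}$ (it is lower semicontinuous, being a supremum of the affine functions $y\mapsto\langle x,y\rangle-f(x)$); and that the pushforward formula is being applied to a sign-changing integrand, so one should split $f^{*}=(f^{*})^{+}-(f^{*})^{-}$ and note the identity holds as an equality in $[-\infty,+\infty]$ whenever either side is defined --- automatic in the intended application, where $\nabla f(\Omega)$ is bounded and $f^{*}$ is finite on it. As a sanity check, when $f$ is smooth and strictly convex the identity collapses, via $y=\nabla f(x)$ and $dy=\det(\nabla^{2}f(x))\,dx$, to the tautology $\int_{\nabla f(\Omega)}f^{*}(y)\,dy=\int_{\Omega}f^{*}(\nabla f(x))\,\det(\nabla^{2}f(x))\,dx$.
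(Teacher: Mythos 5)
Your argument is correct and follows the same route as the paper: establish $(\nabla f)_{\#}\MA(f)=\lambda\big|_{\nabla f(\Omega)}$ via $\MA(f)(S)=\lambda(\nabla f(S))$, then apply the pushforward change-of-variables with $g=f^{*}$ together with the Fenchel equality $f^{*}(\nabla f(x))=\langle x,\nabla f(x)\rangle-f(x)$ valid for $C^{1}$ convex $f$. Your added measure-theoretic care (the identity $\nabla f((\nabla f)^{-1}(E))=E\cap\nabla f(\Omega)$ and the measurability of $f^{*}$) only sharpens the paper's version of the same proof.
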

\begin{proof}
The function $(\nabla f)^{-1}$ may not exist, but we can still define $(\nabla f)^{-1}(U) = \{\,x\mid \nabla f(x) \in U\,\}$. For any Borel $U\subset \Omega$ the Monge--Amp\`{e}re measure satisfies
\begin{align*}
    (\nabla f)_{\#} (\MA(f))\,(U) &= \MA(f)\,\big((\nabla f)^{-1}(U)\big)\\
    &= \lambda\,\big(\, \nabla f((\nabla f)^{-1}(U)) \,\big) = \lambda \,(U),
\end{align*}
so $(\nabla f)_{\#} (\MA(f)) = \lambda$.  The definition for the pushforward of a measure implies for any measurable function $g$ on $\nabla f(\Omega)$
\[
\int_{\nabla f(\Omega)} g\,d\lambda = \int_{\Omega} (g\circ \nabla f) \,\MA(f).
\]
The result follows by setting $g = f^*$ and recalling that for $f$ which are $C^1$, $f^*(\nabla f(x)) = \langle x,\nabla f(x)\rangle -f(x)$.
\end{proof}

The proof of Lemma \ref{limitintegral} involves another double sequence argument as in Lemma \ref{angle}.

\begin{lemma}\label{limitintegral}
Under the same assumptions as Lemma \ref{limitsolution},
\[
\int_A \varphi^*\,d\lambda = -\tau.
\]
\end{lemma}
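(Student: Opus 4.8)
The plan is to pass to the limit in the normalization $\int_A\widetilde{\varphi}_{i'}^*\,d\lambda=-\tau$ after rewriting it, via Lemma \ref{legendreintregrallemma}, as an integral against a Monge--Amp\`ere measure, so that the Wasserstein convergence supplied by Proposition \ref{sd} becomes applicable.

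First I would record the relevant identities. By Lemma \ref{tildeprops} the normalization $\int_A(\cdot)^*\,d\lambda=-\tau$ is preserved by the translations defining $\widetilde{\varphi}_{i'}$, so $\int_A\widetilde{\varphi}_{i'}^*\,d\lambda=-\tau$ for every $i'$. Each $\widetilde{\varphi}_{i'}$ is a smooth, strictly convex function with $\nabla\widetilde{\varphi}_{i'}(\mathbb{R}^n)=A$, and by Lemma \ref{limitsolution} so is $\varphi$; hence Lemma \ref{legendreintregrallemma} applied to each (with $\Omega=\mathbb{R}^n$) gives
\[
-\tau=\int_{\mathbb{R}^n}\psi_{i'}\,\MA(\widetilde{\varphi}_{i'}),\qquad \int_A\varphi^*\,d\lambda=\int_{\mathbb{R}^n}\psi_\infty\,\MA(\varphi),
\]
where $\psi_{i'}(x)=\langle x,\nabla\widetilde{\varphi}_{i'}(x)\rangle-\widetilde{\varphi}_{i'}(x)$ and $\psi_\infty(x)=\langle x,\nabla\varphi(x)\rangle-\varphi(x)$. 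It therefore suffices to prove $\int\psi_{i'}\,\MA(\widetilde{\varphi}_{i'})\to\int\psi_\infty\,\MA(\varphi)$.

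I would establish this by the same double-sequence mechanism used in Lemma \ref{angle}, resting on three observations. (i) Since $\widetilde{\varphi}_{i'}\to\varphi$ uniformly on compact sets and $\varphi$ is everywhere differentiable, standard convex analysis (Rockafellar \cite{Rockafellar}) gives $\nabla\widetilde{\varphi}_{i'}\to\nabla\varphi$ uniformly on compact sets, so $\psi_{i'}\to\psi_\infty$ uniformly on compact sets. (ii) Since $A$ is bounded one has $|\nabla\widetilde{\varphi}_{i'}|\le R$, and Proposition \ref{uniformbounds} gives $\tau/\lambda(A)+r\,|x|\le\widetilde{\varphi}_{i'}(x)\le C+R\,|x|$; hence $|\psi_{i'}(x)|\le C'\,(1+|x|)$, and passing to the limit $|\psi_\infty(x)|\le C'\,(1+|x|)$, with $C'$ independent of $i'$. (iii) By Proposition \ref{sd}, $\MA(\widetilde{\varphi}_{i'})/\lambda(A)\to_1\MA(\varphi)/\lambda(A)$, which by Proposition \ref{Wassequivalent} means $\int f\,\MA(\widetilde{\varphi}_{i'})\to\int f\,\MA(\varphi)$ for every fixed continuous $f$ of linear growth and also yields the uniform tightness $\lim_{M\to\infty}\limsup_{i'\to\infty}\int_{|x|\ge M}|x|\,\MA(\widetilde{\varphi}_{i'})=0$. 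Now write $\int\psi_{i'}\,\MA(\widetilde{\varphi}_{i'})-\int\psi_\infty\,\MA(\varphi)=\int(\psi_{i'}-\psi_\infty)\,\MA(\widetilde{\varphi}_{i'})+\int\psi_\infty\,\big(\MA(\widetilde{\varphi}_{i'})-\MA(\varphi)\big)$. The last term tends to $0$ by (iii) applied to the fixed function $\psi_\infty$. For the first term, fix $M\ge 1$ and split the domain into $B_M$ and $\mathbb{R}^n\setminus B_M$: on $B_M$ the integral is at most $\|\psi_{i'}-\psi_\infty\|_{L^\infty(B_M)}\,\lambda(A)$, which tends to $0$ as $i'\to\infty$ by (i); on $\mathbb{R}^n\setminus B_M$ it is at most $4C'\int_{|x|\ge M}|x|\,\MA(\widetilde{\varphi}_{i'})$ by (ii). Taking $\limsup_{i'\to\infty}$ and then $M\to\infty$, the tightness in (iii) forces the whole expression to $0$; hence $\int\psi_{i'}\,\MA(\widetilde{\varphi}_{i'})\to\int\psi_\infty\,\MA(\varphi)$, and comparing with the displayed identities gives $\int_A\varphi^*\,d\lambda=-\tau$.

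The main obstacle is the non-compactness handled in the last step: neither $\psi_{i'}$ nor $\MA(\widetilde{\varphi}_{i'})$ lives on a fixed compact set, so weak convergence alone does not permit passage to the limit, and one must pair the uniform linear bound (ii) with the uniform first-moment tightness (iii) --- exactly the device already used in Proposition \ref{sd} and Lemma \ref{angle} --- to make the tails uniformly small before letting $i'\to\infty$.
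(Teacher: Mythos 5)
Your proposal is correct and follows essentially the same route as the paper: both reduce the normalization to $\int(\langle x,\nabla\widetilde{\varphi}_{i'}\rangle-\widetilde{\varphi}_{i'})\,\MA(\widetilde{\varphi}_{i'})$ via Lemma \ref{legendreintregrallemma} and pass to the limit using the Wasserstein convergence and tightness of Proposition \ref{sd}, the uniform linear growth bounds, and Rockafellar's convergence of gradients. The only difference is bookkeeping: the paper handles the two terms separately (the $\widetilde{\varphi}_{i'}$-term by Lemma \ref{angle}, the gradient term by an explicit double-sequence argument with pointwise gradient convergence and dominated convergence), whereas you treat the integrand as a whole with a direct splitting into a compact part, controlled by locally uniform convergence, and a tail controlled by the first-moment tightness; both are valid.
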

\begin{proof}
Every $\widetilde{\varphi}_{i'}$ is a smooth, convex function with $\nabla \widetilde{\varphi}_{i'}(\mathbb{R}^n)=A$, so Lemma \ref{legendreintregrallemma} implies
\[
-\tau = \int_A \widetilde{\varphi}_{i'}^*\,d\lambda = \int_{\mathbb{R}^n} \big(\langle \nabla \widetilde{\varphi}_{i'}(x),x \rangle - \widetilde{\varphi}_{i'}(x)\big)\,\det(\nabla^2 \widetilde{\varphi}_{i'})\,d\lambda.
\]
By Lemma \ref{angle}
\begin{equation}\label{r1}
\int_{\mathbb{R}^n} \widetilde{\varphi}_{i'}(x)\,\det(\nabla^2 \widetilde{\varphi}_{i'})\,d\lambda \to \int_{\mathbb{R}^n} \varphi\,\MA(\varphi),
\end{equation}
and by Lemma \ref{limitsolution} $\varphi$ is smooth, so $\MA(\varphi) = \det(\nabla^2 \varphi)\,d\lambda$.

By Rockafellar \cite[Theorem~24.5]{Rockafellar}, for every $x$ in $\mathbb{R}^n$ and every $\epsilon>0$ there exists $i_0$ such that $i'\geq i_0$ implies $\partial \widetilde{\varphi}_{i'}(x)\subset \partial \varphi(x)+ B_\epsilon$.  Since $\widetilde{\varphi}_{i'}$ and $\varphi$ are smooth, this implies $\nabla \widetilde{\varphi}_{i'}$ converges pointwise to $\nabla \varphi(x)$.  Also, $|\,\langle \nabla \widetilde{\varphi}_{i'}(x),x\rangle\,| \leq R\,|x|$, where $R$ is a constant such that $A \subset B_R$.  We consider the double sequence $\int_{\mathbb{R}^n} \big\langle \nabla \widetilde{\varphi}_{i'}(x),x\big\rangle \,\MA(\widetilde{\varphi}_{j'})$. 
\[
\lim_{j'\to\infty}\int_{\mathbb{R}^n} \big\langle \nabla \widetilde{\varphi}_{i'}(x),x\big\rangle \,\MA(\widetilde{\varphi}_{j'}) = \int_{\mathbb{R}^n}  \big\langle \nabla \widetilde{\varphi}_{i'}(x),x\big\rangle\,\MA(\varphi),
\]
because $\MA(\widetilde{\varphi}_{j'}) \rightarrow_1 \MA(\varphi)$.  The convergence is uniform because of the bound $|\,\langle \nabla \widetilde{\varphi}_{i'}(x),x\rangle\,| \leq R\,|x|$. The iterated limit of the double sequence 
\[
\lim_{i'\to\infty}\Big(\,\lim_{j'\to\infty}  \int_{\mathbb{R}^n} \big\langle \nabla \widetilde{\varphi}_{i'}(x),x\big\rangle \,\MA(\widetilde{\varphi}_{j'}) \,\Big) = \int_{\mathbb{R}^n} \big\langle \nabla\varphi(x),x\big\rangle\,\MA(\varphi),
\]
converges by the dominated convergence theorem because the uniformly bounded $\nabla \widetilde{\varphi}_{i'}$ converge pointwise to $\nabla \varphi$.  Thus the whole double sequence converges, and in particular the diagonal sequence converges:
\begin{equation}\label{r2}
\int_{\mathbb{R}^n} \big\langle \widetilde{\varphi}_{i'}(x),x\big\rangle \,\MA(\widetilde{\varphi}_{i'}) \to \int_{\mathbb{R}^n} \big\langle \nabla \varphi(x),x\big\rangle\,\MA(\varphi).
\end{equation}

The limits (\ref{r1}) and (\ref{r2}) together imply
\[
-\tau= \lim_{i' \to \infty}  \int_{\mathbb{R}^n} \Big(\big\langle \nabla \widetilde{\varphi}_{i'}(x),x \big\rangle - \widetilde{\varphi}_{i'}(x)\Big)\, \MA(\widetilde{\varphi}_{i'}) =     \int_{\mathbb{R}^n} \Big(\big\langle \nabla \varphi(x),x\big\rangle - \varphi(x)\Big)\,\MA(\varphi) = \int_A \varphi^*\,d\lambda.
\]
\end{proof}

Now we can finish the proof of Proposition \ref{final}.  

\begin{proof}
Proposition \ref{subsequence} implies that every subsequence $\widetilde{\varphi}_{i'}$ has a convergent subsequence.  If we can show that every convergent subsequence has the same unique limit, then this will imply the convergence of the whole sequence $\widetilde{\varphi}_i$. 

Let $\widetilde{\varphi}_{i'}$ be a subsequence which converges to $\varphi$ uniformly on compact sets.  By Lemmas \ref{limitsolution} and \ref{limitintegral}, $\varphi$ is a smooth solution to equation (\ref{ourpde}).  Also, since $\inf_{\mathbb{R}^n}\{\widetilde{\varphi}_i\}=\widetilde{\varphi}_i(0)$ and $\widetilde{\varphi}_{i'}$ converges to $\varphi$ uniformly on compact sets, it follows that $\inf_{\mathbb{R}^n} \{\varphi\} = \varphi(0)$.  By Hypothesis \ref{E}, solutions to the second boundary problem (\ref{ourpde}) are unique up to translation, so $\inf_{\mathbb{R}^n} \{\varphi\}=\varphi(0)$ implies $\varphi$ is the same unique limit for any subsequence $\widetilde{\varphi}_{i'}$.

Now we must show that the convergence $\widetilde{\varphi}_i \to \varphi$ extends to $C^{k,\alpha}$.  As in the proof of Lemma \ref{limitintegral}, $\nabla \widetilde{\varphi}_{i}$ converges to $\nabla \varphi$, so the smoothness of $h$ implies $h\circ \widetilde{\varphi}_i$ converges to $h\circ \varphi$ in $C^{0,1}$ on compact sets.  So in particular, the $C^{0,1}$ norm of $(h\circ \widetilde{\varphi}_i) / \|h \circ \widetilde{\varphi}_i\|$ has a uniform bound on each compact set.

Caffarelli \cite[Theorem 2]{Caff2} then implies $(h\circ \widetilde{\varphi}_i) / \|h \circ \widetilde{\varphi}_i\|$ has a uniform bound in $C^{2,\alpha}$ on a slightly smaller compact set.  The compact embeddings of H\"{o}lder spaces implies there exists convergent subsequences in $C^{2,\beta}$ for some $\beta<\alpha$.  But the limits of these subsequences are unique, so we have $C^{2,\beta}$ convergence.  Bootstrapping this argument yields $C^{k,\alpha}$ convergence on compact sets.

\end{proof}

\section{K\"{a}hler--Ricci iteration}\label{RIsection}

This section is organized as follows.  In Subsection \ref{iteration2} we prove Theorem \ref{MAiteration2} about the convergence of the Monge--Amp\`{e}re iteration (\ref{MAiteration}) with $h(t)=e^{-t}$.  In Section \ref{smoothtoric}  we give the necessary background on toric K\"{a}hler manifolds, and in Section \ref{proofofRI} we prove Theorem \ref{introthm3} about the convergence of the K\"{a}hler--Ricci iteration.  In Section \ref{Kahlerfunctionals} we show the functionals $\mathcal{F}$ and $\mathcal{G}$ roughly correspond to the Ding functional and Mabuchi K-Energy from K\"{a}hler geometry.

\subsection{The Monge--Amp\`{e}re iteration with \texorpdfstring{$h(t)=e^{-t}$}{a}}\label{iteration2}

To prove Theorem \ref{MAiteration2} about the convergence of the Monge--Amp\`{e}re iteration for $h(t)=e^{-t}$, we must verify Hypotheses \ref{hypB} and apply Theorem \ref{introthm}.

\noindent \textit{Hypothesis \ref{B1}:}

Clearly $e^{-t}$ is smooth, positive, and decreasing. Also, it is bounded by $C\,t^{-(n+p+1)}$ when $t\gg1$ and $p=1$, for example.

\noindent \textit{Hypothesis \ref{E}:}

Berman--Berndtsson \cite[Theorem~1.1]{BB} says if the barycenter of $A$ lies at the origin, then there exist smooth convex solutions $\varphi$ to
\begin{equation}\label{BB}
\begin{cases}
    \det(\nabla^2 \varphi) = e^{-\varphi}\\
    \nabla \varphi(\mathbb{R}^n) = A,
\end{cases}
\end{equation}
and they are unique up to translations by $\mathbb{R}^n$.  A convex function $\varphi$ solves equation (\ref{BB}) if and only if $\varphi+c$ solves 
\begin{equation}\label{BBnorm}
\begin{cases}
    \dfrac{\det(\nabla^2 \varphi)}{\lambda(A)} = \dfrac{e^{-\varphi}}{\|e^{-\varphi}\|_1}\\
    \nabla \varphi(\mathbb{R}^n) = A,
\end{cases}
\end{equation}
so convex solutions to equation (\ref{BBnorm}) are unique up to translation and an additive constant.  Berman--Berndtsson \cite[Theorem~1.1]{BB} also proves that if $\varphi$ is a convex solution to equation (\ref{BB}), then $\int_A \varphi^*\,d\lambda <\infty$.  Thus, the normalization $\int_A \varphi^*\,d\lambda=-\tau$ is valid for any $\tau\in \mathbb{R}$, and convex solutions to the normalized Monge--Amp\`{e}re second boundary problem (\ref{ourpde}) are unique up to translations.

\noindent \textit{Hypothesis \ref{B2}:}

We define $g(s,t)=s-t$.  To verify Hypothesis \ref{B2} we note $g$ is decreasing in $t$, and $g(s,g(s,t)) = s-(s-t)=t$.

\noindent \textit{Hypothesis \ref{B3}:}

First, we compute $\mathcal{F}$.  We integrate $H(t) = \int_t^{\infty} e^{-s}\,d\lambda = e^{-t}$, so $H^{-1}(t) = -\log(t)$, and
\[
\mathcal{F}(f) = H^{-1}\big(\|H \circ f\|_1\big) = -\log\left( \| e^{-f} \|_1\right),
\]
for any $f\in \mathcal{C}_{\lin} = \big\{f:\mathbb{R}^n \to (\tau,\infty) \mid f \text{ continuous, and } f(x)/(1+|x|)\text{ bounded}\,\big\}$.

Next we define $\mathcal{G}$ with $g(s,t)=s-t$.
\[
\mathcal{G}(\mu) = \inf \big\{ \,\langle f,\,\mu\rangle +\log \left( \|e^{-f}\|_1\right) \mid f\in\mathcal{C}_{\lin}\,\big\}.
\]

For $g(s,t)=s-t$ Hypothesis \ref{B3} says
\[
\Big\langle \varphi_{i+1},\, \scaleobj{.9}{\dfrac{\MA(\varphi_{i+1})}{\lambda(A)}} \Big\rangle - \mathcal{G}\Big(\scaleobj{.9}{\dfrac{\MA(\varphi_{i+1})}{\lambda(A)}}\Big) \leq \mathcal{F}(\varphi_i).
\]
To prove this inequality, we compute $\mathcal{G}(\mu)$ when $\mu$ has a continuous density.  Lemma \ref{RicG} can be interpreted as saying $\mathcal{G}(\mu) = -\Ent_{\lambda}(\mu)$, the relative entropy of $\mu$ with respect to Lebesgue measure $\lambda$.

\begin{lemma}\label{RicG}
Let $h$ be a continuous, nonnegative function on $\mathbb{R}^n$ such that $h\,\lambda \in \mathcal{P}_1$.  Then,
\[
\mathcal{G}(h\,\lambda ) = -\int_{\mathbb{R}^n} h\log(h)\,d\lambda.
\]
\end{lemma}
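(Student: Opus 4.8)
The plan is to recognize this as the Gibbs variational principle (the Donsker--Varadhan duality for relative entropy): the functional $\mathcal{F}(f) = -\log\|e^{-f}\|_1$ is, up to sign, a log--Laplace transform, and $\mathcal{G} = \inf_f \big\{\langle f,\cdot\rangle - \mathcal F(f)\big\}$ is its Legendre conjugate, whose formal minimizer is $f = -\log h$. I would prove the two inequalities separately. For the lower bound $\mathcal{G}(h\,\lambda) \ge -\int h\log h\,d\lambda$, first observe that $h\,\lambda\in\mathcal{P}_1$ forces $\int h\log h\,d\lambda$ to be well defined in $(-\infty,+\infty]$: the negative part of $h\log h$ is dominated by an affine function of $|x|$ via the elementary comparison of $h$ with $e^{-|x|}$ on the set where $h<1$. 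Then fix any $f\in\mathcal{C}_{\lin}$; if $\|e^{-f}\|_1=\infty$ the claimed inequality $\langle f,h\,\lambda\rangle + \log\|e^{-f}\|_1 \ge -\int h\log h\,d\lambda$ is trivial, and otherwise set $Z=\|e^{-f}\|_1$ and $\nu = Z^{-1}e^{-f}\lambda\in\mathcal{P}$. Nonnegativity of relative entropy (i.e. Jensen's inequality for $t\mapsto t\log t$), $D(h\,\lambda\,\|\,\nu) = \int h\log h\,d\lambda + \log Z + \langle f,h\,\lambda\rangle \ge 0$, rearranges to exactly the desired bound, and taking the infimum over $f\in\mathcal{C}_{\lin}$ gives $\mathcal{G}(h\,\lambda)\ge -\int h\log h\,d\lambda$.

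For the upper bound $\mathcal{G}(h\,\lambda)\le -\int h\log h\,d\lambda$, I would first check the candidate $f=-\log h$: it gives $\langle f,h\,\lambda\rangle = -\int h\log h\,d\lambda$ and $\|e^{-f}\|_1 = \|h\|_1 = 1$ (since $h\,\lambda$ is a probability measure), hence $\log\|e^{-f}\|_1 = 0$, so $\langle f,h\,\lambda\rangle + \log\|e^{-f}\|_1 = -\int h\log h\,d\lambda$. When $-\log h$ already lies in $\mathcal{C}_{\lin}$ --- which happens precisely when $h$ is bounded below by a constant times $e^{-C|x|}$, is positive and continuous, and stays above $\tau$ --- this is the minimizer and we are done. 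In general one approximates: replace $h$ by the normalized density $h_\varepsilon = (h+\varepsilon\rho)/(1+\varepsilon)$ for a fixed smooth, positive reference density $\rho$ with $-\log\rho\in\mathcal{C}_{\lin}$ and finite entropy (say $\rho$ proportional to $e^{-|x|}$), truncate $-\log h_\varepsilon$ from above at level $R$ to obtain admissible test functions $f_{\varepsilon,R}=\min(-\log h_\varepsilon,\,R)\in\mathcal{C}_{\lin}$, and let $R\to\infty$ then $\varepsilon\to 0$, using monotone and dominated convergence together with the tail control from $h\,\lambda\in\mathcal{P}_1$ to conclude $\langle f_{\varepsilon,R},h\,\lambda\rangle + \log\|e^{-f_{\varepsilon,R}}\|_1 \to -\int h\log h\,d\lambda$.

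The main obstacle is this last approximation step: keeping the test functions genuinely inside $\mathcal{C}_{\lin}$ --- in particular bounded below by $\tau$ and of at most linear growth --- while their functional values still converge to the target; this is where the first-moment hypothesis does real work, both to make $\int h\log h\,d\lambda$ finite on the negative side and to dominate the tails in the limit. It is worth noting that in the only situation where Lemma~\ref{RicG} is applied --- namely $h$ the normalized density $e^{-\varphi}/\|e^{-\varphi}\|_1$ of a convex function $\varphi$ of linear growth bounded below by $\tau/\lambda(A)$ --- the function $h$ is bounded and bounded below by an exponential, so $-\log h = \varphi + \log\|e^{-\varphi}\|_1$ is itself in $\mathcal{C}_{\lin}$ and the approximation is unnecessary; it is only the clean general statement of the lemma that requires the truncation argument.
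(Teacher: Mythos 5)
Your lower bound is correct and is in substance the same argument as the paper's: the paper proves Gibbs' inequality by hand from the convexity of $t\log t$ (the pointwise inequality $t-s\le t\log(t/s)$ applied with $t=h(x)$, $s=e^{-f(x)}/\|e^{-f}\|_1$ and then integrated), which is exactly the nonnegativity of the relative entropy $D(h\,\lambda\,\|\,Z^{-1}e^{-f}\lambda)$ that you invoke; your preliminary remark that the first-moment hypothesis makes the negative part of $h\log h$ integrable is also fine.

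The upper bound, however, has a genuine gap in the approximation step. With $f_{\varepsilon,R}=\min(-\log h_\varepsilon,\,R)$ you have $e^{-f_{\varepsilon,R}}=\max\{h_\varepsilon,\,e^{-R}\}\ge e^{-R}>0$ on all of $\mathbb{R}^n$, so $\|e^{-f_{\varepsilon,R}}\|_1=+\infty$ and the test inequality $\mathcal{G}(h\,\lambda)\le \langle f_{\varepsilon,R},h\,\lambda\rangle+\log\|e^{-f_{\varepsilon,R}}\|_1$ is vacuous for every finite $R$; letting $R\to\infty$ afterwards cannot recover anything, since each term of the family already gives no information. Truncation at a constant level is exactly the wrong cap on a noncompact domain. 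The paper's proof avoids this by capping with a function of \emph{linear growth}: it takes $f_k=\min\{-\log h,\;k\,(1+|x|)\}\in\mathcal{C}_{\lin}$, so that $e^{-f_k}=\max\{h,\,e^{-k(1+|x|)}\}\le h+e^{-k(1+|x|)}$ is integrable; then $\langle f_k,h\,\lambda\rangle\to-\int_{\mathbb{R}^n} h\log h\,d\lambda$ by monotone convergence and $\|e^{-f_k}\|_1\to\|h\|_1=1$ by dominated convergence (dominated by $e^{-f_1}$), which yields the upper bound with no regularization of $h$ at all. Alternatively, your $\varepsilon$-regularization alone would suffice without any truncation: $h_\varepsilon\ge\tfrac{\varepsilon}{1+\varepsilon}\rho$ already gives $-\log h_\varepsilon$ at most linear growth and $\|e^{-(-\log h_\varepsilon)}\|_1=\|h_\varepsilon\|_1=1$, but then you must justify $\int_{\mathbb{R}^n} h\,(-\log h_\varepsilon)\,d\lambda\to-\int_{\mathbb{R}^n} h\log h\,d\lambda$ as $\varepsilon\to0$, which is precisely the work your two-parameter scheme was meant to do and, as written, does not.
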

\begin{proof}

First, we show $-\int_{\mathbb{R}^n} h\log(h)\,d\lambda \leq \mathcal{G}(h\lambda)$.  It is sufficient to show 
\begin{equation}\label{logcvx}
-\int_{\mathbb{R}^n} h\log(h)\,d\lambda \leq \int_{\mathbb{R}^n}f\,h\,d\lambda + \log(\int_{\mathbb{R}^n}e^{-f}\,d\lambda)
\end{equation}
for all $f\in\mathcal{C}_{\lin}$.    The convexity of the function $x\log(x)$ implies the elementary inequality
\begin{equation}\label{elemineq}
t-s \leq t\log(t) - t\log(s) = t\log(t/s)
\end{equation}
for $t\geq 0$ and $s>0$.  Equation (\ref{elemineq}) is clearly true when $s=t$, and when $t=0$ it is true by interpreting $t\log(t)=0$ when $t=0$.  When $t>s$ the convexity of $x\log(x)$ implies
\[
1+\log(s) \leq \dfrac{t\log(t)-s\log(s)}{t-s}.
\]
Multiplying both sides by $t-s$ and simplifying implies equation (\ref{elemineq}).  When $s>t$ the convexity of $x\log(x)$ implies
\[
\dfrac{s\log(s) - t\log(s)}{s-t} \leq 1+\log(s).
\]
Multiplying both sides by $t-s$ switches the inequality and implies equation (\ref{elemineq}).  Applying equation (\ref{elemineq}) with $t=h(x)$ and $s=e^{-f(x)}/\|e^{-f}\|_1$ yields
\begin{align*}
\dfrac{e^{-f(x)}}{\|e^{-f}\|_1} -h(x) &\leq h(x)\log\big(h(x)\big) - h(x)\log\left(\dfrac{e^{-f(x)}}{\|e^{-f}\|_1} \right)\\
&= h(x)\log\big(h(x)\big) +h(x)f(x) + \log(\|e^{-f}\|_1)\, h(x)
\end{align*}
Since $h\,\lambda\in\mathcal{P}_1$ it follows that $\|h\|_1=1$, so integrating over $\mathbb{R}^n$ implies
\[
0 \leq \int_{\mathbb{R}^n}h\,\log(h)\,d\lambda + \int_{\mathbb{R}^n}f\,h\,d\lambda + \log\left(\|e^{-f}\|_1\right),
\]
which is equivalent to equation (\ref{logcvx}).

Next, we show the reverse inequality: $-\int_{\mathbb{R}^n} h\log(h)\,d\lambda \geq \mathcal{G}(h\lambda)$.  Consider
\begin{equation}\label{c2}
f_k(x) = \min\{-\log(h(x)),\, k\,(1+|x|) \}.
\end{equation}
By assumption $h$ is continuous and integrable, so $h$ is finite, which implies $f_k$ is continuous and finite.  Thus, $f_k \in \mathcal{C}_{\lin}$, and
\begin{equation}\label{fk2}
\mathcal{G}(h\,\lambda) \leq \langle f_k,h\,\lambda \rangle +\log\left(\|e^{-f_k}\|_1\right).
\end{equation}
$h\,f_k$ is an increasing sequence of functions which converge pointwise to $-h\log(h)$, so
\[
\langle f_k,h\,\lambda \rangle = \int_{\mathbb{R}^n} f_k\,h \,d\lambda \to -\int_{\mathbb{R}^n} h\log(h)\,d\lambda
\]
by the monotone convergence theorem.  

$e^{-f_k}$ converges to $h$ pointwise, and $e^{-f_k(x)}\leq e^{-f_1(x)}$ for all $k$.  Since $e^{-f_1}$ is integrable, the dominated convergence theorem implies
\[
\|e^{-f_k}\|_1 \to \|h\|_1=1.
\]
Thus $\log\left(\|e^{-f_k}\|_1\right) \to 0$, and taking the limit as $k\to\infty$ in equation (\ref{fk2}) implies
\[
\mathcal{G}(h\,\lambda) \leq -\int_{\mathbb{R}^n}h\log(h)\,d\lambda.
\]
\end{proof}

We need one further lemma concerning the condition $\int_A \varphi_{i}^*=-\tau$ along the iteration.  Lemma \ref{compare} can be thought of as an integral comparison principle for the Monge--Amp\`{e}re measure in comparison to the traditional Monge--Amp\`{e}re measure comparison principle  of Rauch--Taylor \cite{RT}.

\begin{lemma}\label{compare}
If $\varphi$ and $\psi$ are two smooth, convex functions on $\mathbb{R}^n$ such that $\nabla\varphi (\mathbb{R}^n) = \nabla \psi (\mathbb{R}^n)=A$, and $\int_A \varphi^*\,d\lambda = \int_A \psi^* \,d\lambda$, then
\[
\big\langle \varphi, \MA(\varphi) \big\rangle \leq \big\langle \psi, \MA(\varphi)\big\rangle.
\]
\end{lemma}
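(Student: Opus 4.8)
The plan is to turn the difference $\langle\psi,\MA(\varphi)\rangle - \langle\varphi,\MA(\varphi)\rangle$ into an integral over $A$ by a change of variables along $\nabla\varphi$, at which point the hypothesis $\int_A\varphi^*\,d\lambda = \int_A\psi^*\,d\lambda$ annihilates the offending term. Since $\varphi$ and $\psi$ are smooth, $\MA(\varphi) = \det(\nabla^2\varphi)\,\lambda$, and the assertion is equivalent to $\int_{\mathbb{R}^n}(\psi - \varphi)\,\MA(\varphi) \geq 0$.

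First I would record two consequences of the Young--Fenchel inequality $\psi(x) + \psi^*(y) \geq \langle x,y\rangle$. Applying it at $y = \nabla\varphi(x)$ gives $\psi(x) \geq \langle x,\nabla\varphi(x)\rangle - \psi^*(\nabla\varphi(x))$, while, since $\varphi$ is $C^1$, the analogous inequality for $\varphi$ is an equality at $y = \nabla\varphi(x)$, so $\varphi(x) = \langle x,\nabla\varphi(x)\rangle - \varphi^*(\nabla\varphi(x))$. Subtracting, the bilinear terms cancel and we obtain the pointwise bound
\[
\psi(x) - \varphi(x) \;\geq\; \varphi^*\big(\nabla\varphi(x)\big) - \psi^*\big(\nabla\varphi(x)\big) \qquad \text{for all } x \in \mathbb{R}^n .
\]

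Next I would integrate this bound against $\MA(\varphi)$ and invoke Lemma \ref{legendreintregrallemma} with $\Omega = \mathbb{R}^n$ and $g = \varphi^* - \psi^*$, which converts the right-hand integral into $\int_A(\varphi^* - \psi^*)\,d\lambda$; this is legitimate because $\varphi^*$ and $\psi^*$, being convex and hence bounded below by affine functions on the bounded set $A$, lie in $L^1(A)$ once their integrals over $A$ are finite, and because $\nabla\varphi$ pushes $\MA(\varphi)$ forward to $\lambda\big|_A$. The hypothesis then gives $\int_A(\varphi^* - \psi^*)\,d\lambda = 0$, so $\int_{\mathbb{R}^n}(\psi - \varphi)\,\MA(\varphi) \geq 0$, which is the claim. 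I do not expect a genuine obstacle here: the computation is short, and the only points needing care are the direction of the Young--Fenchel inequality, so that the error term carries the correct sign, and the integrability bookkeeping required to apply Lemma \ref{legendreintregrallemma}.
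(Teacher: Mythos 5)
Your proposal is correct and is essentially the paper's own argument: the paper likewise combines the equality case of Legendre duality for $\varphi$, the Young--Fenchel inequality $\langle x,y\rangle \leq \psi(x)+\psi^*(y)$ evaluated along $\nabla\varphi$, the pushforward identity $(\nabla\varphi)_{\#}\MA(\varphi)=\lambda\big|_A$, and the hypothesis $\int_A \varphi^*\,d\lambda = \int_A \psi^*\,d\lambda$; the only difference is that you subtract pointwise before changing variables while the paper integrates the duality equality first and applies the Fenchel inequality on the $A$ side, which is an immaterial reordering.
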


\begin{proof}
The equality case of Legendre duality is $\varphi(x) + \varphi^*(\nabla \varphi(x)) = \big\langle x,\nabla \varphi(x) \big\rangle$.  Integrating both sides of this equality against the Monge--Amp\`{e}re measure of $\varphi$ and using the change of variables $y = \nabla \varphi(x)$ and $x = \nabla \varphi^*(y)$ implies
\[
\int_{\mathbb{R}^n} \varphi \,\MA(\varphi) + \int_A \varphi^* \,d\lambda = \int_A \big\langle y, \nabla \varphi^*(y)\big\rangle\,d\lambda.
\]
Since $\psi$ is convex, $\langle y,x\rangle \leq \psi^*(y) + \psi(x)$ for all $x$ and $y$, so
\[
\int_A \big\langle y, \nabla \varphi^*(y)\big\rangle\,d\lambda \leq \int_A  \psi^*(y)+\psi\big( \nabla \varphi^*(y)\big)\,d\lambda = \int_{\mathbb{R}^n} \psi \,\MA(\varphi) + \int_A \psi^*\,d\lambda.
\]
Since $\int_A \varphi^*\,d\lambda = \int_A \psi^* \,d\lambda$, it follows that 
\[
\int_{\mathbb{R}^n} \varphi\,\MA(\varphi) \leq \int_{\mathbb{R}^n} \psi \,\MA(\varphi).
\]
\end{proof}

Now we can use Lemma \ref{RicG} and Lemma \ref{compare} to prove Hypothesis \ref{B3}.

\begin{lemma}\label{last}
$\Big\langle \varphi_{i+1},\, \scaleobj{.9}{\dfrac{\MA(\varphi_{i+1})}{\lambda(A)}} \Big\rangle - \mathcal{G}\Big(\scaleobj{.9}{\dfrac{\MA(\varphi_{i+1})}{\lambda(A)}}\Big) \leq \mathcal{F}(\varphi_i)$.
\end{lemma}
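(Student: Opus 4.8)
The plan is to unwind the definitions of $\mathcal{F}$ and $\mathcal{G}$ in the case $h(t)=e^{-t}$, $g(s,t)=s-t$, and reduce the claimed inequality to the integral comparison principle of Lemma \ref{compare}. Recall from the computations above that $\mathcal{F}(f)=-\log\big(\|e^{-f}\|_1\big)$. Since $\varphi_{i+1}$ solves the iteration (\ref{MAiteration}) with $h(t)=e^{-t}$, the probability measure $\tfrac{\MA(\varphi_{i+1})}{\lambda(A)}$ has continuous, positive density $h_i:=e^{-\varphi_i}/\|e^{-\varphi_i}\|_1$ against Lebesgue measure. Because $\varphi_i$ is convex with $\nabla\varphi_i(\mathbb{R}^n)=A$ containing the origin in its interior (indeed Proposition \ref{uniformbounds} gives the lower bound $\varphi_i(x)\geq\tfrac{\tau}{\lambda(A)}+r\,|x-a_i|$), the function $e^{-\varphi_i}$ decays exponentially, so $h_i\,\lambda\in\mathcal{P}_1$ and Lemma \ref{RicG} applies.

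By Lemma \ref{RicG}, expanding $\log h_i=-\varphi_i-\log\|e^{-\varphi_i}\|_1$ and using $\int_{\mathbb{R}^n}h_i\,d\lambda=1$,
\[
\mathcal{G}\Big(\dfrac{\MA(\varphi_{i+1})}{\lambda(A)}\Big)=-\int_{\mathbb{R}^n}h_i\log h_i\,d\lambda=\Big\langle\varphi_i,\dfrac{\MA(\varphi_{i+1})}{\lambda(A)}\Big\rangle-\mathcal{F}(\varphi_i).
\]
Substituting this into the left-hand side of the statement, the $\mathcal{F}(\varphi_i)$ terms combine and the asserted inequality becomes equivalent to
\[
\Big\langle\varphi_{i+1},\dfrac{\MA(\varphi_{i+1})}{\lambda(A)}\Big\rangle\leq\Big\langle\varphi_i,\dfrac{\MA(\varphi_{i+1})}{\lambda(A)}\Big\rangle.
\]
After clearing the factor $\lambda(A)$, this is exactly the conclusion of Lemma \ref{compare} applied to the pair $\varphi=\varphi_{i+1}$, $\psi=\varphi_i$: both are smooth, convex, have gradient image $A$, and satisfy the same normalization $\int_A(\cdot)^*\,d\lambda=-\tau$ (for $\varphi_{i+1}$ directly from (\ref{MAiteration}); for $\varphi_i$ from the previous step of the normalized iteration, or, for the seed $\varphi_0$, after normalizing it by an additive constant, which leaves $h_0$ unchanged). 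This completes the argument.

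I do not expect a genuine obstacle here: all the analytic content is already packaged in Lemmas \ref{RicG} and \ref{compare}. The only step requiring a moment's care is the bookkeeping that rewrites $\mathcal{G}$, via the entropy identity of Lemma \ref{RicG}, as the linear pairing $\langle\varphi_i,\MA(\varphi_{i+1})/\lambda(A)\rangle$ minus $\mathcal{F}(\varphi_i)$; once this is in place, Hypothesis \ref{B3} for $h(t)=e^{-t}$ is precisely the comparison inequality of Lemma \ref{compare}, whose hypotheses hold because consecutive iterates share the same gradient image $A$ and the same value of $\int_A(\cdot)^*\,d\lambda$.
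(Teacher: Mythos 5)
Your proof is correct and takes essentially the same route as the paper's: both arguments rest on the entropy identity of Lemma \ref{RicG} together with the comparison principle of Lemma \ref{compare}, differing only in the order in which the two are invoked (you rewrite $\mathcal{G}$ first and reduce to the comparison inequality, while the paper applies the comparison first and then substitutes the entropy formula). Your aside about the seed $\varphi_0$ is consistent with the paper's implicit assumption that every term of the normalized iteration satisfies $\int_A \varphi_i^*\,d\lambda=-\tau$.
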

\begin{proof}
Since $\varphi_i$ is smooth, $\MA(\varphi_i) = \det(\nabla^2 \varphi_i)\lambda$ for all $i$.  Since $\int_A \varphi_i^*\,d\lambda = \int_A \varphi_{i+1}^*\,d\lambda$, Lemma \ref{compare} implies
\[
\int_{\mathbb{R}^n} \varphi_{i+1} \scaleobj{.9}{\dfrac{\det(\nabla^2 \varphi_{i+1})}{\lambda(A)}}\,d\lambda \leq \int_{\mathbb{R}^n}\varphi_i \scaleobj{.9}{\dfrac{\det(\nabla^2 \varphi_{i+1})}{\lambda(A)}}\,d\lambda.
\]
Subtracting $\mathcal{G}\Big(\scaleobj{.9}{\dfrac{\MA(\varphi_{i+1})}{\lambda(A)}}\Big)$ from both sides and applying the equality from Lemma \ref{RicG} yields
\begin{multline*}
\Big\langle \varphi_{i+1},\,\scaleobj{.9}{\dfrac{\MA(\varphi_{i+1})}{\lambda(A)}}\Big\rangle - \mathcal{G}\Big(\scaleobj{.9}{\dfrac{\MA(\varphi_{i+1})}{\lambda(A)}}\Big) \leq \int_{\mathbb{R}^n} \varphi_i \scaleobj{.9}{\dfrac{\det(\nabla^2 \varphi_{i+1})}{\lambda(A)}}\,d\lambda \\ + \int_{\mathbb{R}^n} \scaleobj{.9}{\dfrac{\det(\nabla^2 \varphi_{i+1})}{\lambda(A)}}\log\Big(\scaleobj{.9}{\dfrac{\det(\nabla^2 \varphi_{i+1})}{\lambda(A)}} \Big)\,d\lambda.
\end{multline*}
Since $\{\varphi_i\}$ is a Monge--Amp\`{e}re iteration solving equation (\ref{MAiteration}), it follows that  $\scaleobj{.9}{\dfrac{\det(\nabla^2 \varphi_{i+1})}{\lambda(A)}} = \dfrac{h\circ \varphi_i}{\|h\circ\varphi_i\|_1}$.  Thus, the above inequality implies
\begin{align*}
\Big\langle &\varphi_{i+1},\,\scaleobj{.9}{\dfrac{\MA(\varphi_{i+1})}{\lambda(A)}}\Big\rangle - \mathcal{G}\Big(\scaleobj{.9}{\dfrac{\MA(\varphi_{i+1})}{\lambda(A)}}\Big) \leq  \int_{\mathbb{R}^n} \varphi_i \dfrac{e^{-\varphi_i}}{\|e^{-\varphi_i}\|_1}\,d\lambda + \int_{\mathbb{R}^n} \dfrac{e^{-\varphi_i}}{\|e^{-\varphi_i}\|_1}\log\left(\dfrac{e^{-\varphi_i}}{\|e^{-\varphi_i}\|_1}\right)\,d\lambda\\
&=\|e^{-\varphi_i}\|_1^{-1} \int_{\mathbb{R}^n} \varphi_i \,e^{-\varphi_i}\,d\lambda - \|e^{-\varphi_i}\|_1^{-1} \int_{\mathbb{R}^n} \varphi_i\, e^{-\varphi_i}\,d\lambda - \log\big(\|e^{-\varphi_i}\|_1\big) \int_{\mathbb{R}^n} \dfrac{e^{-\varphi_i}}{\|e^{-\varphi_i}\|_1} \,d\lambda\\
&= -\log\big(\|e^{-\varphi_i}\|_1\big) = \mathcal{F}(\varphi_i).
\end{align*}
\end{proof}

Lemma \ref{last} concludes the proof of Hypothesis \ref{B3}, so Theorem \ref{introthm} implies Theorem \ref{MAiteration2}.

\subsection{Smooth toric Fano manifolds}\label{smoothtoric}
A compact K\"{a}hler manifold $X$ of dimension $n$ is \textit{toric} if there is an effective holomorphic action of the complex torus $\mathbb{C}^*{}^n$ with an open, dense orbit $X_0$.  Delzant \cite{Delzant} showed toric manifolds are characterized by \textit{Delzant} polytopes.  A polytope $P\subset\mathbb{R}^n$ is \textit{Delzant} if for each vertex $v$ there exists a transformation $A\in Sl_n(\mathbb{Z})$ such that
\begin{equation}\label{Delzantcondition}
A\,(P-v) \cap B_\epsilon = \{x\in\mathbb{R}^n \mid x_i\geq 0 \text{ for }i=1,\ldots,n\} \cap B_{\epsilon}.
\end{equation}
In other words, a neighborhood of each vertex of $P$ is $Sl_n(\mathbb{Z})$-equivalent to a neighborhood of the origin in the first orthant. Guillemin \cite{Guillemin} explained the connection to K\"{a}hler geometry and proved the bijective correspondence 
\begin{equation}\label{corr}
\{\text{\,smooth, polarized, toric K\"{a}hler manifolds }(X_P,L_P)\,\} \longleftrightarrow \{ \,\text{integral Delzant polytopes } P \,\}.
\end{equation}
A polarized K\"{a}hler manifold $(X_P,L_P)$ is K\"{a}hler manifold $X_P$ paired with a line bundle $L_P$.  Integral polytopes are those whose vertices lie in the integer lattice $\mathbb{Z}^n$.  We refer to \cite{Cox} for a detailed exposition on the correspondence.  

A Delzant polytope $P$ can be defined as the intersection of half-spaces by

\begin{equation}\label{dualPdef}
P = \medcap_{i=1}^M \big\{y\mid l_i(y)\geq 0\big\} \text{ for } l_i(y)=n_i\cdot y + \lambda_i,
\end{equation}
where $\lambda_i \in \mathbb{Z}$ and $n_i\in \mathbb{Z}^n$ is primitive, meaning its components are relatively prime over $\mathbb{Z}$. The vectors $\{n_i\}_{i=1}^M$ are inward pointing normals for the codimension one facets of $P$.  Under the correspondence (\ref{corr}), the codimension one facets of $P$ can be identified with the codimension one submanifolds of $X_P$ which are invariant under the $\mathbb{C}^*{}^n$ action.  We define $D_i$ to be the toric submanifold corresponding to the facet $\{l_i(y)=0\} \cap P$, and a \textit{toric divisor} is a formal linear combination of the form $\sum_{i=1}^M a_i\,D_i$.  The line bundle $L_P$ can be defined as

\[
L_P = \mathcal{O} \bigg(\sum_{i=1}^M \lambda_i\,D_i\bigg),
\]
the line bundle associated to the divisor $\sum_{i=1}^M \lambda_i \,D_i$.

We are specifically interested in the \textit{Fano} case when $L_P = -K_{X_P}$, the anticanonical line bundle of $X_P$, and we call $(X_P,L_P)$ a \textit{polarized toric Fano manifold}.  this corresponds to the case when $\lambda_i=1$ for each $i$, and for this reason we call polytopes \textit{Fano} if $\lambda_i=1$.  We have the restricted correspondence
\[
\{\,\text{polarized toric Fano manifolds }(X_P,L_P)\,\} \longleftrightarrow \{\,\text{integral Delzant Fano polytopes }P\,\}.
\] 

\subsection{K\"{a}hler Ricci iteration on toric Fano manifolds}\label{proofofRI}
Let $X$ be toric K\"{a}hler manifold, and let $X_0$ be the open dense orbit on which $\mathbb{C}^*{}^n$ acts effectively. There is a coordinate chart $\mathbb{C}^*{}^n$ on $X_0$ with coordinates $z=(z_1,\ldots,z_n)$ such that the $\mathbb{C}^*{}^n$ action is given by componentwise multiplication.  In these coordinates the real torus $(S^1)^n \subset \mathbb{C}^*{}^n$ acts by
\[
(z_1,\ldots,z_n) \mapsto (e^{\sqrt{-1}\theta_1}z_1,\ldots ,e^{\sqrt{-1}\theta_n}z_n).
\]
If we define action, angle coordinates $(x_i,\alpha_i) \in \mathbb{R}^n \times \big(\mathbb{R}/ 2\pi\mathbb{Z}\big)^n$ on the open orbit by
\[
z_i = \scaleobj{1.3}{e^{\frac{x_i}{2}+\sqrt{-1}\alpha_i}},
\]
then functions which are invariant under the action of $T^n$ only depend on the $x_i$ coordinates.  If $\omega$ is a toric K\"{a}hler metric on $X_P$, meaning its invariant under the action of $(S^1)^n$, then in it has a potential $\phi: \mathbb{R}^n \to \mathbb{R}$ such that
\begin{equation}\label{omegadef}
\omega\big|_{\mathbb{C}^*{}^n} = \sqrt{-1}\partial\overline{\partial}\,\phi = \phi_{ij} \dfrac{dz_i}{z_i}\wedge \dfrac{d\overline{z}_j}{\overline{z}_j} = \phi_{ij}\,dx_i\wedge d\alpha_j,
\end{equation}
for $\phi_{ij} = (\nabla^2 \phi)_{ij}$ the components of the real Hessian of $\phi$.  Guillemin \cite{Guillemin} proved that if $\phi$ is a potential for a smooth K\"{a}hler metric $\omega \in c_1(L_P)$ then $\phi$ is a smooth, strictly convex function with $\nabla \phi(\mathbb{R}^n) = \Int P$.

Let $(X_P,L_P)$ be a polarized, toric Fano manifold, and let $\omega$ and $\eta$ be K\"{a}hler metrics in $c_1(L_P)$ solving $\Ric(\omega)=\eta$.  If then $\phi$ and $\psi$ are open orbit potentials for $\omega$ and $\eta$ respectively, then they satisfy the second boundary problem
\begin{equation}\label{KE}
\begin{cases}
    \dfrac{\det(\nabla^2 \phi)}{\lambda(P)} = \dfrac{e^{-\psi}}{\|e^{-\psi}\|_1}\\
    \nabla \phi(\mathbb{R}^n) = \Int P.
\end{cases}
\end{equation}

Let $\{\omega_i\}_{i\in\mathbb{N}}$ be K\"{a}hler--Ricci iteration on $X_P$ a Fano manifold $X$ which satisfies
\begin{equation}\label{RIt}
\Ric(\omega_{i+1})=\omega_i.
\end{equation}
If $\{\phi_i\}_{i\in\mathbb{N}}$ are open orbit potentials for $\{\omega_i\}$, then  equation (\ref{KE}) implies
\begin{equation}\label{toricRI2}
\begin{cases}
\dfrac{\det(\nabla^2 \phi_{i+1})}{\lambda(P)} =  \dfrac{e^{-\phi_i}}{\|e^{-\phi_i}\|_1}\\
\nabla \phi_{i+1}(\mathbb{R}^n) = \Int P.
\end{cases}
\end{equation}
This is the Monge--Amp\`{e}re iteration (\ref{introiteration}) with $h(t)=e^{-t}$. We can choose any $\tau\in\mathbb{R}$ and require $\int_A \phi_i^*\,d\lambda=-\tau$ for each $\phi_i$ because the addition of a constant doesn't affect the right hand side of equation (\ref{toricRI2}).  Thus, the open orbit potentials $\{\phi_i\}_{i\in\mathbb{N}}$ for the K\"{a}hler--Ricci iteration are a normalized Monge--Amp\`{e}re iteration.

Now we can prove Theorem \ref{introthm3}.

\begin{proof}
Let $\{\phi_i\}$ be potentials for $\{\omega_i\}$ in the open orbit, and normalize their additive constants so that $\int_P \phi_i^*\,d\lambda = 0$.   As shown above, they satify equation (\ref{MAiteration}) with $h(t)=e^{-t}$, $A=\Int\,P$, and $\tau=0$.  

Wang--Zhu \cite{WZ} proved if $X_P$ admits a smooth K\"{a}hler--Einstein metric, then the barycenter of $P$ lies at the origin.  Thus, $\Int \,P$ satisfies conditions (\ref{Aconditions}).  By Theorem \ref{MAiteration2}, there exist constants $\{a_i\}$ such that $\widetilde{\phi}_i = \phi_i(x+a_i)$ converges smoothly to $\phi$, a smooth solution of equation (\ref{KE}).  The metric $\omega\big|_{\mathbb{C}^*{}^n} = \sqrt{-1}\partial \overline{\partial}\, \phi$ solves $\Ric(\omega)=\omega$ in the open orbit.  Since the barycenter lies at the origin, there is a smooth K\"{a}hler-Einstein metric which solves the same equation as $\omega$, so $\omega$ extends to a smooth K\"{a}hler-Einstein metric on all  of $X_P$. 

If $\{g_i\}_{i\in\mathbb{N}}$ are the unique automorphisms of $X_P$ which are given by $g(z_i) = e^{a_i/2}\,z_i$ on the coordinate patch associated to $P$, then the metrics $\widetilde{\omega}_i = g_i^*(\omega_i)$, which are given by
\[
\widetilde{\omega}_i = \sqrt{-1}\,\partial\overline{\partial}\,\widetilde{\phi}_i
\] 
on the open orbit, converge smoothly to $\omega$ by Theorem \ref{MAiteration2}.
\end{proof}

\subsection{Ding Functional and Mabuchi K-Energy}\label{Kahlerfunctionals}
For the definitions of all the functionals in this section we refer to equations (5) and (8) in \cite{Darvas}.  Let $(X_P,L_P)$ be a smooth, polarized toric Fano manifold, and let $\omega$ be a reference metric in $c_1(L_P)$. If $\omega_{\varphi} = \omega + \sqrt{-1}\partial\overline{\partial} \varphi$ is another K\"{a}hler metric, then the \textit{Aubin-Mabuchi functional} $\AM(\varphi)$ is defined implicitly by
\[
\dfrac{d}{dt}\bigg|_{t=0} \AM(\varphi + t\, v) = \dfrac{1}{V} \int_{X_P} v\, \MA_{\mathbb{C}}(\varphi), \hspace{5mm} \AM(0)=0,
\]
where $\MA_{\mathbb{C}}( \varphi) = (\sqrt{-1}\partial \overline{\partial}\,\varphi)^n$ is the complex Monge-Ampere operator, and $V=\int_{X_P} \omega^n$ is the volume of $X_P$.  Since $X_P$ is Fano, we can choose $\omega$ so that $\Ric(\omega)$ is a positive form. If $\varphi$ is a toric invariant function, then
\[
\MA_{\mathbb{C}}(v) = (\sqrt{-1} \partial \overline{\partial} \,v)^n = (v_{ij}\,dx_i\wedge d\alpha_j)^n = \det(\nabla^2 v)\,dx_1\wedge\cdots\wedge dx_n\wedge d\alpha_1 \wedge \cdots \wedge d\alpha_n.
\]

Since $X_P$ is toric, we can write any metric in terms of its potential in the open orbit $\mathbb{C}^*{}^n$.  We denote the reference metric $\omega$ and and any other metric $\omega_{\varphi}$ by
\[
\omega = \sqrt{-1}\partial \overline{\partial} \,\psi \hspace{5mm} \omega_{\varphi} = \sqrt{-1} \partial \overline{\partial} \,( \psi + \varphi) = \sqrt{-1}\partial \overline{\partial}\, \phi.
\]
It is convenient to define the Aubin-Mabuchi functional on the K\"{a}hler potential $\phi$ without regard to the reference metric $\omega$.  This simplification does not affect the definition because we assume $\AM(0)=0$. 
\begin{align*}
    \dfrac{d}{dt}\Big|_{t=0} \AM(\phi+t\,v) &= \dfrac{1}{V} \int_{X_P} v\, \det(\nabla^2 \phi) = dx_1 dx_n\wedge d\alpha_1 \wedge \cdots \wedge d\alpha_n\\
    &= \dfrac{1}{(2\pi)^n \,\lambda(P)} (2\pi)^n \int_{\mathbb{R}^n} v\,\det(\nabla^2 \phi)\, dx_1\wedge\cdots \wedge dx_n\\
    &= \dfrac{d}{dt}\Big|_{t=0} \Big( - \lambda(P)^{-1} \int_P \phi^*_t(y) \, dy_1 \wedge \cdots \wedge dy_n \Big),
\end{align*}
where we used the change of variables $y = \nabla \phi(x)$ and the first variation formula for the Legendre transform in the last equality.  We refer the reader to \cite[pg.~85]{Yanirthesis} for a proof and exposition of the variations of the Legendre transform.  Thus, for a convex function $\phi$, the Aubin--Mabuchi functional of the metric $\sqrt{-1} \partial \overline{\partial}\,\phi$ is given by 
\[
\AM(\phi)=-\dfrac{1}{\lambda(P)} \int_P \phi^*\,d\lambda.
\]

Now we return to the reference metric $\omega$.  Define $f_\omega \in C^\infty(X_P)$ to be the unique function satisfying
\[
\sqrt{-1}\partial \overline{\partial}\, f_\omega = \Ric(\omega) -\omega,\hspace{5mm}  \int_{X_P}e^{f_\omega} \omega^n = V.
\]
Let $\omega = \sqrt{-1} \partial\overline{\partial}\,\psi$ in the open orbit coordinates, and assume $\omega$ is toric invariant.  We can add a constant to $\psi$ so that
\[
f_\omega = -\log\big(\det(\nabla^2 \psi)\big) - \psi + \langle a,x \rangle
\]
for some constant $a\in\mathbb{R}^n$, which we will show must equal $0$.  Since $\omega$ was chosen so that $\Ric(\omega)$ is a positive form in $2\pi\,c_1(X_P)$, it follows that both $-\log\big(\det(\nabla^2 \psi)\big)$ and $\psi$ are potentials for K\"{a}hler metrics in the same class.  In particular, they both have gradient images equal to $\Int\,P$.  If $a\neq 0$, then $f_\omega$ is unbounded because the gradient of $f_\omega$ equals $a$ as $|x|\to\infty$.  The unboundedness of $f_\omega$ contradicts the fact that $f_\omega$ is a smooth function on $X_P$, so $a=0$.

The \textit{Ding functional} $\mathcal{D}(\omega_{\varphi})$ is given by
\[
\mathcal{D}(\omega_{\varphi}) = -\AM(\varphi) -\log \dfrac{1}{V}\int_{X_P} e^{f_\omega - \varphi}\,\omega^n.
\]
Since $\omega_{\varphi} = \sqrt{-1}\partial \overline{\partial} \,(\psi + \varphi) = \sqrt{-1} \partial \overline{\partial} \,\phi$ we will write $\mathcal{D}$ as a function of the potential $\phi$,
\begin{align*}
\mathcal{D}(\phi) &= \lambda(P)^{-1} \int_P \phi^*\,d\lambda - \log \dfrac{1}{\lambda(P)} \int_{\mathbb{R}^n} \scaleobj{1.2}{e^{-\log\,\det(\nabla^2 \psi) - \psi  - (\phi - \psi)}}\,\det(\nabla^2 \psi) \,d\lambda\\
&= \lambda(P)^{-1} \int_P \phi^*\,d\lambda - \log \dfrac{1}{\lambda(P)} \,\int_{\mathbb{R}^n} e^{-\phi} \,d\lambda\\
&= \lambda(P)^{-1} \int_P \phi^*\,d\lambda  -\log \bigg( \int_{\mathbb{R}^n} e^{-\phi}\,d\lambda \bigg) + \log\big(\lambda(P)\big).
\end{align*}
Thus, on the toric variety $X_P$ the Ding functional is related to the functional $\mathcal{F}$ by
\[
\mathcal{D}(\phi) = \mathcal{F}(\phi) + \lambda(P)^{-1} \int_P \phi^*\,d\lambda + \log\big(\lambda(P)\big).
\]

The \textit{Mabuchi K-energy} is given by 
\[
\mathcal{K}(\phi) = \lambda(P)^{-1} \bigg( \int_{\mathbb{R}^n} \phi\,\det(\nabla^2 \phi)\,d\lambda + \int_{\mathbb{R}^n} \det(\nabla^2 \phi)\,\log\big(\det(\nabla^2 \phi)\big)\,d\lambda + \int_P \phi^*\,d\lambda \bigg),
\]
as is shown in Section 4.2 of Berman and Berndttson \cite{BB} and Section 2.4 of Donaldson \cite{Donaldson}.  In Section \ref{iteration2} we showed 
\[
\mathcal{G}\Big(\scaleobj{.9}{\dfrac{\MA(\varphi)}{\lambda(P)}}\Big) = -\int_{\mathbb{R}^n} \scaleobj{.9}{\dfrac{\det(\nabla^2 \phi)}{\lambda(P)}}\,\log\Big(\scaleobj{.9}{\dfrac{\det(\nabla^2 \phi)}{\lambda(P)}}\Big)\,d\lambda.
\]
Thus,
\[
\mathcal{K}(\phi) = \Big\langle \phi,\,\scaleobj{.9}{\dfrac{\MA(\phi)}{\lambda(P)}}\Big\rangle - \mathcal{G}\Big(\scaleobj{.9}{\dfrac{\MA(\phi)}{\lambda(P)}}\Big) +\log\big(\lambda(P)\big) + \int_P \phi^*\,d\lambda .
\]

If $\{\phi_i\}$ are potentials for $\{\omega_i\}$ solving the Ricci iteration (\ref{RIt}), then Lemma \ref{functionallimit} implies
\[
\mathcal{F}(\phi_i) \geq \Big\langle \phi_{i+1},\,\scaleobj{.9}{\dfrac{\MA(\phi_{i+1})}{\lambda(P)}}\Big\rangle - \mathcal{G}\Big(\scaleobj{.9}{\dfrac{\MA(\phi_{i+1})}{\lambda(P)}}\Big) \geq \mathcal{F}(\phi_{i+1}).
\]
Adding $\int_P \phi^* \,d\lambda + \log\big(\lambda(P)\big)$ to every term we get for $X_P$ a toric K\"{a}hler manifold, along the Ricci iteration 
\[
\mathcal{D}(\phi_i) \geq \mathcal{K}(\phi_{i+1}) \geq \mathcal{D}(\phi_{i+1}).
\]
This inequality is a special case of a more general fact from K\"{a}hler geometry, that along the Ricci iteration
\[
\mathcal{D}(\omega_i) \geq \mathcal{K}(\omega_{i+1}) \geq \mathcal{D}(\omega_{i+1}),
\]
which is shown in Rubinstein \cite{Rubinstein}.

\section{Affine iteration}\label{AIsection}

This section is organized as follows.  In Subsection \ref{iteration1} we prove Theorem \ref{MAiteration1} about the convergence of the Monge--Amp\`{e}re iteration (\ref{MAiteration}) with $h(t) = t^{-(n+p+1)}$ for $p>0$.   In Subsections \ref{affineimmersions}\,--\,\ref{affinespheres} we give the necessary background on affine differential geometry.  In Subsection \ref{leggraphs} we discuss the specific example of affine immersions which are graphs of Legendre transforms, and in Subsection
\ref{affineiterationsection} we define the affine iteration on these graph immersions and prove Theorem \ref{affineiterationconverges} about its convergence.

\subsection{The Monge--Amp\`{e}re iteration for \texorpdfstring{$h(t)=t^{-(n+p+1)}$}{a}}\label{iteration1}

To prove Theorem \ref{MAiteration1} about the convergence of the Monge--Amp\`{e}re iteration for $h(t)=t^{-(n+p+1)}$ when $p>0$ we must verify Hypotheses \ref{hypB} and apply Theorem \ref{introthm}.

\noindent \textit{Hypothesis \ref{B1}:}

When $p>0$ Hypothesis \ref{B1} is satisfied trivially.

\noindent \textit{Hypothesis \ref{E}:}

Klartag proved \cite[Theorem~3.10]{Klartag}, that the second boundary problem
\begin{equation}\label{AHE}
    \begin{cases}
            \dfrac{\det(\nabla^2 \varphi)}{\lambda(A)} =\dfrac{ \varphi^{-(n+p+1)}}{\|\varphi^{-(n+p+1)}\|_1}\\
            \nabla \varphi( \mathbb{R}^n) = A
    \end{cases}
\end{equation}
has smooth, strictly convex solutions, unique up to translations $\varphi(x-a)$ and dilations $\varphi_a(x) = a\,\varphi(x/a)$ if and only if $A$ has barycenter at the origin.  

\begin{lemma}\label{negativeintegral}
If $\varphi$ is a smooth, convex solution to equation (\ref{AHE}) for $p>0$, then $\int_A \varphi^*\,d\lambda<0$.
\end{lemma}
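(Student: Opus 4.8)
The plan is to rewrite $\int_A\varphi^*\,d\lambda$ as an integral against $\MA(\varphi)$ using Lemma~\ref{legendreintregrallemma}, substitute the equation~(\ref{AHE}), and then evaluate the resulting integral by an integration by parts, obtaining an explicit negative multiple of $\int_{\mathbb{R}^n}\varphi^{-(n+p)}\,d\lambda$.

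First I would record two elementary facts. Since $\varphi$ solves~(\ref{AHE}), the density $\varphi^{-(n+p+1)}/\|\varphi^{-(n+p+1)}\|_1$ equals $\det(\nabla^2\varphi)/\lambda(A)>0$, so $\varphi>0$ on all of $\mathbb{R}^n$. Because $A$ is bounded, open, convex, and has barycenter at the origin, $0\in\Int A=\nabla\varphi(\mathbb{R}^n)$, and convexity of $\varphi$ then gives $r>0$ with $\varphi(x)\geq r\,|x|$ for $|x|$ large (the same argument appears in the discussion of Hypothesis~\ref{B1} in Subsection~\ref{hypexplanation}). Since $\varphi$ is also bounded below by a positive constant on every compact set, the tails of $\varphi^{-(n+p)}$, of $|x|\,\varphi^{-(n+p+1)}$, and of $\varphi^{-(n+p+1)}$ are all dominated by a multiple of $|x|^{-(n+p)}$, which is integrable on $\mathbb{R}^n\setminus B_1$ because $p>0$; hence all three functions lie in $L^1(\mathbb{R}^n)$ and $\int_{\mathbb{R}^n}\varphi^{-(n+p)}\,d\lambda\in(0,\infty)$.

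Next I would apply Lemma~\ref{legendreintregrallemma} with $\Omega=\mathbb{R}^n$ and $f=\varphi$. Since $\varphi$ is smooth, $\MA(\varphi)=\det(\nabla^2\varphi)\,\lambda$, and since $\nabla\varphi(\mathbb{R}^n)=A$ equation~(\ref{AHE}) gives $\det(\nabla^2\varphi)=c\,\varphi^{-(n+p+1)}$ with $c=\lambda(A)\,\|\varphi^{-(n+p+1)}\|_1^{-1}>0$, so
\[
\int_A\varphi^*\,d\lambda=\int_{\mathbb{R}^n}\big(\langle x,\nabla\varphi(x)\rangle-\varphi(x)\big)\det(\nabla^2\varphi)\,d\lambda=c\int_{\mathbb{R}^n}\big(\langle x,\nabla\varphi\rangle-\varphi\big)\varphi^{-(n+p+1)}\,d\lambda.
\]
To evaluate the last integral I would apply the divergence theorem to the vector field $V(x)=x\,\varphi(x)^{-(n+p)}$ on $B_R$. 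One computes $\operatorname{div}V=n\,\varphi^{-(n+p)}-(n+p)\,\langle x,\nabla\varphi\rangle\,\varphi^{-(n+p+1)}$, while the boundary flux equals $R\int_{\partial B_R}\varphi^{-(n+p)}\,dS$, which is $O(R^{-p})\to0$ by the linear lower bound on $\varphi$. Letting $R\to\infty$ (legitimate by the integrability above) gives
\[
(n+p)\int_{\mathbb{R}^n}\langle x,\nabla\varphi\rangle\,\varphi^{-(n+p+1)}\,d\lambda=n\int_{\mathbb{R}^n}\varphi^{-(n+p)}\,d\lambda.
\]
Since also $\int_{\mathbb{R}^n}\varphi\cdot\varphi^{-(n+p+1)}\,d\lambda=\int_{\mathbb{R}^n}\varphi^{-(n+p)}\,d\lambda$, subtracting yields
\[
\int_{\mathbb{R}^n}\big(\langle x,\nabla\varphi\rangle-\varphi\big)\varphi^{-(n+p+1)}\,d\lambda=\Big(\tfrac{n}{n+p}-1\Big)\int_{\mathbb{R}^n}\varphi^{-(n+p)}\,d\lambda=-\tfrac{p}{n+p}\int_{\mathbb{R}^n}\varphi^{-(n+p)}\,d\lambda,
\]
and therefore $\int_A\varphi^*\,d\lambda=-\dfrac{c\,p}{n+p}\int_{\mathbb{R}^n}\varphi^{-(n+p)}\,d\lambda<0$.

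The only genuine obstacle is making the integration by parts rigorous, i.e.\ checking that the three integrals converge and that the boundary flux vanishes; all of this rests on the linear growth bound $\varphi(x)\gtrsim|x|$, which is the single point where the hypothesis that $A$ has barycenter at the origin (hence $0\in\Int A$) is used. The remaining steps are a direct computation.
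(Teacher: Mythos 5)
Your proposal is correct and follows essentially the same route as the paper: rewrite $\int_A\varphi^*\,d\lambda$ as $\int_{\mathbb{R}^n}(\langle x,\nabla\varphi\rangle-\varphi)\det(\nabla^2\varphi)\,d\lambda$, substitute equation (\ref{AHE}), and integrate by parts (your divergence-theorem formulation with $V=x\,\varphi^{-(n+p)}$ is the same computation), with the boundary flux killed by the linear lower bound on $\varphi$ and $p>0$. Your extra care with the integrability of the tails and the positivity of $\varphi$ only tightens details the paper leaves implicit.
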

\begin{proof}
The Legendre transform of $\varphi$ is given by $\varphi^*\big(\nabla \varphi(x)\big) = \langle x,\nabla \varphi(x)\rangle - \varphi(x)$.  Thus, the change of variables $y= \nabla\varphi(x)$ implies
\[
    \int_A \varphi^*\,d\lambda = \int_{\mathbb{R}^n} \big( \langle x,\nabla \varphi(x) \rangle  - \varphi(x) \big)\,\det(\nabla^2 \varphi)\,d\lambda.
\]
Since $\varphi$ satisfies equation (\ref{AHE}),
\[
\int_{\mathbb{R}^n}\big( \langle x,\nabla \varphi(x)\rangle - \varphi(x) \big)\,\det(\nabla^2 \varphi)\,d\lambda
= C\int_{\mathbb{R}^n} \langle x,\,\nabla \varphi(x) \rangle \,\varphi(x)^{-(n+p+1)}\,d\lambda - C\int_{\mathbb{R}^n} \varphi^{-(n+p)}\,d\lambda
\]
for some constant $C>0$.  We can simplify the first integral on the second line by an integration by parts.  We restrict to the ball of radius $R$ to show
\begin{align*}
    \int_{B_R} \langle x,\nabla \varphi(x) \rangle \,\varphi^{-(n+p)}\,d\lambda &= \dfrac{-1}{n+p} \int_{B_R} \langle x, \nabla (\varphi^{-(n+p)})\rangle \,d\lambda\\
    &= \dfrac{n}{n+p} \int_{B_R} \varphi^{-(n+p)}\,d\lambda - \dfrac{1}{n+p} \int_{\partial B_R} \varphi^{-(n+p)}\, \Big\langle x, \dfrac{x}{|x|} \Big\rangle\,dS
\end{align*}
where $dS$ is the surface measure on the sphere of radius $R$.  Since $\varphi$ is positive and convex there are positive constants $c$ and $r$ such that $\varphi(x) \geq c+ r\,|x|$.  Thus, we can bound the integral
\[
\int_{\partial B_R} \varphi^{-(n+p)}\, \Big\langle x, \dfrac{x}{|x|} \Big\rangle\,dS \leq (c+r\,R)^{-(n+p)}\,R\,R^{n-1}\,\omega_n.
\]
This bound goes to $0$ as $R\to\infty$ because $p>0$.  Thus, we know 
\[
 C\int_{\mathbb{R}^n} \langle x,\,\nabla \varphi(x) \rangle \,\varphi(x)^{-(n+p+1)}\,d\lambda = C\,\dfrac{n}{n+p} \int_{\mathbb{R}^n} \varphi^{-(n+p)}\,d\lambda.
\]
Returning to the integral of the Legendre transform we see
\[
\int_A \varphi^*\,d\lambda = -C\,\dfrac{p}{n+p} \int_{\mathbb{R}^n} \varphi^{-(n+p)}\,d\lambda <0.
\]

\end{proof}

The Legendre transform of the dilation satisfies
\[
\varphi_a^*(y) = \sup_{x\in\mathbb{R}^n} \{\,\langle x,y\rangle - \varphi_a(x)\,\} = a\,\sup_{x\in\mathbb{R}^n} \{\,\langle x/a, y\rangle - \varphi(x/a)\,\} = a\,\varphi^*(y).
\]
If $\varphi$ is a solution to equation (\ref{AHE}) then $\int_A \varphi^*\,d\lambda<0$ by Lemma \ref{negativeintegral}, so specifying $\int_A \varphi^*\,d\lambda = -\tau<0$ is equivalent to specifying a unique dilation.  Thus the second boundary problem (\ref{ourpde}) has smooth, strictly convex solutions unique up to translations.

\noindent \textit{Hypothesis \ref{B2}:}

We define $g(s,t)=s/t$.  To verify Hypothesis \ref{B2} we note $g$ is decreasing in $t$ when $s$ is positive, and $g(s,g(s,t)) = s/(s/t)=t$.

\noindent \textit{Hypothesis \ref{B3}:}

We let $s=n+p$ for notational convenience and define $\mathcal{F}_s$ to be the functional $\mathcal{F}$ associated to $h(t)=t^{-(s+1)}$.
First, we compute $\mathcal{F}_s$.  We integrate $H(t) = \int_t^{\infty} x^{-(s+1)}\,d\lambda = s^{-1}\,t^{-s}$, so $H^{-1}(t) = (s\,t)^{-1/s}$. Thus,
\[
\mathcal{F}_s(f) = H^{-1}\big(\|H \circ f\|_1\big) = \bigg( s \int_{\mathbb{R}^n} \frac{1}{s}\,f^{-s}\,d\lambda \bigg)^{-1/s} = \|f\|_{-s}
\]
for any $f\in \mathcal{C}_{\lin} = \{f:\mathbb{R}^n \to (\tau,\infty) \mid f \text{ continuous, and } f(x)/(1+|x|)\text{ bounded}\,\}$.

Next, we compute the definition of $\mathcal{G}_s$ defined in equation (\ref{Gdef}) with $g(s,t)=s/t$:
\[
\mathcal{G}_s(\mu) = \inf \big\{ \,\langle f,\,\mu\rangle \,\|f\|_{-s}^{-1} \mid f\in\mathcal{C}_{\lin}\,\big\}.
\]
For $g(s,t)=s/t$ Hypothesis \ref{B3} says
\[
\Big\langle \varphi_{i+1},\, \scaleobj{.9}{\dfrac{\MA(\varphi_{i+1})}{\lambda(A)}} \Big\rangle \,\mathcal{G}_s \Big(\scaleobj{.9}{\dfrac{\MA(\varphi_{i+1})}{\lambda(A)}}\Big)^{-1} \leq \mathcal{F}_s(\varphi_i).
\]
To prove this inequality, we compute $\mathcal{G}_s(\mu)$ when $\mu$ has a continuous density.

\begin{proposition}\label{Gequality}
Let $h$ be a continuous, nonnegative function on $\mathbb{R}^n$.  If $h\,\lambda$ is in $\mathcal{P}_1$, then
\[
\mathcal{G}_s(h\,\lambda) = \|h\|_{\frac{s}{s+1}}.
\]
\end{proposition}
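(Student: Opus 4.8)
The plan is to prove the two inequalities $\mathcal{G}_s(h\,\lambda)\ge\|h\|_{s/(s+1)}$ and $\mathcal{G}_s(h\,\lambda)\le\|h\|_{s/(s+1)}$ separately, where throughout $\|h\|_{s/(s+1)}=\bigl(\int_{\mathbb{R}^n}h^{s/(s+1)}\,d\lambda\bigr)^{(s+1)/s}$. The first inequality is a one‑line consequence of H\"older's inequality, while the second requires producing a near‑optimal sequence of competitors inside $\mathcal{C}_{\lin}$.

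For the lower bound, fix $f\in\mathcal{C}_{\lin}$. Since $f$ has at most linear growth and $h\,\lambda\in\mathcal{P}_1$, the quantity $\langle f,h\,\lambda\rangle=\int_{\mathbb{R}^n}fh\,d\lambda$ is finite, and we may assume $\int_{\mathbb{R}^n}f^{-s}\,d\lambda<\infty$, since otherwise $\|f\|_{-s}^{-1}=\infty$ and there is nothing to prove. I would use the pointwise identity $h^{s/(s+1)}=(fh)^{s/(s+1)}(f^{-s})^{1/(s+1)}$ together with H\"older's inequality for the conjugate exponents $\tfrac{s+1}{s}$ and $s+1$ to obtain
\[
\int_{\mathbb{R}^n}h^{s/(s+1)}\,d\lambda\ \le\ \Bigl(\int_{\mathbb{R}^n}fh\,d\lambda\Bigr)^{s/(s+1)}\Bigl(\int_{\mathbb{R}^n}f^{-s}\,d\lambda\Bigr)^{1/(s+1)}.
\]
Raising both sides to the power $\tfrac{s+1}{s}$ and recalling $\|f\|_{-s}^{-1}=\bigl(\int f^{-s}\,d\lambda\bigr)^{1/s}$ rewrites this as $\|h\|_{s/(s+1)}\le\langle f,h\,\lambda\rangle\,\|f\|_{-s}^{-1}$, and taking the infimum over $f\in\mathcal{C}_{\lin}$ gives $\|h\|_{s/(s+1)}\le\mathcal{G}_s(h\,\lambda)$. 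In particular the right‑hand side is finite, since $s=n+p>n$ makes $f(x)=c\,(1+|x|)$ an admissible competitor with $\int f^{-s}\,d\lambda<\infty$.

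For the matching upper bound, note that equality in the H\"older step above forces $fh$ proportional to $f^{-s}$, so the natural optimizer is $f=h^{-1/(s+1)}$ (the ratio $\langle f,h\,\lambda\rangle\,\|f\|_{-s}^{-1}$ is unchanged under $f\mapsto cf$). This function need not lie in $\mathcal{C}_{\lin}$, as it may grow superlinearly where $h$ decays fast and may dip below $\tau$ where $h$ is large, so, in the spirit of Lemma \ref{RicG}, I would truncate: for $M>0$ and $k\in\mathbb{N}$ put
\[
f_{M,k}(x)=\max\Bigl\{\tau+1,\ \min\bigl\{M\,h(x)^{-1/(s+1)},\ k(1+|x|)\bigr\}\Bigr\}\in\mathcal{C}_{\lin}.
\]
Holding $M$ fixed and letting $k\to\infty$, the $f_{M,k}$ increase to $f_M:=\max\{\tau+1,\ M\,h^{-1/(s+1)}\}$, so the monotone convergence theorem gives $\langle f_{M,k},h\,\lambda\rangle\to\langle f_M,h\,\lambda\rangle<\infty$, while $f_{M,k}^{-s}$ decreases to $f_M^{-s}$ with integrable majorant $f_{M,1}^{-s}$ (integrable because $s>n$, because $\int h^{s/(s+1)}\,d\lambda<\infty$, and because the floor $\tau+1$ is active only on a set of finite measure), whence $\|f_{M,k}\|_{-s}\to\|f_M\|_{-s}\in(0,\infty)$. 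Thus $\mathcal{G}_s(h\,\lambda)\le\langle f_M,h\,\lambda\rangle\,\|f_M\|_{-s}^{-1}$ for all $M$. Finally, letting $M\to\infty$: the floor is active only on $\{h>(M/(\tau+1))^{s+1}\}$, a set of Lebesgue measure at most $((\tau+1)/M)^{s+1}$, so a routine estimate of its contribution to $\langle f_M,h\,\lambda\rangle$ and to $\int f_M^{-s}\,d\lambda$ (whose leading terms are $M\int h^{s/(s+1)}\,d\lambda$ and $M^{-s}\int h^{s/(s+1)}\,d\lambda$ respectively) gives $\langle f_M,h\,\lambda\rangle\,\|f_M\|_{-s}^{-1}\to\bigl(\int h^{s/(s+1)}\,d\lambda\bigr)^{(s+1)/s}=\|h\|_{s/(s+1)}$. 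Combining the two bounds proves the proposition. (If $h$ happens to be bounded — which holds in the application, where the density is $(h\circ\varphi_i)/\|h\circ\varphi_i\|_1$ with $\varphi_i\ge\tau/\lambda(A)>0$ — one chooses $M$ large enough at the outset that the floor never activates, so only the single limit $k\to\infty$ is needed.)

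The main obstacle is that the exact optimizer $h^{-1/(s+1)}$ is generically outside $\mathcal{C}_{\lin}$, so the entire content of the upper bound lies in the approximation: selecting a truncation that remains admissible and controlling the error terms in the correct order of limits. The hypothesis $s=n+p>n$ enters crucially — it is exactly what lets a linearly growing competitor have finite $\|\cdot\|_{-s}$, underpinning both the finiteness observation in the lower bound and the existence of the dominating function in the upper bound.
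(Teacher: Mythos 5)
Your proof is correct and follows essentially the same route as the paper: the lower bound is the reverse H\"older inequality (which you derive from ordinary H\"older rather than citing it), and the upper bound truncates the near-optimizer $h^{-1/(s+1)}$ into $\mathcal{C}_{\lin}$ and passes to the limit by monotone and dominated convergence, exactly as in the paper's choice $f_k=\min\{h^{-1/(s+1)},\,k(1+|x|)\}$. Your extra floor $\max\{\tau+1,\cdot\}$ together with the second limit $M\to\infty$ handles the requirement that elements of $\mathcal{C}_{\lin}$ take values in $(\tau,\infty)$ --- a point the paper's truncation glosses over when $h$ is unbounded --- so your version is, if anything, slightly more careful.
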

\begin{proof}
This proof relies on the reverse H\"{o}lder inequality which states that if $p\in (0,1)$ and $q<0$ are numbers such that $1/p + 1/q =1$, and $f>0$,  $h\geq 0$ are functions on $\mathbb{R}^n$, then

\begin{equation}\label{reverseholder}
\|h\|_p \, \|f\|_q \leq \int_{\mathbb{R}^n} f\,h\,d\lambda.
\end{equation}
Applying (\ref{reverseholder}) with $f\in\mathcal{C}_{\lin}$, $h$ defined in the proposition, $p=s/(s+1)$, and $q=-s$ shows
\[
\|h \|_{\frac{s}{s+1}} \,\|f\|_{-s} \leq \langle f, \,h\lambda \rangle.
\]
Since $f\in\mathcal{C}_{\lin}$ was arbitrary, it follows that 
\begin{equation}\label{firstinequality}
\|h\|_{\frac{s}{s+1}} \leq \inf\{\,\langle f,\,h\lambda \rangle \,\|f\|_{-s}^{-1}\mid f\in\mathcal{C}_{\lin}\,\}=\mathcal{G}_s(h\,\lambda).
\end{equation}

To prove the reverse inequality consider
\begin{equation}\label{c1}
f_k(x) = \min\{h(x)^{-1/(s+1)},\, k\,(1+|x|) \}.
\end{equation}
By assumption $h$ is continuous and integrable, so $h$ is finite, which implies $f_k$ is continuous and positive.  Thus, $f_k \in \mathcal{C}_{\lin}$, and
\[
\mathcal{G}_s(\mu) \leq \langle f_k,\mu \rangle \,\mathcal{F}_s(f_k)^{-1}.
\]
$f_k$ is an increasing sequence of positive functions which converge pointwise to $g_\mu^{-1/(s+1)}$, so
\[
\langle f_k,\mu \rangle = \int_{\mathbb{R}^n} f_k\,h \,d\lambda \to \int_{\mathbb{R}^n} h^{\frac{s}{s+1}}\,d\lambda = \|h\|_{\frac{s}{s+1}}^{\frac{s+1}{s}}
\]
by the monotone convergence theorem.  

$f_k^{-s}$ converges pointwise to $h^{\frac{s}{s+1}}$, and $f_k^{-s}$ is dominated by $f_1^{-s}$ for all $k$.  $f_1^{-s}$ is integrable because it is the maximum of $h ^{\frac{s}{s+1}}$ and  $(1+|x|)^{-s}$ which are both positive and integrable.  $h^{\frac{s}{s+1}}$ is integrable by  (\ref{firstinequality}) and the finiteness of $\mathcal{G}_s$, and $(1+|x|)^{-s}$ is integrable because $s>n$.  Thus,
\begin{equation}\label{c3}
\|f_k\|_{-s}^{-1} \to \left(\int_{\mathbb{R}^n} h^{\frac{s}{s+1}}\,d\lambda \right)^{1/s}
\end{equation}
by the dominated convergence theorem.  Together, (\ref{c1}) and (\ref{c3}) imply
\[
\mathcal{G}_s(h\,\lambda) \leq \|h\|_{\frac{s}{s+1}}.
\]
\end{proof}

Now we use Lemmas \ref{compare} and \ref{Gequality} to prove Hypothesis \ref{B3}.

\begin{lemma}\label{rev4}
$\mathcal{F}_s (\varphi_i) \geq \Big\langle \scaleobj{.9}{\dfrac{\MA(\varphi_{i+1})}{\lambda(A)}},\varphi_{i+1} \Big\rangle \,\mathcal{G}_s \Big(\scaleobj{.9}{\dfrac{\MA(\varphi_{i+1})}{\lambda(A)}}\Big)^{-1}$.
\end{lemma}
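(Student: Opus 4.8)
The plan is to imitate the proof of Lemma~\ref{last}, with the entropy identity of Lemma~\ref{RicG} replaced by the reverse--H\"older identity of Proposition~\ref{Gequality}, and with Lemma~\ref{compare} playing the same role. Throughout write $s=n+p$ and $\mu_{i+1}=\MA(\varphi_{i+1})/\lambda(A)$. Since $\varphi_{i+1}$ is smooth and strictly convex, $\MA(\varphi_{i+1})=\det(\nabla^2\varphi_{i+1})\,\lambda$; since $\nabla\varphi_{i+1}(\mathbb{R}^n)=A$ and $\lambda(\partial A)=0$, the measure $\mu_{i+1}$ is a probability measure, and the iteration equation~(\ref{MAiteration}) identifies its density as $\varphi_i^{-(s+1)}/\|\varphi_i^{-(s+1)}\|_1$. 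This density is continuous and strictly positive because $\varphi_i\ge \tau/\lambda(A)>0$ by Lemma~\ref{tau}, and $\mu_{i+1}\in\mathcal{P}_1$ because the growth estimate $\varphi_i(x)\ge -C+r|x|$ (which comes from $0\in\Int\partial\varphi_i(\mathbb{R}^n)$) together with Hypothesis~\ref{B1} makes the density decay like $|x|^{-(n+p+1)}$, so $|x|$ times it is integrable. Hence Proposition~\ref{Gequality} applies and gives
\[
\mathcal{G}_s(\mu_{i+1})=\Big\|\dfrac{\varphi_i^{-(s+1)}}{\|\varphi_i^{-(s+1)}\|_1}\Big\|_{\frac{s}{s+1}}=\dfrac{\big(\int_{\mathbb{R}^n}\varphi_i^{-s}\,d\lambda\big)^{\frac{s+1}{s}}}{\int_{\mathbb{R}^n}\varphi_i^{-(s+1)}\,d\lambda},
\]
which is a finite positive number (finiteness of $\int\varphi_i^{-s}$ uses $s>n$).

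Next I would apply Lemma~\ref{compare} to $\varphi_i$ and $\varphi_{i+1}$: both are smooth and convex, $\nabla\varphi_i(\mathbb{R}^n)=\nabla\varphi_{i+1}(\mathbb{R}^n)=A$, and the normalization in~(\ref{MAiteration}) gives $\int_A\varphi_i^*\,d\lambda=\int_A\varphi_{i+1}^*\,d\lambda=-\tau$, so Lemma~\ref{compare} yields $\langle\varphi_{i+1},\MA(\varphi_{i+1})\rangle\le\langle\varphi_i,\MA(\varphi_{i+1})\rangle$, i.e.\ after dividing by $\lambda(A)$,
\[
\big\langle\varphi_{i+1},\mu_{i+1}\big\rangle\le \big\langle\varphi_i,\mu_{i+1}\big\rangle .
\]

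It then remains to check that $\big\langle\varphi_i,\mu_{i+1}\big\rangle\,\mathcal{G}_s(\mu_{i+1})^{-1}=\mathcal{F}_s(\varphi_i)$. Since the density of $\mu_{i+1}$ is proportional to $\varphi_i^{-(s+1)}$, one computes $\big\langle\varphi_i,\mu_{i+1}\big\rangle=\int_{\mathbb{R}^n}\varphi_i^{-s}\,d\lambda\big/\int_{\mathbb{R}^n}\varphi_i^{-(s+1)}\,d\lambda$; dividing this by the displayed expression for $\mathcal{G}_s(\mu_{i+1})$ collapses to $\big(\int_{\mathbb{R}^n}\varphi_i^{-s}\,d\lambda\big)^{-1/s}=\|\varphi_i\|_{-s}=\mathcal{F}_s(\varphi_i)$, using the formula for $\mathcal{F}_s$ computed earlier in this subsection. (Equivalently, $\varphi_i$ is precisely the function realizing equality in the reverse H\"older inequality used in the proof of Proposition~\ref{Gequality}, since $(\varphi_i^{-(s+1)})^{s/(s+1)}=\varphi_i^{-s}$.) Combining this with the inequality from Lemma~\ref{compare} and dividing by $\mathcal{G}_s(\mu_{i+1})>0$ yields
\[
\big\langle\varphi_{i+1},\mu_{i+1}\big\rangle\,\mathcal{G}_s(\mu_{i+1})^{-1}\le \big\langle\varphi_i,\mu_{i+1}\big\rangle\,\mathcal{G}_s(\mu_{i+1})^{-1}=\mathcal{F}_s(\varphi_i),
\]
which is the claim. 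There is no substantive difficulty beyond bookkeeping; the only point that needs care is the finiteness and positivity of the integrals appearing above so that all the divisions are legitimate, which is exactly where positivity of $\varphi_i$ and the linear lower growth bound $\varphi_i(x)\ge -C+r|x|$ enter.
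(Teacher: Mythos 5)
Your proposal is correct and follows essentially the same route as the paper: verify $\MA(\varphi_{i+1})/\lambda(A)\in\mathcal{P}_1$ so that Proposition \ref{Gequality} gives the explicit value of $\mathcal{G}_s$, invoke Lemma \ref{compare} (using the common normalization $\int_A\varphi_i^*\,d\lambda=\int_A\varphi_{i+1}^*\,d\lambda=-\tau$) to replace $\varphi_{i+1}$ by $\varphi_i$ in the pairing, and then compute directly that $\langle\varphi_i,\MA(\varphi_{i+1})/\lambda(A)\rangle\,\mathcal{G}_s(\MA(\varphi_{i+1})/\lambda(A))^{-1}=\|\varphi_i\|_{-s}=\mathcal{F}_s(\varphi_i)$. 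The bookkeeping on positivity and integrability (using $\tau>0$, the linear lower growth of $\varphi_i$, and $s>n$) matches the paper's argument as well.
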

\begin{proof} 
 $\varphi_{i+1}$ is smooth, so $\MA(\varphi_{i+1}) = \det(\nabla^2 \varphi_{i+1})\,\lambda$, and since $\{\varphi_i\}$ is a Monge--Amp\`{e}re iteration satisfying equation (\ref{MAiteration})
\[
\int_{\mathbb{R}^n} |x|\,\MA(\varphi_{i+1}) = \| \varphi_i \|_{-(s+1)}^{s+1}\,\int_{\mathbb{R}^n} |x| \, \varphi_i(x)^{-(s+1)} \,d\lambda < \infty
\]
because $\varphi_i$ has linear growth and $s>n$.  Thus $\scaleobj{.9}{\dfrac{\MA(\varphi_{i+1})}{\lambda(A)}} \in \mathcal{P}_1$, and Proposition \ref{Gequality} implies
\[
\mathcal{G}_s \Big(\scaleobj{.9}{\dfrac{\MA(\varphi_{i+1})}{\lambda(A)}}\Big) = \Big\| \scaleobj{.9}{\dfrac{\det (\nabla^2 \varphi_{i+1})}{\lambda(A)}} \Big\|_{\frac{s}{s+1}}.
\]
Since $\int_A \varphi_i^* \,d\lambda = \int_A \varphi_{i+1}^* \,d\lambda$, Lemma \ref{compare} implies
\[
\langle \varphi_{i+1},\,\MA(\varphi_{i+1}) \rangle \leq \langle \varphi_i, \MA(\varphi_{i+1})\rangle.
\]
By equation (\ref{MAiteration}) defining the Monge--Amp\`{e}re iteration,
\[
\Big\langle \varphi_i, \scaleobj{.9}{\dfrac{\MA(\varphi_{i+1})}{\lambda(A)}} \Big\rangle = \left( \int_{\mathbb{R}^n} \varphi_i^{-(s+1)} \,d\lambda\right)^{-1} \, \int_{\mathbb{R}^n} \varphi_i^{-s}\,d\lambda.
\]
Combining the previous equations shows
\begin{align*}
\Big\langle\varphi_{i+1},\,& \scaleobj{.9}{\dfrac{\MA(\varphi_{i+1})}{\lambda(A)}} \Big\rangle \,\mathcal{G}_s \Big(\scaleobj{.9}{\dfrac{\MA(\varphi_{i+1})}{\lambda(A)}} \Big)^{-1} \\ &\leq \left( \int_{\mathbb{R}^n} \varphi_i^{-(s+1)} \,d\lambda\right)^{-1} \, \left(\int_{\mathbb{R}^n} \varphi_i^{-s}\,d\lambda\right) \, \Big\| \scaleobj{.9}{\dfrac{\det(\nabla^2 \varphi_{i+1})}{\lambda(A)}}\Big\|_{\frac{s}{s+1}}^{-1}\\
&= \left( \int_{\mathbb{R}^n} \varphi_i^{-(s+1)} \,d\lambda\right)^{-1} \, \left(\int_{\mathbb{R}^n} \varphi_i^{-s}\,d\lambda\right)\\ & \hspace{3cm} \left( \int_{\mathbb{R}^n} \varphi_i^{-s}\,d\lambda \right)^{-\frac{s+1}{s}} \,\left( \int_{\mathbb{R}^n} \varphi_i^{-(s+1)}\,d\lambda \right)\\
&=\mathcal{F}_s(\varphi_i).
\end{align*}
\end{proof}

Lemma \ref{rev4} concludes the proof of Hypothesis \ref{B3}, so Theorem \ref{introthm} implies Theorem \ref{MAiteration1}.

\subsection{Affine immersions}\label{affineimmersions}

Affine differential geometry is concerned with properties of submanifolds of Euclidean space which are invariant under volume preserving linear transformations $x\mapsto A\,x$ for $A\in Sl_n\mathbb{R}$.  Consider a smooth immersion $f$ of a manifold $M$ as a hypersurface in $\mathbb{R}^{n+1}$.

\[
f:M^n \to \mathbb{R}^{n+1}
\]
In order to decompose vectors in $T\,\mathbb{R}^{n+1}$ into a component tangent to $f(M)$ and a component transversal to $f(M)$ we need to choose a transversal vector field which we will denote by $\xi$ and pointwise by $M \ni x \mapsto \xi_x$.  In Riemannian geometry the transversal vector field is chosen to be the Euclidean unit normal, but this is only invariant under orthogonal transformations, and not all volume preserving transformations.  

We seek to find a unique \textit{affine normal} $\xi$ which is invariant under all volume preserving linear transformations.  Our strategy is to define a metric, connection, and volume forms based on an arbitrary transversal vector field $\xi$ and then to show under a unique choice of $\xi$ these objects are invariant under volume preserving linear transformations. 

Let $\xi \in T\,\mathbb{R}^{n+1} \big|_{f(M)}$, be an arbitrary vector field transverse to $f(M)$ whose pointwise value we'll denote by $\xi_x \in T_{f(x)} \mathbb{R}^{n+1}$.  We decompose $T\,\mathbb{R}^{n+1}$ as

\[
T_{f(x)} \mathbb{R}^{n+1} = f_* (T_x M) + \text{span} \{\xi_x\}.
\]

\noindent We can differentiate vector fields on $T\,\mathbb{R}^{n+1} \big|_{f(M)}$ using the flat connection $D$ on $\mathbb{R}^{n+1}$ by extending them to a neighborhood of $f(M)$.  

\begin{equation} \label{gauss1}
D_X f_*(Y) = f_*(\nabla _X Y) + h(X,Y)\,\xi,
\end{equation}

\noindent defines a torsion free connection $\nabla$ on $TM$ and a symmetric bilinear form $h$ on $TM$.  Differentiating the vector field $\xi$ 

\begin{equation} \label{gauss2}
D_X \xi = -f_*(S\,X) + \tau(X)\,\xi,
\end{equation}

\noindent defines an endomorphism $S$ of $TM$ and a one form $\tau$.  These definitions are invariant under affine transformations, so if $\widetilde{f}=v+Lf(x)$ and $\widetilde{\xi} = L\,\xi$ for $v\in \mathbb{R}^n$ and $L \in Gl_n\,\mathbb{R}$, then $\widetilde{h}=h$, $\widetilde{\nabla}=\nabla$, $\widetilde{S}=S$ and $\widetilde{\tau}=\tau$.  The converse is also true, as shown in Nomizu--Sasaki \cite[Proposition~1.3]{Nomizu}:

\begin{proposition}
Let $f,\widetilde{f}:M \to \mathbb{R}^{n+1}$ be two immersions with transversal vector fields $\xi$ and $\widetilde{\xi}$ respectively.  If $\nabla=\widetilde{\nabla}$, $h=\widetilde{h}$, $S=\widetilde{S}$, and $\tau = \widetilde{\tau}$ then there is an affine transformation $A$ such that $\widetilde{f} = Af$. 
\end{proposition}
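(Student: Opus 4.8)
The plan is to construct a single affine map of $\mathbb{R}^{n+1}$ by patching together a family of pointwise linear maps and then showing that family is constant. Using the flat connection $D$ to identify every tangent space $T_z\mathbb{R}^{n+1}$ with $\mathbb{R}^{n+1}$ (via translation), I would define, for each $x\in M$, a linear map $L_x:\mathbb{R}^{n+1}\to\mathbb{R}^{n+1}$ by the two requirements $L_x\big(f_*(X)\big)=\widetilde{f}_*(X)$ for all $X\in T_xM$ and $L_x(\xi_x)=\widetilde{\xi}_x$. Because $T_{f(x)}\mathbb{R}^{n+1}=f_*(T_xM)\oplus\operatorname{span}\{\xi_x\}$ and the analogous splitting holds for $\widetilde{f}$ and $\widetilde{\xi}$, this $L_x$ is a well-defined element of $GL_{n+1}\mathbb{R}$ depending smoothly on $x$.

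The heart of the argument is to show $D_XL=0$ for every vector field $X$ on $M$, where $D_XL$ denotes the componentwise derivative of $L$ viewed as a matrix-valued function. First I would differentiate the identity $L(f_*Y)=\widetilde{f}_*Y$ along $X$; the product rule gives $(D_XL)(f_*Y)+L\big(D_X(f_*Y)\big)=D_X(\widetilde{f}_*Y)$. Substituting the Gauss formula (\ref{gauss1}) on both sides turns $L\big(D_X(f_*Y)\big)$ into $\widetilde{f}_*(\nabla_XY)+h(X,Y)\,\widetilde{\xi}$ and turns $D_X(\widetilde{f}_*Y)$ into $\widetilde{f}_*(\widetilde{\nabla}_XY)+\widetilde{h}(X,Y)\,\widetilde{\xi}$; since $\nabla=\widetilde{\nabla}$ and $h=\widetilde{h}$ these cancel, leaving $(D_XL)(f_*Y)=0$. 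Next I would differentiate $L(\xi)=\widetilde{\xi}$ along $X$ and substitute (\ref{gauss2}), $D_X\xi=-f_*(SX)+\tau(X)\xi$; the hypotheses $S=\widetilde{S}$ and $\tau=\widetilde{\tau}$ force the remaining terms to cancel, so $(D_XL)(\xi)=0$. Since $f_*(T_xM)$ together with $\xi_x$ spans $\mathbb{R}^{n+1}$, this yields $D_XL=0$.

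With $M$ connected (handling components separately otherwise), $DL=0$ gives $L\equiv A_0$ for a fixed $A_0\in GL_{n+1}\mathbb{R}$, which is invertible since it carries one direct sum decomposition onto another. Then $D_X(\widetilde{f}-A_0f)=\widetilde{f}_*X-A_0(f_*X)=\widetilde{f}_*X-L(f_*X)=0$ for all $X$, so $\widetilde{f}-A_0f$ is a constant vector $v$; that is, $\widetilde{f}=A_0f+v$, which is the required affine transformation $A$.

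I do not expect a genuine obstacle: this is the equiaffine version of the classical fundamental theorem for hypersurface immersions. The one point that requires care is recognizing that the four hypotheses are precisely matched to the Gauss and Weingarten structure equations (\ref{gauss1}) and (\ref{gauss2}), so that differentiating each defining identity produces exactly the cancellation needed, and keeping straight that the derivative of $L$ is taken only after the relevant tangent spaces of $\mathbb{R}^{n+1}$ have been identified with $\mathbb{R}^{n+1}$ through the flat structure.
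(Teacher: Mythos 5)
Your proof is correct and is essentially the standard argument: the paper itself gives no proof but cites Nomizu--Sasaki, and their proof of this uniqueness statement is exactly your construction — define the pointwise linear map carrying $(f_*X,\xi)$ to $(\widetilde{f}_*X,\widetilde{\xi})$, use the Gauss and Weingarten equations together with the four hypotheses to show its flat derivative vanishes, conclude it is a constant $A_0\in GL_{n+1}\mathbb{R}$, and integrate to get $\widetilde{f}=A_0f+v$. The only point worth making explicit is that $M$ is taken to be connected (as in the cited reference), since on a disconnected $M$ a single affine map need not work.
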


It is important to note that $h$, $\nabla$, $S$, and $\tau$ depend on both the immersion $f$ and the choice of transversal vector $\xi$.  Before proceeding, we will compute a simple example

\begin{exmp}
Let $f(x) = (x,F(x))$ for some smooth function $F$.  Let $\{e_i\}_{i=1}^{n+1}$ be the standard basis for $T\,\mathbb{R}^{n+1}$.  Let $\xi = e_{n+1}$  be the transversal vector field. Since $\xi$ is constant, equation (\ref{gauss2}) implies $S=0$ and $\tau=0$.  Let $\{\partial_i=\partial/\partial x_i \}_{i=1}^n$ be a basis of vector fields on $M=\mathbb{R}^n$.  Thus

\[
f_*(\partial_i) = e_i + F_i\,e_{n+1}
\]

\noindent where $F_i=\partial F/\partial x_i$.  

\[
D_{\partial_i} f_*(\partial_j) = F_{ij}\,e_{n+1}
\]

\noindent so equation (\ref{gauss1}) implies $h(\partial_i,\partial_j) = F_{ij}$, and $\nabla_{\partial_i}\partial_j=0$.
\end{exmp}

In the previous example, $h$ is a positive-definite bilinear form if and only if $\nabla^2 F>0$.  In particular, if $F$ is a strongly convex function, then $h$ is positive-definite.  We will only consider the case when $f:M \hookrightarrow \mathbb{R}^{n+1}$ is a \textit{nondegenerate convex immersion}, meaning $f(M)$ is locally the graph of a convex function, and $h$ is positive-definite at every point.

Let $f:M \to \mathbb{R}^{n+1}$ be a nondegenerate convex immersions.  The induced positive-definite bilinear form $h$ is called the \textit{affine metric}.  The induced connection $\nabla$ is called the \textit{affine connection}, and $S$ is called the \textit{affine shape operator}.  All of these objects are \textit{equiaffine}, meaning they are invariant under volume preserving affine transformations of $\mathbb{R}^{n+1}$.  More precisely, if $\widetilde{f}(x) = v + L\,f(x)$ and $\widetilde{\xi}= L\,\xi$ for $L\in SL_{n+1}\mathbb{R}$, then the objects defined by $(f,\xi)$ and $(\widetilde{f},\widetilde{\xi})$ are equal.

In order to define a unique, equiaffine normal vector, we introduce two volume forms on $M$.  The first is the intrinsic volume form determined by the affine metric $h$.  
\begin{equation}\label{volumeform1}
\omega_h(X_1,\ldots,X_n) = \det(h(X_i,X_j)_{ij})^{1/2}.
\end{equation}

\noindent The second volume form is an extrinsic one which is defined in terms of the transversal vector field $\xi$.  

\begin{equation}\label{volumeform2}
\theta(X_1,\ldots,X_n) = \det(f_*(X_1),\ldots,f_*(X_n),\xi).
\end{equation}

\noindent As before, we consider two embeddings $f$ and $\widetilde{f}$ which are affinely equivalent so that $\widetilde{f}=v+L\,f$ and $\widetilde{\xi}=L\,\xi$ for $v\in\mathbb{R}^{n+1}$ and $L\in GL_{n+1}\,\mathbb{R}$.  $\omega_{\widetilde{h}}=\omega_h$ for all $L\in Gl_{n+1}\,\mathbb{R}$, but  $\widetilde{\theta}=det(L)\,\theta$ so $\theta$ is only invariant for $L\in Sl_{n+1}\,\mathbb{R}$.  Both $\omega_h$ and $\theta$ are equiaffine.

The proof of the following proposition is in Nomizu--Sasaki \cite[Theorem~3.1]{Nomizu}.

\begin{proposition}\label{affinenormalexists}
Let $f:M\to\mathbb{R}^{n+1}$ be a strictly convex immersion.  There exists a unique vector field $\xi\in T\,\mathbb{R}^{n+1}\big|_{f(M)}$ such that $\xi$ points to the concave side of $f(M)$, and the induced affine structure satisfies $\tau=0$ and $\theta=\omega_h$.  
\end{proposition}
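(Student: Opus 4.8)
The plan is the classical one (following Nomizu--Sasaki \cite{Nomizu}): fix an arbitrary admissible transversal field, parametrize all the others, and show that the two normalizations $\tau=0$ and $\theta=\omega_h$ force a unique choice. Begin with any smooth transversal vector field $\xi_0$ pointing to the concave side of $f(M)$. Since $f$ is a strictly (nondegenerate) convex immersion, the symmetric form $h$ defined by (\ref{gauss1}) is positive definite, so $\omega_h$ from (\ref{volumeform1}) is a genuine volume density. Every transversal field along $f$ can be written uniquely as
\[
\bar\xi = \phi\,\xi_0 + f_*(Z),
\]
for a nowhere-vanishing function $\phi$ on $M$ and a tangent vector field $Z$, and $\bar\xi$ points to the concave side exactly when $\phi>0$.

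Next I would record how the induced structure transforms. Substituting $\xi_0 = \phi^{-1}(\bar\xi - f_*(Z))$ into (\ref{gauss1}) gives $\bar h = \phi^{-1}h$ and $\bar\nabla_X Y = \nabla_X Y - \phi^{-1}h(X,Y)\,Z$, hence from (\ref{volumeform1}) $\omega_{\bar h} = \phi^{-n/2}\,\omega_h$; and since $f_*(Z)$ is tangential, (\ref{volumeform2}) gives $\bar\theta = \phi\,\theta$. Differentiating $\bar\xi$ with the flat connection $D$ and using the Weingarten relation (\ref{gauss2}) for $\xi_0$ yields
\[
\bar\tau(X) = \tau(X) + d(\log\phi)(X) + \phi^{-1}h(X,Z).
\]
The structural point — and the thing that makes the argument go through cleanly — is that these two conditions decouple: the normalization $\bar\theta = \omega_{\bar h}$ reads $\phi^{(n+2)/2} = \omega_h/\theta$ and involves $\phi$ alone, while with $\phi$ so fixed neither $\bar h$, nor $\omega_{\bar h}$, nor $\bar\theta$ depends on $Z$.

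Thus I would first solve $\phi^{(n+2)/2} = \omega_h/\theta$: because $\xi_0$ points to the concave side, $\omega_h/\theta$ is a positive function, so there is a unique positive solution $\phi = (\omega_h/\theta)^{2/(n+2)}$. With this $\phi$ in hand, impose $\bar\tau = 0$, i.e. $h(X,Z) = -\phi\big(\tau(X) + d(\log\phi)(X)\big)$ for all $X$; since $h$ is positive definite this linear equation has a unique solution $Z$. The resulting $\bar\xi = \phi\,\xi_0 + f_*(Z)$ then satisfies all three requirements, and uniqueness is immediate: any two solutions differ as above by some $(\phi',Z')$, and the computations force $\phi'\equiv 1$, $Z'\equiv 0$; in particular $\bar\xi$ is independent of the auxiliary choice of $\xi_0$. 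The only genuinely delicate point is the orientation/density bookkeeping needed to compare the volume forms $\theta$ and $\omega_h$ and to confirm that $\omega_h/\theta>0$ under the concave-side hypothesis; everything else is the routine transformation calculus for (\ref{gauss1})--(\ref{gauss2}) recorded above.
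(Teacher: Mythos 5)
Your argument is correct, and it is essentially the classical proof: the paper does not prove this proposition at all but simply cites Nomizu--Sasaki \cite[Theorem~3.1]{Nomizu}, and your parametrization $\bar\xi=\phi\,\xi_0+f_*(Z)$, the transformation rules $\bar h=\phi^{-1}h$, $\omega_{\bar h}=\phi^{-n/2}\omega_h$, $\bar\theta=\phi\,\theta$, $\bar\tau=\tau+d(\log\phi)+\phi^{-1}h(\cdot,Z)$, and the decoupled solution $\phi=(\omega_h/\theta)^{2/(n+2)}$ followed by the unique $Z$ from nondegeneracy of $h$ is exactly the argument given there. The one point you flag--checking $\omega_h/\theta>0$, i.e.\ fixing orientation so that $\theta>0$ when $\xi_0$ points to the concave side (equivalently working with densities)--is indeed the only bookkeeping needed, and with $\phi>0$ the concave-side condition is preserved, so nothing is missing.
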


The unique transversal vector field $\xi$ from Proposition \ref{affinenormalexists} is called the \textit{affine normal}. Nondegenerate convex immersions $f:M\to \mathbb{R}^{n+1}$ paired with their unique affine normals $\xi$ are called \textit{affine immersions}.  Affine immersions induce an equiaffine structure which consists of an affine metric $h$, an affine connection $\nabla$, and an affine shape operator $S$ which satisfy the equations

\[
D_X f_*(Y) = f_*(\nabla_X Y) + h(X,Y)\,\xi \hspace{1cm}\text{and}\hspace{1cm} D_X \xi = -f_*(S(X)).
\]
In this affine immersion we have equality between the volume forms $\theta$ and $\omega_h$.  We define the \textit{affine surface area measure} $\Omega$ to be integration over this volume form.  For any Borel set $U\subset M$ lying in a coordinate patch $x$ of $M$

\begin{equation}\label{ASA1}
\Omega(U) = \int_U \omega_h = \int_U \det(h_{ij})^{1/2} \,dx_1 \wedge\cdots\wedge dx_n.
\end{equation}

\subsection{Alternate formulations of affine surface area}\label{ASAsection}

In Section \ref{affineimmersions} we defined the affine surface area measure in equation (\ref{ASA1}) as integration against the Riemannian volume form of the affine metric.  In this section we will give two more equivalent definitions of affine surface area. 

Let  $f:M \to \mathbb{R}^{n+1}$ be a smooth, nondegenerate convex immersion. For any Borel $U\subset M$ we define the \textit{cone measure} by
\[
\mu(U) = (n+1) \,\lambda\big(\,\cvx(\,0,f(U))\,\big),
\]
where $\lambda$ is Lebesgue measure on $\mathbb{R}^{n+1}$ and $\cvx(0,f(U))$ is the convex hull of the origin and $f(U)$.  The cone measure is $(n+1)$ times the volume of the cone over $f(U)$ with vertex at the origin. 

Let $N(x)$ denote the Euclidean unit normal to $f(M)$ at $f(x)$, with the orientation chosen so that $\langle \xi_x,N(x)\rangle \geq0$. Let $dV_{f(M)} = f^*(\iota_{N}\,dV_{\mathbb{R}^{n+1}})$ be the induced volume form of $M$, and let $dV_{S^n}$ be the standard volume form on the sphere of radius $1$.  The Euclidean unit normal can be thought of as the Gauss map $N:M\to S^n$. We can define the Gaussian curvature, $\kappa:S^n \to \mathbb{R}$, of the immersion $f$ as a function of the Euclidean unit normals to $f(M)$ by the formula
\[
dV_{f(M)} = N^*\Big( \frac{1}{\kappa}\,dV_{S^n}\Big).
\]

If the position vector $f(x)$ is transversal to $f(M)$, then we can define the \textit{support function} of $f$ by 
\begin{equation}\label{support}
\rho(x) = \langle f(x), N(x)\rangle,
\end{equation}
and $\mu$ has the following integral representation:
\[
\mu(U) = \int_U |\rho(x)|\,dV_{f(M)} = \int_{N(U)} \big|\rho(N^{-1}(z))\big|\,\kappa^{-1}(z)\,dV_{S^n}.
\]

Since $\mu$ is defined as the volume of a cone in $\mathbb{R}^{n+1}$ with its vertex at the origin, it is $SL_{n+1}$ invariant, but in contrast to the affine surface area measure, the cone measure is not translation invariant.

Next we will show the affine surface area (\ref{ASA1}) can be expressed in terms of the Gaussian curvature.  First we need the following lemma whose proof can be found in Nomizu--Sasaki \cite[pg~48]{Nomizu}.

\begin{lemma}\label{affinenormalgauss}
Let $f:M\to\mathbb{R}^{n+1}$ be a nondegenerate convex affine immersion with affine normal $\xi$.  Let $N_x$ be the Euclidean unit normal pointing to the concave side of $f(M)$, and let $\kappa(N_x)$ be the Gaussian curvature of $f(M)$ at $f(x)$ .  Then
\[
\langle \xi_x ,N_x\rangle = \kappa^{1/(n+2)}(N_x).
\]
 
\end{lemma}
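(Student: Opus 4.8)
The plan is to express the affine normal $\xi$ in terms of the Euclidean unit normal $N$ by carrying out the construction behind Proposition \ref{affinenormalexists} starting from the transversal field $N$ itself, and then reading off the scalar factor that turns $N$ into $\xi$.

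First I would record how the objects of Section \ref{affineimmersions} transform under a change of transversal vector field. If $\bar{\xi} = \phi\,\xi + f_*(Z)$ for a nowhere-zero function $\phi$ and a tangent vector field $Z$, then writing $\xi = \phi^{-1}\bar{\xi} - \phi^{-1}f_*(Z)$ and substituting into equation (\ref{gauss1}) shows that the bilinear form transforms as $\bar{h} = \phi^{-1}\,h$; consequently the intrinsic volume form (\ref{volumeform1}) transforms as $\omega_{\bar{h}} = \phi^{-n/2}\,\omega_h$. On the other hand, since $f_*(Z)$ is a linear combination of the columns $f_*(X_i)$ it contributes nothing to the determinant in (\ref{volumeform2}), so the extrinsic volume form transforms as $\bar{\theta} = \phi\,\theta$.

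Next I would compute these quantities for the choice $\xi = N$, the Euclidean unit normal pointing to the concave side of $f(M)$. In that case equation (\ref{gauss1}) shows $h$ is the classical Euclidean second fundamental form. Evaluating on a frame $\{X_i\}$ orthonormal for the induced Euclidean metric on $M$: the vectors $f_*(X_1),\dots,f_*(X_n),N$ form an orthonormal basis of $\mathbb{R}^{n+1}$ (orient so that $\theta(X_1,\dots,X_n)=1$), whereas $\omega_h(X_1,\dots,X_n) = \det\big(h(X_i,X_j)\big)^{1/2}$ is the square root of the Gauss--Kronecker curvature $\det(S^0)$, $S^0$ the Euclidean shape operator. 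The defining relation $dV_{f(M)} = N^*\!\big(\kappa^{-1}\,dV_{S^n}\big)$ identifies $\kappa(N_x)$ with $\det(S^0)$, because the Jacobian of the Gauss map $N:M\to S^n$ is precisely $\det(S^0)$. Hence, as densities along $M$, $\omega_h/\theta = \kappa^{1/2}$. Now Proposition \ref{affinenormalexists} characterizes the affine normal by $\tau=0$ together with $\bar{\theta} = \omega_{\bar{h}}$; the equiaffine condition $\tau=0$ only fixes the tangential part $Z$, while the volume condition, via the transformation rules above with $\xi=N$, reads $\phi\,\theta = \phi^{-n/2}\,\omega_h$, i.e. $\phi^{(n+2)/2} = \omega_h/\theta = \kappa^{1/2}$, so $\phi = \kappa^{1/(n+2)}(N_x)$. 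Positivity of $\kappa$ for a convex immersion makes $\phi>0$, consistent with $\xi$ and $N$ both pointing to the concave side. Therefore $\xi_x = \kappa^{1/(n+2)}(N_x)\,N_x + f_*(Z)$ for some tangent vector $Z$, and since $f_*(Z)\perp N_x$ we conclude $\langle \xi_x, N_x\rangle = \kappa^{1/(n+2)}(N_x)$.

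The main obstacle I expect is the bookkeeping in the first step, namely deriving the transformation $\bar{h}=\phi^{-1}h$ of the affine fundamental form and the induced rescaling $\omega_{\bar{h}}=\phi^{-n/2}\omega_h$, together with matching the paper's definition of $\kappa$ (through the Gauss map and $dV_{S^n}$) to the Gauss--Kronecker curvature $\det(S^0)$. Once these normalizations are in place, extracting $\langle\xi_x,N_x\rangle$ is immediate and requires no knowledge of the tangential correction $Z$.
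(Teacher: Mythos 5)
Your proposal is correct: the change-of-transversal bookkeeping ($\bar{h}=\phi^{-1}h$, hence $\omega_{\bar h}=\phi^{-n/2}\omega_h$, while $\bar\theta=\phi\,\theta$), the identification $\omega_h/\theta=\kappa^{1/2}$ for the Euclidean normal via the Gauss-map Jacobian, and the normalization $\bar\theta=\omega_{\bar h}$ giving $\phi^{(n+2)/2}=\kappa^{1/2}$ is precisely the standard derivation; the paper itself offers no proof and simply cites Nomizu--Sasaki, where this same scaling argument appears. The only point to phrase more carefully is the sign: $\phi>0$ is forced by the requirement in Proposition \ref{affinenormalexists} that $\xi$ point to the concave side (the same side as $N$), not by positivity of $\kappa$ alone, since for even $n$ the equation $\phi^{(n+2)/2}=\kappa^{1/2}$ can admit a negative root.
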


\begin{proposition}\label{equivalentasa}
Let $f:M \to \mathbb{R}^{n+1}$ be a nondegenerate convex affine immersion, and let $\Omega$ be the affine surface area measure defined in equation (\ref{ASA1}).  Then for every Borel $U\subset M$,
\begin{equation}\label{ASA2}
\Omega(U) = \int_A \kappa(N(x))^{1/(n+2)}\,dV_{f(M)} = \int_{N(U))} \kappa^{-(n+1)/(n+2)}\,dV_{S^n}.
\end{equation}
\end{proposition}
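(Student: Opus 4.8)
The plan is to establish the two stated identities in turn, using Proposition \ref{affinenormalexists} and Lemma \ref{affinenormalgauss}.

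\textbf{First identity.} To prove $\Omega(U)=\int_U\kappa(N(x))^{1/(n+2)}\,dV_{f(M)}$, I would compare the intrinsic volume form $\omega_h$ of the affine metric with the Euclidean induced volume form $dV_{f(M)}$, routing through the extrinsic volume form $\theta$ from \eqref{volumeform2}. Proposition \ref{affinenormalexists} tells us that the affine normal $\xi$ satisfies $\theta=\omega_h$, so it is enough to show $\theta=\langle\xi,N\rangle\,dV_{f(M)}$ as densities on $M$, where $N$ is the Euclidean unit normal oriented so that $\langle\xi,N\rangle\geq0$. For this I would fix a point $x$ and a basis $X_1,\dots,X_n$ of $T_xM$, decompose $\xi_x=\xi_x^{\top}+\langle\xi_x,N(x)\rangle N(x)$ into its components tangent and normal to $f(M)$, and observe that since $f_*(X_1),\dots,f_*(X_n)$ already span $f_*(T_xM)$ the tangential part $\xi_x^{\top}$ drops out of the determinant defining $\theta$, leaving
\[
\theta(X_1,\dots,X_n)=\langle\xi_x,N(x)\rangle\,\det\!\big(f_*(X_1),\dots,f_*(X_n),N(x)\big).
\]
The remaining determinant is, up to the fixed reordering sign, exactly $(\iota_N\,dV_{\mathbb{R}^{n+1}})(f_*X_1,\dots,f_*X_n)=dV_{f(M)}(X_1,\dots,X_n)$, because $dV_{f(M)}=f^*(\iota_N\,dV_{\mathbb{R}^{n+1}})$. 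Feeding in $\theta=\omega_h$ and then Lemma \ref{affinenormalgauss}, that is $\langle\xi_x,N_x\rangle=\kappa(N_x)^{1/(n+2)}$, gives $\Omega(U)=\int_U\omega_h=\int_U\theta=\int_U\kappa(N(x))^{1/(n+2)}\,dV_{f(M)}$. Since $\omega_h$ is a genuine density, $\Omega$ is defined on all Borel subsets, and a partition-of-unity argument reduces the general case to the coordinate-patch case of \eqref{ASA1}.

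\textbf{Second identity.} To pass from $\int_U\kappa^{1/(n+2)}\,dV_{f(M)}$ to $\int_{N(U)}\kappa^{-(n+1)/(n+2)}\,dV_{S^n}$, I would change variables along the Gauss map. Since $f$ is a strictly convex immersion, $N:M\to S^n$ is a diffeomorphism onto its image, and by the definition of $\kappa$ we have $dV_{f(M)}=N^*\big(\kappa^{-1}\,dV_{S^n}\big)$; hence for any continuous $g$ on $M$,
\[
\int_U g\,dV_{f(M)}=\int_U g\cdot N^*\big(\kappa^{-1}\,dV_{S^n}\big)=\int_{N(U)}\big(g\circ N^{-1}\big)\,\kappa^{-1}\,dV_{S^n}.
\]
Taking $g=\kappa(N(\cdot))^{1/(n+2)}$ and using $\tfrac{1}{n+2}-1=-\tfrac{n+1}{n+2}$ gives the claimed equality.

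\textbf{Main obstacle.} The only genuinely delicate point is the bookkeeping in the first identity: one must keep the orientation of $N$ consistent (so that $\langle\xi,N\rangle\geq0$ and the determinant identities carry the correct sign) and be careful to treat $\omega_h$, $\theta$, and $dV_{f(M)}$ as densities rather than oriented $n$-forms, so that the equalities of integrals over arbitrary Borel sets are legitimate. Once Proposition \ref{affinenormalexists} and Lemma \ref{affinenormalgauss} are invoked, the remainder is a short, standard computation.
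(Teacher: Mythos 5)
Your proposal is correct and follows essentially the same route as the paper: showing $\theta=\langle\xi,N\rangle\,dV_{f(M)}$ by discarding the tangential part of $\xi$ in the determinant, invoking $\theta=\omega_h$ and Lemma \ref{affinenormalgauss} for the first identity, and using $dV_{f(M)}=N^*(\kappa^{-1}\,dV_{S^n})$ for the second. The extra care you take with orientation and densities is a fine refinement but not a different argument.
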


\begin{proof}
Since $\Omega(U) = \int_U \omega_h$, the first inequality is equivalent to showing $\theta = \kappa(N(x))\,dV_{f(M)}$.  Since $\xi$ is the affine normal, the volume form $\omega_h$ is equal to the volume form $\theta$ defined in equation(\ref{volumeform2}) as
\[
\theta = f^*(\iota_{\xi} \,dV_{\mathbb{R}^{n+1}}),
\]
where $\iota_{\xi} \,dV_{\mathbb{R}^{n+1}}$ is the contraction of the Euclidean volume form along the affine normal $\xi$.  The contraction only depends on the component of $\xi$ which is perpendicular to $f(M)$, so it follows that
\[
\theta = \langle \xi, N\rangle\, f^*(\iota_{N} \,dV_{\mathbb{R}^n}) = \langle \xi, N\rangle \,dV_{f(M)}.
\]
Lemma \ref{affinenormalgauss} shows that $\langle \xi, N\rangle = \kappa^{1/(n+2)}$, so
\[
\theta = \kappa(N(x))^{1/(n+2)}\,dV_{f(M)}.
\]
The last equality follows from the Riemannian geometry fact that $N^*(\kappa^{-1}\,dV_{S^n}) = dV_{f(M)}$.
\end{proof}

Lastly we give a dual definition of affine surface area which was introduced by Lutwak in \cite{Lutwak1}:  
\begin{equation}\label{weakASA}
\Omega(U) = \inf \{\, \left(\int_{N(U)} \alpha(z)\,\kappa(z)^{-1}\,dV_{S^n} \right)^{\frac{n+1}{n+2}}\,\left(\int_{N(U)} \alpha(z)^{-(n+1)}\,dV_{S^n} \right)^{\frac{1}{n+2}} \mid \alpha \in C_+(S^n)\,\},
\end{equation}
where $C_+(S^n)$ are continuous, positive functions on $S^n$.

We refer to Lutwak \cite{Lutwak1} for the equivalence between definition (\ref{weakASA}) and (\ref{ASA2}), though we will prove the equivalence in the case of immersions which are graphs of Legendre transforms in Subsection \ref{ASAleggraphs}.  Lutwak also realized that the two integrals inside the infimum of defintion (\ref{weakASA}) can be interpreted in the nonsmooth case.  When $f(M)$ is the boundary of any convex set with the origin in its interior, then Lutwak showed the first integral is a power of the mixed volume of $f(M)$ and the polar set $Q^\circ$, where $Q$ is the convex set whose radial definition function is $\alpha^{-1}$.  The second integral is a power of the volume of $Q$.

\subsection{Affine spheres and dual affine immersions}\label{affinespheres}

An affine immersion is called an \textit{affine spheres} if the shape operator $S=\gamma\,I$ for some constant $\gamma$.  Affine spheres are grouped into three families based on the sign of $\gamma$.  The case $\gamma<0$ is called \textit{hyperbolic}, $\gamma=0$ is called parabolic, and $\gamma>0$ is called elliptic.  Affine spheres also have a geometric interpretation in terms of the affine normals.  The proof of the following proposition can be found in Nomizu--Sasaki \cite[Proposition~3.5]{Nomizu}.

\begin{proposition}
Let $f:M\to\mathbb{R}^{n+1}$ be a convex immersion, $\xi$ be the affine normal, and $S$ be the induced shape operator.  Then $S=0$ if and only if the $\xi$ are all parallel.  And $S=\gamma I$ with $\gamma\neq 0$ if and only if $f(x)+\gamma^{-1}\xi_x=y_0$ a fixed point which is called the center of the affine sphere.
\end{proposition}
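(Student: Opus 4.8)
The plan is to read off both statements directly from the structure equations of an affine immersion, namely the Gauss formula $D_X f_*(Y) = f_*(\nabla_X Y) + h(X,Y)\,\xi$ and the Weingarten formula $D_X \xi = -f_*(S(X))$, where the absence of a $\tau(X)\,\xi$ term in the latter is exactly the normalization $\tau = 0$ built into Proposition \ref{affinenormalexists} for the affine normal. Throughout I use that $f$ is an immersion, so $f_*$ is injective on each tangent space; hence an identity $f_*(U) = 0$ forces $U = 0$. I also work on a connected component of $M$ so that ``locally constant'' upgrades to ``constant.''

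For the first equivalence, observe that $S = 0$ means $D_X \xi = 0$ for every tangent vector $X$, which says precisely that the $\mathbb{R}^{n+1}$-valued map $\xi$ is parallel for the flat connection $D$ on $\mathbb{R}^{n+1}$, i.e. $\xi$ is a constant vector field, so all the $\xi_x$ are parallel. Conversely, if the $\xi_x$ are all parallel then $D_X \xi = 0$, so $f_*(S(X)) = 0$ for all $X$, and injectivity of $f_*$ gives $S(X) = 0$, that is $S = 0$.

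For the second equivalence the key computation is to differentiate the candidate center map $\Phi(x) := f(x) + \gamma^{-1}\xi_x$, regarded as an $\mathbb{R}^{n+1}$-valued function on $M$. Componentwise differentiation along a tangent vector $X$ (which is just the flat connection $D$ applied to the $\mathbb{R}^{n+1}$-valued functions $f$ and $\xi$, consistently with its meaning in the Gauss and Weingarten equations) gives $D_X \Phi = f_*(X) + \gamma^{-1}D_X\xi = f_*(X) - \gamma^{-1}f_*(S(X))$. If $S = \gamma I$ this equals $f_*(X) - f_*(X) = 0$, so $\Phi$ is constant and equals a fixed point $y_0 \in \mathbb{R}^{n+1}$, which is the assertion $f(x) + \gamma^{-1}\xi_x = y_0$. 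Conversely, if $f(x) + \gamma^{-1}\xi_x \equiv y_0$, differentiating gives $f_*(X) + \gamma^{-1}D_X\xi = 0$, hence $D_X\xi = -f_*(\gamma X)$; comparing with the Weingarten formula $D_X\xi = -f_*(S(X))$ and using injectivity of $f_*$ yields $S(X) = \gamma X$ for all $X$, i.e. $S = \gamma I$.

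There is no serious obstacle here; the points requiring care are only (i) making explicit that ``differentiating $f$ and $\xi$'' is the componentwise use of the flat connection $D$ on $\mathbb{R}^{n+1}$, (ii) invoking $\tau = 0$ from Proposition \ref{affinenormalexists} to obtain the clean Weingarten formula without the $\tau(X)\,\xi$ term, and (iii) passing from ``locally constant'' to ``constant'' via connectedness. Everything else is a one-line consequence of the injectivity of $f_*$.
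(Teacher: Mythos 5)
Your proof is correct and is essentially the argument the paper relies on: the paper gives no proof of this proposition itself, deferring to Nomizu--Sasaki (Proposition~3.5), and your computation --- the Weingarten formula $D_X\xi=-f_*(S(X))$ (valid because $\tau=0$ for the affine normal), differentiation of $\Phi=f+\gamma^{-1}\xi$ with the flat connection, and injectivity of $f_*$ --- is exactly that standard proof. Two small points worth making explicit: if ``parallel'' is read as merely pointwise proportional, $\xi_x=\lambda(x)v$, then one extra line is needed ($D_X\xi=(X\lambda)\,v$ is transversal while the Weingarten formula makes it tangential, so $\lambda$ is constant and then $S=0$); and your differentiation of $f+\gamma^{-1}\xi$ uses that $\gamma$ is constant, which is legitimate here because constancy of $\gamma$ is part of the paper's definition of an affine sphere.
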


\begin{exmp}
Let $M=S^n$ be the sphere which we think of as $\{\,x\in\mathbb{R}^{n+1} \mid |x|=R\,\}$.  We can define the immersion of $S^n$ as a sphere of radius $R$ by $f(x) = R\,x$.   Lemma \ref{affinenormalgauss} can be used to show the affine normal is given by
\[
\xi = \kappa^{1/(n+2)}\,N +f_*(Z)
\]
for some vector field $Z$ on $S^n$ which satisfies $h(Z,X) = -X(\kappa^{1/(n+2)})$ for every vector field $X$ on $S^n$.  Since the immersion $f$ has constant Gaussian curvature $\kappa=R^{-n}$ it follows that $Z=0$.  Thus,
\[
\xi(x) = \kappa^{1/(n+2)}\,N(x) = R^{-n/(n+2)}\,N(x) = -R^{(2n+2)(n+2)}f(x)
\]
which shows $f$ is an affine sphere with center at the origin.  Since affine spheres are invariant under volume preserving affine transformations, this shows all ellipses are affine spheres.
\end{exmp}

It turns out that the only complete elliptic affine spheres are ellipses, but there are many more incomplete elliptic affine spheres. 

Affine spheres naturally arise as dual pairs.  Recall that for any affine immersion $f:M\hookrightarrow \mathbb{R}^{n+1}$ we defined the support function by
\[
\rho(x) = \langle f(x),N(x)\rangle,
\]
where $N(x)$ is the Euclidean normal to $f(M)$ at $f(x)$.  We define the \textit{dual affine immersion} $\nu: M \hookrightarrow \mathbb{R}^{n+1}$ by
\[
\nu(x) = \rho(x)^{-1}\,N(x).
\]
Since the Euclidean normal to the immersion $\nu(x)$ is given by $f(x)/|f(x)|$ it follows that the dual of $\nu$ if $f$ again. The affine geometry of dual affine immersions is explained more fully in Nomizu--Sasaki \cite{Nomizu}.  The following proposition was known to Calabi, and first written down by Gigena \cite{Gigena}.

\begin{proposition}\label{dualaffinesphere}
If $f(M)$ is an affine sphere with its center at the origin, then the dual immersion $\nu(M)$ is also an affine sphere with its center at the origin.  The dual affine spheres are of the same type (elliptic, parabolic, or hyperbolic).
\end{proposition}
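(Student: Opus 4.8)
The plan is to reduce the statement to a single identity for the Gaussian curvature of the dual hypersurface, and then to extract that identity from the affine-sphere structure equations, following Nomizu--Sasaki \cite{Nomizu}.

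By the Proposition of Subsection~\ref{affinespheres}, $f(M)$ is an affine sphere with center at the origin precisely when $S = c^{-1}I$ for a constant $c$, i.e.\ $\xi_x = -c^{-1}f(x)$, with $c>0$ in the elliptic case and $c<0$ in the hyperbolic case; the parabolic case is the degenerate limit $S=0$. First I would check that $\nu$ is a nondegenerate convex immersion whose Euclidean unit conormal at $\nu(x)$ is $f(x)/|f(x)|$: differentiating $\langle\nu,f\rangle = \rho^{-1}\langle N,f\rangle = 1$ gives $\langle D_X\nu,f\rangle = -\langle\nu,f_*X\rangle = -\rho^{-1}\langle N,f_*X\rangle = 0$, so $f(x)\perp\nu_*(T_xM)$, and nondegeneracy of $\nu$ follows from that of $f$. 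Since the dual of $\nu$ is again $f$, and since homotheties of $\mathbb{R}^{n+1}$ send affine spheres to affine spheres of the same type, it suffices to produce the affine normal $\xi^\nu$ of $\nu$ and show that $\xi^\nu_x = -(c')^{-1}\nu(x)$ for a constant $c'$ with $\operatorname{sign}(c') = \operatorname{sign}(c)$.

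To compute $\xi^\nu$ I would use Lemma~\ref{affinenormalgauss}. The Euclidean normal of $\nu(M)$ is $f/|f|$, and the support function (\ref{support}) of $\nu(M)$ is $\langle\nu,f/|f|\rangle = 1/|f|$. If $\xi^\nu = -(c')^{-1}\nu$, then its component along the Euclidean normal of $\nu(M)$ is $\langle\xi^\nu,f/|f|\rangle = -(c')^{-1}/|f|$, which by Lemma~\ref{affinenormalgauss} must equal $\kappa_\nu^{1/(n+2)}$, where $\kappa_\nu$ is the Gaussian curvature of $\nu(M)$. Conversely, once one knows that $\kappa_\nu^{1/(n+2)}\,|f|$ is constant, this pins down the normal part of $\xi^\nu$ as the right multiple of $\nu$, and its tangential part is then forced to agree with that of $-(c')^{-1}\nu$ by the general formula for the affine normal (normal part $\kappa_\nu^{1/(n+2)}N$ by Lemma~\ref{affinenormalgauss}, tangential part the $h$-gradient of $\kappa_\nu^{1/(n+2)}$, as in the computation in Subsection~\ref{affinespheres}). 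Thus the whole statement reduces to the identity
\[
\kappa_\nu(x) \;=\; \mathrm{const}\cdot|f(x)|^{-(n+2)}.
\]

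This curvature identity is the heart of the matter. Applying Lemma~\ref{affinenormalgauss} to $f$ itself, $\kappa_f^{1/(n+2)} = \langle\xi,N\rangle = -c^{-1}\langle f,N\rangle = -c^{-1}\rho$, so the support function $\rho$ of $f(M)$ is, up to a constant, a power of its Gaussian curvature; substituting this into the classical polar-duality relation between the pair $(\rho,\kappa_f)$ of the convex body $K$ bounded by $f(M)$ and the pair $(1/|f|,\kappa_\nu)$ of the polar body $K^\circ$, whose boundary is $\nu(M)$, yields $\kappa_\nu\propto|f|^{-(n+2)}$. Equivalently and more intrinsically, one may run the conormal-map structure equations of \cite{Nomizu}: writing the Blaschke conormal as $\mathbf n = -c\,\nu$ and using $D_X\xi = -c^{-1}f_*X$ in the Weingarten equation (\ref{gauss2}), a double differentiation of the defining relations $\langle\mathbf n,\xi\rangle = 1$, $\langle\mathbf n,f_*X\rangle = 0$ collapses to the apolarity normalization $\theta = \omega_h$ of Subsection~\ref{affineimmersions} for a transversal field of $\mathbf n$ proportional to $f$; this normalization is exactly the curvature identity above, and it shows $\xi^{\mathbf n}$, hence $\xi^\nu$, is a constant multiple of $\mathbf n$, hence of $\nu$. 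The sign of the constant matches $\operatorname{sign}(c)$ because the affine normal points to the concave side, so the type (elliptic, hyperbolic, or parabolic) is preserved; the parabolic case $S=0$ is the limit, in which the $f$-term drops and $\xi^\nu$ is parallel. The principal obstacle is precisely this apolarity/curvature computation --- relating $\det(h^\nu_{ij})$ on $\nu(M)$ to $\det(h_{ij})$ on $f(M)$ through Lemma~\ref{affinenormalgauss} and checking that the exponent $n+2$ is reproduced consistently --- and a complete general argument is carried out in Gigena \cite{Gigena}. In the case needed for Theorem~\ref{affineiterationconverges}, where $f = f_\phi$ is a Legendre graph immersion over $A$ (Subsection~\ref{leggraphs}), this can be bypassed: the affine-sphere condition becomes the Monge--Amp\`ere equation (\ref{AHE}) with $p=1$ for $\phi$, the dual immersion is the Legendre graph of $\phi^*$ over $\nabla\phi(\mathbb{R}^n)$, and the claim reduces to the self-duality of (\ref{AHE}) under $\phi\mapsto\phi^*$.
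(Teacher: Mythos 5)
The paper itself gives no proof of this proposition: it is quoted as a classical fact, ``known to Calabi, and first written down by Gigena,'' with \cite{Gigena} standing in for the argument. So the fact that your sketch ultimately defers the decisive computation to Gigena is not, by itself, a mismatch with the paper. Your reduction is also the right skeleton: show $\xi^{\nu}=-\gamma'\,\nu$ for a constant $\gamma'$ of the appropriate sign, note that the Euclidean normal of $\nu(M)$ is $f/|f|$ (your differentiation of $\langle \nu,f\rangle=1$ is fine), and observe via Lemma \ref{affinenormalgauss} that the normal component of the desired identity is the curvature relation $\kappa_{\nu}^{1/(n+2)}=\mathrm{const}\cdot\langle\nu,f/|f|\rangle$.

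Two places where you treat steps as routine are genuine gaps, though. First, the tangential components are not ``forced'': the tangential part of $\xi^{\nu}$ is determined by the derivatives of $\kappa_{\nu}^{1/(n+2)}$ through the affine metric (as in the sphere example of Subsection \ref{affinespheres}), while the tangential part of $-\gamma'\nu$ is $-\gamma'$ times the tangential projection of the position vector; matching them requires differentiating the curvature identity and using the conformal relation between the affine metric and the Euclidean second fundamental form. It is a short computation, but it has to be done. Second, and more seriously, the proposed bypass for the Legendre-graph case is incorrect. The dual immersion $\nu_{\phi}(x)=\big(x/\phi(x),\,-1/\phi(x)\big)$ is a polar-type dual: by Lemma \ref{anchor} its boundary is $\partial A^{\circ}\times\{0\}$ and its image is a bounded hypersurface in the lower half-space. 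It is not ``the Legendre graph of $\phi^{*}$ over $\nabla\phi(\mathbb{R}^n)$'' --- under either reading that surface is the graph of $\phi^{*}$ over $A$ (i.e.\ $f_{\phi}(\mathbb{R}^n)$ itself) or the graph of $\phi$ over all of $\mathbb{R}^n$ (and $\phi^{*}$ has unbounded gradient image, so it does not even define a Legendre graph immersion in the sense of Subsection \ref{leggraphs}). Correspondingly, equation (\ref{AHE}) is not self-dual under $\phi\mapsto\phi^{*}$: if $\det(\nabla^2\phi)=c\,\phi^{-(n+2)}$, then at $y=\nabla\phi(x)$ one gets $\det(\nabla^2\phi^{*})(y)=c^{-1}\big(\langle y,\nabla\phi^{*}(y)\rangle-\phi^{*}(y)\big)^{n+2}$, a different equation; the duality behind the proposition is the polarity $A\mapsto A^{\circ}$, not the Legendre transform. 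So either establish the curvature/apolarity identity directly (including the tangential check), or simply cite Gigena as the paper does, and drop the Legendre-transform shortcut.
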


\subsection{Graphs of Legendre transforms}\label{leggraphs}

A large class of examplse of affine immersions are graphs of convex functions.  Specifically, we'll study the graphs of Legendre transforms of convex functions with bounded gradient image.

Let $\phi: \mathbb{R}^n \to \mathbb{R}$ be a smooth, strictly convex function.  We define the \textit{Legendre graph immersion} $f_\phi: \mathbb{R}^n \hookrightarrow \mathbb{R}^{n+1}$ by
\begin{equation}\label{fimmersion}
f_\phi(x) = \big(\nabla \phi(x), \langle x,\nabla \phi(x) \rangle  -\phi(x) \big) = \big(\nabla \phi(x) , \,\phi^*(\nabla \phi(x))\big).
\end{equation}

When $\nabla \phi(\mathbb{R}^n) = A$ then we say $f_\phi$ is a \textit{Legendre graph immersion over $A$}, to capture that $f_\phi(\mathbb{R}^n)$ is the graph of the Legendre transform $\phi^*$ over the domain $A$.

\begin{lemma}\label{Legaffinenormal}
Let $\phi$ is a smooth, strictly convex function.  If $f_\phi$ is the Legendre graph immersion defined in (\ref{fimmersion}), then the affine normal of $f_\phi$ is given by 
\[
\xi = -\big(\nabla \psi(x),\big\langle x,\nabla \psi(x) \big\rangle -\psi(x)\big) \hspace{5mm}\text{where}\hspace{5mm} \psi(x) = \det\big(\nabla ^2 \phi(x)\big)^{-1/(n+2)}.
\]
\end{lemma}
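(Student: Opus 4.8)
The plan is to produce $\xi$ by the classical device of correcting a convenient transversal field to the affine normal, using the characterization in Proposition \ref{affinenormalexists}. The first step is to observe that after the change of variable $y=\nabla\phi(x)$ the immersion $f_\phi$ is exactly the graph immersion $g(y)=(y,\phi^*(y))$ over $A$: indeed $f_\phi=g\circ\nabla\phi$ and $\nabla\phi$ is a diffeomorphism of $\mathbb{R}^n$ onto $A$. Since the affine normal depends only on the image hypersurface and not on the parametrization, it suffices to compute it for $g$ and then re-express the answer in the $x$-variable by means of $\nabla\phi^*(\nabla\phi(x))=x$ and $\nabla^2\phi^*(\nabla\phi(x))=(\nabla^2\phi(x))^{-1}$.

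For $g$ I would start from the constant transversal field $\xi_0=e_{n+1}=(0,\dots,0,1)$. By the computation carried out in the Example of Subsection \ref{affineimmersions}, the induced structure has $h_0(\partial_{y_i},\partial_{y_j})=(\nabla^2\phi^*)_{ij}$, flat induced connection, $S_0=0$, $\tau_0=0$; moreover, evaluating the two volume forms on the coordinate frame $\{\partial_{y_j}\}$ gives $\theta_0\equiv 1$ and $\omega_{h_0}=(\det\nabla^2\phi^*)^{1/2}$. Any other transversal field has the form $\xi=\mu\,\xi_0+g_*(W)$ for a positive function $\mu$ and a tangent field $W$, and a direct computation from the structure equations (cf.\ Nomizu--Sasaki \cite{Nomizu}) gives the transformation rules $h_\xi=\mu^{-1}h_0$, $\theta_\xi=\mu\,\theta_0$, $\omega_{h_\xi}=\mu^{-n/2}\,\omega_{h_0}$, and $\tau_\xi(X)=d(\log\mu)(X)+\mu^{-1}h_0(X,W)$. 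The two requirements of Proposition \ref{affinenormalexists}, namely $\theta_\xi=\omega_{h_\xi}$ and $\tau_\xi=0$, then read $\mu^{(n+2)/2}=\omega_{h_0}/\theta_0=(\det\nabla^2\phi^*)^{1/2}$ and $h_0(\,\cdot\,,W)=-d\mu$. Solving the first yields $\mu=(\det\nabla^2\phi^*)^{1/(n+2)}=\det(\nabla^2\phi)^{-1/(n+2)}=\psi$, which identifies the normalizing factor in the statement.

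It then remains to solve $h_0(\,\cdot\,,W)=-d\psi$ and to assemble $\xi$. Writing $W^j=-\sum_i(\nabla^2\phi^*)^{ij}\partial_{y_i}\psi$ and using $\partial_{y_i}\psi=\sum_k(\partial_{x_k}\psi)(\partial_{y_i}x_k)=\sum_k\psi_k(\nabla^2\phi^*)_{ik}$, the double sum collapses by the identity $\sum_i(\nabla^2\phi^*)^{ij}(\nabla^2\phi^*)_{ik}=\delta_{jk}$ to $W^j=-\partial_{x_j}\psi$, i.e.\ $W=-\nabla\psi$. Since $g_*(\partial_{y_j})=(e_j,\partial_{y_j}\phi^*)=(e_j,x_j)$, this gives $g_*(W)=(-\nabla\psi,-\langle x,\nabla\psi\rangle)$, and hence
\[
\xi=\psi\,e_{n+1}+g_*(W)=\big(-\nabla\psi,\ \psi-\langle x,\nabla\psi\rangle\big)=-\big(\nabla\psi(x),\ \langle x,\nabla\psi(x)\rangle-\psi(x)\big),
\]
which is the asserted formula. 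Finally, since $\psi>0$ the transversal component of $\xi$ equals $\psi\,\xi_0$, which points into the epigraph of the convex function $\phi^*$, i.e.\ to the concave side of $g(\mathbb{R}^n)$; by the uniqueness statement in Proposition \ref{affinenormalexists} this $\xi$ is the affine normal of $f_\phi$.

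I expect the only delicate point to be bookkeeping: recording the transformation rule for $\tau$ correctly (equivalently, re-deriving it from $D_X\xi=-f_*(S_\xi X)+\tau_\xi(X)\,\xi$) and keeping the exponents in the determinant identities straight. A fully self-contained alternative is to substitute the candidate $\xi$ directly into the Gauss--Weingarten equations and verify $\tau=0$ and $\theta=\omega_h$ by hand, but this forces one to differentiate $\psi$ — hence to handle third derivatives of $\phi$ explicitly — and is considerably messier; I would only fall back on it if invoking the transformation rule were deemed undesirable.
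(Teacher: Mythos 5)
Your derivation is correct, but it takes a genuinely different route from the paper. The paper verifies the candidate $\xi$ directly in the $x$-parametrization: it computes $D_{\partial_i}f_*(\partial_j)$ and $D_{\partial_i}\xi$, splits them along $f_*$ and $\xi$ using the third-derivative identity $\psi_i=-(n+2)^{-1}\psi\,\phi^{kl}\phi_{kli}$, reads off $h_{ij}=\psi^{-1}\phi_{ij}$, $\tau=0$, $S(\partial_i)=\psi_{ij}\phi^{jk}\partial_k$, and then checks $\theta=\omega_h$ by a block-determinant computation — i.e.\ exactly the ``messier fallback'' you describe. You instead pass to the dual parametrization $g(y)=(y,\phi^*(y))$, start from the naive transversal $e_{n+1}$ (whose induced data are already computed in the Example of Subsection \ref{affineimmersions}), and use the standard transformation rules under a change of transversal field $\xi=\mu\,\xi_0+g_*(W)$ to \emph{solve} for $\mu$ and $W$ from the two conditions $\theta_\xi=\omega_{h_\xi}$ and $\tau_\xi=0$; the conversion $\nabla^2\phi^*(\nabla\phi(x))=(\nabla^2\phi(x))^{-1}$ then yields $\mu=\psi$ and $W=-\nabla\psi$, hence the stated formula, with the apolarity/third-derivative bookkeeping absorbed into $d\mu$. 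The exponent and index checks in your argument are all consistent (in particular $\tau_\xi(X)=d(\log\mu)(X)+\mu^{-1}h_0(X,W)$ and $\mu^{(n+2)/2}=\omega_{h_0}/\theta_0$ are the correct relations, and parametrization-invariance of the affine normal justifies working with $g$), and your concave-side argument via the positive $e_{n+1}$-component is the same as the paper's. What your route buys is a derivation rather than a verification, with no explicit third derivatives of $\phi$, at the cost of invoking (or rederiving in a few lines) the change-of-transversal formulas from Nomizu--Sasaki; what the paper's route buys is self-containedness plus the explicit expressions for $h$ and $S$ that it reuses later (in Proposition \ref{affinesphereequation} and Subsection \ref{ASAleggraphs}), so if you adopted your proof you would still need to extract $h_{ij}=\psi^{-1}\phi_{ij}$ and $S=\psi_{ij}\phi^{jk}$ separately for those applications.
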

\begin{proof}

By Lemma \ref{affinenormalexists} we must verify that $\xi$ points to the concave side of $f(\mathbb{R}^n)$, and the induced affine structure satisfies $\tau=0$ and $\theta=\omega_h$.  

Since $\phi$ is strictly convex, it follows that $\psi>0$.  To verify that $\xi$ points the concave side of $f$, it is enough to show this at one point. Since $\phi$ is strictlyl convex, $\psi$ is positive, so $\xi(0) = (-\nabla \psi (0), \psi(0))$ has positive $e_{n+1}$ component. Since $f(\mathbb{R}^n)$ is the graph of a convex function, it follows that $\xi$ points to the concave side of $f(\mathbb{R}^n)$.

Now we compute the affine metric, shape operator, and the volume forms $\theta$ and $\omega_h$.  Let $\partial_i = \partial/\partial x_i$, and let $\{e_i\}_{i=1}^{n+1}$ be the standard basis for $T\mathbb{R}^{n+1}$.  Then
\[
f_*(\partial_j) = \phi_{jk}\,e_k + \phi_{jk}\,x_k\,e_{n+1}
\]
where repeated indices are summed from $1$ to $n$.  Differentiating once more shows
\begin{align}
D_{\partial_i} f_*(\partial_j) &= \phi_{ijk}\,e_k + (\phi_{ijk}\,x_k + \phi_{ij})\,e_{n+1} \nonumber \\
&=\left( \phi_{ijl}\,\phi^{lk}\,-(n+2)^{-1}\, \phi_{ij}\,\phi^{lm}\,\phi_{lmp}\,\phi^{kp}\right) \,f_*(\partial_k) + \psi^{-1}\,\phi_{ij} \,\xi, \label{secondderiv}
\end{align}
where $\phi^{ij}$ are the components of $(\nabla^2 \phi)^{-1}$, which is well defined becuase $\phi$ is strictly convex.  To verify the second inequality, we must expand $\psi^{-1}\,\phi_{ij} \,\xi$.  Utilizing the identity $\psi_i = -(n+2)^{-1} \,\psi \,\phi^{kl}\,\phi_{kli}$ shows

\begin{align*}
    \psi^{-1}\,\phi_{ij} \,\xi &= -\psi^{-1}\,\phi_{ij} \,(  \psi_k\,e_k + (\langle x,\nabla \psi\rangle - \psi )  \,e_{n+1}) \\
    &= (n+2)^{-1} \,\phi_{ij}\,\phi^{pq}\,\phi_{pqk}\,e_k\, + \left( (n+2)^{-1} \phi_{ij} \,\phi^{pq}\,\phi_{pqk}\,x_k + \phi_{ij} \right) \,e_{n+1}.
\end{align*}
Plugging this into equation (\ref{secondderiv}) along with the definition of $f_*(\partial_k)$ verifies the equality.  The definition of $h$ given in  (\ref{gauss1}), along with equation (\ref{secondderiv}) imply
\begin{equation}\label{fh}
h_{ij} := h(\partial_i,\partial_j) = \psi^{-1} \,\phi_{ij}.
\end{equation}
Now we differentiate $\xi$ to verify that $\tau=0$.
\[
D_{\partial_i} \xi = -\psi_{ij}\,e_j -  \psi_{ij}\,x_j \,e_{n+1} = - \psi_{ij} \,\phi^{jk}\,f_*(\partial_k).
\]
The definition of $S$ given in (\ref{gauss2}) along with the previous equation imply
\begin{equation}\label{Sdef}
S(\partial_{i}) = \psi_{ij}\,\phi^{jk}\,e_k \hspace{5mm}\text{and}\hspace{5mm} \tau=0.
\end{equation}
The last step to verifying $\xi$ is the affine normal is to show the two volume forms $\omega_h$ and $\theta$ defined in equations (\ref{volumeform1}) and (\ref{volumeform2}) are equal.  It is enough to verify their equality on the basis $\{\partial_i\}$.  
\[
\omega_h( \partial_1,\cdots,\partial_n) = \det(h_{ij})^{1/2} = \big(\psi^{-n}\,det(\phi_{ij})\big)^{1/2} = \psi^{-(n+1)},
\]
because $\det(\nabla^2 \phi) = \psi^{-(n+2)}$.  
\begin{align*}
\theta(\partial_1,\cdots,\partial_n) &= \det \begin{pmatrix} f_*(\partial_i) & \cdots & f_*(\partial_n) & \xi \end{pmatrix} \\
&= \det \begin{pmatrix} \nabla^2 \phi & -\nabla \psi\\ x^T\,\nabla^2 \phi & \psi - \langle x,\nabla \psi \rangle \end{pmatrix}\\
&= \det(\nabla^2 \phi) \,\big( \psi - \langle x,\nabla \psi\rangle + x^T \,\nabla^2 \phi \,(\nabla^2 \phi)^{-1}\,\nabla \psi\big) = \psi^{-(n+1)},
\end{align*}
where we've used the formula
\[
\det \begin{pmatrix} A & u \\ v^T & a \end{pmatrix} = \det(A) \,\big(a-v^T\,A^{-1}\,u\big).
\]
Thus $\omega_h = \theta$, and $\xi$ is the affine normal.
\end{proof}

Now we will derive a formula for the dual immersion $\nu:\mathbb{R}^n \hookrightarrow \mathbb{R}^{n+1}$ and find conditions for both $f(\mathbb{R}^n)$ and $\nu(\mathbb{R}^n)$ to be elliptic affine spheres.  First we must derive a formula for the Euclidean normal to $f(\mathbb{R}^n)$, which we will denote $N_f$.  If we let $y=\nabla \phi(x)$, then the image of the immersion $f$ is a graph of the form $\big(y,\phi^*(y)\big)$.  The normal to this graph is
\begin{equation}\label{fnormal}
N_f(x)=\bigg(\dfrac{\nabla \phi^*(y)}{\sqrt{1+|\nabla \phi^*(y)|^2}},\dfrac{-1}{\sqrt{1+|\nabla \phi^*(y)|^2}}\bigg) = \bigg(\dfrac{x}{\sqrt{1+|x|^2}},\dfrac{-1}{\sqrt{1+|x|^2}}\bigg).
\end{equation}
Then the support function $\rho_f(x) = \langle N_f(x),f(x)\rangle$ is given by
\[
\rho_f(x) = \dfrac{\phi(x)}{\sqrt{1+|x|^2}}.
\]
And finally, the dual affine immersion $\nu(x) = \rho_f(x)^{-1}\,N_f(x)$ is given by
\begin{equation}\label{nuimmersion}
\nu(x) = \bigg( \dfrac{x}{\phi(x)},\dfrac{-1}{\phi(x)} \bigg).
\end{equation}

\begin{lemma}\label{anchor}
Let $\phi$ be a smooth, positive, strictly convex satisfying $\nabla \phi(\mathbb{R}^n)=A$, a bounded convex set with the center in its interior.  Then the boundary of the dual immersion $\nu_{\phi}(\mathbb{R}^n)$ equals 
\[
\partial A^\circ \times \{0\}
\]
where $A^\circ = \{\,y\in\mathbb{R}^n \mid \langle x,y\rangle \leq 1 \text{ for all }x\in A\,\}$ is the polar set of $A$, and $\partial A^\circ$ is its boundary.
\end{lemma}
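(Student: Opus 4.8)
The plan is to work directly with the explicit parametrization $\nu_\phi(x)=\bigl(x/\phi(x),\,-1/\phi(x)\bigr)$ of equation (\ref{nuimmersion}) and to identify the boundary of $\nu_\phi(\mathbb{R}^n)$ with its set of limit points not lying on the hypersurface itself, that is, with $\overline{\nu_\phi(\mathbb{R}^n)}\setminus\nu_\phi(\mathbb{R}^n)$.

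First I would localize where such boundary points can occur. Since $\phi>0$, the image lies in the open half-space $\{z_{n+1}<0\}\subset\mathbb{R}^{n+1}$. Since $0\in\Int A=\Int\nabla\phi(\mathbb{R}^n)$, the function $\phi$ has a linear lower bound $\phi(x)\ge r|x|-C$ with $r>0$ (the observation used in the discussion of Hypothesis \ref{B1}), and being finite and convex it is continuous, hence bounded on bounded sets. It follows that $\nu_\phi\colon\mathbb{R}^n\to\{z_{n+1}<0\}$ is a proper map: on the preimage of any compact $K\subset\{z_{n+1}<0\}$ one has $z_{n+1}=-1/\phi(x)\le-\delta$, hence $\phi(x)\le 1/\delta$, hence $|x|$ bounded. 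Thus $\nu_\phi(\mathbb{R}^n)$ is relatively closed in $\{z_{n+1}<0\}$, so $\overline{\nu_\phi(\mathbb{R}^n)}\setminus\nu_\phi(\mathbb{R}^n)=\overline{\nu_\phi(\mathbb{R}^n)}\cap\{z_{n+1}=0\}$. Any point $(w,0)$ of this set arises as $\lim_k\nu_\phi(x_k)$ with $1/\phi(x_k)\to 0$, hence $\phi(x_k)\to\infty$, and hence $|x_k|\to\infty$.

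The analytic heart of the proof is the asymptotic claim: if $|x_k|\to\infty$ and $x_k/|x_k|\to\theta\in S^{n-1}$, then $\phi(x_k)/|x_k|\to h_A(\theta)$, where $h_A(\theta)=\sup_{y\in A}\langle\theta,y\rangle$ is the support function of $A$. The lower bound is immediate: for $y\in A$ one has $\phi^*(y)<\infty$ (since $y=\nabla\phi(x)$ for some $x$, so the Legendre supremum is attained at $x$), hence $\phi(x_k)\ge\langle x_k,y\rangle-\phi^*(y)$; dividing by $|x_k|$, letting $k\to\infty$, and taking the supremum over $y\in A$ gives $\liminf_k\phi(x_k)/|x_k|\ge h_A(\theta)$. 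For the upper bound I would use that the recession function of $\phi$ equals the support function of $\overline{\dom\phi^*}=\overline A$ (Rockafellar), which yields $\phi(tv)\le\phi(0)+t\,h_A(v)$ for all $t>0$ and all unit $v$; applying this with $t=|x_k|$, $v=x_k/|x_k|$ and using that $h_A$ is continuous (finite convex, as $A$ is bounded) gives $\limsup_k\phi(x_k)/|x_k|\le h_A(\theta)$. Note also $h_A>0$ on $S^{n-1}$ because $A$ contains a ball about the origin.

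With the asymptotic claim in hand I would conclude both inclusions. If $(w,0)\in\overline{\nu_\phi(\mathbb{R}^n)}$, pick $x_k$ with $\nu_\phi(x_k)\to(w,0)$ and pass to a subsequence along which $x_k/|x_k|\to\theta$; then $x_k/\phi(x_k)=(x_k/|x_k|)\,(|x_k|/\phi(x_k))\to\theta/h_A(\theta)$, so $w=\theta/h_A(\theta)$. Conversely, for a bounded convex $A$ with $0$ in its interior one has $\partial A^\circ=\{y:h_A(y)=1\}=\{\theta/h_A(\theta):\theta\in S^{n-1}\}$, so any $w\in\partial A^\circ$ is of the form $w=\theta/h_A(\theta)$, and taking $x_k=k\theta$ yields $\nu_\phi(x_k)\to(w,0)$. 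Hence $\overline{\nu_\phi(\mathbb{R}^n)}\cap\{z_{n+1}=0\}=\partial A^\circ\times\{0\}$, which is the assertion. The main obstacle is the upper bound in the asymptotic claim: because $\phi^*$ need not be bounded on $\overline A$, one cannot estimate $\phi(x_k)$ directly from its Legendre representation, and one must instead pass through the recession-function identity together with continuity of $h_A$ to get a bound uniform in the direction $x_k/|x_k|$; the remaining steps are routine convex analysis.
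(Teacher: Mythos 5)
Your proof is correct and follows essentially the same route as the paper: both reduce the boundary to the set of limits of $\nu_\phi$ at infinity and prove the asymptotic $\phi(x)/|x|\to h_A(x/|x|)$, so that the limit points are exactly $\{\theta/h_A(\theta) : \theta\in S^{n-1}\}=\partial A^\circ$. The differences are only technical: you justify the reduction via properness of $\nu_\phi$ into $\{z_{n+1}<0\}$ and treat arbitrary (not just radial) sequences, and you get the upper asymptotic bound from the recession-function identity and the lower one from a fixed-$y$ Legendre inequality, whereas the paper sandwiches $\phi(x)$ between $C_\epsilon+\sup_{y\in A_\epsilon}\langle x,y\rangle$ and $\phi(0)+\sup_{y\in A}\langle x,y\rangle$ and lets $\epsilon\to 0$.
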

\begin{proof}
Since $\phi$ is a strictly convex function, the boundary of $\nu_{\phi}(\mathbb{R}^n)$ equals
\[
\{\,\lim_{r\to\infty} \nu_{\phi}(r\,x) \mid x\in S^{n-1}\,\}.
\]
Since the origin lies in the interior of $A$, the convex function $\phi$ is proper, so
\[
\lim_{r\to\infty} \dfrac{-1}{\phi(r\,x)} = 0.
\]
Thus, it remains to show 
\[
\lim_{r\to\infty} \dfrac{r\,x}{\phi(r\,x)} \in \partial A^\circ.
\]

For every $\epsilon>0$ define 
\[
A_{\epsilon} = \{\,y\in A \mid B_\epsilon(y) \subset A \,\}.
\]
Since $\nabla \phi(\mathbb{R}^n) = A$ it follows that for every $\epsilon>0$ there is a constant $C_\epsilon$ such that
\[
C_{\epsilon} + \sup_{y\in A_{\epsilon}} \{\,\langle x,y\rangle\,\} \leq \phi(x) \leq \phi(0) + \sup_{y\in A} \{\,\langle x,y\rangle\,\}.
\]
It follows that for $r$ large enough
\[
\dfrac{r}{\phi(0) + \sup_{y\in A} \{\,\langle r\,x,y\rangle\,\}} \leq \dfrac{r}{\phi(r\,x)} \leq \dfrac{r}{C_{\epsilon} + \sup_{y\in A_{\epsilon}} \{\,\langle r\,x,y\rangle\,\}}.
\]
Taking the limit as $r\to\infty$ implies
\[
\dfrac{1}{\sup_{y\in A} \{\,\langle r\,x,y\rangle\,\}} \leq \lim_{r\to\infty} \dfrac{r}{\phi(r\,x)} \leq \dfrac{1}{\sup_{y\in A_{\epsilon}} \{\,\langle r\,x,y\rangle\,\}}.
\]
Since $\epsilon$ was arbitrary, it follows that
\[
\lim_{r\to\infty} \dfrac{r\,x}{\phi(r\,x)} = \dfrac{x}{\sup_{y\in A} \{\,\langle x,y\rangle\,\}}.
\]

For all $y\in A$
\[
\Big\langle \dfrac{x}{\sup_{y\in A} \{\,\langle x,y\rangle\,\}},\,y \Big\rangle \leq 1,
\]
so $\dfrac{x}{\sup_{y\in A} \{\,\langle x,y\rangle\,\}}\in A^\circ$.  For arbitrarily small $\epsilon$ the point $(1+\epsilon) \dfrac{x}{\sup_{y\in A} \{\,\langle x,y\rangle\,\}}$ does not lie in $A^\circ$, so $\dfrac{x}{\sup_{y\in A} \{\,\langle x,y\rangle\,\}}$ must lie in the boundary of $A^\circ$.
\end{proof}

In the language of Klartag \cite{Klartag} we say the immersion $\nu_{\phi}$ has \textit{anchor} $A^\circ$.  We depict a Legendre graph immersion in Figure \ref{LGIfig} and its dual immersion in Figure \ref{dualfig}.  The dashed lines in Figure \ref{LGIfig} form the boundary of the convex set $A$ which the immersion is a graph over.  The dashed lines in Figure \ref{dualfig} form $\partial A^\circ \times \{0\}$, which is the boundary of the dual immersion, as guaranteed by Lemma \ref{anchor}. 

\begin{figure}[h]
\centering
\begin{minipage}{.5\textwidth}
  \centering
  \includegraphics[width=.88\linewidth]{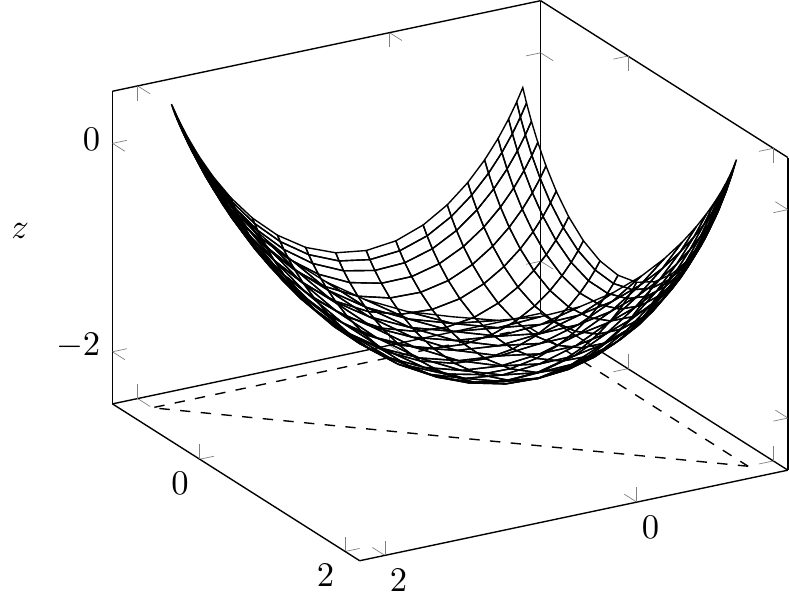}
  \captionof{figure}{A Legendre graph immersion over $A$}
  \label{LGIfig}
\end{minipage}%
\begin{minipage}{.5\textwidth}
  \centering
  \includegraphics[width=.975\linewidth]{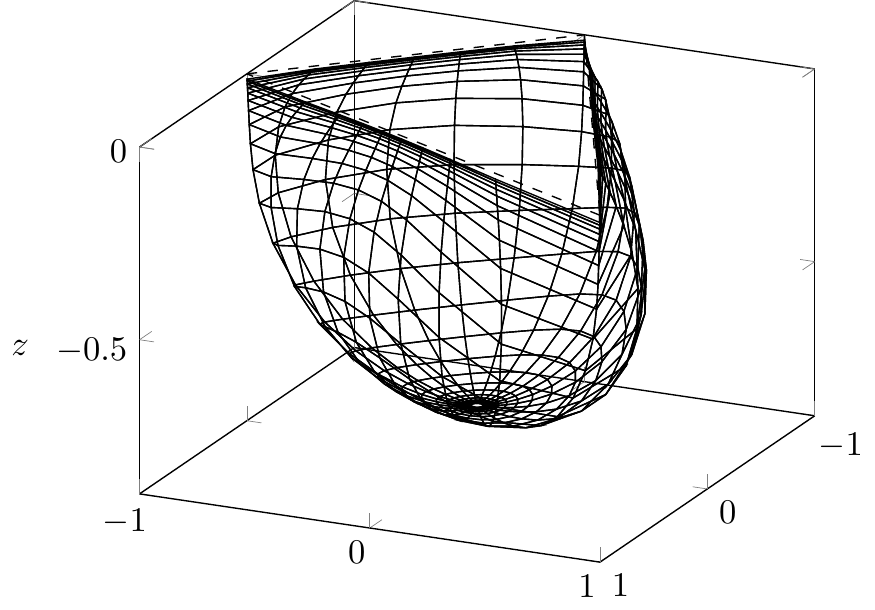}
  \captionof{figure}{The dual immersion with anchor $A^\circ$}
  \label{dualfig}
\end{minipage}
\end{figure}

\begin{proposition}\label{affinesphereequation}
Let $A\subset\mathbb{R}^n$ satisfy conditions (\ref{Aconditions}), and let $\phi$ be a strictly convex function solving the Monge--Amp\`{e}re second boundary problem (\ref{ourpde}) with $h(t)=t^{-(n+2)}$ and $\tau>0$. Then the immersion $f$ defined in equation (\ref{fimmersion}) and its dual affine immersion $\nu$ defined  in equation (\ref{nuimmersion}) are elliptic affine spheres.
\end{proposition}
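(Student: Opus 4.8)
The plan is to substitute the Monge--Amp\`ere equation into the affine-normal formula of Lemma~\ref{Legaffinenormal}, read off the affine-sphere property of $f$ directly, and then hand the dual immersion over to the general duality Proposition~\ref{dualaffinesphere}.

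First I would observe that the hypotheses force $\phi$ to be positive: since $\tau>0$, Lemma~\ref{tau} gives $\phi\geq\tau/\lambda(A)>0$, so $h\circ\phi=\phi^{-(n+2)}$ is a legitimate positive right-hand side and every object below is well defined. Rewriting the second boundary problem (\ref{ourpde}) for $h(t)=t^{-(n+2)}$ as
\[
\det(\nabla^2\phi) = c\,\phi^{-(n+2)},\qquad c:=\frac{\lambda(A)}{\|\phi^{-(n+2)}\|_1}>0,
\]
the auxiliary function appearing in Lemma~\ref{Legaffinenormal} becomes
\[
\psi = \det(\nabla^2\phi)^{-1/(n+2)} = c^{-1/(n+2)}\,\phi,
\]
i.e.\ a fixed positive multiple of $\phi$. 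Writing $\gamma:=c^{-1/(n+2)}>0$ and plugging $\psi=\gamma\phi$ into the formula for the affine normal in Lemma~\ref{Legaffinenormal} gives
\[
\xi_x = -\bigl(\nabla\psi(x),\,\langle x,\nabla\psi(x)\rangle-\psi(x)\bigr) = -\gamma\bigl(\nabla\phi(x),\,\langle x,\nabla\phi(x)\rangle-\phi(x)\bigr) = -\gamma\,f(x),
\]
so $f(x)+\gamma^{-1}\xi_x=0$ for all $x$; equivalently, by (\ref{Sdef}), the shape operator is $S=\gamma I$.

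Next I would invoke the characterization of affine spheres recalled in Subsection~\ref{affinespheres}: since $S=\gamma I$ with $\gamma\neq 0$ and $f(x)+\gamma^{-1}\xi_x$ is the constant vector $0$, the immersion $f$ is an affine sphere with center at the origin, and it is elliptic because $\gamma>0$. For the dual immersion, positivity of $\phi$ makes the position vector $f(x)$ transversal to $f(\mathbb{R}^n)$ --- indeed $\rho_f(x)=\phi(x)/\sqrt{1+|x|^2}>0$ --- so $\nu$ as in (\ref{nuimmersion}) is well defined, and Proposition~\ref{dualaffinesphere} then yields that $\nu(\mathbb{R}^n)$ is also an affine sphere with center at the origin, of the same (elliptic) type.

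I do not expect any serious obstacle: the heavy computation is already contained in Lemma~\ref{Legaffinenormal}, and the only point requiring care is the bookkeeping of the constant $c$ and, in particular, the sign of $\gamma$, which is exactly what pins down ellipticity as opposed to the parabolic or hyperbolic cases. If one preferred not to rely on Proposition~\ref{dualaffinesphere}, the same substitution could be carried out directly for $\nu$ using its explicit expression (\ref{nuimmersion}), but this is unnecessary given the duality result.
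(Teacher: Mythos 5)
Your proposal is correct and follows essentially the same route as the paper: substitute $\det(\nabla^2\phi)=c\,\phi^{-(n+2)}$ into Lemma~\ref{Legaffinenormal} so that $\psi$ is a positive multiple of $\phi$, conclude via (\ref{Sdef}) that $S=\gamma I$ with $\gamma=\lambda(A)^{-1/(n+2)}\|\phi\|_{-(n+2)}^{-1}>0$, and then invoke Proposition~\ref{dualaffinesphere} for the dual immersion. Your extra remarks (positivity of $\phi$ from Lemma~\ref{tau} with $\tau>0$, and the explicit observation that the center is the origin, which Proposition~\ref{dualaffinesphere} requires) are welcome bookkeeping that the paper leaves implicit.
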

\begin{proof}
$f(\mathbb{R}^n)$ is an elliptic affine sphere if and only if the affine shape operator $S$ of $f$ satisfies $S=\gamma I$ for $\gamma>0$.  
In equation (\ref{Sdef}) we computed 
\[
S(\partial_{i}) = \psi_{ij}\,\phi^{jk}\,e_k
\]
where $\psi = \det(\nabla^2 \phi)^{-1/(n+2)}$.  Since $\phi$ solves equation (\ref{ourpde}) it follows that
\[
\psi = \lambda(A)^{-1/(n+2)}\,\|\phi\|_{-(n+2)}^{-1}\,\phi,
\]
so the shape operator simplifies to
\[
S(\partial_i) = \lambda(A)^{-1/(n+2)}\, \|\phi\|_{-(n+2)}^{-1} \,\phi_{ij}\,\phi^{jk}\,e_k =  \lambda(A)^{-1/(n+2)}\,\|\phi\|_{-(n+2)}^{-1} \,\partial_i.
\]
Thus, $f(\mathbb{R}^n)$ is an elliptic affine sphere with $\gamma = \lambda(A)^{-1/(n+2)}\,\|\phi\|_{-(n+2)}^{-1}>0$.  Proposition \ref{dualaffinesphere} implies that the dual immersion $\nu(\mathbb{R}^n)$ is also an elliptic affine sphere.
\end{proof}

\subsection{Affine surface area of Legendre graph immersions}\label{ASAleggraphs}

In this  to verify that the three definitions of the affine surface area are equivalent. 

\noindent \textit{Affine surface area definition 1:}

Equation (\ref{ASA1}) defines the affine surface area as
\[
\Omega_f(U) = \int_U \omega_h = \int_U \det(h_{ij})^{1/2}\,dx_1\wedge\cdots\wedge dx_n.
\]
We derived the affine metric $h$ of $f$ in equation (\ref{fh}) to be $h_{ij} := h(\partial_i,\partial_j) = \psi^{-1} \,\phi_{ij}$, where $\psi = \det(\nabla^2 \phi)^{-1/(n+2)}$.  Thus, $\det(h_{ij}) = \psi^{-n}\,\det(\nabla^2 \phi) = \psi^{-(2n+2)}$, and the affine surface are measure is given by
\begin{equation}\label{Legasa}
\Omega(U) = \int_U \psi^{-(n+1)}\,d\lambda = \int_U \det(\nabla^2 \phi)^{\frac{n+1}{n+2}}\,d\lambda.
\end{equation}

\noindent \textit{Affine surface area definition 2:}

Equation (\ref{ASA2}) defines the affine surface area as 
\[
\Omega_f(U) = \int_U \kappa_f\big(N_f(x)\big)^{1/(n+2)}\,dV_{f(\mathbb{R}^n)}.
\]
Since $f(\mathbb{R}^n)$ is the graph of $\phi^*$ we can use the coordinate $y=\nabla \phi$ and the formula for the volume form of a graph of a function to find
\[
dV_{f(\mathbb{R}^n)} = \big(1+|\nabla \phi^*(y)|^2\big)^{1/2}\,dy = \big(1+|x|^2\big)^{1/2}\,\det(\nabla^2 \phi(x))\,dx
\]
where $dx=dx_1\wedge\cdots\wedge dx_n$ is shorthand for the standard volume form on $M=\mathbb{R}^n$.  We derived the formula for the Gauss map $N_f$ in equation (\ref{fnormal}), and it can be used to compute
\[
N_f^*(dV_{S^n}) = \big(1+|x|^2\big)^{-(n+1)/2}\,dx.
\]
Now we use the identity $dV_f = \kappa_f\big(N_f(x)\big)^{-1}\,N_f^*(dV_{S^n})$ to show
\[
\kappa_f\big(N_f(x)\big) = \big(1+|x|^2\big)^{-(n+2)/2}\,\det\big(\nabla^2 \phi(x)\big)^{-1}.
\]
Putting the formulas for $\kappa_f\big(N_f(x)\big)$ and $dV_f$ into equation (\ref{ASA2}) yields
\begin{align*}
\Omega_f(U) &= \int_U \Big( \big(1+|x|^2\big)^{-(n+2)/2}\,\det\big(\nabla^2 \phi(x)\big)^{-1} \Big)^{1/(n+2)}\,\big(1+|x|^2\big)^{1/2}\,\det\big(\nabla^2 \phi(x)\big)\,dx  \\&= \int_U \det(\nabla^2 \phi)^{\frac{n+1}{n+2}}\,d\lambda.
\end{align*}

\noindent \textit{Affine surface are definition 3:}
 
Equation (\ref{weakASA}) gives the dual definition of affine surface area as 
\[
\Omega(U) = \inf \bigg\{\, \bigg(\int_{N(U)} \alpha(z)\,\kappa(z)^{-1}\,dV_{S^n} \bigg)^{\frac{n+1}{n+2}}\,\bigg(\int_{N(U)} \alpha(z)^{-(n+1)}\,dV_{S^n} \bigg)^{\frac{1}{n+2}} \mid \alpha \in C_+(S^n)\,\bigg\},
\]
where $C_+(S^n)$ are continuous, positive functions on $S^n$.  Using the formulas for $N_f$, $\kappa_f$, and $N_f^*(dV_{S^n})$ derived above, it follows that
\begin{align*}
\Omega(U)^{\frac{n+2}{n+1}} &= \inf_{\alpha \in C_+(S^n)} \bigg\{\, \bigg(\int_U \alpha\big(N_f(x)\big)\,\kappa_f\big(N_f(x)\big)^{-1}\,N_f^*(dV_{S^n}) \bigg) \\
&\hspace{5.2cm}\bigg(\int_U \alpha\big(N_f(x)\big)^{-(n+1)}\,N_f^*(dV_{S^n}) \bigg)^{\frac{1}{n+1}} \,\bigg\}\\
&= \inf_{\alpha \in C_+(S^n)} \bigg\{\, \bigg(\int_U \widehat{\alpha}(x)\,\det\big(\nabla^2 \phi(x)\big)\,dx \bigg)\bigg(\int_U \widehat{\alpha}(x)^{-(n+1)}\,dx\,\bigg)^{\frac{1}{n+1}}\bigg\},
\end{align*}
where we define $\widehat{\alpha}\in C_+(\mathbb{R}^n)$ by
\[
\widehat{\alpha}(x) = \sqrt{1+|x|^2}\,\alpha \bigg(\frac{x}{\sqrt{1+|x|^2}},\frac{-1}{\sqrt{1+|x|^2}}\bigg).
\]
Since $\alpha\in C_+(S^n)$, it follows that $0<\alpha(z) \leq C$.  Thus, $\widehat{\alpha}$ is continuous, and 
\[
\widehat{\alpha}(x) \leq C\,\sqrt{1+|x|^2} \leq C\,(1+|x|),
\] 
so $\widehat{\alpha}\in \mathcal{C}_{\lin}  = \{\,f:\mathbb{R}^n \to (0,\infty) \mid f \text{ is continuous, and }f(x)/(1+|x|) \text{ is bounded} \,\}$.
This characterization of $\widehat{\alpha}$ implies
\[
\Omega_f(\mathbb{R}^n)^{\frac{n+2}{n+1}}  = \lambda(A)\,\inf\Big\{ \,\Big\langle \widehat{\alpha},\scaleobj{.9}{\dfrac{\MA(\phi)}{\lambda(A)}} \Big\rangle\,\|\widehat{\alpha}\|_{-(n+1)}^{-1} \mid \widehat{\alpha}\in\mathcal{C}_{\lin}\,\Big\}=\lambda(A)\,\mathcal{G}\Big(\scaleobj{.9}{\dfrac{\MA(\phi)}{\lambda(A)}}\Big).
\]
Proposition \ref{Gequality} implies $\mathcal{G}\Big(\scaleobj{.9}{\dfrac{\MA(\phi)}{\lambda(A)}}\Big) = \Big\|\scaleobj{.9}{\dfrac{\det(\nabla^2 \phi)}{\lambda(A)}}\Big\|_{\frac{n+1}{n+2}}$, so the previous equation again verifies that
\[
\Omega_f(\mathbb{R}^n) = \int_{\mathbb{R}^n} \det(\nabla^2 \phi)^{\frac{n+1}{n+2}}\,d\lambda.
\]
Thus, Proposition \ref{Gequality} can be viewed as a proof of Lutwak's theorem that the dual definition of the affine surface area is equal to the usual definition in the case when the immersion $f$ is given by equation (\ref{fimmersion}).

The last observation we want to  make about the immersion $f$ is the interpretation of the functional $\mathcal{F}(\phi) = \|\phi\|_{-(n+1)}$.  
\begin{lemma}
Let $\phi$ be a smooth, positive, strictly convex function. If $f$ is the immersion defined in equation (\ref{fimmersion}), and $\nu$ is its dual immersion defined  in equation (\ref{nuimmersion}), then 
\[
\mathcal{F}(\phi)^{-(n+1)} = \mu_\nu (\mathbb{R}^n),
\]
where $\mu_{\nu}$ is the cone measure of the dual immersion.
\end{lemma}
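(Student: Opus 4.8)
The plan is to evaluate both sides and observe that they coincide with $\int_{\mathbb{R}^n}\phi^{-(n+1)}\,d\lambda$ (an identity in $[0,\infty]$, both sides being finite as soon as $\phi$ grows linearly, e.g.\ when $\nabla\phi(\mathbb{R}^n)$ contains a neighbourhood of the origin). For the left side, recall from Subsection~\ref{iteration1} that for $h(t)=t^{-(n+2)}$ one has $s=n+1$ and $\mathcal{F}(\phi)=\mathcal{F}_s(\phi)=\|\phi\|_{-(n+1)}=\big(\int_{\mathbb{R}^n}\phi^{-(n+1)}\,d\lambda\big)^{-1/(n+1)}$, so
\[
\mathcal{F}(\phi)^{-(n+1)}=\int_{\mathbb{R}^n}\phi^{-(n+1)}\,d\lambda .
\]
Thus the whole statement reduces to showing $\mu_\nu(\mathbb{R}^n)=(n+1)\,\lambda\big(\cvx(0,\nu(\mathbb{R}^n))\big)=\int_{\mathbb{R}^n}\phi^{-(n+1)}\,d\lambda$, i.e.\ that the cone $C:=\cvx(0,\nu(\mathbb{R}^n))$ has $\lambda(C)=\tfrac1{n+1}\int_{\mathbb{R}^n}\phi^{-(n+1)}\,d\lambda$.

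First I would identify $C$ explicitly. Since $\nu(x)=\tfrac1{\phi(x)}(x,-1)$, the point $\nu(x)$ sits on the ray $\{s(x,-1):s\ge0\}$ at the value $s=1/\phi(x)$, which suggests
\[
C=\big\{(sx,-s)\ :\ x\in\mathbb{R}^n,\ 0\le s\le 1/\phi(x)\big\}
\]
up to a $\lambda$-null subset of the face $\{w=0\}$. To prove this I would introduce the perspective function $g(y,w):=(-w)\,\phi\big(y/(-w)\big)$ on $\{w<0\}$, which is convex and positively $1$-homogeneous; then $\nu(\mathbb{R}^n)\subseteq\{g=1\}$, the region $\{w\le0,\ g\le1\}$ is convex (a sublevel set of a convex function), bounded (using positivity and linear lower growth of $\phi$), and has face in $\{w=0\}$ equal to $A^\circ\times\{0\}$ in agreement with Lemma~\ref{anchor}; and $1$-homogeneity of $g$ forces each ray from the origin to meet $\{g=1\}$ exactly once, so $\{w\le0,\ g\le1\}=\overline C$. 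Rewriting $g(y,w)\le1$ via $s=-w,\ x=y/s$ gives precisely $s\le 1/\phi(x)$.

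Then the computation is a change of variables: the map $(x,s)\mapsto(y,w)=(sx,-s)$ from $\mathbb{R}^n\times(0,\infty)$ onto $\{w<0\}$ has Jacobian of absolute value $s^n$, so
\[
\lambda(C)=\int_{\mathbb{R}^n}\int_0^{1/\phi(x)}s^n\,ds\,d\lambda(x)=\frac1{n+1}\int_{\mathbb{R}^n}\phi(x)^{-(n+1)}\,d\lambda(x),
\]
and multiplying by $n+1$ finishes the proof. As a cross-check one can instead run the argument through the integral representation $\mu_\nu(U)=\int_U|\rho_\nu|\,dV_{\nu(\mathbb{R}^n)}$, using $\langle\nu(x),f(x)\rangle=1$ and the fact that the Euclidean normal of $\nu$ is $f(x)/|f(x)|$ to get $\rho_\nu(x)=|f(x)|^{-1}$; but this requires computing the induced volume form of $\nu$, so the cone picture is cleaner.

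The main obstacle is the geometric identification of $C$ with the ray-swept region: one must rule out that $\cvx(0,\nu(\mathbb{R}^n))$ is strictly larger (it is not, since $\nu(\mathbb{R}^n)$ lies on the single convex hypersurface $\{g=1\}$) and that, because $\nu(\mathbb{R}^n)$ fails to be closed, taking the convex hull with the origin contributes extra mass along $A^\circ\times\{0\}$ (it does not, that face being $\lambda$-null in $\mathbb{R}^{n+1}$). Homogeneity of the perspective function is exactly the tool that collapses this to a one-line verification; once $C$ is pinned down, the remainder is the routine integral above.
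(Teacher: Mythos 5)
Your proposal is correct, but it proves the identity by a different route than the paper. You work directly from the definition of the cone measure, $\mu_\nu(\mathbb{R}^n)=(n+1)\,\lambda\big(\cvx(0,\nu(\mathbb{R}^n))\big)$, identify the cone as the region swept by the segments from the origin to $\nu(x)=\tfrac{1}{\phi(x)}(x,-1)$ (your perspective-function argument does this, though in fact the ray-swept set $\{(sx,-s):0\le s\le 1/\phi(x)\}$ is itself convex and contains $0$ and $\nu(\mathbb{R}^n)$, so it \emph{equals} the convex hull on the nose and the worry about extra mass on $A^\circ\times\{0\}$ evaporates), and then compute its volume by the change of variables $(x,s)\mapsto(sx,-s)$ with Jacobian $s^n$, giving $\tfrac{1}{n+1}\int\phi^{-(n+1)}\,d\lambda$; combined with $\mathcal{F}(\phi)^{-(n+1)}=\int\phi^{-(n+1)}\,d\lambda$ from Subsection \ref{iteration1} this closes the argument. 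The paper instead uses the integral representation $d\mu_\nu=\rho_\nu\,dV_\nu$ from Subsection \ref{ASAsection}: it computes the support function $\rho_\nu=|f|^{-1}$ from $N_\nu=f/|f|$ and the induced volume form $dV_\nu=\phi^{-(n+1)}|f|\,dx$, so that $d\mu_\nu=\phi^{-(n+1)}\,dx$ — exactly the cross-check you mention but set aside. Your route buys elementarity and a pointwise-free, purely measure-theoretic derivation valid as an identity in $[0,\infty]$ without growth hypotheses on $\phi$; the paper's route buys brevity given the differential-geometric formulas already derived for Legendre graph immersions, and yields the density of $\mu_\nu$, not just its total mass.
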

\begin{proof}
The cone measure of the dual immersion is given by
\[
d\mu_{\nu} = \rho_{\nu}\,dV_{\nu},
\]
where $\rho_{\nu}$ is the support function of $\nu$, and $dV_{\nu}$ is the induced volume form of the immersion.  Since $\nu$ is dual to $f$, it follows that its normal vector field $N_{\nu}(x) = f(x)/|f(x)|$.  The definitions of the dual map and the support function imply
\[
\rho_{\nu} = \langle N_{\nu},\nu\rangle = \left\langle \dfrac{f}{|f|}, \dfrac{N_f}{\langle N_f, f\rangle} \right\rangle = |f|^{-1}.
\]
A computation shows that
\[
dV_{\nu} = \nu^*(\iota_{N_{\nu}} dV_{\mathbb{R}^{n+1}}) = \phi^{-(n+1)} |f|\,dx.
\]
Thus,
\[
\mathcal{F}(\phi)^{-(n+1)} = \int_{\mathbb{R}^n} \phi^{-(n+1)}\,d\lambda = \mu_{\nu}(\mathbb{R}^n).
\]
\end{proof}

\subsection{Affine iteration}\label{affineiterationsection}

Fix an open, bounded, convex set $A\subset\mathbb{R}^n$.  We begin by proving that for any Legendre graph immersion $f_0$ over $A$, there exists a unique affine iteration $\{f_i\}_{i\in\mathbb{N}}$ over $A$.  It suffices to show that for every affine immersion $f_\psi$ over $A$, there exists an affine immersion $f_\phi$ satisfying
\[
\xi_{\phi}(x) = -c\,f_\psi(x),
\]
for some constant $c>0$.

\begin{proposition}\label{prescribednormal}
Let $A$ be an open, convex set with the origin contained in its interior.  If $\psi:\mathbb{R}^n \to \mathbb{R}$ is a smooth, positive, strictly convex function such that $\nabla \psi(\mathbb{R}^n)=A$, and if $f_\psi$ is its Legendre graph immersion over $A$, then there is a Legendre graph immersion $f_\phi$ over $A$, unique up to an additive constant, which satisfies
\[
\xi_{\phi}(x) = -c\,f_\psi(x),
\]
for some $c>0$.
\end{proposition}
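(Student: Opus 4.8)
The plan is to reduce the prescribed--affine--normal equation to a Monge--Amp\`ere second boundary value problem and then solve it by optimal transport. By Lemma \ref{Legaffinenormal}, the affine normal of a Legendre graph immersion $f_\phi$ over $A$ is $\xi_\phi(x) = -\big(\nabla\Psi(x),\,\langle x,\nabla\Psi(x)\rangle-\Psi(x)\big)$, where $\Psi(x) = \det(\nabla^2\phi(x))^{-1/(n+2)}$ (this is the function called $\psi$ in that lemma; I rename it to $\Psi$ to avoid a clash with the given $\psi$), while $f_\psi(x) = \big(\nabla\psi(x),\,\langle x,\nabla\psi(x)\rangle-\psi(x)\big)$. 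Comparing the first $n$ coordinates of $\xi_\phi(x) = -c\,f_\psi(x)$ gives $\nabla\Psi = c\,\nabla\psi$, hence $\Psi = c\,\psi + d$ for a constant $d$; comparing the last coordinate then forces $d=0$, so the equation is equivalent to $\Psi = c\,\psi$, i.e. to finding a smooth, strictly convex $\phi$ with
\[
\det(\nabla^2\phi) = (c\,\psi)^{-(n+2)} \qquad\text{and}\qquad \nabla\phi(\mathbb{R}^n) = A,
\]
for an appropriate $c>0$. Once such a $\phi$ is produced, setting $\Psi = c\psi$ and applying Lemma \ref{Legaffinenormal} yields $\xi_\phi = -c\,f_\psi$ directly.

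Next I would fix $c$ by a mass--balance argument. Since $0\in\Int A = \Int\nabla\psi(\mathbb{R}^n)$, we have $B_r\subset\partial\psi(\mathbb{R}^n)$ for some $r>0$, and $\psi^*$ is finite and continuous on $\overline{B_r}\subset A$; the order--reversing property of the Legendre transform then gives $\psi(x)\ge r|x|-C$. Combined with positivity and continuity of $\psi$, this yields $\psi^{-(n+2)}\in L^1(\mathbb{R}^n)$ because $n+2>n$, so $\int_{\mathbb{R}^n}\psi^{-(n+2)}\,d\lambda$ is finite and positive. Set
\[
c = \bigg(\frac{1}{\lambda(A)}\int_{\mathbb{R}^n}\psi^{-(n+2)}\,d\lambda\bigg)^{1/(n+2)} > 0,
\]
so that the absolutely continuous measure $\mu := (c\psi)^{-(n+2)}\lambda$ and the measure $\nu := \lambda|_A$ have the same finite total mass $\lambda(A)$. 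By the theorems of Brenier \cite{Brenier2} and McCann \cite{McCann}, after normalizing $\mu$ and $\nu$ to probability measures there is a convex function $\phi$, unique up to an additive constant, with $(\nabla\phi)_\#\mu = \nu$; in particular $\phi$ solves $\det(\nabla^2\phi) = (c\psi)^{-(n+2)}$ in the Alexandrov sense and $\partial\phi(\mathbb{R}^n) = A$ up to a null set.

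Finally I would establish regularity exactly as in the proof of Lemma \ref{limitsolution}: the right--hand side $(c\psi)^{-(n+2)}$ is smooth and strictly positive and $\partial\phi(\mathbb{R}^n)$ has nonempty interior, so Lemma \ref{regularitylemma} gives strict convexity of $\phi$, and Caffarelli's $C^{2,\alpha}$ estimates together with elliptic bootstrapping promote $\phi$ to a smooth solution, upgrading the almost--everywhere identity to $\nabla\phi(\mathbb{R}^n) = A$. Then $\Psi := c\psi = \det(\nabla^2\phi)^{-1/(n+2)}$ and Lemma \ref{Legaffinenormal} give $\xi_\phi = -c\,f_\psi$. For uniqueness: the reduction above shows $c$ and the measure $\mu$ are determined by $\psi$ and $A$, and the Brenier potential is unique up to an additive constant, so $f_\phi$ is unique up to an additive constant as claimed. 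I expect the only genuinely technical point to be the passage from the weak (Brenier/Alexandrov) solution to a smooth, strictly convex one with full gradient image precisely $A$; but this is exactly the regularity package already assembled in Section \ref{thmpf} (Lemma \ref{regularitylemma} and the argument inside Lemma \ref{limitsolution}), so no new difficulty arises.
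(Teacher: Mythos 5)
Your proposal is correct and follows essentially the same route as the paper: reduce the prescribed affine normal condition to the Monge--Amp\`ere second boundary value problem $\det(\nabla^2\phi)=(c\,\psi)^{-(n+2)}$, $\nabla\phi(\mathbb{R}^n)=A$, solve it via McCann--Brenier optimal transport, upgrade to a smooth strictly convex solution by Caffarelli's regularity, and conclude with Lemma \ref{Legaffinenormal}; your normalization of $c$ agrees with the paper's $c=\lambda(A)^{-1/(n+2)}\|\psi\|_{-(n+2)}^{-1}$. The only difference is cosmetic: you make explicit the equivalence $\xi_\phi=-c\,f_\psi\iff\det(\nabla^2\phi)^{-1/(n+2)}=c\,\psi$ and the mass-balance determination of $c$, which the paper leaves implicit when asserting uniqueness.
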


\begin{proof}
Let $B_r$ be some small ball around the origin which is contained in $A$.  Since $\nabla \psi(\mathbb{R}^n) = A$, it follows that $\psi(x) \geq r\,|x| -c$ for some constant $c$.  Since $\psi$ is also positive, this lower bound implies $\psi^{-(n+2)}$ is integrable. It follows that 
\[
\dfrac{\psi(x)^{-(n+2)}}{\int_{\mathbb{R}^n} \psi^{-(n+2)}\,d\lambda}\,\lambda
\]
is a smooth, positive probability measure on $\mathbb{R}^n$.  McCann proved in \cite{McCann} that the equation
\[
\begin{cases}
    \dfrac{\det(\nabla^2 \phi)}{\lambda(A)} =\dfrac{\psi(x)^{-(n+2)}}{\int_{\mathbb{R}^n} \psi^{-(n+2)}\,d\lambda}\\
    \nabla \phi(\mathbb{R}^n) = A
\end{cases}
\]
has convex solutions which are unique up to an additive constant.  Since $\psi$ is smooth and positive, it follows from the regularity theory of Caffarelli \cite{Caffarelli} that $\phi$ is smooth, and thus strictly convex.  Thus, $f_\phi$ as defined in equation (\ref{fimmersion}) is a Legendre graph immersion over $A$.  

Since $\det(\nabla^2 \phi) = c\,\psi$ for $c=\lambda(A)^{-1/(n+2)}\|\psi\|_{-(n+2)}^{-1}$, Proposition \ref{Legaffinenormal} implies the affine normal of $f_\phi$ is given by
\[
\xi(x) = -c\,\big(\nabla \psi(x), \psi^*(\nabla \psi(x))\big) = -c\,f_\psi(x).
\]
Thus $f_\phi$ solves the prescribed affine normal problem, and $\phi$ is unique up to the addition of a constant.
\end{proof}

If we choose the additive constant so that $\phi>0$, then we can solve the applied affine normal problem for $f_\phi$ and create a sequence of Legendre graph immersions.  If the barycenter of $A$ is the origin, then the integral condition $\int_A \phi^*\,d\lambda = -\tau<0$ forces $\phi(x)>0$.  

Now we prove, Theorem \ref{affineiterationconverges}.

\begin{proof}
The functions $\{\phi_i\}$ which define the Legendre graph immersions $\{f_i\}$ are smooth solutions to the Monge--Amp\`{e}re iteration (\ref{MAiteration}) with $h(t)=t^{-(n+2)}$.  Theorem \ref{MAiteration1} with $p=1$ implies there exists a sequence of constants $\{a_i\}$ such that $\widetilde{\phi}_i(x) = \phi_i(x+a_i)$ converges smoothly to $\phi$, which is a smooth, convex solution of the Monge--Amp\`{e}re equation (\ref{ourpde}).  By Proposition \ref{affinesphereequation}, the Legendre graph immersion $f_\phi$ associated to $\phi$ is an elliptic affinesphere.

The Legendre transform of the translated sequence satisfy $\widetilde{\phi}_i^*(y) = \phi_i^*(y) - \langle a,y\rangle$.  Since the image of Legendre graph immersions are simply the graph of the Legendre transform, it follows that
\[
\widetilde{f}_i(\mathbb{R}^n) = \begin{pmatrix} I & 0 \\ a_i^T & 1 \end{pmatrix} f_i(\mathbb{R}^n).
\]
Thus if we define $M_i = \begin{pmatrix} I & 0 \\ a_i^T & 1 \end{pmatrix}$, then it follows that $M_i\,f_i(\mathbb{R}^n)$ converges smoothly to $f_\phi(\mathbb{R}^n)$ which is an elliptic affine hemisphere with its center at the origin.
\end{proof}

Finally, we would like to remark about the decreasing functionals along the affine iteration.  As a consequence of Lemma \ref{functionallimit} and the fact that Hypotheses \ref{hypB} hold for $h(t)=t^{-(n+2)}$ as shown in Section \ref{iteration1}, we have the inequalities
\[
\mathcal{F}(\phi_i) \geq \Big\langle \phi_{i+1},\scaleobj{.9}{\dfrac{\MA(\phi_{i+1})}{\lambda(A)}}\Big\rangle \,\mathcal{G}\Big(\scaleobj{.9}{\dfrac{\MA(\phi_{i+1})}{\lambda(A)}}\Big)^{-1} \geq \mathcal{F}(\phi_{i+1})
\]
when $\phi_i$ solves the Monge--Amp\`{e}re iteration with $h(t)=t^{-(n+2)}$.  Due to the calculation of these functionals in Section \ref{leggraphs}, this is equivalent to 
\[
\scaleobj{1.2}{
\mu_{\nu_i}(\mathbb{R}^n)^{-\frac{1}{n+1}} \geq \mu_{f_{i+1}}(\mathbb{R}^n)\,\Omega_{f_{i+1}}(\mathbb{R}^n)^{-\frac{n+2}{n+1}} \geq \mu_{\nu_{i+1}}(\mathbb{R}^n)^{-\frac{1}{n+1}}}.
\]

The right inequality, which comes from the definition of $\mathcal{G}$ and the equivalence of normal and dual definitions of affine surface area, is a special case of the inequality in Lutwak \cite[Lemma~AB]{Lutwak3}.  The left inequality says that the reverse inequality holds along the affine iteration, and that equality is achieved when the immersion is an affine sphere.

\medskip

\bibliographystyle{plain}
\bibliography{sources}

\end{document}